\newcommand{\Comment}[1]{{\color{blue}#1}}
\newcommand{\OptionalDetails}[1]{
    \ifbool{ForSubmission}
        {
        }
        {\begin{quote}\Comment{\footnotesize
        \medskip

        \noindent#1}
        \end{quote}
        }
    }
\newcommand{\IfarXivElse}[2]{
    \ifbool{arXivFormat}
        {#1}{#2}
    }
\renewcommand{\mathbf}[1]{\bm{#1} \textbf{ *** Use bm instead of mathbf ***}}
\newcommand{\eqn}{\begin{eqnarray}}
\newcommand{\een}{\end{eqnarray}}
\newtheorem{theorem}{Theorem}[section]
\newtheorem*{theorem*}{Theorem}				
\newtheorem{prop}[theorem]{Proposition}
\newtheorem{lemma}[theorem]{Lemma}
\newtheorem{cor}[theorem]{Corollary}
\newtheorem{definition}[theorem]{Definition}
\newtheorem{remark}[theorem]{Remark}
\numberwithin{equation}{section}
\newcommand{\orgabs}[1]{\left\vert#1\right\vert}
\DeclarePairedDelimiter{\abs}{\lvert}{\rvert}
\DeclarePairedDelimiter{\bigabs}{\Big\lvert}{\Big\rvert}
\DeclarePairedDelimiter{\Bigabs}{\Bigg\lvert}{\Bigg\rvert}
\newcommand{\innp}[1]{\ensuremath{\langle #1 \rangle}}
\newcommand{\BoldTau}
    {\mbox{\boldmath $\tau$}}
\newcommand{\BB}[1]{\ensuremath{\mathbb{#1}}}
\newcommand{\R}{\ensuremath{\BB{R}}}
\newcommand{\N}{\ensuremath{\BB{N}}}
\newcommand{\Z}{\ensuremath{\BB{Z}}}
\newcommand{\iny}{\ensuremath{\infty}}
\newcommand{\grad}{\ensuremath{\nabla}}
\newcommand{\CharFunc}{
    \ifbool{HaveBBM}{
        \ensuremath{\mathbbm{1}}
        }
        {
        \ensuremath{\bm{1}}
        }
    }
\DeclareMathOperator{\dv}{div} %
\DeclareMathOperator{\curl}{curl} %
\DeclareMathOperator{\supp}{supp} %
\newcommand{\prt}{\ensuremath{\partial}}
\newcommand{\brac}[1]{\ensuremath{\left[ #1 \right]}}
\newcommand{\pr}[1]{\ensuremath{\left( #1 \right)}}
\DeclarePairedDelimiter{\set}{\{}{\}}
\newcommand{\norm}[1]{\ensuremath{\left\Vert #1 \right\Vert}}
\newcommand{\smallnorm}[1]{\ensuremath{\Vert #1 \Vert}}
\newcommand{\AGE}{aggregation equation\xspace}
\newcommand{\GAGnu}{\ensuremath{(GAG_\nu)}\xspace}
\newcommand{\GAGzero}{\ensuremath{(GAG_0)}\xspace}
\newcommand{\PAGnu}{\ensuremath{(AG_\nu)}\xspace}
\newcommand{\PAGzero}{\ensuremath{(AG_0)}\xspace}
\newcommand{\GAGVnu}{\ensuremath{(GAGV_\nu)}\xspace}
\newcommand{\VV}{\ensuremath{(VV)}\xspace}
\newcommand{\btau}{\BoldTau}
\newcommand{\mul}{\widetilde{\mu}}
\newcommand{\wl}{\widetilde{\w}}
\newcommand{\rhol}{\widetilde{\rho}}
\newcommand{\vl}{\widetilde{\v}}
\newcommand\tenq[2][1]{%
	\def\useanchorwidth{T}%
	\ifnum#1>1%
		\stackunder[0pt]{\tenq[\numexpr#1-1\relax]{#2}}{\scriptscriptstyle\sim}%
	\else%
		\stackunder[1pt]{#2}{\scriptscriptstyle\sim}%
	\fi%
	}
\newcommand{\FTF}
    {\Cal{F}}
\newcommand{\F}{\Phi}
\renewcommand{\epsilon}{\varepsilon}
\newcommand{\eps}{\ensuremath{\varepsilon}}
\newcommand{\Cal}[1]{\ensuremath{\mathcal{#1}}}
\newcommand{\al}{\ensuremath{\alpha}}
\newcommand{\la}{\ensuremath{\lambda}}
\newcommand{\diff}[2]{\frac{ d#1}{d#2}}
\newcommand{\MOC}{modulus of continuity\xspace}
\newcommand{\Holder}
    {H\"{o}lder\xspace}
\newcommand{\Ignore}[1]{}
\newcommand{\ToDo}[1]{\textbf{\Comment{[#1]}}}
\definecolor{Correction}{named}{red}
\crefname{cor}{Corollary}{Corollaries} 
\crefname{lemma}{Lemma}{Lemmas}	       
\crefname{section}{Section}{Sections}
\Crefname{section}{Section}{Sections}
\crefname{appendix}{Appendix}{Appendices}
\Crefname{appendix}{Appendix}{Appendices}
\crefname{theorem}{Theorem}{Theorems}
\Crefname{theorem}{Theorem}{Theorems}
\crefname{prop}{Proposition}{Propositions}
\Crefname{prop}{Proposition}{Propositions}
\crefname{conj}{Conjecture}{Conjectures}
\Crefname{conj}{Conjecture}{Conjectures}
\crefname{definition}{Definition}{Definitions}
\Crefname{definition}{Definition}{Definitions}
\crefname{remark}{Remark}{Remarks}
\Crefname{remark}{Remark}{Remarks}
\crefname{assumption}{Assumption}{Assumptions}
\Crefname{assumption}{Assumption}{Assumptions}
\renewcommand{\i}{\bm{\mathrm{i}}}
\renewcommand{\u}{\bm{\mathrm{u}}}
\renewcommand{\v}{\bm{\mathrm{v}}}
\newcommand{\w}{\bm{\mathrm{w}}}
\newcommand{\E}{\ensuremath{\mathbb{E}}}
\begin{document}

\title
	[The aggregation equation with Newtonian potential]
	{The aggregation equation with Newtonian potential}

\author[Elaine Cozzi, Gung-Min Gie, James P. Kelliher]
{Elaine Cozzi$^1$, Gung-Min Gie$^2$, James P. Kelliher$^3$}
\address{$^1$Department of Mathematics, 368 Kidder Hall, Oregon State University, Corvallis, OR 97330, U.S.A.}
\address{$^2$ Department of Mathematics, 328 Natural Sciences Building, University of Louisville, Louisville, KY 40292, U.S.A.}
\address{$^3$ Department of Mathematics, University of California, Riverside, 900 University Ave., Riverside, CA 92521, U.S.A.}
\email{cozzie@math.oregonstate.edu}
\email{gungmin.gie@louisville.edu}
\email{kelliher@math.ucr.edu}

\begin{abstract}
	The viscous and inviscid \AGE with Newtonian potential models a number
	of different physical systems, and has close analogs in 2D incompressible
	fluid mechanics. We consider a slight generalization of these equations
	in the whole space, establishing well-posedness and spatial decay
	of the viscous equations, and obtaining the convergence of viscous
	solutions to the inviscid solution as the viscosity goes to zero.
\end{abstract}

\maketitle

	\begin{center}
		

	\end{center}

	\tableofcontents
	

%
%
\section{Introduction}\label{S:Introduction}

\noindent In this work, we study, on $\R^d$, $d \ge 2$, the (viscous or inviscid) \AGE with Newtonian potential,
\begin{align*} 
	\PAGnu \quad
	\left\{
	\begin{array}{l}
		\prt_t \rho^\nu + \dv (\rho^\nu \v^\nu) = \nu \Delta \rho^\nu, \\
		\v^\nu = - \grad \F * \rho^\nu, \\
		\rho^\nu(0) = \rho_0.
	\end{array}
	\right.
\end{align*}
Here, $\nu \ge 0$ is the viscosity and $\F$ is the fundamental solution of the Laplacian, or Newtonian potential (so $\Delta \F = \delta$ and $\dv \v^\nu = - \rho^\nu$). 
The density is $\rho^\nu$, the velocity is $\v^\nu$, and $\rho_0$ is the initial density. 

Many variations on these equations are considered in the literature, primarily by using potentials other than the Newtonian or by using more general diffusive terms. We restrict our attention to the Newtonian potential with linear diffusion, for we will be concerned with analyzing the viscous ($\nu > 0$) and inviscid ($\nu = 0$) \AGE using techniques adapted from the study of 2D fluid mechanics.

The \AGE models many different physical problems.
For the Newtonian potential, as in \PAGnu, this includes type-II superconductivity when $\nu = 0$ (see \cite{MasmoudiZhang2005} and the references therein) and chemotaxis, where \PAGnu for $\nu > 0$ is a limiting case of the Keller-Segel system (see Section 5.2 of \cite{Perthame2007}), and has been extensively studied. In this context, $\rho^\nu$ measures the density of cells (bacteria or cancer cells, for instance) and $\v^\nu$ is the gradient of the concentration of a chemoattractant. References most closely related to the approach to the \AGE taken in this paper include \cite{BGLV2015,BLL2012,BlanchetCarrilloMasmoudi2008,KiselevXu2015,Laurent2007a,MasmoudiZhang2005,Perthame2007}.

We will, in fact, consider a slightly more general set of equations of the form
\begin{align*}
	\GAGnu \quad
	\left\{
	\begin{array}{l}
		\prt_t \rho^\nu + \v^\nu \cdot \grad \rho^\nu
			= \sigma_2 (\rho^\nu)^2 + \nu \Delta \rho^\nu, \\
		\v^\nu = \sigma_1 \grad \F * \rho^\nu, \\
		\rho^\nu(0) = \rho_0,
	\end{array}
	\right.
\end{align*}
where $\sigma_1$, $\sigma_2$ are constants with $\sigma_1 \ne 0$.
When $\sigma_1 = -1$, $\sigma_2 = 1$, \GAGnu reduces to \PAGnu, since then $\dv(\rho^\nu \v^\nu) = \v^\nu \cdot \grad \rho^\nu + \dv \v^\nu \rho^\nu = \v^\nu \cdot \grad \rho^\nu - (\rho^\nu)^2$.
We will study these equations in all of $\R^d$, though much of what we find extends naturally to a bounded domain given appropriate boundary conditions.

At least one other special case of \GAGnu has been studied in the literature: \GAGzero with $\sigma_1 = -1$, $\sigma_2 = 0$ are derived from \PAGzero by making a transformation of variables in (1.6) of \cite{BGLV2015}. This transformation applies only in the special case of aggregation patch initial data (analogous to vortex patches for fluids) for \PAGzero. Although this transformation only works for aggregation patch initial data, the authors of \cite{BGLV2015} go on to use this special case of \GAGzero throughout their analysis of aggregation patches. A general well-posedness result is not needed in \cite{BGLV2015} and hence not established there, but such a result was one of our motivations for studying the generalization of \PAGnu in \GAGnu, the parameters $\sigma_1$, $\sigma_2$ merely interpolating between \PAGzero and the equations studied in \cite{BGLV2015}.

We will find establishing the existence of weak viscous solutions to \GAGnu no more difficult than doing the same for \PAGnu, except for keeping track of the constants $\sigma_1$ and $\sigma_2$. We give a proof of existence of weak solutions for $\rho_0 \in L^1 \cap L^\iny$ in \cref{S:ViscousProblem}. The result we obtain, specifically a bound on the existence time, is suited to our needs in later sections, though much more is known about the existence time of solutions, at least for \PAGnu for nonnegative $\rho_0$ (as summarized in Sections 5.2, 5.3 of \cite{Perthame2007}). Uniqueness for solutions to \GAGnu when $\sigma_1 + \sigma_2 = 0$ follows, even for $\rho_0 \in BMO$, using Yudovich's uniqueness argument (in the form in \cite{Y1995}) as proved in \cite{BedrossianAzzam2015}.

In \cref{S:SpatialDecay} we bound the spatial decay of viscous solutions, bounds that will be required later in establishing the vanishing viscosity limit.

The varying effects of $\sigma_1$ and $\sigma_2$ begin to become apparent when we examine the behavior of the total mass of the density, $m(\rho^\nu) := \int_{\R^d} \rho^\nu$, in \cref{S:InfiniteEnergy}. We will find that $m(\rho^\nu)$ is conserved only when $\sigma_1 + \sigma_2 = 0$.

The well-posedness of the inviscid equations, \GAGzero, are the subject of \cref{S:InviscidProblem}.

In \cref{S:VV} we begin our analysis of the vanishing viscosity limit of solutions of $\GAGnu$ to a solution to $\GAGzero$ with the same initial data, showing that
\begin{align*}
	\VV:
		\quad
			\v^\nu \to \v^0 \text{ in } L^\iny(0, T; H^1), \; \rho^\nu \to \rho^0
				\text{ in } L^\iny(0, T; L^2) \text{ as } \nu \to 0.
\end{align*}
When $d = 3$, $\v^\nu$ and $\v^0$ both lie in $L^2(\R^d)$. When $d = 2$, this is no longer (in general) the case, the energies being infinite. When $\sigma_1 + \sigma_2 = 0$, however, because the total mass of the densities $\rho^{\nu}$ and $\rho^0$ are conserved over time, the infinite parts of the energies cancel, giving $\v^\nu - \v^0 \in L^2(\R^2)$. In both of the cases, $d \ge 3$ or $d = 2$ with $\sigma_1 + \sigma_2 = 0$, \VV holds, as we show in \cref{S:VV}.

In \cref{S:VVNonL2} we consider the remaining case where $d = 2$ but $\sigma_1 + \sigma_2 \ne 0$. In this case, the total mass of the densities are not conserved over time, and the infinite parts of the energies do not cancel. We will nonetheless be able to isolate the infinite parts of the energy and use them to define a corrector, $\bm\theta^\nu$, that lies in weak-$L^2$ and all higher $L^p$ spaces, and show that in place of \VV, we have
\begin{align*}
	\VV':
		&\quad
			\v^\nu - \v^0 - \bm\theta^\nu \text{ in } L^\iny(0, T; H^1), \; \rho^\nu \to \rho^0
				\text{ in } L^\iny(0, T; L^2) \text{ as } \nu \to 0, \\
		&\quad
			\bm\theta^\nu \to 0 \text{ in } L^\iny([0, T]; C^k)
				\text{ for all } k \ge 0.
\end{align*}

As can be seen from \VV, $\VV'$, both the velocity and density converge strongly in the vanishing viscosity limit. Indeed, the arguments in \cref{S:VV,S:VVNonL2} involve showing the simultaneous convergence of both the velocities and the densities.

In \cref{S:VVUniform}, we use the results from \cref{S:VV,S:VVNonL2}, along with uniform bounds in viscosity on Holder norms of solutions to ($GAG_{\nu}$), to prove that the vanishing viscosity limit holds in the $L^{\infty}$-norm of the density.

\bigskip

We close by stating a few conventions and making one definition.

We follow the convention that $\norm{\cdot} = \norm{\cdot}_{L^2(\R^d)}$.
We write $\innp{\cdot, \cdot}$ for the $L^2$-inner product and $(\cdot, \cdot)$ for the pairing in the duality between $H^1$ and $H^{-1}$.

Our proofs of the vanishing viscosity limit yield rates of convergence in both viscosity and time, but it will be unwieldy to keep track of the specific dependence on time. Hence, we will use the convention that
\begin{align*} 
	C_0(t) \text{ is a positive, continuous, nondecreasing function of } t \in [0, \iny).
\end{align*}

We make the convention that $C(a_1, \dots, a_n)$ stands for a continuous function from $[0, \iny)^n \to [0, \iny)$ that is nondecreasing in each of its arguments. We use $C(a_1, \dots, a_n)$ in the context of a constant that depends on the parameters $a_1, \dots, a_n$, where the exact form of the constant is unimportant.

We will find various uses for the following cutoff function:
\begin{definition}\label{D:Radial}
	Let $a$ be a radially symmetric function in $C^\iny(\R^d)$ supported in $B_2(0)$ with
	$a \equiv 1$ on $B_1(0)$ and with $a(x)$ nonincreasing in $\abs{x}$.
	For any $R \ge 1$ define $a_R(\cdot) = a(\cdot/R)$. Note
	that for any fixed $x \in \R^d$, $a_R(x)$ is nondecreasing in $R$.
\end{definition}

For any $p_1, p_2 \in [1, \iny]$, we define $\norm{f}_{L^{p_1} \cap L^{p_2}} = \norm{f}_{L^{p_1}} + \norm{f}_{L^{p_2}}$.

%
%
\section{The viscous problem}\label{S:ViscousProblem}

\noindent \cref{D:WeakSolution} gives our definition of a weak solution to the \AGE. This definition applies for both viscous and inviscid solutions.

\begin{definition}\label{D:WeakSolution}
	Let $\nu \ge 0$ and $\rho_0 \in L^1 \cap L^\iny$.
	We say that $\rho^\nu$ is a weak solution to the aggregations equations \GAGnu
	on the interval $[0, T]$
	with initial density $\rho_0$ if $\rho^\nu(0) = \rho_0$,
	\begin{align}\label{e:MassReg}
		\begin{split}
		&\rho^\nu \in L^\iny(0, T; L^1 \cap L^\iny)
			\cap C([0, T]; L^2), \\
		&\prt_t \rho^\nu \in L^2(0, T; H^{-1}), \\
		&\rho^\nu \in L^2(0, T; H^1) \quad \text{if } \nu > 0, \\
		\end{split}
	\end{align}
	with
	\Ignore{ 
	\begin{align}\label{e:WeakDefEq}
		\innp{\prt_t \rho^\nu, \varphi} - \innp{\rho^\nu \v^\nu, \grad \varphi}
			&= (\sigma_1 + \sigma_2) \innp{(\rho^\nu)^2, \varphi}
				- \nu \innp{\grad \rho^\nu, \grad \varphi}
	\end{align}
	for all $\varphi \in H^1(\R^d)$, where $\v^\nu
	:= \sigma_1 \grad \F * \rho^\nu$.
	} 
	\begin{align}\label{e:WeakDefEq}
		\begin{split}
		&\int_0^T \int_{\R^d}
			\pr{\rho^\nu \prt_t \varphi
				+ \rho^\nu \v^\nu \cdot \grad \varphi
				+ (\sigma_1 + \sigma_2) (\rho^\nu)^2 \varphi
				- \nu \grad \rho^\nu \cdot \grad \varphi
				}
			= 0 \\
		&\qquad\qquad\qquad
		\text{for all }\varphi \in C_0^\iny([0, T) \times \R^d).
		\end{split}
	\end{align}
\end{definition}
\begin{remark}
	By the initial condition $\rho^\nu(0) = \rho_0$ we mean that
	$\rho^\nu(t) \to \rho_0$ in $L^2$ as $t \to 0^+$, which makes sense
	because $\rho^\nu \in C([0, T]; L^2)$.
\end{remark}



In this section we treat weak solutions to \GAGnu for $\nu > 0$. In \cref{S:InviscidProblem}
we treat the case $\nu = 0$.

Define the total mass of $f \in L^1(\R^d)$ by
\begin{align}\label{e:MassDef}
	m(f)
		:= \int_{\R^d} f.
\end{align}

We consider first in \cref{S:LinearProblem} a higher regularity linear problem that we will use in \cref{S:NonlinearProblem} to obtain a solution to the nonlinear problem (that is, a solution as in \cref{D:WeakSolution}).

\subsection{The linear problem}\label{S:LinearProblem}

\begin{prop}\label{P:LinearProblem}
Let $\nu > 0$. For a fixed $T > 0$ let $f \in L^\iny(0, T; L^1 \cap L^\iny) \cap C(0, T; L^2)$ and let $\v_f = \sigma_1 \grad \F * f$. Then there exists a unique weak solution $\rho^\nu \in C(0, T; L^1 \cap L^\iny) \cap L^2(0, T; H^1)$ to
\begin{align}\label{e:AGLinear}
	\left\{
	\begin{array}{l}
		\prt_t \rho^\nu + \v_f \cdot \grad \rho^\nu = \sigma_2 f \rho^\nu + \nu \Delta \rho^\nu, \\
		\rho^\nu(0) = f(0).
	\end{array}
	\right.
\end{align}
Moreover, if $f$ also lies in $L^\iny(0, T;  C^\iny)$ then $\rho^\nu$ also lies in $L^\iny(0, T; C^\iny)$ and is unique.
\end{prop}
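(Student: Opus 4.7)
The plan is to treat \eqref{e:AGLinear} as a linear advection--reaction--diffusion equation with a frozen (given) velocity field, and use classical parabolic theory. The key preliminary observation is that, since $f \in L^\iny(0, T; L^1 \cap L^\iny)$, the velocity $\v_f = \sigma_1 \grad \F * f$ lies in $L^\iny(0, T; L^\iny(\R^d))$: one splits the kernel $\grad \F$, whose pointwise size is comparable to $1/|x|^{d-1}$, into a near-origin piece (in $L^p$ for $p < d/(d-1)$) and a tail piece (in $L^q$ for $q > d/(d-1)$), then applies Young's convolution inequality against $f \in L^\iny$ and $f \in L^1$ respectively. Equally important is the distributional identity $\dv \v_f = \sigma_1 f$, which is uniformly bounded on $[0,T] \times \R^d$; this is what makes all the integration-by-parts arguments below close.

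For existence I would regularize. Let $f_\eps = f * \eta_\eps$ for a standard spatial mollifier $\eta_\eps$, so that $f_\eps \in L^\iny(0, T; C^\iny)$ and $\v_{f_\eps} = \sigma_1 \grad \F * f_\eps$ is smooth with uniformly bounded $L^\iny$ norm. The corresponding linear parabolic problem with smooth bounded coefficients admits a unique classical solution $\rho_\eps$ by Duhamel against the heat semigroup combined with a contraction mapping argument (or equivalently by Galerkin). Testing the equation against $|\rho_\eps|^{p-2}\rho_\eps$, integrating by parts, and using $\dv \v_{f_\eps} = \sigma_1 f_\eps$ produces, by Gr\"onwall,
\begin{align*}
	\norm{\rho_\eps(t)}_{L^p}
		\le \norm{f(0)}_{L^p} \exp\pr{C(p) \int_0^t \norm{f(s)}_{L^\iny}\, ds}
\end{align*}
uniformly in $\eps$ for every $p \in [1, \iny]$, including $p = \iny$ by passing to the limit. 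The standard $L^2$ energy identity then yields $\rho_\eps \in L^\iny(0, T; L^2) \cap L^2(0, T; H^1)$ uniformly, and the equation itself gives a bound on $\prt_t \rho_\eps$ in $L^2(0, T; H^{-1})$. Aubin--Lions plus the uniform $L^\iny$ estimate produce strong compactness enough to extract a limit $\rho^\nu$ satisfying the weak form of \eqref{e:AGLinear}, with $\rho^\nu \in C(0, T; L^1 \cap L^\iny)$ following from interpolation against the $L^2(0, T; H^1)$ regularity.

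Uniqueness is the cleanest piece: if $\rho_1, \rho_2$ are two solutions and $w = \rho_1 - \rho_2$, then $w(0) = 0$ and the $L^2$ estimate gives
\begin{align*}
	\tfrac{1}{2} \diff{}{t} \norm{w}^2 + \nu \norm{\grad w}^2
		= - \int_{\R^d} (\v_f \cdot \grad w)\, w + \sigma_2 \int_{\R^d} f w^2
		= \bigpr{\tfrac{\sigma_1}{2} + \sigma_2} \int_{\R^d} f w^2,
\end{align*}
after an integration by parts that uses $\dv \v_f = \sigma_1 f$. Since $\norm{f(t)}_{L^\iny}$ is bounded on $[0,T]$, Gr\"onwall forces $w \equiv 0$.

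When $f \in L^\iny(0, T; C^\iny)$, smoothness of $\rho^\nu$ is obtained by applying $\prt^\al$ to the equation and running the same energy estimate in each $H^k$ norm; the commutators with $\v_f$ and $f$ are controlled because $\v_f$ inherits the spatial smoothness of $f$ (the convolution with $\grad \F$ preserves smoothness away from the integrable singularity at the origin). Induction on $|\al|$ and Sobolev embedding give $\rho^\nu \in L^\iny(0, T; C^\iny)$, and uniqueness in this class follows from the $L^2$ argument above. I do not expect any serious conceptual obstacle: once the $L^\iny$ bound on $\v_f$ and the identity $\dv \v_f = \sigma_1 f$ are in hand, everything else is classical linear parabolic theory --- the diffusion $\nu \Delta$ absorbs the limited regularity of the drift. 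The main bookkeeping point is making sure the $L^1$, $L^2$, and $L^\iny$ estimates all pass cleanly to the limit in $\eps$, which is routine given the uniform bounds above.
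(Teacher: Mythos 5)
Your proposal is correct in substance, but it takes a genuinely different route from the paper's proof. The paper argues abstractly: it writes \cref{e:AGLinear} in weak form as $(\prt_t \rho^\nu, \varphi) + a(\rho^\nu, \varphi) = 0$ with the bilinear form $a(g, \varphi) := \innp{\v_f \cdot \grad g - \sigma_2 f g, \varphi} + \nu \innp{\grad g, \grad \varphi}$, verifies boundedness, $\abs{a(g,\varphi)} \le C(f) \norm{g}_{H^1} \norm{\varphi}_{H^1}$, and the G\aa rding-type lower bound $a(g,g) = \nu \norm{\grad g}^2 - \pr{\tfrac{\sigma_1}{2} + \sigma_2} \int_{\R^d} f g^2 \ge \nu \norm{\grad g}^2 - C(f)\norm{g}^2$, and then invokes the Lions--Magenes theory (Theorem 10.9 of Brezis) to obtain existence and uniqueness of the weak solution in a single stroke, with smoothness for smooth $f$ by a bootstrap. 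Note that the identity underlying the paper's coercivity estimate, $\int_{\R^d} \v_f \cdot \grad (g^2) = -\sigma_1 \int_{\R^d} f g^2$, is exactly the identity your uniqueness computation rests on, so the essential structural observation --- that $\dv \v_f = \sigma_1 f \in L^\iny$ tames the drift --- is common to both arguments; what differs is the existence machinery. Your mollification-plus-compactness construction is more self-contained (no black-box theorem) and yields explicit $L^p$ bounds uniform in the regularization, which in fact mirror the estimates the paper proves for the nonlinear iterates in \cref{L:rhonLqBounds}; the paper's route is shorter and delivers existence and uniqueness simultaneously. Two of your steps deserve more care, though neither is fatal: (i) Aubin--Lions on all of $\R^d$ does not give global strong compactness, since $H^1(\R^d)$ is not compactly embedded in $L^2(\R^d)$ --- the paper itself flags this issue for the nonlinear problem and handles it as in Temam; for the \emph{linear} equation you can sidestep it entirely, because weak convergence of $\rho_\eps$ paired with strong convergence of the coefficients $\v_{f_\eps}$, $f_\eps$ suffices to pass to the limit in every term; (ii) testing against $\abs{\rho_\eps}^{p-2}\rho_\eps$ for $p < 2$ runs into the singularity at $\rho_\eps = 0$, which requires a truncation --- the paper performs exactly this truncation, with the cutoff $\la$, in the proof of \cref{L:rhonLqBounds}.
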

\begin{proof}
	We can write \cref{e:AGLinear} in weak form as
	\begin{align*}
		(\prt_t \rho^\nu, \varphi) + a(\rho^\nu, \varphi) = 0
	\end{align*}
	a.e. in time for all $\varphi \in H^1(\R^d)$, where the bilinear form,
	$a \colon H^1(\R^d) \times H^1(\R^d) \to \R$, is given by
	\begin{align*}
		a(g, \varphi)
			:= \innp{\v_f \cdot \grad g - \sigma_2 f g, \varphi}
				+ \nu \innp{\grad g, \grad \varphi}.
	\end{align*}
	
	Observe that
	\begin{align*}
		\abs{a(g, \varphi)}
			&\le C(f) \norm{g}_{H^1(\R^d)} \norm{\varphi}_{H^1(\R^d)}
	\end{align*}
	and
	\begin{align*}
		&a(g, g)
			= \innp{\v_f \cdot \grad g - \sigma_2 f g, g}
				+ \nu \norm{\grad g}_{L^2(\R^d)}^2
			= \nu \norm{\grad g}_{L^2(\R^d)}^2
				+ \frac{1}{2} \int_{\R^d} \v_f \cdot \grad g^2
				- \sigma_2 \int_{\R^d} f g^2 \\
			&= \nu \norm{\grad g}_{L^2(\R^d)}^2
				- \pr{\frac{\sigma_1}{2} + \sigma_2} \int_{\R^d} f g^2 
			\ge
				\nu \norm{\grad g}_{L^2(\R^d)}^2
					- C(f) \norm{g}_{L^2(\R^d)}^2. 
	\end{align*}
	The existence of a unique weak solution to \cref{e:AGLinear} with
	$\rho^\nu \in C(0, T; L^1 \cap L^\iny) \cap L^2(0, T; H^1)$ then
	follows from \cite{LionsMagenes1972} (see Theorem 10.9 of \cite{Brezis2011}).
	That $f$ also in $L^\iny(0, T; C^\iny)$ gives $\rho^\nu$ in $L^\iny(0, T; C^\iny)$
	follows via a standard bootstrap argument.
\end{proof}

\Ignore { 
To obtain a smooth solution, we follows the usual procedure of finding a weak solution then proving higher regularity. Since the approach to higher regularity is very similar to that taken in \cref{S:ViscousHigherRegularity}, we give only the proof of the existence of a weak solution.

We define a weak solution as in \cref{D:WeakSolution} but with \cref{e:WeakDefEq} replaced by
\begin{align}\label{e:LinearWeakDefEq}
	\innp{\prt_t \rho, \varphi} - \innp{\rho \v, \grad \varphi}
		&= (\sigma_1 + \sigma_2) \innp{\rho^2, \varphi}
			- \nu \innp{\grad \rho, \grad \varphi}
\end{align}
for all $\varphi \in H^1(\R^d)$.

Let $(g_n)_{n = 1}^\iny$ be
} 

\subsection{The nonlinear problem}\label{S:NonlinearProblem}

\begin{theorem}\label{T:ViscousExistence}
Fix $T > 0$ with $T < (\abs{\sigma_2} \norm{\rho_0}_{L^\iny})^{-1}$ or
	$T < \iny$ if $\sigma_2 = 0$. (Note that $[0, T]$ is
	within the time of existence for the inviscid problem---see \cref{T:InviscidExistence}). 
	Let $\nu > 0$ and assume that $\rho_0 \in L^1 \cap L^\iny$. Then there exists a weak solution
	to \GAGnu as in \cref{D:WeakSolution} on the time interval $[0, T]$ with
	\begin{align}\label{e:rhoBound1}
		\begin{split}
		\norm{\rho^\nu(t)}_{L^\iny}
			&\le \frac{\norm{\rho_0}_{L^\iny}}{1 - \abs{\sigma_2} \norm{\rho_0}_{L^\iny} t}.
		\end{split}
	\end{align}
	When $\sigma_2 \ne 0$, we have
	\begin{align}\label{e:rhoBound2}
		\begin{split}
		\norm{\rho^\nu(t)}_{L^q}
			&\le \norm{\rho_0}_{L^q}
				\pr{1 - \abs{\sigma_2} \norm{\rho_0}_{L^\iny} t}
					^{-\abs{q^{-1} \sigma_1 \slash \sigma_2 + 1}}
				\, \forall \, q \in [1, \iny), \\
		\norm{\rho^\nu(t)}^2 + 2 \nu \int_0^t \norm{\grad \rho^\nu}^2
			&\le \norm{\rho_0}^2
				\pr{1 - \abs{\sigma_2} \norm{\rho_0}_{L^\iny} t}
					^{-\abs{\sigma_1 \slash \sigma_2 + 2}}.
		\end{split}
	\end{align}
	When $\sigma_2 = 0$, we have
	\begin{align}\label{e:rhoBoundsigma2zero}
		\begin{split}
		\norm{\rho^\nu(t)}_{L^q}
			&\le \norm{\rho_0}_{L^q} \exp \pr{\abs{\sigma_1} q^{-1} \norm{\rho_0}_{L^\iny} t}
				\, \forall \, q \in [1, \iny), \\
		\norm{\rho^\nu(t)}^2 + 2 \nu \int_0^t \norm{\grad \rho^\nu}^2
			&\le \norm{\rho_0}^2 \exp \pr{\abs{\sigma_1} \norm{\rho_0}_{L^\iny} t}.
		\end{split}
	\end{align}
\end{theorem}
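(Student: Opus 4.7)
The plan is to construct the solution by a two-stage approximation. First, mollify the initial data to get $\rho_0^\epsilon \in C_c^\iny$ with $\norm{\rho_0^\epsilon}_{L^p} \le \norm{\rho_0}_{L^p}$ for $p \in \smallset{1, 2, \iny}$ and $\rho_0^\epsilon \to \rho_0$ in $L^1 \cap L^2$. For each $\epsilon$, build a smooth solution by Picard iteration: set $\rho^{(0)} \equiv \rho_0^\epsilon$, and inductively let $\rho^{(n+1)}$ be the smooth solution furnished by \cref{P:LinearProblem} applied with $f = \rho^{(n)}$ (the hypotheses hold because each iterate inherits $L^\iny(0,T; C^\iny)$ regularity). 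Then pass to the limit $n \to \iny$ and afterward $\epsilon \to 0$ using uniform a priori bounds.

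The heart of the argument is establishing \cref{e:rhoBound1,e:rhoBound2,e:rhoBoundsigma2zero} at the a priori level, uniformly in $n$ and $\epsilon$. For the $L^\iny$ bound, apply the parabolic maximum principle to the linear equation \cref{e:AGLinear} for $\rho^{(n+1)}$: at a spatial extremum the gradient vanishes and the Laplacian has the favorable sign, leaving $\prt_t \rho^{(n+1)} \le \abs{\sigma_2} M^{(n)}(t) M^{(n+1)}(t)$, where $M^{(n)}(t) := \norm{\rho^{(n)}(t)}_{L^\iny}$; an induction on $n$ starting from $M^{(0)} \le \norm{\rho_0}_{L^\iny}$ yields the closed-form bound \cref{e:rhoBound1}. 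For the $L^q$ bound, multiply the equation by $\abs{\rho^{(n+1)}}^{q-2} \rho^{(n+1)}$, integrate, and exploit the identity $\dv \v^{(n)} = \sigma_1 \rho^{(n)}$ (from $\Delta \F = \delta$) after integrating the transport term by parts to obtain
\begin{align*}
	\diff{}{t} \norm{\rho^{(n+1)}}_{L^q}^q
		+ q(q-1) \nu \int \abs{\rho^{(n+1)}}^{q-2} \abs{\grad \rho^{(n+1)}}^2
		= (\sigma_1 + q \sigma_2) \int \rho^{(n)} \abs{\rho^{(n+1)}}^{q}.
\end{align*}
Bounding the right-hand side by $\abs{\sigma_1 + q \sigma_2} M^{(n)}(t) \norm{\rho^{(n+1)}}_{L^q}^q$ and integrating against the explicit form of $M^{(n)}$ produces exactly the stated exponent $-\abs{q^{-1} \sigma_1 / \sigma_2 + 1}$ in \cref{e:rhoBound2}; specializing $q = 2$ and keeping the dissipation term recovers the second inequality there. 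The case $\sigma_2 = 0$, in which $M^{(n)}(t) \le \norm{\rho_0}_{L^\iny}$, yields \cref{e:rhoBoundsigma2zero} by the same computation with the logarithmic integrating factor replaced by a linear one.

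To complete existence for smooth data, verify that $\smallset{\rho^{(n)}}$ is Cauchy in $C([0, T_*]; L^2)$ for some small $T_* \le T$ by a standard difference estimate on $\rho^{(n+1)} - \rho^{(n)}$ that uses the uniform bounds to control the coefficients and forcing; extension to $[0, T]$ follows because the bounds self-renew on successive subintervals. To remove the mollification, the uniform control of \cref{e:rhoBound1,e:rhoBound2,e:rhoBoundsigma2zero} combined with $\prt_t \rho^\nu_\epsilon \in L^2(0, T; H^{-1})$ (read off the equation, using Calder\'on--Zygmund on $\v^\nu_\epsilon$) feeds Aubin--Lions to give strong convergence of a subsequence in $L^2_{\mathrm{loc}}([0,T] \times \R^d)$. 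This suffices to pass to the limit in the weak formulation \cref{e:WeakDefEq} against $C_c^\iny$ test functions, and the norm bounds transfer to the limit by lower semicontinuity.

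The main obstacle is preserving the sharp exponents: a crude estimate $\diff{}{t}\norm{\rho^{(n+1)}}_{L^q}^q \le (\abs{\sigma_1} + q \abs{\sigma_2}) M^{(n)} \norm{\rho^{(n+1)}}_{L^q}^q$ would yield only the weaker exponent $-\abs{\sigma_1}/(q\abs{\sigma_2}) - 1$ in place of the stated $-\abs{q^{-1}\sigma_1/\sigma_2 + 1}$, so $\sigma_1 + q\sigma_2$ must be kept inside a single absolute value before estimating by $M^{(n)}$. A secondary technicality is tail control on $\R^d$ when upgrading $L^2_{\mathrm{loc}}$ convergence to a genuinely global weak solution, which rests on the uniform $L^1$ bound propagating through Aubin--Lions.
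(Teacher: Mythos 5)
Your proposal follows, in its essentials, the same route as the paper: the iteration you build from \cref{P:LinearProblem} is exactly the scheme \cref{e:ApproxViscous}, your $L^q$ energy identity (integrating the transport term by parts against $\dv \v^{(n)} = \sigma_1 \rho^{(n)}$) is the computation in the proof of \cref{L:rhonLqBounds}, and your closing remark about keeping $\sigma_1 + q\sigma_2$ inside a single absolute value before estimating against $\norm{\rho^{(n)}}_{L^\iny}$ is precisely how the paper obtains the sharp exponents. Your two deviations are both legitimate: you derive \cref{e:rhoBound1} from the parabolic maximum principle for the linear equation, whereas the paper lets $q \to \iny$ in the $L^q$ bounds (your route needs the standard caveat that on $\R^d$ the supremum need not be attained, so one must invoke the maximum principle for bounded solutions of uniformly parabolic equations rather than argue at an extremal point); and for convergence you run a Cauchy-in-$C([0,T_*];L^2)$ argument plus Aubin--Lions, whereas the paper skips any contraction estimate and passes to a limit by the weak-compactness argument of \cite{CF1988}, using Temam's device from \cite{T2001} to compensate for the non-compactness of $H^1(\R^d) \hookrightarrow L^2(\R^d)$.

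Two points in your sketch need repair. First, your $L^q$ identity is not justified for $q \in [1,2)$: the multiplier $\abs{\rho}^{q-2}\rho$ is singular where $\rho = 0$, and the dissipation term $\int \abs{\rho}^{q-2}\abs{\grad \rho}^2$ need not be finite, so the integration by parts fails as written. The paper addresses exactly this by replacing the multiplier with $\la'(\rho_n^q)\rho_n^{q-1}$ for a smooth convex $\la$ that is constant near the origin, and then removing the regularization; you need some such device (or prove $q \ge 2$ and $q = 1$ separately and interpolate), since the theorem claims all $q \in [1,\iny)$. Second, your ``standard difference estimate'' conceals a real difficulty when $d = 2$: the term $(\v^{(n)} - \v^{(n-1)})\cdot \grad \rho^{(n)}$ involves $\grad \F * (\rho^{(n)} - \rho^{(n-1)})$, and in two dimensions this convolution generally fails to lie in $L^2$ unless the difference has zero total mass --- which it does not along the iteration, since total mass is not conserved when $\sigma_1 + \sigma_2 \ne 0$. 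One can work around this using the $L^p$ bounds ($p > 2$) of \cref{L:MassZero}, at the price of tracking $L^1 \cap L^p$ norms of differences, but the cleaner fix is to discard the Cauchy argument altogether and use the compactness route, which requires only the uniform bounds you have already established.
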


\begin{remark}

\noindent Uniqueness of solutions as in \cref{D:WeakSolution} is addressed in \cite{BedrossianAzzam2015} (at least for $\sigma_1 +\sigma_2 =0$). In 2D, uniqueness can also be obtained using arguments very close to those we give in \cref{S:VV,S:VVNonL2} for the vanishing viscosity limit.

\end{remark}

To prove \cref{T:ViscousExistence}, we will construct a sequence of approximations.
We will obtain the necessary bounds on this sequence in \cref{L:rhonLqBounds}, then use these bounds in the proof proper of \cref{T:ViscousExistence}.

The sequence of approximations is defined as follows:
\begin{align}\label{e:ApproxViscous}
	\begin{array}{l}
		\rho_0(t, x) = \rho_0(x), \\
		\v_n = \sigma_1 \grad \F * \rho_{n - 1}, \\
		\prt_t \rho_n + \v_n \cdot \grad \rho_n
			= \sigma_2 \rho_{n - 1} \rho_n + \nu \Delta \rho_n, \\
		\rho_n(0) = \rho_0
	\end{array}
\end{align}
for $n = 1, 2, \dots$. Note that
\begin{align*}
	\dv \v_n = \sigma_1 \rho_{n - 1}.
\end{align*}

\begin{lemma}\label{L:rhonLqBounds}
	Fix $T > 0$ with $T < (\abs{\sigma_2} \norm{\rho_0}_{L^\iny})^{-1}$ or
	$T < \iny$ if $\sigma_2 = 0$. Let $\nu \ge 0$, $n \ge 0$, $t \in [0, T]$.
	We have
	\begin{align}\label{e:rhonBound1}
		\begin{split}
		\norm{\rho_n(t)}_{L^\iny}
			&\le \frac{\norm{\rho_0}_{L^\iny}}{1 - \abs{\sigma_2} \norm{\rho_0}_{L^\iny} t}, \\
		(\prt_t \rho_n) &\text{ is bounded in } L^2(0, T; H^{-1}(\R^d)).
		\end{split}
	\end{align}
	When $\sigma_2 \ne 0$, we have
	\begin{align}\label{e:rhonBound2}
		\begin{split}
		\norm{\rho_n(t)}_{L^q}
			&\le \norm{\rho_0}_{L^q}
				\pr{1 - \abs{\sigma_2} \norm{\rho_0}_{L^\iny} t}
					^{-\abs{q^{-1} \sigma_1/\sigma_2 + 1}}
					\, \forall \, q \in [1, \iny), \\
		\norm{\rho_n(t)}^2 + 2 \nu \int_0^t \norm{\grad \rho_n}^2
			&= \norm{\rho_0}^2
				\pr{1 - \abs{\sigma_2} \norm{\rho_0}_{L^\iny} t}
					^{-\abs{\sigma_1/\sigma_2 + 2}}.
		\end{split}
	\end{align}
	When $\sigma_2 = 0$, we have
	\begin{align}\label{e:rhonBoundsigma2zero}
		\begin{split}
		\norm{\rho_n(t)}_{L^q}
			&\le \norm{\rho_0}_{L^q} \exp \pr{\abs{\sigma_1} q^{-1} \norm{\rho_0}_{L^\iny} t}
				\, \forall \, q \in [1, \iny), \\
		\norm{\rho_n(t)}^2 + 2 \nu \int_0^t \norm{\grad \rho_n}^2
			&= \norm{\rho_0}^2 \exp \pr{\abs{\sigma_1} \norm{\rho_0}_{L^\iny} t}.
		\end{split}
	\end{align}
\end{lemma}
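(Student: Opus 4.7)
I would proceed by induction on $n$. The base case $n = 0$ is immediate because $\rho_0(t,x) = \rho_0(x)$ is time-independent, and in every stated inequality the right-hand side at $t = 0$ equals the $L^q$ (or $L^\iny$) norm of $\rho_0$. For the inductive step, the existence of $\rho_n \in C(0, T; L^1 \cap L^\iny) \cap L^2(0, T; H^1)$ satisfying \cref{e:ApproxViscous} follows from applying \cref{P:LinearProblem} with $f = \rho_{n-1}$, using the inductive bounds to verify that $f$ has the required regularity on $[0, T]$. The whole argument should be carried out first for smooth $\rho_0$ (so that each $\rho_n$ is classical by the bootstrap part of \cref{P:LinearProblem}), with the general case following by a standard mollification/limit argument; I will suppress this point below.

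\textbf{The $L^q$ bounds.} Multiplying \cref{e:ApproxViscous}$_3$ by $\abs{\rho_n}^{q-2} \rho_n$ and integrating over $\R^d$ yields, using $\grad \rho_n \cdot \abs{\rho_n}^{q-2} \rho_n = q^{-1} \grad \abs{\rho_n}^q$ and $\dv \v_n = \sigma_1 \rho_{n-1}$,
\begin{align*}
    \frac{1}{q} \diff{}{t} \norm{\rho_n(t)}_{L^q}^q
        + (q - 1) \nu \int_{\R^d} \abs{\rho_n}^{q - 2} \abs{\grad \rho_n}^2
    = \pr{\sigma_2 + \frac{\sigma_1}{q}} \int_{\R^d} \rho_{n - 1} \abs{\rho_n}^q.
\end{align*}
Dropping the nonnegative diffusion term and using the inductive $L^\iny$ estimate on $\rho_{n-1}$ gives
\begin{align*}
    \diff{}{t} \norm{\rho_n(t)}_{L^q}
        \le \abs{\sigma_2 + \sigma_1/q}
            \frac{\norm{\rho_0}_{L^\iny}}{1 - \abs{\sigma_2} \norm{\rho_0}_{L^\iny} t}
            \norm{\rho_n(t)}_{L^q},
\end{align*}
which I integrate explicitly (writing $\abs{\sigma_2 + \sigma_1/q} = \abs{\sigma_2}\abs{1 + q^{-1}\sigma_1/\sigma_2}$ when $\sigma_2 \ne 0$, and treating $\sigma_2 = 0$ separately via Gronwall) to recover the stated $L^q$ bound. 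For $q = 2$ I keep the diffusion term, obtaining the energy-type inequality; I believe the ``$=$'' in \cref{e:rhonBound2,e:rhonBoundsigma2zero} is a typo for ``$\le$''.

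\textbf{The $L^\iny$ bound.} The fastest route is to note that the $L^q$ bound, evaluated at $t$, sends $\abs{q^{-1}\sigma_1/\sigma_2 + 1} \to 1$ as $q \to \iny$, so letting $q \to \iny$ (using $\norm{\rho_0}_{L^q} \to \norm{\rho_0}_{L^\iny}$ for $\rho_0 \in L^1 \cap L^\iny$) produces exactly \cref{e:rhonBound1}$_1$. Alternatively, a direct parabolic maximum principle argument on the linear equation for $\rho_n$ (whose zero-order coefficient is $\sigma_2 \rho_{n-1}$) gives the same bound via Gronwall.

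\textbf{The $H^{-1}$ bound on $\prt_t \rho_n$.} From \cref{e:ApproxViscous} I write
\begin{align*}
    \prt_t \rho_n
        = - \dv(\rho_n \v_n) + (\sigma_1 + \sigma_2) \rho_{n-1} \rho_n
            + \nu \Delta \rho_n,
\end{align*}
so that in $H^{-1}$ we have
\begin{align*}
    \norm{\prt_t \rho_n}_{H^{-1}}
        \le \norm{\rho_n \v_n}_{L^2}
            + C \norm{\rho_{n - 1} \rho_n}_{L^2}
            + \nu \norm{\grad \rho_n}_{L^2},
\end{align*}
using $L^2 \hookrightarrow H^{-1}$ and $\norm{\dv F}_{H^{-1}} \le \norm{F}_{L^2}$. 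Standard splitting of the Riesz-type convolution $\grad\F * \rho_{n-1}$ into its near and far pieces, combined with the $L^1 \cap L^\iny$ bound on $\rho_{n-1}$, gives $\norm{\v_n}_{L^\iny} \le C(\norm{\rho_{n-1}}_{L^1 \cap L^\iny})$, uniformly on $[0, T]$. The $L^q$ bounds already proved then control the first two terms pointwise in $t$ by $C(T, \rho_0)$, and time-integrating the square of the last term is bounded by the energy estimate \cref{e:rhonBound2}$_2$. This delivers \cref{e:rhonBound1}$_2$.

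\textbf{Main obstacle.} The only genuinely delicate point is that the constants on the right of \cref{e:rhonBound1,e:rhonBound2,e:rhonBoundsigma2zero} depend on $\rho_0$ only (not on $n$), so the induction closes: the bound on $\rho_{n-1}$ used in the estimate for $\rho_n$ must be exactly the bound one is proving for $\rho_n$. Choosing $T < (\abs{\sigma_2}\norm{\rho_0}_{L^\iny})^{-1}$ is precisely what makes the integrating factor $\exp(\int_0^t \abs{\sigma_2}\norm{\rho_0}_{L^\iny}/(1 - \abs{\sigma_2}\norm{\rho_0}_{L^\iny} s)\, ds)$ finite, so this is the crucial structural hypothesis; once it is in force, each $L^q$ estimate closes by the same mechanism.
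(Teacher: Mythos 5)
Your overall strategy coincides with the paper's: induction on $n$ closed by the $L^\iny$ bound on $\rho_{n-1}$, existence of each iterate from \cref{P:LinearProblem}, $L^q$ energy estimates obtained by multiplying the linear equation by a power of $\rho_n$ and integrating by parts against $\dv \v_n = \sigma_1 \rho_{n-1}$, Gronwall combined with the explicit integral of $(1 - \abs{\sigma_2}\norm{\rho_0}_{L^\iny} s)^{-1}$, the limit $q \to \iny$ to recover $\cref{e:rhonBound1}_1$, retention of the dissipation term at $q = 2$ for the energy inequality, and a final density argument to pass from smooth to general $\rho_0 \in L^1 \cap L^\iny$. Your reading of the ``$=$'' in \cref{e:rhonBound2,e:rhonBoundsigma2zero} as a typo for ``$\le$'' is correct, and your divergence-form treatment of the $H^{-1}$ bound is a harmless variant of the paper's (which uses $\prt_t \rho_n = -\v_n \cdot \grad \rho_n + \sigma_2 \rho_{n-1}\rho_n + \nu \Delta \rho_n$ together with \cref{L:vBounds} and the $q=2$ energy estimate). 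Your multiplier $\abs{\rho_n}^{q-2}\rho_n$ also disposes of the sign issue that the paper handles by restricting to rational $q$ with even numerator.

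There is, however, one genuine gap: the range $q \in [1,2)$. Your key identity integrates the diffusion term by parts to produce $(q-1)\nu \int \abs{\rho_n}^{q-2}\abs{\grad \rho_n}^2$. For $q < 2$ the multiplier $\abs{\rho_n}^{q-2}\rho_n$ is not $C^1$ and the factor $\abs{\rho_n}^{q-2}$ is singular on the set where $\rho_n = 0$, so neither the integration by parts nor the finiteness of that term is justified; the formal computation genuinely breaks down there. This matters because the lemma asserts the bounds for \emph{all} $q \in [1,\iny)$, and the $q$ near $1$ cases cannot be recovered from $q \ge 2$ by interpolation or continuity of Lebesgue norms — while $L^1$ control is used throughout the paper via the $L^1 \cap L^\iny$ norms. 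The paper repairs exactly this point with a regularization: fix $\eps > 0$, take a smooth convex $\la$ with $\la \equiv \eps$ on $[0,\eps]$ and $\la(x) = x$ for $x \ge 2\eps$, multiply the equation by $\la'(\rho_n^q)\rho_n^{q-1}$ (so that $\la' = 0$ near the origin removes the singular factor), verify that the diffusion contribution still has a sign and the transport and zero-order terms converge, and then let $\eps \to 0$ to recover the same differential inequality as for $q \ge 2$. Your proof needs this step (or an equivalent truncation of the nonlinearity near $\rho_n = 0$) to be complete.
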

\begin{proof}
We will start with the assumption that $\rho_0 \in L^1 \cap L^\iny \cap C^\iny$.

Set as an induction hypothesis that $\rho_n \in C(0, T; L^1 \cap L^\iny \cap C^\iny)$ and that $\cref{e:rhonBound1}_1$ holds.

This is certainly true for $n = 0$. Assume it is true up to $n - 1$.
Then by \cref{P:LinearProblem}, the equation defining $\rho_n$ in $\cref{e:ApproxViscous}_3$ has a solution in $C(0, T; L^1 \cap L^\iny \cap C^\iny)$.

Assume that $q$ is a rational number in $[2, \iny)$ with $q = m/n$ in lowest terms for $m$ even. This insures that $\rho_n^q \ge 0$. The conclusions we reach for such rational $q$'s will hold for all $q \in [2, \iny)$ by the continuity of Lebesgue norms.

Multiplying $\cref{e:ApproxViscous}_3$ by $\rho_n^{q - 1}$ and integrating gives
\begin{align*}
	\innp{\prt_t \rho_n, \rho_n^{q - 1}} + \innp{\v_n \cdot \grad \rho_n, \rho_n^{q - 1}}
		= \sigma_2 \innp{\rho_{n - 1} \rho_n, \rho_n^{q - 1}}
				+ \nu \innp{\Delta \rho_n, \rho_n^{q - 1}}.
\end{align*}
But,
\begin{align*}
	\innp{\prt_t \rho_n, \rho_n^{q - 1}}
		&= \frac{1}{q} \int_{\R^d} \prt_t \rho_n^q
		= \frac{1}{q} \diff{}{t} \norm{\rho_n}_{L^q}^q, \\
	\innp{\v_n \cdot \grad \rho_n, \rho_n^{q - 1}}
		&= \frac{1}{q} \int_{\R^d} \v_n \cdot \grad \rho_n^q
		= - \frac{1}{q} \int_{\R^d} \dv \v_n \, \rho_n^q
		= - \frac{\sigma_1}{q} \int_{\R^d} \rho_{n - 1} \rho_n^q, \\
	\sigma_2 \innp{\rho_{n - 1} \rho_n, \rho_n^{q - 1}}
		&= \sigma_2 \int_{\R^d} \rho_{n - 1} \rho_n^q, \\
	\nu \innp{\Delta \rho_n, \rho_n^{q - 1}}
		&= -\nu \innp{\grad \rho_n, \grad \rho_n^{q - 1}}
		= - (q - 1) \nu \int_{\R^d} \rho_n^{q - 2} \abs{\grad \rho_n}^2
		\le 0.
\end{align*}
Thus,
\begin{align}\label{e:PreGronwallsrhonBound}
	\diff{}{t} \norm{\rho_n}_{L^q}^q
		\le (\sigma_1 + q \sigma_2) \int_{\R^d} \rho_{n - 1} \rho_n^q
		\le
			\abs{\sigma_1 + q \sigma_2} \norm{\rho_{n - 1}}_{L^\iny} \norm{\rho_n}_{L^q}^q.
\end{align}

Now assume that $q \in [1, 2)$, with the same assumption on its rationality as before, and observe that the above argument fails since $\rho_n^{q - 2}$ is singular at $\rho_n = 0$. We therefore modify the argument as follows. Fix $\eps > 0$ and define $\la \in C^\iny(\R)$ so that
\begin{align*}
	\la(x)
		=
		\begin{cases}
			\eps, & 0 \le x \le \eps, \\
			x, & x \ge 2 \eps
		\end{cases}
\end{align*}
and so $\la', \la'' \ge 0$.
Multiplying $\cref{e:ApproxViscous}_3$ by $\la'(\rho_n^q) \rho_n^{q - 1}$ and integrating gives
\begin{align*}
	\innp{\prt_t \rho_n, \la'(\rho_n^q) \rho_n^{q - 1}}
			+ \innp{\v_n \cdot \grad \rho_n, \la'(\rho_n^q) \rho_n^{q - 1}}
		= \sigma_2 \innp{\rho_{n - 1} \rho_n, \la'(\rho_n^q) \rho_n^{q - 1}}
				+ \nu \innp{\Delta \rho_n, \la'(\rho_n^q) \rho_n^{q - 1}}.
\end{align*}
But,
\begin{align*}
	\innp{\prt_t \rho_n, \la'(\rho_n^q) \rho_n^{q - 1}}
		&= \frac{1}{q} \int_{\R^d} \prt_t \la(\rho_n^q)
		= \frac{1}{q} \diff{}{t} \int_{\R^d} \la(\rho_n^q), \\
	\innp{\v_n \cdot \grad \rho_n, \la'(\rho_n^q) \rho_n^{q - 1}}
		&= \frac{1}{q} \int_{\R^d} \v_n \cdot \grad \la(\rho_n^q)
		= - \frac{1}{q} \int_{\R^d} \dv \v_n \, \la(\rho_n^q)
		= - \frac{\sigma_1}{q} \int_{\R^d} \rho_{n - 1} \la(\rho_n^q), \\
	\sigma_2 \innp{\rho_{n - 1} \rho_n, \la'(\rho_n^q) \rho_n^{q - 1}}
		&= \sigma_2 \int_{\R^d} \rho_{n - 1} \la'(\rho_n^q) \rho_n^q, \\
	\nu \innp{\Delta \rho_n, \la'(\rho_n^q) \rho_n^{q - 1}}
		&= -\nu \innp{\grad \rho_n, \grad (\la'(\rho_n^q) \rho_n^{q - 1})} \\
		&= - (q - 1) \nu \int_{\R^d} \la'(\rho_n^q) \rho_n^{q - 2} \abs{\grad \rho_n}^2
			- q \nu \int_{\R^d} \la''(\rho_n^q) \rho_n^{2(q - 1)} \abs{\grad \rho_n}^2
		\le 0.
\end{align*}
Taking the limit as $\eps \to 0$, we recover the same bound as in \cref{e:PreGronwallsrhonBound}. We use here that $\la' = 0$ in a neighborhood of the origin so that the singularity in $\rho_n^{q - 2}$ is removed.

Let $t \in [0, T]$. Applying Gronwall's lemma gives
\begin{align*}
	\norm{\rho_n(t)}_{L^q}^q
		\le \norm{\rho_0}_{L^q}^q \exp \pr{\abs{\sigma_1 + q \sigma_2}
			\int_0^t \norm{\rho_{n - 1}(s)}_{L^\iny} \, ds}
\end{align*}
so that
\begin{align}\label{e:rhonLq}
	\norm{\rho_n(t)}_{L^q}
		\le \norm{\rho_0}_{L^q} \exp \pr{\abs{q^{-1}\sigma_1 + \sigma_2}
			\int_0^t \norm{\rho_{n - 1}(s)}_{L^\iny} \, ds}.
\end{align}

Now, by the induction hypothesis,
\begin{align*}
	\int_0^t \norm{\rho_{n - 1}(s)}_{L^\iny} \, ds
		&\le \int_0^t \frac{\norm{\rho_0}_{L^\iny}}{1 - \abs{\sigma_2}
					\norm{\rho_0}_{L^\iny} s} \, ds \\
		&= \left\{
			\begin{array}{rl}
				-\abs{\sigma_2}^{-1} \log \pr{1 - \abs{\sigma_2} \norm{\rho_0}_{L^\iny} t},
				&\sigma_2 \ne 0, \\
				\norm{\rho_0}_{L^\iny} t,
				&\sigma_2 = 0.
			\end{array}
		\right.
\end{align*}
Taking the limit as $q \to \iny$ of both sides of \cref{e:rhonLq}, it follows by the continuity of Lebesgue norms that for $\sigma_2 \ne 0$,
\begin{align*}
	\norm{\rho_n(t)}_{L^\iny}
		&\le \norm{\rho_0}_{L^\iny}
			\exp \pr{-\log \pr{1 - \abs{\sigma_2} \norm{\rho_0}_{L^\iny} t}}
		= \norm{\rho_0}_{L^\iny}
			\pr{1 - \abs{\sigma_2} \norm{\rho_0}_{L^\iny} t}^{-1}
\end{align*}
and $\norm{\rho_n(t)}_{L^\iny} \le \norm{\rho_0}_{L^\iny}$ if $\sigma_2 = 0$.
This shows that the induction hypothesis, $\cref{e:rhonBound1}_1$, holds for $n$, and so for all $n$ by induction.

Returning to \cref{e:rhonLq}, we see that $\cref{e:rhonBound2}_1$ and $\cref{e:rhonBoundsigma2zero}_1$ hold.

The bounds in $\cref{e:rhonBound2}_2$, $\cref{e:rhonBoundsigma2zero}_2$ follow by not discarding for $q = 2$ the term above that we observed was never positive. Using $\prt_t \rho_n = - \v_n \cdot \grad \rho_n + \nu \Delta \rho_n$, $\cref{e:rhonBound1}_2$ then follows from $\cref{e:rhonBound2}_2$ or $\cref{e:rhonBoundsigma2zero}_2$ along with \cref{L:vBounds}.

\Ignore{ 
To prove $\cref{e:rhonBound1}_2$, we first note that $\rho_n(t) \in L^\iny(0, T; L^1(\R^d))$. This follows from $\cref{e:rhonBound1}_1$ and the rapid spatial decay of $\rho_n$: the formal argument for this decay :::::

we integrate $\cref{e:ApproxViscous}_3$ in space, giving
\begin{align*}
	\int_{\R^d} \prt_t \rho_n + \int_{\R^d} \v_n \cdot \grad \rho_n
		= \sigma_2 \int_{\R^d} \rho_{n - 1} \rho_n + \nu \int_{\R^d}  \Delta \rho_n,
\end{align*}
so that
\begin{align*}
	\diff{}{t} \int_{\R^d} \rho_n
		= \sigma_2 \int_{\R^d} \rho_{n - 1} \rho_n +  \int_{\R^d} \dv \v_n \rho_n + \nu \int_{\R^d} \dv \grad \rho_n.
\end{align*}
The final integral vanishes and $\dv \v_n = \sigma_1 \rho_{n - 1}$, so
\begin{align*}
	\diff{}{t} m(\rho_n)
		&= (\sigma_1 + \sigma_2) \int_{\R^d} \rho_{n - 1} \rho_n
		= (\sigma_1 + \sigma_2) \innp{\rho_{n - 1}, \rho_n}.
\end{align*}
Integrating in time yields $\cref{e:rhonBound1}_2$.
} 

Because the bounds obtained depend only upon the $L^q$ norms of $\rho_0$, we see by the density of $L^1 \cap C^\iny$ in $L^1 \cap L^\iny$ that the result holds for $\rho_0$ in $L^1 \cap L^\iny$.
\end{proof}

\begin{proof}[\textbf{Proof of \cref{T:ViscousExistence}}]
	Because of the bounds in \cref{L:rhonLqBounds}, we can make the same argument for
	existence of solutions to \GAGnu as is made for the Navier-Stokes equations on
	pages 72-73 of \cite{CF1988}. That is, except that $H^1(\R^d)$ is not compactly
	embedded in $L^2(\R^d)$ because $\R^d$ is an unbounded domain.
	(The embedding of $L^2(\R^2)$ into $H^{-1}(\R^d)$ is, however, continuous,
	and no compactness is needed for this embedding.)
	We handle this lack of compactness, however, as Temam does in Remark III.3.2 of
	\cite{T2001}.
	
	We note that because $\prt_t \rho^\nu \in L^2_{loc}([0, \iny); H^{-1})$ and
	not just in $L^1_{loc}([0, \iny); H^{-1})$, $\rho^\nu$ is equal (a.e.) to a function
	continuous in $L^2$. This is what happens for the Navier-Stokes equations in 2D versus
	higher dimension, and is treated in the same manner. (See, for instance, the argument
	following (3.60) Chapter III of \cite{T2001}.)
	
	The estimates stated in \cref{T:ViscousExistence} then follow
	from taking the limit as $n \to \iny$ of the bounds obtained in
	\cref{L:rhonLqBounds}.
\Ignore{ 
we integrate $\cref{e:ApproxViscous}_3$ in space, giving
\begin{align*}
	\int_{\R^d} \prt_t \rho_n + \int_{\R^d} \v_n \cdot \grad \rho_n
		= \sigma_2 \int_{\R^d} \rho_{n - 1} \rho_n + \nu \int_{\R^d}  \Delta \rho_n,
\end{align*}
so that
\begin{align*}
	\diff{}{t} \int_{\R^d} \rho_n
		= \sigma_2 \int_{\R^d} \rho_{n - 1} \rho_n +  \int_{\R^d} \dv \v_n \rho_n + \nu \int_{\R^d} \dv \grad \rho_n.
\end{align*}
The final integral vanishes and $\dv \v_n = \sigma_1 \rho_{n - 1}$, so
\begin{align*}
	\diff{}{t} m(\rho_n)
		&= (\sigma_1 + \sigma_2) \int_{\R^d} \rho_{n - 1} \rho_n
		= (\sigma_1 + \sigma_2) \innp{\rho_{n - 1}, \rho_n}.
\end{align*}
Integrating in time yields $\cref{e:rhonBound1}_2$.
} 
\end{proof}

We used \cref{L:vBounds} above in the proof of \cref{L:rhonLqBounds} and will use it again in later sections.

\begin{lemma}\label{L:vBounds}
	Let $\w = \grad \F * f$. Then
	\begin{align*}
		\norm{\w}_{LL}
			&\le C \norm{f}_{L^1 \cap L^\iny}.
	\end{align*}
\end{lemma}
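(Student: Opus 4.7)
The plan is to prove the two parts of the log-Lipschitz norm separately by splitting the convolution with $\grad \F$ into regions tuned to where the $L^\iny$ or $L^1$ control of $f$ is sharp. Since $\grad \F(x) = c_d x/\abs{x}^d$, for the $L^\iny$ bound I would write, for each fixed $x$,
\begin{align*}
	\abs{\w(x)}
		\le C \int_{\abs{x-y} < 1} \abs{x-y}^{-(d-1)} \abs{f(y)} \, dy
			+ C \int_{\abs{x-y} \ge 1} \abs{x-y}^{-(d-1)} \abs{f(y)} \, dy.
\end{align*}
The local integral is controlled by $\norm{f}_{L^\iny}$ times $\int_{\abs{z} < 1} \abs{z}^{-(d-1)} \, dz < \iny$ (using $d \ge 2$), while the far integral is controlled by $\norm{f}_{L^1}$ since $\abs{x-y}^{-(d-1)} \le 1$ on the domain. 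This gives $\norm{\w}_{L^\iny} \le C \norm{f}_{L^1 \cap L^\iny}$.

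For the log-Lipschitz modulus, fix $x, y \in \R^d$ with $r := \abs{x-y} < 1/2$ and write
\begin{align*}
	\w(x) - \w(y)
		= \int_{\R^d} \bigpr{\grad \F(x-z) - \grad \F(y-z)} f(z) \, dz.
\end{align*}
I would split $\R^d$ into a near set $N := \set{z : \min(\abs{x-z}, \abs{y-z}) < 2r}$, an intermediate annulus $I := \set{z : 2r \le \abs{x-z} \le 2} \setminus N$, and a far set $F := \set{z : \abs{x-z} > 2}$. On $N$ I bound the two terms separately, each by $C \norm{f}_{L^\iny} \int_{\abs{w} < 3r} \abs{w}^{-(d-1)} \, dw \le C r \norm{f}_{L^\iny}$, absorbing the singularity directly. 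On $F$, the factor $\abs{x-z} > 2$ keeps us bounded, so I apply the mean value theorem with $\abs{D^2 \F(w)} \le C \abs{w}^{-d}$ and obtain a contribution bounded by $C r \norm{f}_{L^1}$.

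The interesting region is $I$, where the logarithm appears. There, since $\abs{x-z} \ge 2r$, the entire segment joining $x-z$ to $y-z$ stays at distance at least $\abs{x-z}/2$ from the origin, so the Hessian bound $\abs{D^2 \F(w)} \le C \abs{w}^{-d}$ applied uniformly along the segment gives
\begin{align*}
	\abs{\grad \F(x-z) - \grad \F(y-z)}
		\le \frac{C r}{\abs{x-z}^d}.
\end{align*}
Integrating in polar coordinates centered at $x$ produces
\begin{align*}
	C r \norm{f}_{L^\iny} \int_{2r}^{2} \rho^{-1} \, d\rho
		\le C r \abs{\log r} \norm{f}_{L^\iny},
\end{align*}
which is precisely the logarithmic loss. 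Summing the three regions gives $\abs{\w(x) - \w(y)} \le C \norm{f}_{L^1 \cap L^\iny} \, r (1 + \abs{\log r})$ for $r < 1/2$, which combined with the pointwise bound yields the log-Lipschitz estimate.

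The only delicate step is the Hessian estimate on $I$: one must verify that when $\abs{x-z} \ge 2r$, the convex combinations $(1-t)(x-z) + t(y-z)$ have norm at least $\abs{x-z}/2$ for all $t \in [0,1]$, so that the pointwise singularity of $D^2 \F$ does not contaminate the bound. This is a simple triangle inequality but is what makes the polar integral yield $\abs{\log r}$ rather than a worse power of $r$, and is the source of the log-Lipschitz regularity (as opposed to Lipschitz) of velocities generated by $L^\iny$ densities.
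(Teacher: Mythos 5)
Your proof is correct, but it is not the route the paper takes: the paper gives no argument at all, instead citing Lemma 8.1 of \cite{MB2002} for the 2D case and asserting that all dimensions follow ``in a manner very similar to'' Theorem 3.1 of \cite{VishikBesov}, i.e.\ by a Littlewood--Paley argument (decompose $\w$ into blocks $\Delta_j \w$, use Bernstein's lemma and $L^2$-boundedness of Calder\'on--Zygmund operators to bound low and high frequencies, and optimize the frequency cutoff at $N \sim -\log_2 \abs{x-y}$, which is where the logarithm arises). Your argument is instead the classical physical-space potential estimate --- near/intermediate/far splitting of the kernel, the Hessian decay $\abs{D^2\F(w)} \le C\abs{w}^{-d}$, and the integral $\int_{2r}^{2} \rho^{-1}\,d\rho$ producing the $\abs{\log r}$ --- which is essentially how \cite{MB2002} proves the 2D case, and as you show it extends verbatim to all $d \ge 2$. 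Your key geometric step is sound: on the intermediate region one has $r \le \abs{x-z}/2$, so every point $(x-z) - t(x-y)$ of the segment has norm at least $\abs{x-z}/2$, and the singularity of $D^2\F$ never enters. What each approach buys: yours is elementary and fully self-contained, requiring nothing beyond the explicit form of $\grad\F$; the frequency-space route the paper points to reuses machinery the paper sets up anyway (the operators $\Delta_j$, Bernstein's lemma, Calder\'on--Zygmund bounds of \cref{S:InfiniteEnergy}), avoids any explicit kernel computation, and generalizes directly to Besov-type and borderline spaces, which is the setting of \cite{VishikBesov} and of the uniform H\"older estimates in \cref{S:VVUniform}.
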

\begin{proof}
In 2D, this is Lemma 8.1 of \cite{MB2002}. It can be proved in all dimensions in a manner very similar to that of Theorem 3.1 of \cite{VishikBesov}, so we suppress the proof.
\Ignore{ 
We follow the proof of Theorem 3.1 of \cite{VishikBesov}.  First observe that, by Lemma \ref{L:MassZero}, $\| \w \|_{L^{\infty}} \leq C\norm{f}_{L^1 \cap L^\iny}$.  To estimate the modulus of continuity of $\w$, we use Bernstein's Lemma and boundedness of Calderon-Zygmund operators on $L^2$ to write, for $x$ and $y$ satisfying $|x-y|\leq 1$,
\begin{equation*}
\begin{split}
&| \w(x) - \w(y) | \leq \sum_{j=-1}^N\| \Delta_{j} \w(x) - \Delta_{-1}\w(y) \|_{L^{\infty}} + \sum_{j= N+1}^{\infty}\| \Delta_{j} \w(x) - \Delta_{j}\w(y) \|_{L^{\infty}}\\
&\qquad \leq C\sum_{j=-1}^N\| \Delta_{j} \nabla\w \|_{L^{\infty}}|x-y| + 2\sum_{j= N+1}^{\infty} \| \Delta_{j}\w \|_{L^{\infty}}\\
&\qquad \leq C\| \Delta_{-1} \nabla\w \|_{L^{2}}|x-y| + C\sum_{j=0}^N\| \Delta_{j} \nabla\w \|_{L^{\infty}}|x-y| + C \sum_{j= N+1}^{\infty} 2^{-j}\| \Delta_{j} \nabla\w \|_{L^{\infty}}\\
&\qquad \leq C\| \Delta_{-1} f \|_{L^{2}}|x-y| + C\sum_{j=0}^N\| \Delta_{j} f \|_{L^{\infty}}|x-y| + C \sum_{j= N+1}^{\infty} 2^{-j}\| \Delta_{j} f \|_{L^{\infty}}\\
&\qquad \leq C(\|  f \|_{L^{2}} + N\|  f \|_{L^{\infty}})|x-y| +  C 2^{-N}\| f \|_{L^{\infty}}.\\
\end{split}
\end{equation*}
Set $N = 1-\log_2|x-y|$.  Then for $x$ and $y$ satisfying $|x-y|\leq 1$,
\begin{equation*}
| \w(x) - \w(y) | \leq C\norm{f}_{L^1 \cap L^\iny} |x-y|(1-\log_2|x-y|).
\end{equation*}
This completes the proof.
} 
\end{proof}

\Ignore{ 
We will find the following two easy-to-establish lemmas useful in the following sections:

\ToDo{As far as I can tell, we never use the two lemmas below.  Is this correct?}
\begin{lemma}\label{L:gradvlBound}
	For all $k \ge 1$, $\nu \ge 0$, and $t \ge 0$,  if $\rho^\nu(t) \in H^{k - 1}(\R^d)$ then
	\begin{align*}
	\smallnorm{\grad^k \v^\nu(t)}
		&= \abs{\sigma_1} \smallnorm{\grad^{k - 1} \rho^\nu(t)}.	
	\end{align*}
\end{lemma}
\begin{proof}
Observe that for $k \ge 1$, if $\rho^\nu(t) \in C_C^\iny(\R^d)$ then by \cref{L:H2Lemma},
\begin{align*}
	\smallnorm{\grad^k \v^\nu}
		&= \abs{\sigma_1} \smallnorm{\grad^k \grad \F * \rho^\nu}
		= \abs{\sigma_1} \smallnorm{\grad^{k - 1}\Delta \F * \rho^\nu}
		= \abs{\sigma_1} \smallnorm{\grad^{k - 1} \rho^\nu}.
\end{align*}
The result then follows from the density of $C_C^\iny(\R^d)$ in $H^{k - 1}(\R^d)$.
\end{proof}

\begin{lemma}\label{L:H2Lemma}
	Let $f, g \in C^\iny(\R^d)$ with $\Delta f$, $\Delta g \in L^2(\R^d)$.
	Then
	$
		\innp{\grad \grad f, \grad \grad g}
			= \innp{\Delta f, \Delta g}
	$.
\end{lemma}
\begin{proof}
	This follows from integrating by parts.
\end{proof}
} 

%
%
\section{Spatial decay of viscous solutions}\label{S:SpatialDecay}

\noindent Let $a_R$ be as in \cref{D:Radial} and let $b_R(r) = 1 - a_R(x)$, where $r := \abs{x}$.

\begin{prop}\label{P:rhonuTailBound}
	Let $\nu \ge 0$. For any $R > 0$, for solutions to ($GAG_{\nu}$) satisfying Definition (\ref{D:WeakSolution}),
	\begin{align}\label{e:rhonuTailPrelimBound}
		\norm{b_R \rho^\nu(t)}^2
				+ \nu \int_0^t \norm{b_R \grad \rho^\nu}^2
			&\le \pr{\norm{b_R \rho^\nu_0}^2 + C_0(t)t(\nu + 1) R^{-2}} e^{C_0(t) t}.
	\end{align}
	Now assume that for some integer $N \ge 2$ we have
	$\norm{b_R \rho^\nu_0}^2 \le C R^{-N}$ for all sufficiently large $R$. Then
	for all sufficiently large $R$, we have
	\begin{align}\label{e:rhonuTailBound}
		\norm{b_R \rho^\nu(t)}^2
				+ \nu \int_0^t \norm{b_R \grad \rho^\nu}^2
			&\le C_0(t) t (\nu + 1)^2 R^{-N} e^{C_0(t) t}.
	\end{align}
\end{prop}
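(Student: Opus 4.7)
The plan is a weighted $L^2$ energy estimate obtained by testing the equation for $\rho^\nu$ against $b_R^2 \rho^\nu$. After integration by parts, together with $\dv \v^\nu = \sigma_1 \rho^\nu$ to handle the transport term, one obtains an identity of the form
\begin{align*}
\tfrac{1}{2} \diff{}{t} \norm{b_R \rho^\nu}^2 + \nu \norm{b_R \grad \rho^\nu}^2
&= \pr{\sigma_2 + \tfrac{\sigma_1}{2}} \int b_R^2 (\rho^\nu)^3
+ \int b_R (\grad b_R \cdot \v^\nu) (\rho^\nu)^2 \\
&\quad - 2\nu \int b_R \rho^\nu \, \grad b_R \cdot \grad \rho^\nu.
\end{align*}
The cubic term is $\le C_0(t) \norm{b_R \rho^\nu}^2$ via the $L^\infty$ bound in \cref{T:ViscousExistence}. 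For the advection term, Cauchy--Schwarz combined with $\norm{\v^\nu}_{L^\infty}, \norm{\rho^\nu} \le C_0(t)$ (from \cref{L:vBounds} and \cref{T:ViscousExistence}) and $\norm{\grad b_R}_{L^\infty} \le C/R$ yields a bound of the form $(C_0(t)/R)\norm{b_R \rho^\nu}$, which by AM--GM is $\le C_0(t) \norm{b_R \rho^\nu}^2 + C_0(t) R^{-2}$. The cross-diffusion term is split by Cauchy--Schwarz: half of the viscous dissipation on the left is used to absorb one piece, while the other contributes $\le C_0(t) \nu R^{-2}$. Gronwall on the resulting differential inequality produces \cref{e:rhonuTailPrelimBound}.

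For the refined decay \cref{e:rhonuTailBound}, the plan is to bootstrap. The crucial observation is that $\supp \grad b_R \subset \set{R \le \abs{x} \le 2R}$, a region on which $b_{R/2} \equiv 1$. Consequently, in the two RHS estimates above where an unweighted $\norm{\rho^\nu}$ appeared as the ``large'' factor, one may localize to replace it by $\norm{b_{R/2} \rho^\nu}$, producing the sharper differential inequality
\begin{align*}
\diff{}{t} \norm{b_R \rho^\nu}^2 + \nu \norm{b_R \grad \rho^\nu}^2
\le C_0(t) \norm{b_R \rho^\nu}^2 + \frac{C_0(t)(1+\nu)}{R^2} \norm{b_{R/2} \rho^\nu}^2.
\end{align*}
Feeding the preliminary bound (applied at scale $R/2$, where the assumption $\norm{b_{R/2} \rho_0}^2 \le C R^{-N} \le C R^{-2}$ absorbs the initial-data contribution) into the right-hand side upgrades the decay to $R^{-4}$; iterating $\lceil N/2 \rceil$ times, with $\norm{b_R \rho_0}^2 \le C R^{-N}$ providing a floor at $R^{-N}$, eventually delivers the claimed bound.

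The main obstacle is bookkeeping the $(1+\nu)$ factors across the bootstrap, since each invocation of the refined inequality nominally multiplies the ambient tail by $(1+\nu)/R^2$ and so accumulates one factor of $(1+\nu)$ per pass. To arrive at the stated $(\nu+1)^2$ one must combine these contributions carefully across iterations---or, alternatively, funnel the improving tail estimate for $\norm{b_{R/2} \rho^\nu}^2$ through a single Gronwall pass---so that only a bounded number of $(1+\nu)$ factors survives in the final constant.
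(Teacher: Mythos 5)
Your proposal follows essentially the same route as the paper's own proof: the identical weighted energy estimate obtained by testing against $b_R^2\rho^\nu$, the same term-by-term bounds (cubic term via the $L^\iny$ bound, advection via $\norm{\grad b_R}_{L^\iny}\le C/R$, cross-diffusion absorbed into the dissipation), Gronwall for \cref{e:rhonuTailPrelimBound}, and the same bootstrap for \cref{e:rhonuTailBound} exploiting that $b_{R/2}\equiv 1$ on $\supp\grad b_R$ so the unweighted factors can be replaced by $\norm{b_{R/2}\rho^\nu}$ and the estimate iterated. The $(\nu+1)$-bookkeeping concern you flag is genuine but is equally present in the paper's proof, which simply asserts ``applying the above process inductively'' without tracking those powers; since in all later applications one has $\nu\le 1$ (or constants are allowed to depend on $\nu$), the exact power of $(\nu+1)$ is immaterial, and your proposal is, if anything, more candid on this point.
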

\begin{proof}
	We drop the superscript $\nu$ for convenience.
	
	Multiplying \GAGnu by $b_R^2 \rho$ and integrating, we have
	\begin{align*}
		\innp{\prt_t \rho, b_R^2 \rho}
			= - \innp{\v \cdot \grad \rho, b_R^2 \rho}
				+ \sigma_2 \innp{\rho^2, b_R^2 \rho}
				+ \nu \innp{\Delta \rho, b_R^2 \rho}.
	\end{align*}
	Now,
	\begin{align*}
		\innp{\prt_t \rho, b_R^2 \rho}
			&= \frac{1}{2} \diff{}{t} \norm{b_R \rho}^2, \\
		-\innp{\v \cdot \grad \rho, b_R^2 \rho}
			&= -\frac{1}{2} \innp{\v, \grad (b_R^2 \rho^2)}
				+ \frac{1}{2} \innp{\v, \rho^2 \grad b_R^2} \\
			&= \frac{1}{2} \innp{\dv \v, b_R^2 \rho^2}
				+ \innp{\v, \rho^2 b_R \grad b_R}
			= \frac{\sigma_1}{2} \innp{\rho, b_R^2 \rho^2}
				+ \innp{\v, \rho^2 b_R \grad b_R}, \\
		\nu \innp{\Delta \rho, b_R^2 \rho}
			&= - \nu \innp{\grad \rho, \grad(b_R^2 \rho)}
			= - \nu \innp{\grad \rho, b_R^2 \grad \rho
					+ \rho \grad (b_R)^2} \\
			&= - \nu \norm{b_R \grad \rho}^2
				- 2 \nu \innp{b_R \grad \rho, \rho \grad b_R}.
	\end{align*}
	
	We estimate the various terms, taking advantage of \cref{T:ViscousExistence,T:InviscidExistence}:
	\begin{align*}
		\abs{\innp{\rho, b_R^2 \rho^2}}
			&\le \norm{\rho}_{L^\iny} \norm{b_R^2 \rho^2}_{L^1}
			\le C_0(t) \norm{b_R \rho}^2, \\
		\abs{\innp{\v, \rho^2 b_R \grad b_R}}
			&\le \norm{\grad b_R}_{L^\iny} \norm{\v}_{L^\iny} \norm{\rho} \norm{b_R \rho}
			\le C_0(t) (R^{-2} + \norm{b_R \rho}^2),\\
		\abs{2 \nu \innp{b_R \grad \rho, \rho \grad b_R}}
			&\le \frac{\nu}{2} \norm{b_R \grad \rho}^2
				+ 2\nu \norm{\rho \grad b_R}^2,\\
		\nu \norm{\rho \grad b_R}^2
			&\le \nu C_0(t) R^{-2}, \\
		\sigma_2 \innp{\rho^2, b_R^2 \rho}
			&\le \abs{\sigma_2} \norm{\rho}_{L^\iny} \norm{b_R \rho}^2
				\le C_0(t) \norm{b_R \rho}^2.
	\end{align*}
	
	Combining the estimates above, we have
	\begin{align*}
		\frac{1}{2} \diff{}{t} &\norm{b_R \rho}^2
				+ \nu \norm{b_R \grad \rho}^2
			\le C_0(t)(\nu + 1) R^{-2}
				+ \frac{\nu}{2} \norm{b_R \grad \rho}^2
					+ C_0(t) \norm{b_R \rho}^2,
	\end{align*}
	or,
	\begin{align*}
		\diff{}{t} &\norm{b_R \rho}^2
				+ \nu \norm{b_R \grad \rho}^2
			\le C_0(t)(\nu + 1) R^{-2}
					+ C_0(t) \norm{b_R \rho}^2.
	\end{align*}
	Applying Gronwall's lemma (using that $C_0(t)$ increases in $t$) gives
	\cref{e:rhonuTailPrelimBound}.
	
	Now assume that for some integer $N \ge 2$ we have
	$\norm{b_R \rho_0}^2 \le C R^{-N}$ for all sufficiently large $R$.
	We refine some of our estimates using \cref{e:rhonuTailPrelimBound}. We have,
	\begin{align*}
		\abs{\innp{\v, \rho^2 b_R \grad b_R}}
			&\le \norm{\v}_{L^\iny} \norm{\grad b_R}_{L^\iny} \norm{b_R \rho}
				\norm{\rho}_{L^2(\supp \grad b_R)} \\
			&\le C_0(t) R^{-1} \pr{\norm{b_R \rho_0}^2
				+ C_0(t)t(\nu + 1) R^{-2}}^{1\slash 2} e^{C_0(t) t}
				\norm{\rho}_{L^2(\supp \grad b_R)},
				\\
			&\le C_0(t) R^{-1} \pr{C R^{-N} + C_0(t)t(\nu + 1) R^{-2}}^{1\slash 2} e^{C_0(t) t}
				\norm{\rho}_{L^2(\supp \grad b_R)}
	\end{align*}
	and
	\begin{align*}
		\nu \norm{\rho \grad b_R}^2
			&\le \nu \norm{\grad b_R}_{L^\iny}^2 \norm{\rho}_{L^2(\supp \grad b_R)}^2
			\le \nu R^{-2} \norm{\rho}_{L^2(\supp \grad b_R)}^2.
	\end{align*}
	But
	\begin{align*}
		\norm{\rho}_{L^2(\supp \grad b_R)}
			&\le \norm{b_{R/2} \rho}
			\le \pr{\norm{b_{R/2} \rho_0}^2
				+ C_0(t)t(\nu + 1) R^{-2}}^{1\slash 2} e^{C_0(t) t} \\
			&\le \pr{C R^{-N} + C_0(t)t(\nu + 1) R^{-2}}^{1\slash 2} e^{C_0(t) t}
	\end{align*}
	follows from \cref{e:rhonuTailPrelimBound} applied with $R/2$ in place of $R$.
	
	We conclude that
	\begin{align*}
		\diff{}{t} &\norm{b_R \rho}^2
				+ \nu \norm{b_R \grad \rho}^2
			\le C_0(t)(\nu + 1)^2 (R^{-N-1} + R^{-3})
					+ C_0(t) \norm{b_R \rho}^2 
	\end{align*}
	holds for all sufficiently large $R$. Integrating in time and applying
	Gronwall's inequality shows, in particular, that \cref{e:rhonuTailBound} holds for $N \leq 3$.
	Applying the above process inductively gives \cref{e:rhonuTailBound} for all
	$N \ge 2$.
\end{proof}

\Ignore{ 
\begin{cor}\label{C:R7Decay}
	Assume that $\norm{b_R \rho_0}^2 \le C R^{-(d+4)}$
	for all sufficiently large $R$.
	Then $x \rho^\nu(x) \in L^1(\R^d)$.
\end{cor}
\begin{proof}
	Letting $A_R := B_R(0) \setminus B_{R - 1}(0)$, we have
	\begin{align*}
		\norm{x \rho^\nu}_{L^1}
			&= \sum_{R = 1}^\iny \norm{x \rho^\nu}_{L^1(A_R)}
			\le \sum_{R = 1}^\iny R \norm{\rho^\nu}_{L^1(A_R)}
			\le C\sum_{R = 1}^\iny R^{1+\frac{d-1}{2}} \norm{\rho^\nu}_{L^2(A_R)} \\
			&\le C\sum_{R = 1}^\iny R^{\frac{d+1}{2}} \norm{b_{R/2} \rho^\nu}
			\le C_0(t) (\nu + 1)^{\frac{1}{2}}
				\sum_{R = 1}^\iny R^{\frac{d+1}{2}} R^{-(d+4)/2}
			< \iny.
	\end{align*}
\end{proof}
} 

\begin{cor}\label{C:L1Membership}
	Fix $\al \ge 0$ and suppose that for some integer $N > d + 2\al + 1$ and some $R_0>0$, we have
	$\norm{b_R \rho^\nu_0}^2 \le C R^{-N}$ for all $R \ge R_0$. Then
	$\abs{x}^\al \rho^\nu(t, x) \in L^1_x(\R^d)$ up to the time of existence.
\end{cor}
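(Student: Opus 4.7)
The plan is to follow the strategy hinted at by the commented-out \texttt{C:R7Decay} corollary, generalizing from the specific exponent to an arbitrary $\alpha \ge 0$. I would decompose $\R^d$ into annuli $A_R := B_R(0) \setminus B_{R-1}(0)$ for integer $R \ge 1$ and estimate $\||x|^\alpha \rho^\nu(t)\|_{L^1}$ by summing over these annuli.

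On each annulus, $|x|^\alpha \le R^\alpha$, and the volume satisfies $|A_R| \le C R^{d-1}$. By the Cauchy--Schwarz inequality,
\begin{align*}
    \||x|^\alpha \rho^\nu(t)\|_{L^1(A_R)}
        \le R^\alpha |A_R|^{1/2} \|\rho^\nu(t)\|_{L^2(A_R)}
        \le C\, R^{\alpha + (d-1)/2} \|\rho^\nu(t)\|_{L^2(A_R)}.
\end{align*}
For $R$ large enough that $R/2 \ge 2$ is inside the support where $b_{R/2} \equiv 1$ on $A_R$ (which holds once $R \ge 4$ using the properties of $b_{R/2}$ from \cref{D:Radial}), we have the pointwise bound $\chi_{A_R} \le b_{R/2}$, and hence $\|\rho^\nu(t)\|_{L^2(A_R)} \le \|b_{R/2} \rho^\nu(t)\|$. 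Then \cref{P:rhonuTailBound} (applied with $R/2$ in place of $R$, under the hypothesis $\|b_R \rho^\nu_0\|^2 \le C R^{-N}$) gives
\begin{align*}
    \|b_{R/2}\rho^\nu(t)\|
        \le C_0(t)(\nu + 1) R^{-N/2},
\end{align*}
absorbing constants into $C_0(t)$, valid for all sufficiently large $R$.

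Combining these estimates yields
\begin{align*}
    \||x|^\alpha \rho^\nu(t)\|_{L^1}
        &\le \sum_{R=1}^{R_1} \||x|^\alpha \rho^\nu(t)\|_{L^1(A_R)}
            + C_0(t)(\nu+1) \sum_{R > R_1} R^{\alpha + (d-1)/2 - N/2},
\end{align*}
where $R_1$ is a sufficiently large fixed radius. The finite sum is trivially bounded using $\rho^\nu(t) \in L^1 \cap L^\infty$ from \cref{T:ViscousExistence}. The infinite tail converges provided $\alpha + (d-1)/2 - N/2 < -1$, that is, $N > d + 2\alpha + 1$, which is exactly the hypothesis.

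The only genuinely delicate point is ensuring the tail bound from \cref{P:rhonuTailBound} kicks in uniformly starting from some fixed $R$, but since the hypothesis already postulates $\|b_R \rho_0^\nu\|^2 \le C R^{-N}$ only for $R \ge R_0$, this is built into the inductive step of \cref{P:rhonuTailBound} and causes no issue. The rest is a routine Cauchy--Schwarz-plus-dyadic-summation argument, so no substantial obstacle is expected.
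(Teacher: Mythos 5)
Your proof takes essentially the same route as the paper's: decompose the far field into annuli, use Cauchy--Schwarz on each annulus to convert the weighted $L^1$ norm into an $L^2$ norm times the annulus volume, control that $L^2$ norm by a cutoff norm to which the tail bound \cref{e:rhonuTailBound} of \cref{P:rhonuTailBound} applies, and sum, with convergence precisely when $N > d + 2\al + 1$. The paper works with annuli $B_{(k+1)R_0}\setminus B_{kR_0}$ and estimates only over the complement of $B_{R_0}$ (the near-field integral being trivial), while you use unit-width annuli and a separate finite sum; this is a cosmetic difference.

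One slip worth fixing: the pointwise bound $\chi_{A_R} \le b_{R/2}$ is false as stated, no matter how large $R$ is. On $A_R = B_R\setminus B_{R-1}$ one only knows $\abs{x} \ge R-1$, whereas $a_{R/2}$ is supported in $B_R(0)$ (by \cref{D:Radial}, $a_R$ is supported in $B_{2R}(0)$), so $b_{R/2} = 1 - a_{R/2}$ is guaranteed to equal $1$ only on $\set{\abs{x} \ge R}$; it can be strictly less than $1$ on the inner portion of $A_R$. The remedy is simply to use $b_{(R-1)/2}$, which is identically $1$ on $\set{\abs{x} \ge R-1} \supseteq A_R$, and to which \cref{P:rhonuTailBound} applies equally well with the same decay; nothing else in your argument changes. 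This is exactly the pairing the paper makes, matching the annulus that starts at radius $k R_0$ with the cutoff $b_{k R_0/2}$.
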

\begin{proof}
	For convenience, we set $\rho = \rho^\nu$. Then
	\begin{align*}
		&\norm{\abs{x}^\al \rho}_{L_x^1(B_{R_0}^C)}
			\le \sum_{k = 1}^\iny \norm{\abs{x}^\al \rho}_{L^1(B_{(k + 1) R_0} \setminus B_{k R_0})}
			\le CR_0^{\al + \frac{d}{2}} \sum_{k = 1}^\iny (k + 1)^{\alpha + \frac{d-1}{2}}  
					\norm{\rho}_{L^2(B_{(k + 1) R_0} \setminus B_{k R_0})} \\
			&\qquad
			\le C R_0^{\al + \frac{d}{2}} \sum_{k = 1}^\iny k^{\al + \frac{d-1}{2}}
					\norm{b_{\frac{k R_0}{2}} \rho}
			\le C(t, \nu, \al, R_0) \sum_{k = 1}^\iny k^{\al + \frac{d-1}{2}}
					\left(\frac{R_0 k}{2}\right)^{-\frac{N}{2}} \\
			&\qquad
			\le C(t, \nu, \al, R_0) \sum_{k = 1}^\iny k^{\frac{2 \al + d -1 - N}{2}} < \infty.
	\end{align*}
\end{proof}

\begin{theorem}
	Assume that for all $R \ge R_0$,
	$\norm{b_R \rho^\nu_0}^2 \le C R^{-N}$ for some $N > d + 1$.
	Then up to the time of existence,
	\begin{align}\label{e:TotalMassId}
		m(\rho^\nu)
			&= m(\rho_0) + (\sigma_1 + \sigma_2) \int_0^t \norm{\rho^\nu(s)}^2 \, ds.
	\end{align}
\end{theorem}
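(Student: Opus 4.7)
The plan is to rigorously justify the formal computation that comes from integrating $\GAGnu$ over $\R^d$. Formally, using the continuity-equation form $\partial_t \rho^\nu + \dv(\rho^\nu \v^\nu) = (\sigma_1 + \sigma_2) (\rho^\nu)^2 + \nu \Delta \rho^\nu$ (which follows from $\dv \v^\nu = \sigma_1 \rho^\nu$), integrating in space should cause the $\dv$ and $\Delta$ terms to drop out, leaving $\frac{d}{dt} m(\rho^\nu) = (\sigma_1 + \sigma_2) \norm{\rho^\nu}^2$. Integrating in time then yields \cref{e:TotalMassId}. The hypothesis $N > d+1$ is exactly what \cref{C:L1Membership} (with $\al = 0$) requires to guarantee the spatial decay needed to kill the boundary contributions when the formal argument is regularized.

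My first step is to test the weak formulation \cref{e:WeakDefEq} against $\varphi(s, x) = \chi(s) a_R(x)$ with $a_R$ the cutoff from \cref{D:Radial} and $\chi$ a suitable temporal bump, to obtain, for each fixed $R \ge 1$ and $t$ up to the time of existence,
\begin{align*}
    \int_{\R^d} a_R \rho^\nu(t) - \int_{\R^d} a_R \rho_0
        = \int_0^t \Bigl[\int_{\R^d} \rho^\nu \v^\nu \cdot \grad a_R
            + (\sigma_1 + \sigma_2) \int_{\R^d} a_R (\rho^\nu)^2
            - \nu \int_{\R^d} \grad \rho^\nu \cdot \grad a_R\Bigr] ds.
\end{align*}
For the viscous case the last term is well-defined via $\rho^\nu \in L^2(0, T; H^1)$ from \cref{e:MassReg}; a further integration by parts rewrites it as $\nu \int \rho^\nu \Delta a_R$, while for $\nu = 0$ it is simply absent. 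Next I pass to the limit $R \to \infty$. Since $\rho^\nu \in L^\iny(0, T; L^1)$ by \cref{T:ViscousExistence} and $a_R \nearrow 1$ pointwise, dominated convergence gives $\int a_R \rho^\nu(t) \to m(\rho^\nu(t))$ and $\int a_R \rho_0 \to m(\rho_0)$, while $(\rho^\nu)^2 \in L^\iny(0, T; L^1)$ (from $\rho^\nu \in L^\iny(0, T; L^2)$) yields $\int_0^t \int a_R (\rho^\nu)^2 \to \int_0^t \norm{\rho^\nu}^2 ds$.

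For the two remaining terms I exploit that $\grad a_R$ and $\Delta a_R$ are supported in the annulus $B_{2R} \setminus B_R$ with $\smallabs{\grad a_R} \le C/R$ and $\smallabs{\Delta a_R} \le C/R^2$. Combined with \cref{L:vBounds}, which gives $\v^\nu \in L^\iny$ uniformly in time, one has
\begin{align*}
    \Bigabs{\int_0^t \int \rho^\nu \v^\nu \cdot \grad a_R \, ds}
        &\le \frac{C}{R} \norm{\v^\nu}_{L^\iny_{s,x}}
            \int_0^t \int_{B_{2R} \setminus B_R} \abs{\rho^\nu} \, ds, \\
    \Bigabs{\nu \int_0^t \int \rho^\nu \Delta a_R \, ds}
        &\le \frac{C \nu}{R^2} \int_0^t \int_{B_{2R} \setminus B_R} \abs{\rho^\nu} \, ds.
\end{align*}
By Cauchy-Schwarz, $\int_{B_{2R} \setminus B_R} \smallabs{\rho^\nu(s)} \le C R^{d/2} \norm{b_{R/2} \rho^\nu(s)}$, which by \cref{P:rhonuTailBound} (applicable since $N > d+1 \ge 2$) is bounded by $C_0(t)(\nu+1) R^{(d - N)/2}$. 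Both displayed quantities therefore vanish as $R \to \iny$ under $N > d + 1$ (in fact $N > d - 2$ suffices), completing the identification of the limit.

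The main obstacle is the careful justification of these integrations by parts at the level of weak solutions and the uniform-in-time control of the two boundary integrals; everything else is dominated convergence. The spatial-decay machinery of \cref{S:SpatialDecay}, and in particular the quantitative tail bound \cref{e:rhonuTailBound}, is precisely what is needed to push the annular contributions to zero uniformly on $[0, t]$, after which the statement follows by integrating in $s$ and passing to the limit.
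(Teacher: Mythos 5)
Your proposal is correct, and it takes a genuinely different route from the paper. The paper's proof integrates the PDE over $\R^d$ directly: it justifies the integration by parts $\int_{\R^d} \v^\nu \cdot \grad \rho^\nu = -\sigma_1 \int_{\R^d} (\rho^\nu)^2$ using only the preliminary tail bound \cref{e:rhonuTailPrelimBound}, notes that the remaining two steps in \cref{e:FormalMassIBPs} (pulling $\partial_t$ outside the integral, and $\int_{\R^d} \Delta \rho^\nu = 0$) are merely formal for weak solutions, and then sketches their justification by passing through approximate solutions --- either mollifying $\rho_0$ or carrying the decay estimates of \cref{P:rhonuTailBound,C:L1Membership} through the iteration $(\rho_n)$ of \cref{T:ViscousExistence}, which yields the discrete identity $m(\rho_n) = m(\rho_0) + (\sigma_1+\sigma_2)\int_0^t \innp{\rho_{n-1},\rho_n}\,ds$ and then \cref{e:TotalMassId} in the limit. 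You instead work entirely at the level of \cref{D:WeakSolution}: testing \cref{e:WeakDefEq} against $\chi(s)a_R(x)$ and sending $R \to \iny$, with the refined tail bound \cref{e:rhonuTailBound} killing the annular terms $\int \rho^\nu \v^\nu \cdot \grad a_R$ and $\nu \int \rho^\nu \Delta a_R$. This buys you a fully self-contained argument: the time-regularity issue disappears because the initial condition enters only through $\rho^\nu \in C([0,T];L^2)$ and the temporal bump, no approximation sequence is needed, and as you observe, the identity in fact only requires $N > d-2$ rather than $N > d+1$ (the membership $\rho^\nu(t) \in L^1$, which the paper extracts from \cref{C:L1Membership}, is already built into \cref{e:MassReg}). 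What the paper's sketch buys in exchange is brevity and, in the $(\rho_n)$ variant, a mass identity along the approximating sequence that can be reused elsewhere. One small difference in mechanism worth noting: in your weak-form computation the $\sigma_1$ contribution is already absorbed into the $(\sigma_1+\sigma_2)(\rho^\nu)^2$ term of \cref{e:WeakDefEq}, so the convective term is a pure boundary term that vanishes, whereas in the paper's strong-form computation that same term is exactly what produces $-\sigma_1\int(\rho^\nu)^2$; the two bookkeepings are consistent.
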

\begin{proof}
	By \cref{C:L1Membership}, $\rho^\nu(t, x) \in L^1_x(\R^d)$ up to the time of existence.
	We integrate \GAGnu over $\R^d$ to obtain
	\begin{align*}
		\int_{\R^d} \prt_t \rho^\nu
			+ \int_{\R^d} \v^\nu \cdot \grad \rho^\nu
			= \sigma_2 \int_{\R^d} (\rho^\nu)^2
				+ \nu \int_{\R^d} \Delta \rho^\nu.
	\end{align*}
	
	The bound in
	\cref{e:rhonuTailPrelimBound} is sufficient to integrate one term by parts, giving
	\begin{align*}
		\int_{\R^d} \v^\nu \cdot \grad \rho^\nu
			= - \int_{\R^d} \dv \v^\nu \rho^\nu
			= - \sigma_1 \int_{\R^d} (\rho^\nu)^2.
	\end{align*}
	For the other two terms we have, formally,	
	\begin{align}\label{e:FormalMassIBPs}
		\int_{\R^d} \prt_t \rho^\nu
			= \diff{}{t} \int_{\R^d} \rho^\nu, \quad
		\int_{\R^d} \Delta \rho^\nu
			= 0.
	\end{align}
	Integrating in time then yields \cref{e:TotalMassId}.
	
	Our weak solutions, however, lack the time regularity to obtain the first equality
	in \cref{e:FormalMassIBPs},
	and the spatial regularity and decay to obtain the second. 
	To justify these equalities, we could mollify the initial data and employ a sequence of
	approximate solutions. Alternately, we could obtain the equivalent of
	\cref{P:rhonuTailBound,C:L1Membership} for the sequence, $(\rho_n)$, of approximate
	solutions employed
	in the proof of \cref{T:ViscousExistence}. This would lead to the identity,
	\begin{align*}
		m(\rho_n)
			&= m(\rho_0) + (\sigma_1 + \sigma_2) \int_0^t \innp{\rho_{n - 1}(s), \rho_n(s)} \, ds,
	\end{align*}
	which in turn yields \cref{e:TotalMassId} in the limit as $n \to \iny$.
\end{proof}

When $\sigma_1 = - \sigma_2$, as happens for \PAGnu, total mass is conserved, as we can see from \cref{e:TotalMassId}.

%
%
\section{Total mass and infinite energy}\label{S:InfiniteEnergy}

\noindent In dimensions three and higher, $\rho^\nu \in L^1 \cap L^\iny$ is enough to guarantee membership of $\v^\nu$ to $L^2(\R^d)$. In 2D, however, this is no longer true: the viscous (and inviscid) velocity in 2D will generically have infinite energy, even if it has finite energy at time zero (see, for example, Proposition 3.1.1 of \cite{C1998}).
When dealing only with existence of solutions to \GAGnu, the infinite energy of 2D velocities is a minor issue. We will need to face this issue directly, however, in \cref{S:VV} when we take the vanishing viscosity limit.

In recovering the velocity from its divergence, the total mass (see \cref{e:MassDef}) of the density plays an important, if so far hidden, role in 2D: in short, if the total mass of the density is zero and has sufficient spatial decay, then the velocity will lie in $L^2$.  We prove this, along with other useful bounds on the velocity, in \cref{L:MassZero}.

Before giving the proof of \cref{L:MassZero}, we must first define the Littlewood-Paley operators.  It is classical that there exists two functions ${\chi}, {\phi} \in S(\R^d)$ with supp $\hat{\chi}\subset \{\xi\in \R^d: |\xi |\leq \frac{5}{6} \}$ and supp $\hat{\phi}\subset \{\xi\in \R^d: \frac{3}{5} \leq|\xi |\leq \frac{5}{3} \}$, such that, if for every $j\geq 0$ we set $\phi_j(x) = 2^{jd} \phi(2^j x)$, then
\begin{equation*}
\begin{split}
	&\hat{\chi}+ \sum_{j\geq 0} \hat{\phi_j}
		= \hat{\chi} + \sum_{j\geq 0} \hat{\phi}(2^{-j} \cdot) 
		\equiv 1.
\end{split}
\end{equation*}

\Ignore{For $n\in\Z$ define ${\chi}_n \in S(\R^d)$ in terms of its Fourier transform ${\hat{\chi}}_n$, where ${\hat{\chi}}_n$ satisfies 
\begin{equation*}
{\hat{\chi}}_n (\xi) =   \hat{\chi}(\xi) + \sum_{j\leq n} \hat{\phi_j}(\xi)
\end{equation*}
for all $\xi\in\R^d$. }

For $f\in S'(\R^d)$ and $j\geq -1$, define the Littlewood-Paley operators ${\Delta}_j$ by 
\begin{align*}
	\Delta_j f  = \quad
	\left\{
	\begin{array}{l}
		\chi\ast f,  \qquad j=-1 \\
		 \phi_j\ast f, \qquad j\geq 0.
	\end{array}
	\right.
\end{align*}

We make use of the following lemma throughout the paper.  A proof of the lemma can be found in \cite{C1996}, chapter 2.  Below, $C_{a,b}(0)$ denotes the annulus with inner radius $a$ and outer radius $b$.
\begin{lemma}\label{bernstein}
(Bernstein's Lemma) Let $r_1$ and $r_2$ satisfy $0<r_1<r_2<\infty$, and let $p$ and $q$ satisfy $1\leq p \leq q \leq \infty$. There exists a positive constant $C$ such that for every integer $k$ , if $u$ belongs to $L^p(\R^d)$, and supp $\hat{u}\subset B_{r_1\lambda}(0)$, then 
\begin{equation}\label{bern1}
\sup_{|\alpha|=k} ||\partial^{\alpha}u||_{L^q} \leq C^k{\lambda}^{k+d(\frac{1}{p}-\frac{1}{q})}||u||_{L^p}.
\end{equation}
Furthermore, if supp $\hat{u}\subset C_{r_1\lambda, r_2\lambda}(0)$, then 
\begin{equation}\label{bern2}
C^{-k}{\lambda}^k||u||_{L^p} \leq \sup_{|\alpha|=k}||\partial^{\alpha}u||_{L^p} \leq C^{k}{\lambda}^k||u||_{L^p}.
\end{equation} 
\end{lemma}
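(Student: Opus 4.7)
The plan is to establish both parts via Fourier multiplier arguments combined with Young's convolution inequality, with the $C^k$ dependence coming from integration-by-parts estimates on suitable Schwartz functions.

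For \cref{bern1}, I would fix $\phi \in C_c^\iny(\R^d)$ with $\phi \equiv 1$ on $B_{r_1}(0)$ and $\supp \phi \subset B_R(0)$ for some $R > r_1$. Since $\hat{u}$ is supported in $B_{r_1\lambda}(0)$, we have $\hat{u} = \phi(\cdot/\lambda)\hat{u}$ and $\widehat{\partial^\alpha u}(\xi) = \lambda^{|\alpha|} h_\alpha(\xi/\lambda)\hat{u}(\xi)$ with $h_\alpha(\xi) := i^{|\alpha|}\xi^\alpha\phi(\xi)$. Inverting the Fourier transform gives $\partial^\alpha u = \lambda^{k+d}\, h_\alpha^\vee(\lambda\,\cdot)\ast u$, and Young's inequality with $1+1/q = 1/r+1/p$, combined with the dilation identity $\|h_\alpha^\vee(\lambda\,\cdot)\|_{L^r} = \lambda^{-d/r}\|h_\alpha^\vee\|_{L^r}$, yields
\begin{equation*}
	\|\partial^\alpha u\|_{L^q} \leq \lambda^{k+d(1/p-1/q)}\,\|h_\alpha^\vee\|_{L^r}\,\|u\|_{L^p},
\end{equation*}
which matches the claimed bound provided $\|h_\alpha^\vee\|_{L^r} \leq C^k$. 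For \cref{bern2}, the upper bound is a special case of \cref{bern1} with $p=q$, so only the lower bound requires new work. I would pick $\chi \in C_c^\iny(\R^d)$ supported in a slightly enlarged annulus with $\chi \equiv 1$ on $C_{r_1,r_2}(0)$ and exploit the multinomial identity $|\xi|^{2k} = \sum_{|\alpha|=k}\binom{k}{\alpha}\xi^{2\alpha}$ (which is bounded below by $c\lambda^{2k}$ on $\supp\chi(\cdot/\lambda)$) to decompose
\begin{equation*}
	\hat{u}(\xi) = \lambda^{-k}\sum_{|\alpha|=k}(-i)^k\binom{k}{\alpha}\eta_\alpha(\xi/\lambda)\,\widehat{\partial^\alpha u}(\xi), \qquad \eta_\alpha(\xi) := \frac{\xi^\alpha\chi(\xi)}{|\xi|^{2k}}.
\end{equation*}
Applying Young's inequality with $r = 1$ and using $\sum_{|\alpha|=k}\binom{k}{\alpha} = d^k$ then gives $\lambda^k\|u\|_{L^p} \leq d^k\,\sup_{|\alpha|=k}\|\eta_\alpha^\vee\|_{L^1}\,\sup_{|\alpha|=k}\|\partial^\alpha u\|_{L^p}$, reducing matters to a bound $\|\eta_\alpha^\vee\|_{L^1} \leq C^k$.

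The hard part will be establishing these $C^k$ bounds uniformly in $\alpha$ with $|\alpha| = k$. Both $h_\alpha$ and $\eta_\alpha$ are smooth and supported in a fixed compact set (for $\eta_\alpha$, using that $|\xi|$ is bounded away from $0$ on $\supp\chi$), so their inverse Fourier transforms are Schwartz. Leibniz's rule gives $|\partial^\beta h_\alpha(\xi)|, |\partial^\beta \eta_\alpha(\xi)| \leq C_\beta P(k) R^k$ on their supports, where $P(k)$ is polynomial in $k$ (arising from the multinomial coefficients in $\partial^\gamma\xi^\alpha$) and $R$ depends only on $\supp\phi$ or $\supp\chi$. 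Choosing $N$ so that $(1+|x|^{2N})^{-1} \in L^r$ and integrating by parts $2N$ times in the integral defining $h_\alpha^\vee$ then yields $(1+|x|^{2N})|h_\alpha^\vee(x)| \leq C_N P(k) R^k$, whence $\|h_\alpha^\vee\|_{L^r} \leq C_N P(k) R^k$, and analogously for $\eta_\alpha$. Since any polynomial growth satisfies $P(k) \leq C_\epsilon(1+\epsilon)^k$, the factor $P(k) R^k$ is absorbed into $\tilde C^k$ for $\tilde C$ slightly larger than $R$; absorbing the final $d^k$ from the summation in the lower bound produces the claimed exponential constant in both inequalities.
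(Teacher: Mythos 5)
Your proposal is correct, and it is the standard proof of Bernstein's lemma: localize the Fourier support with a fixed bump (or annular cutoff), write $\partial^\alpha u$ (resp.\ $u$) as a convolution of $u$ (resp.\ the $\partial^\alpha u$'s, via the multinomial identity for $|\xi|^{2k}$) against a rescaled kernel, apply Young's inequality, and control the kernel's $L^r$-norm uniformly in $|\alpha|=k$ by integration by parts, absorbing the polynomial-in-$k$ losses from Leibniz into the exponential constant $C^k$. The paper itself does not prove this lemma---it simply cites Chemin \cite{C1996} for the proof---and your argument is essentially the one found in that reference, so there is nothing to reconcile.
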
 

The following Littlewood-Paley definition of Holder spaces will be useful.  This definition is equivalent to the classical definition of Holder spaces when $\alpha$ is a positive non-integer (see, for example, \cite{C1996}, chapter 2).
\begin{definition}\label{D:HolderSpaces}
For $\alpha\in\R$, the space $C_*^{\alpha}$ is the set of functions $f$ such that 
\begin{equation*}
\sup_{j\geq -1} 2^{j\alpha}\| \Delta_j f \|_{L^{\infty}} < \infty. 
\end{equation*}
We set 
\begin{equation*}
\| f \|_{C_*^{\alpha}} = \sup_{j\geq -1}2^{j\alpha}\| \Delta_j f \|_{L^{\infty}}.
\end{equation*}
When $\alpha$ is a positive non-integer, we will often write $C^{\alpha}$ in place of $C^{\alpha}_*$, in view of the equivalence between the two spaces.
\end{definition}
\Ignore{
\ToDo{BEGIN definitions, lemmas, embeddings which are only necessary if we include the Sobolev existence proof.}

\begin{definition}
For $s\in(0,1)$, we define the fractional Sobolev space $W^{s,p}(\R^d)$ to be the set of all functions $f$ in $L^p(\R^d)$ such that 
\begin{equation*}
\int_{\R^d}\int_{\R^d} \frac{|u(x) - u(y)|^p}{|x-y|^{d+sp}} \, dx \, dy < \infty.
\end{equation*} 
We define a norm on $W^{s,p}(\R^d)$ by 
\begin{equation*}
\| f \|_{W^{s,p}} = \left( \| f \|^p_{L^p} +  \int_{\R^d}\int_{\R^d} \frac{|u(x) - u(y)|^p}{|x-y|^{d+sp}} \, dx \, dy \right)^{1\slash p}.
\end{equation*}
Now assume $s>1$ is a non-integral real number.  Set $s=m+\sigma$, where $\sigma\in(0,1)$.  We define the fractional Sobolev space $W^{s,p}$ to be the set of all functions $f$ in $W^{m,p}$ such that, for every multi-index $\alpha$ with $|\alpha|=m$, we have $D^{\alpha}f\in W^{\sigma, p}$.  In this case, we have 
\begin{equation*}
\| f \|_{W^{s,p}} = \left( \| f \|^p_{W^{m,p}} +  \sum_{|\alpha|=m} \| D^{\alpha} f \|^p_{W^{\sigma, p}} \right)^{1\slash p}.
\end{equation*} 
\end{definition}

\begin{definition}
For $\al \in (0,1)$, we define the space $F^\al_p$ to be the set of functions $u\in L^p$ for which there exists a function $U$ in $L^p$ satisfying
\begin{equation}\label{Uchar}
\frac{|u(x) - u(y)|}{|x-y|^s} \leq U(x) + U(y)
\end{equation}	
for all pairs of points ($x,y$) in $\R^d\times \R^d$.  We define a norm on $F^\al_p$ as follows:
\begin{equation}
\| u \|_{F^\al_p} = \| u \|_{L^p} + \inf \{ \| U \|_{L^p} : \text{  } U \text{ satisfies (\ref{Uchar})}   \}.
\end{equation}
\end{definition}
\begin{remark}
One can use the identities $F^\al_p = F^\al_{p,\infty}$ and $W^{\al,p}=F^{\al}_{p,2}$, where $F^\al_{p,\infty}$ and $F^{\al}_{p,2}$ denote Triebel-Lizorkin spaces, along with embedding properties of Triebel-Lizorkin spaces, to show that $W^{\al,p} \hookrightarrow F^\al_p \hookrightarrow  W^{\al', p}$ for all $\al'<\al$.  We refer the reader to \cite{BC1994} and \ToDo{add citation (Triebel)} for details.  
\end{remark}
\begin{definition}
For $\alpha\in\R$, the space $B^{\alpha}_{p,\infty}$ is the set of functions $f$ such that 
\begin{equation*}
\sup_{j\geq -1} 2^{j\alpha}\| \Delta_j f \|_{L^{p}} < \infty. 
\end{equation*}
We set 
\begin{equation*}
\| f \|_{B^{\alpha}_{p,\infty}} = \sup_{j\geq -1} 2^{j\alpha} \| \Delta_j f \|_{L^{p}}.
\end{equation*}
\end{definition}
\begin{remark}
We have the embedding $W^{\al,p}\hookrightarrow B^{\al}_{p,\infty}\hookrightarrow W^{\al',p}$ for all $\al'<\al$ (see \ToDo{add citation (Triebel)} for details).
\end{remark}

The following lemma allows one to obtain Sobolev regularity of the velocity gradient from Sobolev regularity of the density.
\begin{lemma}\label{L:VelocityRegSob}
	For any fixed $\al>0$, $\al'<\al$, and $p\in (1,\infty)$,
	\begin{align*}
		\norm{\nabla\grad \F * \rho}_{W^{ \al', p}}
			\le C \norm{\rho}_{W^{\al, p}}.
	\end{align*}
\end{lemma}
\begin{proof}
	Let $v = \grad \F * \rho$. Then
	\begin{align*}
		& \norm{\nabla v}_{B^{ \al}_{p,\infty}} = \sup_{q \ge -1} 2^{q\al}\norm{\Delta_q \nabla v}_{L^p}
			\le C \sup_{q \ge -1} 2^{ q\al} \norm{\Delta_q \rho}_{L^p} = C \norm{\rho}_{B^{\alpha}_{p,\infty}}. 
	\end{align*}
	Above we used boundedness of Calderon-Zygmund operators on $L^p$, $p\in (1, \infty)$.  The lemma follows from the series of embeddings $W^{s,p}\hookrightarrow B^s_{p,\infty}\hookrightarrow W^{s',p}$.
\end{proof}
\ToDo{This can be done without bumbling down from $s$ to $s'$ by using the Triebel-Lizorkin characterization of Sobolev spaces.  But, in the end, we wouldn't gain anything by doing things that way.}

The following lemma is key in the proof of Theorem \ref{T:WeakImpliesStrongOld}.
\begin{lemma}\label{SobolevComp}
	Let $s, \beta \in (0, 1)$. For any fixed real number $C_1$, $p\in(1,\infty)$, and for any $s'<s$, if $\|(1-C_1f)^{-1}\|_{L^{\infty}}< \infty$, then 
	\begin{align}\label{e:CalphaFacts}
    	\begin{split}
        	&\norm{\frac{f \circ g}{1-C_1(f \circ g)}}_{{W}^{s' \beta, p}} \le C\| \det \nabla g \|^{1\slash p}_{L^{\infty}}(1+ \| g \|^{s}_{\dot{C}^{\beta}})\norm{f}_{{W}^{s, p}} , \\
        	&\norm{\frac{f \circ g}{1-C_1(f \circ g)}}_{{W}^{s', p}}
            	\le C\| \det \nabla g \|^{1\slash p}_{L^{\infty}}(1+ \| \nabla g \|^{s}_{L^{\infty}})\norm{f}_{{W}^{s, p}},  \\
	    \end{split}
	\end{align}
where $C$ depends on $\|(1-C_1f)^{-1}\|_{L^{\infty}}$.	
\end{lemma}
\begin{proof}
We follow a method similar to that used in \cite{BC1994} for the Euler equations.  By a simple calculation we have, for any $x$ and $y$ in $\R^d$,
	\begin{equation*}
	\frac{f(g(x))}{1-C_1(f(g(x)))} - \frac{f(g(y))}{1-C_1(f(g(y)))} = \frac{f(g(x)) - f(g(y))}{(1-C_1(f(g(x))))(1-C_1(f(g(y))))},
	\end{equation*}
	so that
	\begin{align*}
		&\frac{\left| \frac{f(g(x))}{1-C_1(f(g(x)))} - \frac{f(g(y))}{1-C_1(f(g(y)))} \right|}{|x-y|^{s\beta}}  \leq  \| (1-C_1(f \circ g))^{-1}\|^{2}_{L^{\infty}}
				\frac{\abs{f(g(x)) - f(g(y))}}{\abs{g(x) - g(y)}^{s}}
				\pr{\frac{\abs{g(x) - g(y)}}{\abs{x - y}^\beta}}^{s} \\
			&\qquad\qquad\leq C\| g \|^{s}_{\dot{C}^{\beta}}(U(g(x)) + U(g(y)))  .
	\end{align*}
Thus, if $U$ satisfies (\ref{Uchar}) for $f$, then $C\| g \|^{s}_{\dot{C}^{\beta}}U\circ g$ satisfies (\ref{Uchar}) for $\frac{f\circ g}{1-C_1f\circ g}$.  This implies that
\begin{equation}
\begin{split}
&\left\|\frac{f\circ g}{1-C_1f\circ g} \right\|_{F^{s\beta}_p} = \left\| \frac{f\circ g}{1-C_1f\circ g} \right\|_{L^p} + \inf \left\{ \| V \|_{L^p} : \text{  } V \text{ satisfies (\ref{Uchar}) for $\frac{f\circ g}{1-C_1f\circ g}$}  \right \}\\
&\qquad \leq C\| \det\nabla g \|^{1\slash p}_{L^{\infty}} \| f \|_{L^p} + \inf \{ C\| g \|^{s}_{\dot{C}^{\beta}}\| U\circ g \|_{L^p} : \text{  } U \text{ satisfies (\ref{Uchar}) for $f$}   \}\\
&\qquad \leq C\| \det \nabla g \|^{1\slash p}_{L^{\infty}}(1+ C\| g \|^{s}_{\dot{C}^{\beta}})\| f \|_{F^s_p}. 
\end{split}
\end{equation} 
A similar argument with $\beta=1$ yields the estimate
\begin{equation*}
\left\|\frac{f\circ g}{1-C_1f\circ g} \right\|_{F^{s}_p} \leq C\| \det \nabla g \|^{1\slash p}_{L^{\infty}}(1+ C\| \grad g \|^{s}_{L^{\infty}})\| f \|_{F^s_p}. 
\end{equation*}
The lemma follows from the embedding $W^{s\beta,p}\hookrightarrow F^{s\beta}_p \hookrightarrow W^{s'\beta, p}$.
\end{proof}
The following lemma is proved in \ToDo{add citation (Chae)}.
\begin{lemma}\label{Chae}
Let $s>0$ and $p\in(1,\infty)$.  Then there exists a constant $C$ such that
\begin{equation*}
\| fg \|_{W^{s,p}} \leq C (\| f \|_{L^{p_1}} \| g \|_{W^{s,p_2}} + \| f \|_{W^{s,r_1}} \| g\|_{L^{r_2}} ),
\end{equation*} 
where $p_1$, $r_1\in [1,\infty]$ and $1\slash p = 1\slash p_1 + 1\slash p_2 = 1\slash r_1 + 1\slash r_2$.
\end{lemma}

\Ignore{\begin{lemma}\label{W1pLem1}
Let $f$ belong to $W^{1,p}(\R^d)$, for $p\in[1,\infty]$.  Then for $h\in\R^d$,
\begin{equation*}
\| \tau_h f - f \|_{L^p} \leq \| \nabla f \|_{L^p} |h|.
\end{equation*}
\end{lemma}
\begin{proof}
Write 
\begin{align}
&| u (x-h) - u(x)| = \left|\int_0^1\frac{d}{dt} u(x-th) \, dt \right| = \left|\int_0^1\nabla u(x-th)\cdot h \, dt \right| \\
&\qquad \leq |h| \int_0^1 |\nabla u(x-th) | \, dt.
\end{align}
Taking the $L^p$-norm of both sides gives the lemma.
\end{proof}  }   

\begin{lemma}\label{W1pLem}
Let $f$ belong to $W^{1,p}(\R^d)$, for $p\in[1,\infty]$.  Assume $g_1 - g_2$ belongs to $L^{\infty}(\R^d)$ and $\det\nabla g_j\in L^{\infty}$ for $j=1$, $2$.  Then  
\begin{equation*}
\| f(g_1(x)) - f(g_2(x))\|_{L^p_x} \leq C\| \nabla f \|_{L^p} \| g_1 - g_2 \|_{L^{\infty}}.
\end{equation*}
\end{lemma}
\begin{proof}
Write 
\begin{equation}\label{W1p1}
\begin{split}
&| f(g_1(x)) - f(g_2(x))|=| f(g_2(x) - (g_2-g_1)(x)) - f(g_2(x))| \\
& = \left|\int_0^1\frac{d}{dt} f(g_2(x)-t(g_2-g_1)(x)) \, dt \right| \\
&\qquad= \left|\int_0^1(\nabla f)(g_2(x)-t(g_2-g_1)(x))\cdot (g_2-g_1)(x) \, dt \right| \\
&\qquad \leq |(g_2-g_1)(x)|  \int_0^1 |(\nabla f)(g_2(x)-t(g_2-g_1)(x)) | \, dt.
\end{split}
\end{equation}
Now observe that for fixed $t\in [0,1]$,
\begin{equation}\label{W1p2}
\begin{split}
&\| (\nabla f)(g_2(x)-t(g_2-g_1)(x)) \|_{L_x^p}  \leq \| \nabla f \|_{L^p} \| \det \nabla ((1-t)g_2(x) + tg_1(x)) \|^{1\slash p}_{L_x^{\infty}} \\
&\qquad \leq C( \| \det\nabla g_1 \|_{L^{\infty}} +  \| \det\nabla g_2 \|_{L^{\infty}})^{1\slash p}\| \nabla f \|_{L^p}.
\end{split}
\end{equation}
Taking the $L^p$-norm of both sides of (\ref{W1p1}) and applying (\ref{W1p2}) gives the desired inequality.
\end{proof}

\ToDo{END definitions, lemmas, embeddings which are only necessary if we include the Sobolev existence proof.}

}     

\begin{lemma}\label{L:MassZero}
	Let $\rho \in L^1 \cap L^\iny(\R^d)$.
	For all $p \in (d/(d - 1), \iny]$, $d \ge 2$,
	\begin{align}\label{e:gradFrhobound}
		\norm{\grad \F * \rho}_{L^p}
			\le C \norm{\rho}_{L^1 \cap L^p}.
	\end{align}  
	If $d \ge 3$ then for all $p \in (d/(d - 2), \iny]$,
	\begin{align}\label{e:CalFmuBound}
		\norm{\F * \rho}_{L^p}
			\le C(p) \norm \rho_{L^1 \cap L^p}, \quad
		\norm{\grad \F * \rho}
			\le C \norm \rho_{L^1 \cap L^\iny}^{\frac{1}{2}} \norm{\rho}_{L^1}^{\frac{1}{2}}.
	\end{align}  
	Moreover, for $d \ge 3$, let $p_1, p_2 \in \R$ with $1 \le p_1 < d/2 < p_2 \le \iny$.  Then
	\begin{align}\label{e:CalFmuBoundLp}
		\norm{\F * \rho}_{L^\iny}
			\le C \norm \rho_{L^{p_1} \cap L^{p_2}}, \quad
		\norm{\grad \F * \rho}
			\le C \norm \rho_{L^{p_1} \cap L^{p_2}}^{\frac{1}{2}}
				\norm{\rho}_{L^1}^{\frac{1}{2}}.
	\end{align}
	Let $d = 2$ with $m(\rho) = 0$ and $|x|^{\epsilon} \rho(x) \in L_x^1(\R^2)$ for some $\epsilon\in(0,1]$.
	For all $p \in (\frac{2}{\epsilon},\infty]$,
	$\F * \rho \in L^p$ and
	\begin{align}\label{e:CalFmuBoundd2}
		\norm{\F * \rho}_{L^p}
			\le C(\| |x|^{\epsilon}\rho(x)\|_{L_x^1}+ \| \rho \|_{L^1 \cap L^{\infty}}), \quad
		\norm{\grad \F * \rho}^2
			\le C(\| |x|^{\epsilon}\rho(x)\|_{L^1_x}+ \| \rho \|_{L^1\cap L^{\infty}})\norm{\rho}_{L^1}.
	\end{align} 
\end{lemma}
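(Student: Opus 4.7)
The plan is to prove the first five estimates---\cref{e:gradFrhobound}, the two bounds in \cref{e:CalFmuBound}, and the two bounds in \cref{e:CalFmuBoundLp}---by a near/far decomposition of the convolution kernel in physical space (for the $L^p$ and $L^\iny$ bounds) combined with a low/high-frequency split via Plancherel (for the $L^2$ bounds on $\grad \F * \rho$), and then to handle \cref{e:CalFmuBoundd2} separately using the cancellation afforded by $m(\rho) = 0$.

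For \cref{e:gradFrhobound}, I would write $K := \grad\F$, so that $|K(x)|\simeq|x|^{1-d}$, and split $K = K\CharFunc_{B_1} + K\CharFunc_{B_1^c}$. A direct computation shows $K\CharFunc_{B_1} \in L^1(\R^d)$ for all $d\ge 2$, while $K\CharFunc_{B_1^c} \in L^p(\R^d)$ precisely when $p > d/(d-1)$, so Young's inequality applied to each piece and paired with $\norm{\rho}_{L^p}$ and $\norm{\rho}_{L^1}$ respectively gives the bound. The $L^p$ estimate in \cref{e:CalFmuBound} is identical with kernel $|x|^{2-d}$ and threshold $p > d/(d-2)$. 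For the $L^\iny$ bound in \cref{e:CalFmuBoundLp}, I would apply \Holders inequality after splitting the $y$-integral at $|x-y|=1$: on the near piece, $\F \in L^{p_2'}(B_1)$ (which requires $p_2' < d/(d-2)$, that is, $p_2 > d/2$) pairs with $\rho \in L^{p_2}$; on the far piece, $\F \in L^{p_1'}(B_1^c)$ (which requires $p_1'>d/(d-2)$, that is, $p_1 < d/2$) pairs with $\rho \in L^{p_1}$. For the $L^2$-bounds on $\grad\F*\rho$, Plancherel gives
\[
\norm{\grad \F * \rho}^2 = c \int_{\R^d}\frac{\abs{\wh\rho(\xi)}^2}{\abs{\xi}^2}\, d\xi;
\]
I would split at $|\xi|=1$, use $|\wh\rho|\le\norm{\rho}_{L^1}$ on the low-frequency piece together with $\int_{|\xi|\le 1}|\xi|^{-2}\,d\xi<\iny$ (which needs $d\ge 3$), and bound the high-frequency piece by $\norm{\rho}^2$, controlled by $\norm{\rho}_{L^1}\norm{\rho}_{L^\iny}$ or $\norm{\rho}_{L^1}\norm{\rho}_{L^{p_1}\cap L^{p_2}}$ via \Holders inequality and log-convexity of Lebesgue norms.

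The real work is in \cref{e:CalFmuBoundd2}, where $d=2$ and $\F = \frac{1}{2\pi}\log|x|$ is neither bounded nor decaying. The key move is to use $m(\rho)=0$ to rewrite
\[
(\F*\rho)(x) = \int_{\R^2}\pr{\F(x-y) - \F(x)}\rho(y)\, dy.
\]
For $|x|\le 1$, a standard near/far split against $\rho \in L^1\cap L^\iny$ gives $|(\F*\rho)(x)|\le C\norm{\rho}_{L^1\cap L^\iny}$. For $|x|\ge 2$, I would split the $y$-integral at $|y|=|x|/2$: on $\{|y|\le|x|/2\}$ use $|\log(|x-y|/|x|)|\le C|y|/|x|$ together with the bound $|y|\le|x|^{1-\epsilon}|y|^\epsilon$ valid on this set, yielding a contribution of order $|x|^{-\epsilon}\norm{|y|^\epsilon\rho(y)}_{L^1_y}$; on $\{|y|>|x|/2\}$ the total $\rho$-mass is at most $C|x|^{-\epsilon}\norm{|y|^\epsilon\rho(y)}_{L^1_y}$, so the $\F(x)$-contribution is $O(|x|^{-\epsilon}\log|x|)$, and the $\F(x-y)$-contribution is handled by a further split of the $(x-y)$-integral at an intermediate radius $r=|x|^{-\epsilon/2}$, using $\rho\in L^\iny$ near the log-singularity and the weighted-mass estimate outside. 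Combining these pieces gives $|(\F*\rho)(x)|\le C|x|^{-\epsilon}\log|x|$ for $|x|\ge 2$, which lies in $L^p(\R^2)$ precisely when $p>2/\epsilon$. For the $L^2$-bound on $\grad\F*\rho$, Plancherel again reduces matters to controlling $\int_{\R^2}|\wh\rho|^2|\xi|^{-2}\, d\xi$; the dangerous low-frequency contribution is tamed by $\wh\rho(0)=0$ and the \Holder estimate $|\wh\rho(\xi)|\le C|\xi|^\epsilon\norm{|y|^\epsilon\rho(y)}_{L^1_y}$ (obtained from $|e^{-ix\cdot\xi}-1|\le C|x|^\epsilon|\xi|^\epsilon$ for $\epsilon\in(0,1]$), and writing $|\wh\rho|^2\le|\wh\rho|\cdot|\wh\rho|$ with one factor using the trivial bound $|\wh\rho|\le\norm{\rho}_{L^1}$ produces exactly the product structure $\norm{\rho}_{L^1}\norm{|y|^\epsilon\rho(y)}_{L^1_y}$ appearing in the statement. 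The high-frequency piece is handled as in the preceding paragraph. The hardest step will be the careful bookkeeping in the physical-space split for $|x|\ge 2$: balancing the log-singularity of $\F$ against the slow tail decay of $\rho$ through the intermediate radius $r=|x|^{-\epsilon/2}$ so as to reproduce exactly the sharp decay rate $|x|^{-\epsilon}\log|x|$.
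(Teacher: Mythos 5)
Most of your outline is sound, and parts of it genuinely diverge from the paper: for the kernel splittings behind \cref{e:gradFrhobound} and the two $L^p$/$L^\iny$ convolution bounds you proceed exactly as the paper does (it uses the smooth cutoff $a$ of \cref{D:Radial} where you use sharp indicators), while for the $L^2$ bounds on $\grad \F * \rho$ and for the two-dimensional estimates \cref{e:CalFmuBoundd2} you work on the Fourier side, whereas the paper integrates by parts via \cref{L:NearL2Norm} to get $\norm{\grad \F * \rho}^2 = -\innp{\F * \rho, \rho} \le \norm{\F * \rho}_{L^\iny} \norm{\rho}_{L^1}$ and, in 2D, runs a Littlewood--Paley argument showing $\wh{\rho} \in C^\epsilon$ followed by Hausdorff--Young. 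Your direct estimate $\abs{\wh{\rho}(\xi)} \le C \abs{\xi}^\epsilon \norm{\abs{x}^\epsilon \rho}_{L^1_x}$ from $\wh\rho(0)=0$ is in fact cleaner than the paper's derivation of the \Holder continuity of $\wh\rho$, and your Plancherel splitting does prove the second inequalities of \cref{e:CalFmuBound} and of \cref{e:CalFmuBoundd2}, because there the right-hand sides involve $\norm{\rho}_{L^1}$ and $\norm{\rho}_{L^\iny}$.

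The genuine gap is the second inequality of \cref{e:CalFmuBoundLp}. On low frequencies, squaring the trivial bound $\abs{\wh\rho} \le \norm{\rho}_{L^1}$ produces $C\norm{\rho}_{L^1}^2$, which is \emph{not} dominated by the target $C\norm{\rho}_{L^{p_1} \cap L^{p_2}} \norm{\rho}_{L^1}$: for the spreading family $\rho_\la = \la^{-d}\phi(\cdot/\la)$ with $\la \to \iny$, the $L^1$ norm is constant while $\norm{\rho_\la}_{L^{p_1} \cap L^{p_2}} \to 0$ whenever $p_1 > 1$. The high-frequency piece fails symmetrically: the claim $\norm{\rho}^2 \le C \norm{\rho}_{L^1} \norm{\rho}_{L^{p_1} \cap L^{p_2}}$ is false whenever $p_2 < \iny$ (take $\la \to 0$ in the same family: $\norm{\rho_\la}^2 \sim \la^{-d}$ while the right side is $\sim \la^{-d(1 - 1/p_2)}$), and log-convexity only gives $\norm{\rho}^2 \le \norm{\rho}_{L^1}^{2\theta} \norm{\rho}_{L^{p_2}}^{2(1-\theta)}$ with $1/2 = \theta + (1-\theta)/p_2$, hence $2\theta < 1$, which the same scaling shows cannot be upgraded to the needed product. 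One can try to repair the Fourier route by keeping a single factor $\norm{\rho}_{L^1}$ and estimating $\int \abs{\wh\rho}\abs{\xi}^{-2}$ by Hausdorff--Young against $\norm{\abs{\xi}^{-2}}_{L^{p_1}(\set{\abs{\xi} \le 1})}$ and $\norm{\abs{\xi}^{-2}}_{L^{p_2}(\set{\abs{\xi} > 1})}$ (both finite precisely because $p_1 < d/2 < p_2$), but Hausdorff--Young requires $p_1, p_2 \le 2$, which is impossible once $d \ge 4$, since then $p_2 > d/2 \ge 2$. The paper's device sidesteps all of this: by \cref{L:NearL2Norm}, $\norm{\grad \F * \rho}^2 \le \norm{\F * \rho}_{L^\iny} \norm{\rho}_{L^1}$, and the $L^\iny$ bound you have already established (the first inequality of \cref{e:CalFmuBoundLp}) then yields exactly the stated product structure. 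You should substitute that step; the rest of your outline can stand.
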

\begin{proof}
	Let $a$ be as in Definition \ref{D:Radial}.
	First observe that
	\begin{align*}
		\norm{\grad \F * \rho}_{L^p}
			&\le \norm{a \grad \F}_{L^1} \norm{\rho}_{L^p}
					+ \norm{(1 - a) \grad \F}_{L^p} \norm{\rho}_{L^1}
			< \iny
	\end{align*}
	for all $p > d/(d - 1)$, giving \cref{e:gradFrhobound}.
	For $d \ge 3$, 
	\begin{align*}
		\norm{\F * \rho}_{L^p}
			&\le \norm{a \F}_{L^1} \norm{\rho}_{L^p}
					+ \norm{(1 - a) \F}_{L^p} \norm{\rho}_{L^1}
			< \iny
	\end{align*}
	for all $p > d/(d - 2)$. In particular,
	$\norm{\F * \rho}_{L^\iny} \le C \norm{\rho}_{L^1 \cap L^\iny}$.
	Hence, for $d \ge 3$, we can apply \cref{L:NearL2Norm}, which gives
	\begin{align*}
		\begin{split}
		\norm{\grad \F * \rho}^2
			&= - \int_{\R^d} (\F * \rho) (\Delta \F * \rho)
			\le \norm{\F * \rho}_{L^\iny} \norm{\Delta \F * \rho}_{L^1}
			= \norm{\F * \rho}_{L^\iny} \norm{\rho}_{L^1},
		\end{split}
	\end{align*}
	which leads to \cref{e:CalFmuBound}. 
	
	For the remainder of the proof, let $\FTF$ denote the Fourier transform operator.
	
To establish the lemma for $d=2$, we will first show that since $|x|^{\epsilon}\rho(x)$ belongs to $L_x^1(\R^2)$, $\hat{\rho}$ belongs to $C^{\epsilon}(\R^2)$.  To see this, note that for $j\geq 0$,
\begin{equation*}
\begin{split}
&\| \Delta_j \hat{\rho} \|_{L^{\infty}} = \| \phi_j * \hat{\rho} \|_{L^{\infty}} \leq  \| \FTF(\phi_j * \hat{\rho}) \|_{L^{1}} = \| \hat{\phi}_j (x)\rho(-x) \|_{L_x^{1}}= \| \hat{\phi}_j(x) |x|^{-\epsilon}|x|^{\epsilon}\rho(-x) \|_{L_x^{1}}\\
&\qquad  \leq \| \hat{\phi}_j(x) |x|^{-\epsilon} \|_{L_x^{\infty}(\text{supp }\hat\phi_j  )} \| |x|^{\epsilon}\rho(x) \|_{L_x^{1}} \leq C2^{-j\epsilon} \| |x|^{\epsilon}\rho(x) \|_{L_x^{1}}.
\end{split}
\end{equation*}	
Also note that, by Bernstein's Lemma (or Young's convolution inequality), $\| \Delta_{-1} \hat{\rho}\|_{L^{\infty}}\leq C\| \hat{\rho} \|_{L^2} = C\| \rho \|_{L^2}$.  We conclude that 
\begin{equation*}
\begin{split}
&\| \hat{\rho} \|_{C^{\epsilon}} = \sup_{j\geq -1} 2^{j\epsilon} \| \Delta_j \hat{\rho}\|_{L^{\infty}} \leq C\| \rho \|_{L^2} + \sup_{j\geq 0} 2^{j\epsilon} \| \Delta_j \hat{\rho}\|_{L^{\infty}} \\
&\qquad\qquad \leq C(\| \rho \|_{L^2} + \| |x|^{\epsilon}\rho(x) \|_{L_x^{1}}).
\end{split}
\end{equation*} 	
Since $\hat{\rho}(0) = 0$, we can write
\begin{equation}\label{rhohatnear0}
\begin{split}
&|\hat{\rho}(\xi)| = |\hat{\rho}(\xi) - \hat{\rho}(0)| \leq C(\| \rho \|_{L^2} + \| |x|^{\epsilon}\rho \|_{L_x^{1}}) | \xi - 0 |^{\epsilon}\\
&\qquad\qquad  = C(\| \rho \|_{L^2} + \| |x|^{\epsilon}\rho(x) \|_{L_x^{1}}) | \xi |^{\epsilon} 
\end{split}
\end{equation}  
for each $\xi\in\R^2$.  The estimate (\ref{rhohatnear0}) implies that $\FTF({\Delta_{-1}(\F\ast\rho)})$ belongs to $L^r(\R^2)$ for all $r<\frac{2}{2-\epsilon}$.  Thus, using the Hausdorff-Young inequality, for any $p$ satisfying $1\slash p + 1\slash q = 1$, with $q<\frac{2}{2-\epsilon}$, we can write
\begin{equation*}
\begin{split}
&\| \F \ast \rho \|_{L^p} \leq \| \Delta_{-1}(\F\ast\rho) \|_{L^p} + \sum_{j\geq 0} \| \Delta_j (\F\ast\rho) \|_{L^p}\\
&\qquad \leq \| \FTF({\Delta_{-1}(\F\ast\rho)}) \|_{L^q} + \sum_{j\geq 0} 2^{-2j}\| \Delta_j (\partial_k\partial_l\F\ast\rho) \|_{L^p}\\
&\qquad \leq \| \FTF({\Delta_{-1}(\F\ast\rho)} )\|_{L^q} + C\| \rho \|_{L^p} \leq C(\| |x|^{\epsilon}\rho(x)\|_{L_x^1} + \| \rho \|_{L^1 \cap L^{\infty}}),
\end{split}
\end{equation*}   
where we also used Bernstein's Lemma to get the second inequality.  To obtain the third inequality, for $p<\infty$ we used boundedness of the Riesz transforms on $L^p$, and for $p=\infty$ we used a classical lemma (for a proof, see Lemma 4.2 of \cite{CK2006}). 

We conclude, in particular, that when $d=2$, $\norm{\F * \rho}_{L^\iny} \leq C(\| |x|^{\epsilon}\rho(x)\|_{L_x^1} + \| \rho \|_{L^1\cap L^{\infty}})$.  Since $\F \ast \rho$ belongs to $L^{p_1}$ for some $p_1<\infty$, and $\nabla\F \ast \rho$ belongs to $L^{p_2}$ for all $p_2>2$ by \cref{e:gradFrhobound}, we can apply \cref{L:NearL2Norm}, which gives
	\begin{align*}
		\begin{split}
		\norm{\grad \F * \rho}^2
			&\le  \norm{\F * \rho}_{L^\iny} \norm{\rho}_{L^1},
		\end{split}
	\end{align*}
	which leads to \cref{e:CalFmuBoundd2}.  
	\Ignore{ 
	Now,
	\begin{align*}
		\frac{1}{p_1} + \frac{1}{p_2}
			= \frac{d - 2}{d} + \frac{d - 1}{d}
			= 2 - \frac{3}{d}
			> 1
	\end{align*}
	for all $d \ge 3$. Thus, we can apply
	} 
	
	For \cref{e:CalFmuBoundLp} for $d \ge 3$, instead of the
	bound $\norm{\F * \rho}_{L^\iny} \le C \norm{\rho}_{L^1 \cap L^\iny}$,
	we use the bound
	\begin{align*}
		\norm{\F * \rho}_{L^\iny}
			&\le \norm{a \F}_{L^{p_2'}} \norm{\rho}_{L^{p_2}}
					+ \norm{(1 - a) \F}_{L^{p_1'}} \norm{\rho}_{L^{p_1}},
	\end{align*} 
	where the primes represent \Holder conjugates. This is finite since
	$p_2' < d/(d - 2)$ and $\F \in L^{p_2'}_{loc}(\R^d)$, while
	$p_1' > d/(d - 2)$ and $\F$ decays like an $L^{p_1'}$-function.
\end{proof}

To treat densities in $\R^2$ having nonzero total mass, as we will need to do in \cref{S:VVNonL2}, we will subtract from the associated velocity field a radially symmetric velocity field, $\btau_0$. We do this in analogy with the definition of the stationary solution to the Euler equations used to obtain the radial-energy decomposition of a 2D velocity field in \cite{C1998,MB2002}.

\begin{definition}\label{D:btau0}
Fix a radially symmetric function $g_0 \in C_C^\iny(\R^2)$ having total mass 1. We will abuse notation by writing both $g_0(x)$ and $g_0(r)$, where $x \in \R^2$ and $r = \abs{x}$. Define
\begin{align*}
	\btau_0(x) = f(r) x, \quad
		f(r) := \frac{1}{r^2} \int_0^r \eta g_0(\eta) \, d \eta.
\end{align*}
\end{definition}
Being a radially directed vector field, $\btau_0$, is a gradient. We see that
\begin{align*}
	\dv \btau_0
		&= 2f + x^i \prt_i f
		= 2f + x^i \frac{x^i}{r} \prt_r f
		= 2f + r \prt_r f \\
		&= 2f - 2r \frac{1}{r^3} \int_0^r \eta g(\eta) \, d \eta
			+ r \frac{r g_0(r)}{r^2}
		= 2f - 2f + g_0(r)
		= g_0(r).
\end{align*}
Hence, also, $\btau_0 = \grad \F * g_0$.

We used the following technical lemma in the proof of \cref{L:MassZero}, above.

\begin{lemma}\label{L:NearL2Norm}
	Let $\varphi \in L^{p_1} \cap L^\iny(\R^d)$ with $\grad \varphi \in L^{p_2} \cap L^\iny(\R^d)$
	and $\Delta \varphi \in L^1 \cap L^\iny(\R^d)$.
	If $1/p_1 + 1/p_2 \ge (d-1)/d$ then $\grad \varphi \in L^2(\R^d)$.
	If $1/p_1 + 1/p_2 > (d-1)/d$ then
	\begin{align}\label{e:gradvarphiBound}
		\norm{\grad \varphi}^2
			&= - \int_{\R^d} \varphi \Delta \varphi.
	\end{align}
	\Ignore{ 
	Case 2: If $1/p_1 + 1/p_2 = (d-1)/d$ then
	\begin{align}\label{e:gradvarphiBoundp2}
		\norm{\grad \varphi}^2
			\le 2 \norm{\varphi}_{L^\iny} \norm{\Delta \varphi}_{L^1}
				+ C \norm{\varphi}_{L^\iny}^2.
	\end{align}
	} 
\end{lemma}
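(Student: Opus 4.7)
The plan is to truncate with the cutoff $a_R$ from \cref{D:Radial}, integrate by parts against $-a_R \varphi \Delta \varphi$, and send $R \to \iny$. Once a routine mollification of $\varphi$ is used to make the calculation classical, I would work from
\[
    -\int_{\R^d} a_R \varphi \, \Delta \varphi
        = \int_{\R^d} a_R \abs{\grad \varphi}^2
            + \int_{\R^d} \varphi \, (\grad a_R) \cdot \grad \varphi,
\]
valid because $a_R \varphi$ has compact support while $\grad \varphi$ is bounded. The left-hand side converges to $-\int_{\R^d} \varphi \Delta \varphi$ by dominated convergence, since $\varphi \in L^\iny$ and $\Delta \varphi \in L^1$ force $\varphi \Delta \varphi \in L^1$. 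The first term on the right is monotone increasing in $R$ and, by the monotone convergence theorem, converges to $\norm{\grad \varphi}^2 \in [0, +\iny]$. Everything therefore reduces to controlling the error term $E_R := \int_{\R^d} \varphi (\grad a_R) \cdot \grad \varphi$.

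Since $\abs{\grad a_R} \le C/R$ is supported in an annulus $A_R \subset \smallset{R \le \abs{x} \le 2R}$ of volume $\abs{A_R} \le C R^d$, \Holders inequality with exponents $p_1$, $p_2$, and $r$ determined by $1/r = 1 - 1/p_1 - 1/p_2$ (reducing to this case when $1/p_1 + 1/p_2 > 1$ by interpolating up against the available $L^\iny$ bounds on $\varphi$ and $\grad \varphi$, which can only decrease the subsequent power of $R$) would give
\[
    \abs{E_R} \le \frac{C}{R} \norm{\varphi}_{L^{p_1}(A_R)} \norm{\grad \varphi}_{L^{p_2}(A_R)} \abs{A_R}^{1/r}
        \le C R^{d(1 - 1/p_1 - 1/p_2) - 1} \norm{\varphi}_{L^{p_1}} \norm{\grad \varphi}_{L^{p_2}}.
\]
Under the standing hypothesis $1/p_1 + 1/p_2 \ge (d-1)/d$, the exponent $d(1 - 1/p_1 - 1/p_2) - 1$ is nonpositive, so $\abs{E_R}$ is bounded uniformly in $R$; combined with the uniform bound $\abs{\int a_R \varphi \Delta \varphi} \le \norm{\varphi}_{L^\iny} \norm{\Delta \varphi}_{L^1}$ this forces $\int_{\R^d} a_R \abs{\grad \varphi}^2$ to be uniformly bounded, and monotone convergence yields $\grad \varphi \in L^2(\R^d)$. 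Under the strict hypothesis $1/p_1 + 1/p_2 > (d-1)/d$, the exponent is strictly negative, $E_R \to 0$, and passing to the limit produces \cref{e:gradvarphiBound}.

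The most delicate point is not the analysis above but the initial integration by parts, since $\varphi$, $\grad\varphi$, and $\Delta\varphi$ are assumed only at the level of Lebesgue regularity. The plan is to handle it by mollifying $\varphi$ with a standard bump at scale $\eps$, performing the classical integration by parts against $a_R (\varphi * \eta_\eps)$, and then sending $\eps \to 0^+$. Because $a_R$ has compact support, the $L^\iny$ controls on $\varphi$ and $\grad \varphi$, together with $\Delta(\varphi * \eta_\eps) = (\Delta\varphi) * \eta_\eps \to \Delta \varphi$ in $L^1_{loc}$, make this passage to the limit routine, after which the cleaner outer limit $R \to \iny$ delivers both conclusions of the lemma.
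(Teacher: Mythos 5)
Your proof is correct and takes essentially the same route as the paper's: truncation by the cutoff $a_R$, the same integration by parts, the same three-way \Holder estimate on the annulus yielding the exponent $d(1 - 1/p_1 - 1/p_2) - 1$ for the error term, and the monotone convergence theorem for $\int a_R \abs{\grad \varphi}^2$. The only differences are cosmetic: you justify the integration by parts by mollification where the paper assumes $\varphi \in C^\iny$ and appeals to density, and you spell out explicitly both the boundary case $1/p_1 + 1/p_2 = (d-1)/d$ (uniform boundedness of the error, giving $\grad \varphi \in L^2$) and the corner case $1/p_1 + 1/p_2 > 1$, which the paper leaves implicit.
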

\begin{proof}
	Let $a_R$ be as in \cref{D:Radial}.
	Assume first that
	$\varphi$ is also in $C^\iny(\R^d)$. Then
	\begin{align*}
		\int_{\R^d} \abs{\grad \varphi}^2
			&= \lim_{R \to \iny} \int_{\R^d} a_R \grad \varphi \cdot \grad \varphi
			= - \lim_{R \to \iny} \int_{\R^d} \dv(a_R \grad \varphi) \, \varphi \\
			&= - \lim_{R \to \iny} \int_{\R^d} a_R \Delta \varphi \, \varphi
			- \lim_{R \to \iny} \int_{\R^d} (\grad a_R \cdot \grad \varphi) \varphi \\
			&= - \int_{\R^d} \Delta \varphi \, \varphi
			- \lim_{R \to \iny} \int_{\R^d} (\grad a_R \cdot \grad \varphi) \varphi.
	\end{align*}
	For the first equality, the properties of $a$ allow us to apply the monotone convergence
	theorem (we may obtain $\iny$, though).
	The one limit we evaluated is valid because $a_R \Delta \varphi \to \Delta \varphi$
	in $L^1(\R^d)$.
	For the remaining limit, we have
	\begin{align}\label{e:RemainingLimit}
		\orgabs{\int_{\R^d} (\grad a_R \cdot \grad \varphi) \varphi}
			&\le \norm{\grad a_R}_{L^\iny}
				\norm{1}_{L^p(\supp a_R)}
				\norm{\grad \varphi}_{L^{p_1}}
				\norm{\varphi}_{L^{p_2}}
			\le \frac{C}{R} R^{\frac{d}{p}}
			= C R^{\frac{d}{p} - 1}.
	\end{align}
	By assumption,
	$
		1 = \frac{1}{p} + \frac{1}{p_1} + \frac{1}{p_2}
			\ge \frac{1}{p} + \frac{d-1}{d}
	$,
	so $p \ge d$. If $
		 \frac{1}{p_1} + \frac{1}{p_2}
			> \frac{d-1}{d}
	$, so that $p > d$, then we conclude that the remaining limit vanishes,
	from which \cref{e:gradvarphiBound} follows.
	\Ignore{ 

	Now suppose that $p = 2$. Then the calculation above with $\limsup$ in place
	of $\lim$ shows that $\int_{\R^d} a_R \grad \varphi \cdot \grad \varphi$ is bounded
	in $R$. But it is also nondecreasing in $R$ and hence has a limit. We conclude therefore,
	again by the monotone convergence theorem, that $\grad \varphi \in L^2(\R^d)$.
	Thus, we can return to \cref{e:RemainingLimit} using $p_1 = 2$, $p_2 = \iny$
	to conclude that
	\begin{align*}
		\norm{\grad \varphi}^2
			\le \orgabs{\int_{\R^d} \varphi \Delta \varphi}
				+ C \norm{\grad \varphi} \norm{\varphi}_{L^\iny}
			\le \norm{\varphi}_{L^\iny} \norm{\Delta \varphi}_{L^1}
				+ C \norm{\grad \varphi} \norm{\varphi}_{L^\iny}.
	\end{align*}
	This inequality is of the form $A^2 \le B + D A$, where $B, D > 0$. Applying the quadratic formula
	to bound $A$ and squaring the result gives $A^2 \le D^2 + 2 B$, which is
	\cref{e:gradvarphiBoundp2}.

	This gives the result
	for smooth $\varphi$ and in then in full generality by density.
	} 
\end{proof}

\Ignore{ 
         
         We will use the following lemma in later sections. We include it here because of its similarity to \cref{L:NearL2Norm}. (It is not, however, strong enough to imply \cref{L:NearL2Norm}.)

\begin{lemma}\label{L:NearPairing}
\ToDo{Where is this lemma used?  We never refer to it in the paper.}	Let $\u \in L^2(\R^d)$ with $\dv \u \in L^1(\R^d)$. Let $\varphi \in L^p \cap L^\iny$
	for some $p < \frac{2d}{d-2}$ or $\varphi \in L^\iny(\R^d)$ and vanishing at infinity. Assume that
	$\grad \varphi \in L^2(\R^d)$. Then
	\begin{align*}
		\innp{\u, \grad \varphi}
			&= - \int_{\R^d} \dv \u \, \varphi.
	\end{align*}
\end{lemma}
\begin{proof}
	Let $a_R$ be as in \cref{D:Radial}. Assume first that $\varphi$ is
	also in $C^\iny(\R^d)$. Then
	\begin{align*}
		\innp{\u, \grad \varphi}
			&= \lim_{R \to \iny} \innp{a_R \u, \grad \varphi}
			= - \lim_{R \to \iny} \innp{\dv(a_R \u), \varphi} \\
			&= - \lim_{R \to \iny} \innp{a_R \dv \u, \varphi}
				- \lim_{R \to \iny} \innp{\grad a_R \cdot \u, \varphi}
			= - \int_{\R^d} \dv \u \, \varphi
				- \lim_{R \to \iny} \innp{\grad a_R \cdot \u, \varphi}.
	\end{align*}
	The last equality is valid because $a_R \dv \u \to \dv \u$ in $L^1(\R^d)$.
	
	For the remaining limit, assume first that $\varphi \in L^p \cap L^\iny$ and without loss
	of generality assume that $2 < p < \iny$.
	Let $p'$ be \Holder conjugate to $p$. Then
	\begin{align*}
		\abs{\innp{\grad a_R \cdot \u, \varphi}}
			\le \norm{\grad a_R}_{L^\iny} \norm{\u}_{L^{p'}(\supp a_R)} \norm{\varphi}_{L^p}
			\le \frac{C}{R} \norm{\u}_{L^2} \norm{1}_{L^q(\supp a_R)} \norm{\varphi}_{L^p}.
	\end{align*}
	Here,
	$
		1 - \frac{1}{p} = \frac{1}{p'} = \frac{1}{2} + \frac{1}{q}
	$
	so $\frac{1}{q} = \frac{1}{2} - \frac{1}{p}$. Hence,
	\begin{align*}
		\abs{\innp{\grad a_R \cdot \u, \varphi}}
			\le \frac{C}{R} \norm{\u}_{L^2} \norm{\varphi}_{L^p} (4 \pi R^d)^{\frac{1}{q}}
			\le \frac{C}{R} (4 \pi R^d)^{\frac{1}{2} - \frac{1}{p}}
			= C R^{(\frac{d}{2} - \frac{d}{p} -1)},
	\end{align*}
	which vanishes as $R \to \iny$ for $p<\frac{2d}{d-2}$.
	If $\varphi \in L^\iny(\R^d)$ and vanishes at infinity, then
	\begin{align*}
		\abs{\innp{\grad a_R \cdot \u, \varphi}}
			\le \norm{\grad a_R}_{L^2} \norm{\u}_{L^2} \norm{\varphi}_{L^\iny(\supp \grad a_R)}
			\le C\norm{\u}_{L^2} \norm{\varphi}_{L^\iny(\supp \grad a_R)},
	\end{align*} \ToDo{I think there is a mistake here.  The gradient of aR will in only be bounded in L2 independent of R when d=2... but I don't see where we used this later in the paper, so maybe it can be removed anyway.}
	which vanishes as $R \to \iny$ since $\supp \grad a_R \subseteq B_{2R}(0) \setminus B_R(0)$.
	This gives the result.
\end{proof}
} 

\Ignore{ 
Fix a radially symmetric function $g_0 \in C_C^\iny(\R^d)$ having total mass 1. We will abuse notation by writing both $g_0(x)$ and $g_0(r)$, where $x \in \R^d$ and $r = \abs{x}$. Define
\begin{align*}
	\btau_0(x) = \frac{x}{r^d} \int_0^r \eta^{d - 1} g_0(\eta) \, d \eta.
\end{align*}
Being a radially directed vector field, $\btau_0$, is a gradient. Also, letting
\begin{align*}
	f(r)
		= \frac{1}{r^d} \int_0^r \eta^{d - 1} g_0(\eta) \, d \eta,
\end{align*}
so that $\btau_0(x) = x f(r)$, we see that
\begin{align*}
	\dv \btau_0
		&= d f + x^i \prt_i f
		= d f + x^i \frac{x^i}{r} \prt_r f
		= d f + r \prt_r f \\
		&= df - d r \frac{1}{r^{d + 1}} \int_0^r \eta^{d - 1} g(\eta) \, d \eta
			+ r \frac{r^{d - 1} g_0(r)}{r^d}
		= df - df + g_0(r)
		= g_0(r).
\end{align*}
Hence, also, $\btau_0 = \grad \F * g_0$.
} 

\Ignore{ 

%
%
\section{Higher regularity of viscous solutions}\label{S:ViscousHigherRegularity}

\noindent 
Given $m \in \R$, define the affine space $E_m$ somewhat in analogy with the space of the same name in Definition 1.3.3 of \cite{C1998}.

\begin{definition}\label{D:E}
	Let
	\begin{align*}
		\E
			:= \set{\grad \F * \rho \colon \rho \in L^1 \cap L^\iny(\R^2)}.
	\end{align*}
	For $m \in \R$, let
	\begin{align*}
		E_m
			:= \set{\v \in \E \colon m(\dv \v) = m}
			= m \btau_0 + E_0,
	\end{align*}
	so that
	\begin{align*}
		\E = \bigcup_{m \in \R} E_m.
	\end{align*}
	Define the semi-norm,
	$
		\norm{\v}_{E_m}
			= \norm{\v - m \btau_0}
	$
	and the norm,
	\begin{align}\label{e:NormE}
		\norm{\v}_\E
			= \abs{m(\dv \v)} + \norm{\v - m(\dv \v) \btau_0}.
	\end{align}
	Note that $\norm{\v}_\E$ as well as $\norm{\v}_{E_m}$ depend upon the specific way
	in which we chose $\btau_0$.
\end{definition}

With this definition of the space $\E$, we can state the energy-based control we have on $\v^\nu$.
\begin{prop}\label{P:ViscousVelocity}
	Let $\rho^\nu$ be a weak solution to \GAGnu for $\nu \ge 0$ as in \cref{D:WeakSolution}
	and let $\v^\nu = \grad \F * \rho^\nu$.
	Then $\v^\nu \in L^\iny_{loc}(\R; \E \cap \dot{H}^1)$ for $d = 2$ and
	$\v^\nu \in L^\iny_{loc}(\R; H^1)$ for $d \ge 3$.
	Here, $\dot{H}^1$ is the homogeneous Sobolev space
\end{prop}
\begin{proof}
	This follows from \cref{T:ViscousExistence}, \cref{L:gradvlBound},
	and \cref{L:MassZero}.
\end{proof}


\ToDo{The $L^\iny$ in time spaces should be changed to continuous in times spaces, I believe. Also, the independence of the norms on $\nu$ needs to be stated, especially the $H^1$ norm of $\grad \rho^\nu$, as we use this independence in the next section. But, in fact, the norms are far from independent of $\nu$.}
\begin{theorem}\label{T:ViscousHigherRegularity}
	Suppose that $\rho_0 \in L^1 \cap L^\iny \cap H^k(\R^d)$, $k \ge 0$,
	and let $T$ be as in \cref{T:ViscousExistence}. Then
	\begin{align*}
		\rho^\nu \in L^\iny(0, T; L^1 \cap L^\iny \cap H^k(\R^d))
				\cap L^2(0, T; H^{k + 1}(\R^d))
	\end{align*}
	and
	\begin{align*}
		\begin{array}{ll}
			\v^\nu \in L^\iny(0, T; \E \cap \dot{H}^{k + 1}(\R^d))
					\cap L^2(0, T; H^{k + 2}(\R^d)),
					& d = 2, \\
			\v^\nu \in L^\iny(0, T; H^{k + 1}(\R^d))
					\cap L^2(0, T; H^{k + 2}(\R^d)),
					& d \ge 3.
		\end{array}
	\end{align*}
	For $k \ge d/2$, the solutions to \GAGnu are unique in these classes.
\end{theorem}
\begin{proof}
We drop the $\nu$ subscript for convenience, writing $\v$, $\rho$ for $\v^\nu$, $\rho^\nu$.

The case $k = 0$ is covered by \cref{T:ViscousExistence}. For $k = 1$, we give only the formal energy estimates, the full proof requiring the use of the sequence of approximations defined in \cref{e:ApproxViscous}.

Taking the inner product of \GAGnu with $\Delta \rho$, we have
\begin{align*}
	\innp{\prt_t \rho + \v \cdot \grad \rho, \Delta \rho}
		= \sigma_2 \innp{\rho^2, \Delta \rho} + \nu \innp{\Delta \rho, \Delta \rho}.
\end{align*}
Integrating by parts gives
\begin{align*}
	-\frac{1}{2} \diff{}{t} \norm{\grad \rho}^2 +  \innp{\v \cdot \grad \rho, \Delta \rho}
		= -2 \sigma_2 \innp{\rho, \abs{\grad \rho}^2} - \nu \norm{\Delta \rho}^2,
\end{align*}
so that
\begin{align*}
	\frac{1}{2} \diff{}{t} &\norm{\grad \rho}^2 + \nu \norm{\Delta \rho}^2
		= - \innp{\v \cdot \grad \rho, \Delta \rho} -2 \sigma_2 \innp{\rho, \abs{\grad \rho}^2} \\
		&\le \pr{\frac{1}{2 \nu} \norm{\v}_{L^\iny}^2
			+ 2 \abs{\sigma_2} \norm{\rho}_{L^\iny}} \norm{\grad \rho}^2
			+ \frac{\nu}{2} \norm{\Delta \rho}^2.
\end{align*}
Hence, since $\norm{\Delta \rho} = \norm{\grad \grad \rho}$,
\begin{align}\label{e:gradrhoBound}
	\diff{}{t} \norm{\grad \rho}^2 + \nu \norm{\grad \grad \rho}^2
		\le \pr{\frac{1}{\nu} \norm{\v}_{L^\iny}^2
			+ 4 \abs{\sigma_2} \norm{\rho}_{L^\iny}} \norm{\grad \rho}^2.
\end{align}
Integrating in time and applying Gronwall's lemma, we conclude that
\begin{align*}
	\norm{\grad \rho(t)}^2 + \nu \int_0^t \norm{\grad \grad \rho}^2
		&\le \norm{\grad \rho_0}^2
			\exp \pr{\int_0^t \pr{\frac{1}{\nu} \norm{\v(s) }_{L^\iny}^2
				+ 4 \abs{\sigma_2} \norm{\rho(s)}_{L^\iny}} \, ds},
\end{align*}
which is finite
by the bound on $\norm{\v(s) }_{L^\iny}$ given in \cref{T:ViscousExistence}.

The general case proceeds by taking the inner product of \GAGnu and $\Delta^k \rho$, and inducting in the standard way for transport-diffusion equations. This approach works since \cref{L:gradvlBound} gives control on the velocity once we have control from the previous step on the mass. \cref{L:gradvlBound}, with help from \cref{P:ViscousVelocity}, also gives the stated bound on $\v^\nu$.

The vanishing viscosity argument we present  in \cref{S:VV} can be adapted to prove uniqueness of solutions, with one of the two solutions playing the role the classical solution to the Euler equations plays in \cref{S:VV}. See \cref{e:HigherRegularityUniqueness}.
\end{proof}

We also have the following regularity result for weak solutions, which gives regularity of weak solutions to \GAGnu after time zero.
\begin{theorem}\label{T:Viscoust12Regularity}
	Let $\rho$ be a weak solution to \GAGnu as in \cref{D:WeakSolution},
	and let $T$ be as in \cref{T:ViscousExistence}. Then
	\begin{align*}
		t^{\frac{1}{2}} \rho^\nu \in L^\iny(0, T; H^2(\R^d))
				\cap L^\iny(0, T; H^1(\R^d)).
	\end{align*}
\end{theorem}
\begin{proof}
	As in the proof of \cref{T:ViscousHigherRegularity}, we give only the formal energy
	argument, the full argument being much like that of Theorem III.3.10 of \cite{T2001}.
	
	Multiplying \cref{e:gradrhoBound} by $t$, we obtain
	\begin{align*}
		\diff{}{t} \pr{t \norm{\grad \rho}^2} + \nu t \norm{\grad \grad \rho}^2
			&\le \norm{\grad \rho}^2
				+ \pr{\frac{1}{\nu} \norm{\v}_{L^\iny}^2
				+ 4 \abs{\sigma_2} \norm{\rho}_{L^\iny}} t \norm{\grad \rho}^2 \\
			&\le \norm{\grad \rho}^2
				+ C_0(t) t \norm{\grad \rho}^2
	\end{align*}
	Integrating in time, we have
	\begin{align*}
		\pr{t \norm{\grad \rho(t)}^2} + &\nu \int_0^t s \norm{\grad \grad \rho(s)}^2 \, ds
			\le \int_0^t \norm{\grad \rho(s)}^2 \, ds
				+ \int_0^t C_0(s) s \norm{\grad \rho(s)}^2 \, ds.
	\end{align*}
	Applying Gronwall's inequality gives the result, since we already know from
	\cref{T:ViscousExistence} that $\int_0^t \norm{\grad \rho(s)}^2 \, ds < C \nu^{-1}$.
\end{proof}
} 

\section{The inviscid problem}\label{S:InviscidProblem}

\noindent Well-posedness of weak solutions to \PAGzero locally in time having bounded, compactly supported density as well as classical solutions having \Holder regularity is proved in \cite{BLL2012}. All the solutions constructed were also Lagrangian solutions. The approach in \cite{BLL2012} can be adapted to apply to the more general equations in \GAGzero for initial density in $L^1 \cap L^\iny$, and lead to \cref{T:InviscidExistence,T:InviscidStrong}, below. Alternately, the economical and elegant proof of the existence and uniqueness of 2D solutions to the Euler equations given by Marchioro and Pulvirenti in \cite{MP1994}, which originates in their earlier text \cite{MP1984}, can be adapted to obtain the same results. 

In brief, the authors of \cite{BLL2012} first construct smooth solutions then use a sequence of approximate smooth solutions to obtain a weak solution by demonstrating convergence of the flow maps (as in \cite{MB2002}). This approach is reversed in \cite{MP1994}, where weak (Lagrangian) solutions are first constructed by obtaining the convergence of a sequence of flow maps for approximating linearizations of the 2D Euler equations. A very simple argument then shows that regularity of the initial data is propagated over time. Considerable complications arise when adapting Marchioro and Pulvirenti's to apply to \GAGzero, because the underlying velocity field is not divergence-free (analogous complications are dealt with in \cite{BLL2012}). This requires the assumption of some regularity on the initial data to obtain weak solutions, an assumption that is only removed a posteriori via a separate (but very similar) iteration to that used to prove existence. Since the focus of this paper is on the vanishing viscosity limit, we do not give the details of this alternate approach here. 

Formally, if $\rho = \rho^0$ solves \GAGzero and $X$ is the flow map for $\v = \v^0$, then
\begin{align*}
	\diff{}{t} \rho(t, X(t, x))
		= \sigma_2 \rho(t, X(t, x))^2.
\end{align*}
Integrating along flow lines gives
\begin{align}\label{e:rhoAlongFlow}
	\rho(t, X(t, x))
		&= \frac{\rho_0(x)}{1 - \sigma_2 t \rho_0(x)}.
\end{align}
This motivates the following definition of a Lagrangian solution to \GAGzero:

\begin{definition}\label{D:LagrangianSolution}
	Let $\rho \in L^\iny_{loc}([0, \iny); L^1 \cap L^\iny(\R^d)) \cap C([0, \iny); L^2(\R^d))$
	and let $\v := \sigma_1\grad \F * \rho$.
	By \cref{L:vBounds},
	$\v \in C([0, \iny); LL(\R^d))$, where $LL(\R^d)$ is the space of bounded
	log-Lipschitz vector fields, and so $\v$ has a unique classical flow map, $X$.
	We say that $\rho$ is a Lagrangian solution to the inviscid aggregation equations \GAGzero
	with initial density $\rho_0 \in L^1 \cap L^\iny$ if
	\begin{align*}
		\rho(t, x) = \frac{\rho_0(X^{-1}(t, x))}{1 - \sigma_2 t \rho_0(X^{-1}(t, x))}
	\end{align*}
	for all $t \ge 0$, $x \in \R^d$.
\end{definition}

The form of $\rho$ in \cref{e:rhoAlongFlow} also yields a sharp time of existence for our Lagrangian solutions. If we do not consider the sign of $\rho_0$, we obtain an upper limit on the existence time that is the same as that for viscous solutions in \cref{T:ViscousExistence}. Hence, we should expect that if, say, $\sigma_2 < 0$ and $\rho_0 > 0$, so that the inviscid solution exists for all time, then the existence time for viscous solutions might be considerably longer than the bound given in \cref{T:ViscousExistence}. An open question is whether for all sufficiently small viscosity, viscous solutions to \GAGnu exist for as long as the inviscid solution exists, as was established for the 3D Navier-Stokes and Euler equations in \cite{Constantin1986}. (Issues of existence times of viscous solutions in relation to the total mass of $\rho_0$ have been well-studied: see \cite{Perthame2007}.)

We have the existence of weak and of strong solutions to \GAGzero:

\begin{theorem}\label{T:InviscidExistence}
	Fix $T > 0$ with $T < (\abs{\sigma_2} \norm{\rho_0}_{L^\iny})^{-1}$ or
	$T < \iny$ if $\sigma_2 = 0$.
	Assume that $\rho_0 \in L^1 \cap L^\iny(\R^d)$ 
	is compactly supported.
	Then there exists a unique weak solution
	to \GAGzero as in \cref{D:WeakSolution} on the time interval $[0, T]$.
	This weak solution is the unique Lagrangian solution. 
	Moreover, \cref{e:rhoBound1} holds.
\end{theorem}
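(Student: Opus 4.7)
My approach would follow the flow-map strategy of \cite{BLL2012}: construct the solution as a limit of smooth approximations and then identify it with the Lagrangian formula of \cref{D:LagrangianSolution}. First I would mollify: set $\rho_0^\eps := \rho_0 * \eta_\eps$ for a standard mollifier $\eta_\eps$, so that $\rho_0^\eps \in C_c^\iny(\R^d)$, $\norm{\rho_0^\eps}_{L^1 \cap L^\iny} \le \norm{\rho_0}_{L^1 \cap L^\iny}$, and the supports stay in a fixed bounded neighborhood of $\supp \rho_0$. For each $\eps$ I would build a classical solution $\rho^\eps$ of \GAGzero with data $\rho_0^\eps$ on $[0, T]$ by iterating the linear transport problem from \cref{e:ApproxViscous} with $\nu = 0$ in higher-regularity spaces. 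Integration along characteristics yields
\[
    \rho^\eps(t, X^\eps(t, x)) = \frac{\rho_0^\eps(x)}{1 - \sigma_2 t \rho_0^\eps(x)},
\]
which is finite on $[0, T]$ by the choice of $T$ and gives the $L^\iny$ bound \cref{e:rhoBound1} uniformly in $\eps$; the Lagrangian formula also transports compact support and preserves $L^1$.

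To pass to the limit, I would use that \cref{L:vBounds} makes $\v^\eps = \sigma_1 \grad \F * \rho^\eps$ uniformly bounded and log-Lipschitz. Osgood's lemma then gives that the flows $X^\eps$ and their inverses are uniformly equicontinuous on compact subsets of $[0, T] \times \R^d$, and since the support of $\rho^\eps$ remains contained in a fixed compact set (depending on $T$ and $\norm{\v^\eps}_{L^\iny}$ but not on $\eps$), Arzela-Ascoli produces a subsequence with $X^\eps \to X$ and $(X^\eps)^{-1} \to X^{-1}$ locally uniformly. Defining
\[
    \rho(t, x) := \frac{\rho_0(X^{-1}(t, x))}{1 - \sigma_2 t \rho_0(X^{-1}(t, x))}
\]
and setting $\v := \sigma_1 \grad \F * \rho$, a change-of-variables argument gives $\rho^\eps \to \rho$ in $C([0, T]; L^p)$ for every $p < \iny$, and \cref{L:vBounds} upgrades this to locally uniform convergence $\v^\eps \to \v$. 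Each smooth $\rho^\eps$ satisfies \cref{e:WeakDefEq}, and the products $\rho^\eps \v^\eps$ and $(\rho^\eps)^2$ converge in $L^1_{loc}$ to $\rho \v$ and $\rho^2$, so passing to the limit identifies $\rho$ as a weak solution enjoying \cref{e:MassReg}; verifying that $X$ is the classical flow of $\v$ shows it is the Lagrangian solution.

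For uniqueness I would argue as follows. Any weak solution $\widetilde \rho$ in the class \cref{e:MassReg} produces a velocity $\widetilde \v = \sigma_1 \grad \F * \widetilde \rho$ that is bounded and log-Lipschitz by \cref{L:vBounds}, so it admits a unique classical flow $\widetilde X$. Testing \cref{e:WeakDefEq} against approximations of $\varphi \circ \widetilde X^{-1}$ (for smooth compactly supported $\varphi$) reduces the PDE along trajectories to the ODE $\diff{}{t}(\widetilde \rho \circ \widetilde X) = \sigma_2 (\widetilde \rho \circ \widetilde X)^2$, which integrates to the Lagrangian formula; hence every weak solution is Lagrangian. Given two Lagrangian solutions with flows $X_1, X_2$, the log-Lipschitz bound produces an Osgood-type estimate
\[
    \diff{}{t} \norm{X_1(t) - X_2(t)}_{L^\iny} \le C \mu\bigl(\norm{X_1(t) - X_2(t)}_{L^\iny}\bigr) + C \norm{\rho_1(t) - \rho_2(t)}_{L^1 \cap L^\iny},
\]
with $\mu(r) = r(1 - \log r)$ the log-Lipschitz modulus, and the Lagrangian formula combined with the uniform compact support lets me bound $\norm{\rho_1 - \rho_2}_{L^1 \cap L^\iny}$ by a constant multiple of $\norm{X_1 - X_2}_{L^\iny}$; Osgood's uniqueness criterion then forces $X_1 \equiv X_2$. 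The main obstacle I expect is this last step: since the velocity is not divergence-free, controlling $\norm{\rho_1 - \rho_2}_{L^1 \cap L^\iny}$ in terms of the flow difference requires careful tracking of how the supports grow under the flow and of the denominators $1 - \sigma_2 t \rho_0 \circ X_i^{-1}$, which are kept uniformly bounded away from zero precisely by the condition $T < (|\sigma_2| \norm{\rho_0}_{L^\iny})^{-1}$.
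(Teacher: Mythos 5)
Your existence half follows essentially the route the paper itself relies on (it defers details to adaptations of \cite{BLL2012} and \cite{MP1994}): mollify, solve for smooth data by iterating the linearized problem, read off \cref{e:rhoBound1} and the Lagrangian formula along characteristics, and compactify the flows using the uniform log-Lipschitz bound of \cref{L:vBounds} plus Osgood. That part is sound in outline.

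The uniqueness argument, however, has a genuine gap. Your Osgood inequality needs the step
$\norm{\rho_1(t)-\rho_2(t)}_{L^1\cap L^\iny} \le C\,\norm{X_1(t)-X_2(t)}_{L^\iny}$,
and this is false at the regularity of the theorem: $\rho_0$ is only in $L^1\cap L^\iny$, so $\rho_i(t)$ is a composition of a merely bounded, possibly discontinuous function with the flow. Take $\rho_0=\CharFunc_{B_1(0)}$: if the two flows differ by an arbitrarily small translation, then $\norm{\rho_1(t)-\rho_2(t)}_{L^\iny}$ is of order $\norm{\rho_0}_{L^\iny}$ no matter how small $\norm{X_1-X_2}_{L^\iny}$ is, so the right-hand side of your differential inequality does not shrink with $h(t)=\norm{X_1(t)-X_2(t)}_{L^\iny}$ and Osgood gives nothing. (Even retreating to the $L^1$ difference does not help: for general $L^1\cap L^\iny$ data the $L^1$ modulus of continuity under small displacements has no rate, and \cref{L:vBounds} needs $L^1\cap L^\iny$ control of the density difference to control the velocity difference.) The way the paper's sources avoid this --- and the reason the Lagrangian structure is essential --- is to never compare the two densities pointwise at all. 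Instead one changes variables $y\mapsto X_j(s,y)$ inside $\grad\F*\rho_j$, using that \emph{both} Lagrangian solutions have the \emph{same explicit} Jacobian, $\det\grad X_j(s,y)=(1-\sigma_2 s\rho_0(y))^{-\sigma_1/\sigma_2}$ (or $e^{\sigma_1\rho_0(y)s}$ when $\sigma_2=0$), since each density rides its own flow. The velocity difference then becomes
\begin{align*}
	\v_2(s,z)-\v_1(s,z)
		= \sigma_1\int_{\R^d}\bigl(\grad\F(z-X_2(s,y))-\grad\F(z-X_1(s,y))\bigr)\,
			\rho_0(y)\,(1-\sigma_2 s\rho_0(y))^{-\frac{\sigma_1}{\sigma_2}-1}\,dy,
\end{align*}
in which only the kernel sees the two flows, weighted by one fixed function of $y$; a kernel estimate (Lemma 3.1 of \cite{MP1994}, using the compact support) bounds this by $C\mu(h(s))$, and Osgood closes in $h$ alone. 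Your proof needs to be restructured around this change-of-variables identity; as written, the uniqueness step fails.
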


We establish first that with additional regularity of the initial density, a weak solution is a classical solution.

\begin{theorem}\label{T:InviscidStrong}
	Assume that $\rho_0 \in C^{k, \al}(\R^d)$ and compactly supported,
	$k \ge 0$, $\al \in (0, 1)$. There exists a unique classical solution,
	$\rho \in L^\iny(0, T; C^{k, \al})$, to \GAGzero. Moreover,
	\begin{align}\label{e:rhoCaltBound}
		\norm{\rho(t)}_{C^{k,\al}}
			\le C(t, \abs{\sigma_1}, \norm{\rho_0}_{L^1}, \norm{\rho_0}_{C^{k,\al}}).
	\end{align}
\end{theorem}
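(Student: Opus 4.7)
The plan is to work Lagrangian: the unique solution provided by \cref{T:InviscidExistence} admits the explicit representation $\rho(t, X(t,x)) = \rho_0(x)/(1 - \sigma_2 t \rho_0(x))$, so propagating $C^{k,\al}$ regularity reduces to propagating regularity of the flow map $X$ (and its inverse) together with standard Hölder composition estimates. Existence of a classical solution will be obtained by smoothing the data (e.g.\ $\rho_0^\eps = \rho_0 * \phi_\eps$), running the analysis below to get $\eps$-uniform $C^{k,\al}$ bounds, and passing to the limit; uniqueness in the class $L^\iny(0,T;C^{k,\al})$ follows from the uniqueness already proven in \cref{T:InviscidExistence}.

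First, I would control the velocity. For $\rho(t)\in L^1\cap L^\iny \cap C^\al$ with compact support, the identity $\grad \v = \sigma_1 \grad\grad\F * \rho$ expresses $\grad \v$ as a Calderón–Zygmund operator applied to $\rho$. Standard $C^\al$ continuity of such operators, combined with the $L^1\cap L^\iny$ bound \cref{e:rhoBound1}, yields
\[
	\norm{\grad \v(t)}_{C^\al} \;\le\; C\bigl(\norm{\rho(t)}_{L^1\cap L^\iny} + \norm{\rho(t)}_{C^\al}\bigr) \;\le\; C_0(t)\bigl(1 + \norm{\rho(t)}_{C^\al}\bigr).
\]
For higher $k$, differentiating the convolution and using the same CZ bound gives analogous estimates for $\norm{\grad^{k+1}\v(t)}_{C^\al}$ in terms of $\norm{\rho(t)}_{C^{k,\al}}$ (plus lower-order terms controlled by the base case).

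Second, I would control the flow map. Since $\v \in L^\iny(0,T;C^{1,\al})$, it is Lipschitz, so $X(t,\cdot)$ is a $C^1$ diffeomorphism and $\grad X$ solves the matrix ODE $\tfrac{d}{dt}\grad X(t,x) = (\grad\v)(t,X(t,x))\,\grad X(t,x)$. Gronwall then gives
\[
	\norm{\grad X(t)}_{L^\iny} + \norm{\grad X^{-1}(t)}_{L^\iny} \;\le\; \exp\!\Bigl(\textstyle\int_0^t \norm{\grad\v(s)}_{L^\iny}\,ds\Bigr),
\]
and iterating the ODE argument (or differentiating further in $x$) propagates $C^{k,\al}$ control on $X$ and $X^{-1}$ in terms of $\int_0^t \norm{\v(s)}_{C^{k+1,\al}}\,ds$.

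Third, I would close the loop through the representation formula. Since $T < (\abs{\sigma_2}\norm{\rho_0}_{L^\iny})^{-1}$ (trivially if $\sigma_2 = 0$), the denominator $1 - \sigma_2 t \rho_0(X^{-1}(t,x))$ stays bounded away from zero on $[0,T]$, so the function $s \mapsto s/(1 - \sigma_2 t s)$ is smooth on the range of $\rho_0$ and Hölder composition gives
\[
	\norm{\rho(t)}_{C^{k,\al}} \;\le\; C\bigl(t,\norm{\rho_0}_{L^\iny}\bigr)\,\norm{\rho_0}_{C^{k,\al}}\,\bigl(1 + \norm{X^{-1}(t)}_{C^{k,\al}}\bigr)^{k+\al}.
\]
For the base case $k = 0$, substituting the CZ bound into the flow estimate yields a differential inequality of the form $\tfrac{d}{dt}\norm{\rho(t)}_{C^\al} \le C_0(t)(1+\norm{\rho(t)}_{C^\al})$, which Gronwall integrates on $[0,T]$; higher $k$ then follow inductively, with each step controlled in terms of the previous ones. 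Applied at the level of the smooth approximations, these bounds are $\eps$-uniform and pass to the limit, giving \cref{e:rhoCaltBound}.

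The main obstacle is the circular dependence in the estimate: $\norm{\grad\v}_{L^\iny}$ appears inside the exponential controlling $\grad X$, while that same norm is itself bounded (via CZ) by $\norm{\rho}_{C^\al}$, which in turn is controlled by $\norm{\grad X^{-1}}_{L^\iny}$ through the composition formula. Making this closed requires careful bookkeeping of the $L^\iny$ bound from \cref{e:rhoBound1} to reduce one of the occurrences to a time-dependent constant, so that the remaining inequality is linear (rather than super-linear) in $\norm{\rho(t)}_{C^\al}$. Once the $k=0$ case is closed, the $k \ge 1$ estimates are genuinely straightforward inductions.
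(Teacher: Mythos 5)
Your overall Lagrangian strategy (representation formula, Calder\'on--Zygmund estimate for $\grad \v$, Gronwall for the flow, \Holder composition) is the right family of ideas, but the step you flag as ``careful bookkeeping'' is exactly where the argument breaks, and it cannot be repaired in the way you describe. The bound \cref{e:rhoBound1} controls only $\norm{\rho(t)}_{L^1 \cap L^\iny}$, and since $\grad \v = \sigma_1 \grad\grad\F*\rho$ is a Calder\'on--Zygmund operator applied to $\rho$, this does \emph{not} control $\norm{\grad\v(t)}_{L^\iny}$: such operators are unbounded on $L^\iny$, and the most one gets from $L^1\cap L^\iny$ data is the log-Lipschitz modulus of \cref{L:vBounds}, not a Lipschitz bound. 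Consequently neither occurrence of $\norm{\grad\v}_{L^\iny}$ in your loop can be replaced by $C_0(t)$; what your ingredients actually yield is $\norm{\rho(t)}_{C^\al}\le C_0(t)\norm{\rho_0}_{C^\al}\exp\bigl(C\al\int_0^t(C_0(s)+\norm{\rho(s)}_{C^\al})\,ds\bigr)$, which is super-linear (exponential) in the unknown, so Gronwall does not apply. Solving this inequality only gives a bound up to a time of order $\norm{\rho_0}_{C^\al}^{-1}$, i.e.\ a short-time result whose lifespan depends on the \Holder norm of the data --- not the statement of the theorem, where $[0,T]$ is determined by $\norm{\rho_0}_{L^\iny}$ alone. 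The claimed linear differential inequality for $\norm{\rho(t)}_{C^\al}$ therefore does not follow.

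There are two ways to close the loop, and the paper's route (it defers the details to the adaptation of \cite{BLL2012} and \cite{MP1994}) is the first. (a) Break the circularity with a losing estimate: by \cref{L:vBounds} the velocity is log-Lipschitz with constant depending only on $\norm{\rho_0}_{L^1\cap L^\iny}$, so by Osgood's lemma the flow map and its inverse lie in $C^{\theta(t)}$ with $\theta(t)=e^{-c_0 t}$, where $c_0$ is a priori (non-circularly) bounded. Composition then gives $\rho(t)\in C^{\al\theta(t)}$ with norm $\le C_0(T)\norm{\rho_0}_{C^\al}$, hence $\grad\v(t)\in C^{\al\theta(t)}\hookrightarrow L^\iny$ with an a priori bound; only at this point does your Gronwall argument give $\grad X, \grad X^{-1}\in L^\iny$, and a second pass through the composition estimate restores the full $C^\al$ regularity, after which the $k\ge 1$ cases follow by the bootstrap you describe (using that $C^\al$ is a Banach algebra to handle the factor $(1-\sigma_2 t\rho_0(X^{-1}))^{-1}$). (b) Alternatively, keep your scheme but replace the pointwise use of the CZ bound by the logarithmic interpolation inequality (Proposition 2.3.5 of \cite{C1998}, which the paper uses in the proof of \cref{T:HolderUniform}): $\norm{\grad\v}_{L^\iny}\le C\norm{\rho}_{L^1\cap L^\iny}\log(e+\norm{\rho}_{C^\al})$ up to constants. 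The loop then becomes linear in $\log(e+\norm{\rho(t)}_{C^\al})$, Gronwall applies to that quantity, and one obtains \cref{e:rhoCaltBound} on all of $[0,T]$ with a double-exponential constant.
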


\Ignore { 

\section{The inviscid problem}\label{S:InviscidProblem}

\noindent Well-posedness of weak solutions to \PAGzero locally in time having bounded, compactly supported density as well as classical solutions having \Holder regularity is proved in \cite{BLL2012}. All the solutions constructed were also Lagrangian solutions. The time of existence, which is sharp, comes from integrating $\rho$ along the flow lines. The same calculation applies with no significant changes \ToDo{applies to what?}, and corresponds to the upper limit on $T$ in \cref{T:ViscousExistence}.

\Ignore{ 
In this section we present a proof of existence and uniqueness of solutions to \GAGzero having bounded density. This proofs follow, to a large extent, the economical and elegant approach of Marchioro and Pulvirenti's text \cite{MP1994}, which originates in their earlier text \cite{MP1984}. We modify Marchioro and Pulvirenti's proof of existence of weak solutions to more explicitly use the flow map, as in Serfati's proof of uniqueness in \cite{Serfati1995A}.


In short, the plan in \cite{MP1994} is to construct weak solutions for bounded vorticity using a Picard iteration on successive approximations. The solution is constructed for short time, but is easily shown to exist for all time because the guaranteed time of existence depends only upon the $L^1 \cap L^\iny$ norm of the vorticity, which is conserved over time. A minor variant of this same argument gives uniqueness. Once weak solutions are established, a simple bootstrap argument using the flow map gives the existence of classical solutions.

There is a major difference between our proofs and those in \cite{MP1994}, however. This difference stems from the proof of existence of weak solutions for bounded density, in which a critical estimate (see \cref{e:vnvn1Diff}) requires a change of variables to follow the trajectories of the flow maps, one change for each approximation. For the Euler equations, both Jacobians are 1, but for the aggregation equations, this is no longer the case. This introduces an additional term whose control requires us to have some regularity of the initial data.

Hence, the organization of the proofs is reversed. First, we prove that any weak solution that has initially regular data is actually a classical solution that maintains that regularity for all time. We give a specific bound on the growth over time of the $C^\al$-norm of the density.  Next, we prove the short-time existence of weak solutions for bounded density given that the initial density also lies in $C^\al$ for some $\al > 0$. We use the specific $C^\al$-norm estimate on the density to show that this solution extends for all time.

To drop the requirement that the initial density be $C^\al$, we adapt the estimates in the proof of existence to show that a sequence of classical solutions converges to a weak solution. The critical estimate is simpler than in the proof of existence because now the density moves along the flow lines of the classical solutions, and the bound does not involve the $C^\al$ norm of the density. This allows convergence of the solutions.

Finally, having obtained existence and uniqueness of global-in-time weak solutions, our first result that weak solutions with regular initial data are classical gives unique global-in-time classical solutions.
}

Well-posedness of weak solutions to the more general \GAGzero for initial density in $L^1 \cap L^\iny$ can be obtained by adapting the economical and elegant approach of Marchioro and Pulvirenti's text \cite{MP1994}, which originates in their earlier text \cite{MP1984}. Considerable complications arise in such an adaptation, however. This leads to \cref{T:InviscidExistence}.

We make the convention that $C(a_1, \dots, a_n)$ stands for a continuous function from $[0, \iny)^n \to [0, \iny)$ that is nondecreasing in each of its arguments. We use $C(a_1, \dots, a_n)$ in the context of a constant that depends on the parameters $a_1, \dots, a_n$, where the exact form of the constant is unimportant.

Formally, if $\rho = \rho^0$ solves \GAGzero and $X$ is the flow map for $\v = \v^0$, then
\begin{align*}
	\diff{}{t} \rho(t, X(t, x))
		= \sigma_2 \rho(t, X(t, x))^2.
\end{align*}
Integrating along flow lines gives
\begin{align*}
	\rho(t, X(t, x))
		&= \frac{\rho_0(x)}{1 - \sigma_2 t \rho_0(x)}.
\end{align*}
This motivates the following definition of a Lagrangian solution to \GAGzero:

\begin{definition}\label{D:LagrangianSolution}
	Let $\rho \in L^\iny_{loc}([0, \iny); L^1 \cap L^\iny(\R^d)) \cap C([0, \iny); L^2(\R^d))$
	and let $\v := \sigma_1\grad \F * \rho$. By \cref{L:vBounds},
	$\v \in C([0, \iny); LL(\R^d))$ and so it has a unique classical flow map, $X$.
	We say that $\rho$ is a Lagrangian solution to the inviscid aggregation equations \GAGzero
	with initial density $\rho_0 \in L^1 \cap L^\iny$ if
	\begin{align*}
		\rho(t, x) = \frac{\rho_0(X^{-1}(t, x))}{1 - \sigma_2 t \rho_0(X^{-1}(t, x))}
	\end{align*}
	for all $t \ge 0$, $x \in \R^d$.
\end{definition}

\begin{theorem}\label{T:InviscidExistence} \ToDo{Do we want to keep this theorem?  We don't need it for the inviscid limit result.}
Fix $T > 0$ with $T < (\abs{\sigma_2} \norm{\rho_0}_{L^\iny})^{-1}$ or
	$T < \iny$ if $\sigma_2 = 0$.
	Assume that $\rho_0 \in L^1 \cap L^\iny(\R^d)$ 
	is compactly supported.
	Then there exists a unique weak solution
	to \GAGzero as in \cref{D:WeakSolution} on the time interval $[0, T]$.
	This weak solution is the unique Lagrangian solution. 
	Moreover, \cref{e:rhoBound1} holds and \ToDo{add additional bounds}.
\end{theorem}
\Ignore{ 
\begin{proof}
We follow the basic approach established by J.L. Lions in \cite{JL1969}, presented in more detail by P.L Lions in \cite{L1996}. 

Fix $T < (\abs{\sigma_2} \norm{\rho_0}_{L^\iny})^{-1}$. By \cref{T:ViscousExistence}, $(\v^\nu)_{\nu \ge 0}$ is bounded in $C([0, T]; L^2)$ while $(\rho^\nu)_{\nu \ge 0}$ is bounded in $C([0, T]; L^q)$ for all $q \ge 1$ \ToDo{Do we have $C$ or just $L^\iny$ here? We might need to improve \cref{T:ViscousExistence}.} and $(\nu \grad \rho^\nu)_{\nu \ge 0}$ is bounded in $L^2([0, T] \times \R^d)$. Hence, in fact, $(\v^\nu)_{\nu \ge 0}$ is bounded in $C([0, T]; H^1)$.

Letting $\varphi \in H^1(\R^d)$ with $\norm{\varphi}_{H^1} = 1$, we have
\begin{align*}
	\innp{\v^\nu \cdot \grad \rho^\nu, \varphi}
		= \innp{\grad \rho^\nu, \varphi \v^\nu}
		= - \innp{\rho^\nu, \varphi \dv \v^\nu + \grad \varphi \cdot \v^\nu}
		= - \innp{\rho^\nu, \sigma_1 \varphi \rho^\nu + \grad \varphi \cdot \v^\nu}
\end{align*}
so
\begin{align*}
	\abs{\innp{\v^\nu \cdot \grad \rho^\nu, \varphi}}
		\le \abs{\sigma_1} \norm{\rho^\nu}^2
			+ \norm{\rho^\nu} \norm{\varphi}.
\end{align*}
This shows that $(\v^\nu \cdot \grad \rho^\nu)_{\nu \ge 0}$ is bounded in $C([0, T]; H^{-1})$. Since $(\rho^\nu)_{\nu \ge 0}$ is bounded in $C([0, T]; H^1)$ by \cref{T:ViscousHigherRegularity} (this is the only place we use the higher regularity of $\rho^\nu$) we also have $(\Delta \rho^\nu)_{\nu \ge 0}$ bounded in $C([0, T]; H^{-1})$. Hence, from the first equation in \GAGnu, $(\prt_t \rho_\nu)_{\nu \ge 0}$ is bounded in $C([0, T]; H^{-1})$.

Arguing now as on page 131 of \cite{L1996}, there exists $\v \in C([0, T]; H^1)$ and a subsequence of $(\v^\nu)$, which we will relabel $(\v^n)_{n \in \N}$, with $(\v^n)$ converging to $\v$ in $C([0, T]; L^r)$ and in $C([0, T]; W^{1, r} - w)$ for all $r \ge 2$, where $X - w$ is the space $X$ endowed with the weak topology. At the same time, letting $\rho^n = \sigma_1^{-1} \dv \v^n$, $\rho^n \to \rho = \sigma_1^{-1} \dv \v$ in $C([0, T]; L^r - w)$ for all $r \ge 2$ and and $\v(0) = \sigma_1^{-1} \dv \rho_0$. The convergences are enough to show that
\begin{align*}
	\int_0^T \int_{\R^d}
		\pr{\rho^\nu \prt_t \varphi
				+ \rho^\nu \v^\nu \cdot \grad \varphi
				+ (\sigma_1 + \sigma_2) (\rho^\nu)^2 \varphi
				}
			\to
	\int_0^T \int_{\R^d}
		\pr{\rho \prt_t \varphi
				+ \rho \v \cdot \grad \varphi
				+ (\sigma_1 + \sigma_2) \rho^2 \varphi
				}.		
\end{align*}
Moreover,
\begin{align*}
	\nu \bigabs{\int_0^T \int_{\R^d} \grad \rho^\nu \cdot \grad \varphi}
		&\le \nu \norm{\grad \rho^\nu}_{L^2([0, T] \times \R^d)}
			\norm{\grad \varphi}_{L^2([0, T] \times \R^d)} \\
		&\le C \nu^{\frac{1}{2}}
			\pr{\nu \int_0^T \norm{\grad \rho^\nu}^2}^{\frac{1}{2}}
		\le C \nu^{\frac{1}{2}}
		\to 0 \text{ as } \nu \to 0.
\end{align*}
This shows that $(\v, \rho)$ satisfies \cref{e:WeakDefEq} with $\nu = 0$ and hence gives a weak, inviscid solution to \GAGnu.

\ToDo{It remains to show that our solution is actually a Lagrangian solution. Because of the higher regularity, this \textit{should} be possible.}
\end{proof}
} 

We establish first that with additional regularity of the initial density, a weak solution is a classical solution.

\begin{theorem}\label{T:WeakImpliesStrongOld}
	\ToDo{In my opinion, we should change this theorem to k greater than or equal to zero, and remove the part about the density being a classical solution.}  Assume that $\rho_0 \in C^{k, \al}(\R^d)$ and compactly supported,
	$k \ge 1$, $\al \in (0, 1)$. Assume that
	$\rho$ a Lagrangian solution to \GAGzero on the interval
	$[0, T]$ for some $T > 0$. Then $\rho$ is, in fact, a classical solution, with
	$\rho \in L^\iny(0, T; C^{k, \al})$. Moreover,
	\begin{align}\label{e:rhoCaltBound}
		\norm{\rho(t)}_{C^{k,\al}}
			\le C(t, \abs{\sigma_1}, \norm{\rho_0}_{L^1}, \norm{\rho_0}_{C^{k,\al}}).
	\end{align}
\end{theorem}
\begin{proof}
	It follows (as in Theorem 5.1.1 of \cite{C1998}) that $x \mapsto X(t, x) - x$ has norm 1 in $C^{\theta(t)}$,
	where $\theta(t) = e^{-c_0 t}$, $c_0 = C_0(T) \norm{\rho_0}_{L^1 \cap L^\iny}$ \ToDo{a little weird},
	and the same is true of the inverse flow map.
	Thus, by Lemma \ref{L:CalIds},
	\begin{align*}
		\rho(t) \in C^{\al \theta(t)},
	\end{align*}
	with
	\begin{align*}
		\norm{\rho(t)}_{C^{\al \theta(t)}}
			&= \norm{\rho(t)}_{L^\iny} + \norm{\rho(t)}_{\dot{C}^{\al \theta(t)}}
			\le C_0(T)\norm{\rho_0}_{L^\iny}
				+ C_0(T)\norm{\rho_0}_{\dot{C}^\al} \norm{X^{-1}(t)}_{\dot{C}^{\theta(t)}}^\al \\
			&\le C_0(T)\norm{\rho_0}_{L^\iny}
				+ C_0(T)\norm{\rho_0}_{\dot{C}^\al}
			= C_0(T)\norm{\rho_0}_{C^\al}.
	\end{align*}
	Hence by \cref{L:VelocityReg}, $\v(t) = \sigma_1\grad \F* \rho(t) \in C^{1, \al \theta(t)}$,
	with
	\begin{align}\label{e:gradvCalBound}
		\begin{split}
			\norm{\grad \v(t)}_{C^{\al \theta(t)}}
				&\le C \norm{\rho(t)}_{L^1} + C \norm{\rho(t)}_{C^{\al \theta(t)}} \\
				&\le C_0(T)\norm{\rho_0}_{L^1} \exp \pr{C_0(T)\abs{\sigma_1} \norm{\rho_0}_{L^\iny} t}
					+ C_0(T)\norm{\rho_0}_{C^\al},
		\end{split}
	\end{align}
	where we also used \cref{L:L1LInfGrowth}.
	
	Because $\v$ is differentiable, it follows that
	\begin{align*}
		\grad X(t, x)
			= I + \int_0^t \grad \v(s, X(s, x)) \cdot \grad X(s, x) \, ds
	\end{align*}
	so that
	\begin{align}\label{e:gradXLInfForGronwalls}
		\norm{\grad X(t)}_{L^\iny}
			\le 1 + \int_0^t \norm{\grad v(s)}_{L^\iny} \norm{\grad X(s)}_{L^\iny} \, ds.
	\end{align}
	Applying Gronwall's lemma and using \cref{e:gradvCalBound}, we see that $\grad X(t) \in L^\iny$
	with
	\begin{align*}
		\norm{\grad X(t)}_{L^\iny}
			\le C(T, \abs{\sigma_1}, \norm{\rho_0}_{L^1}, \norm{\rho_0}_{C^{\al}}).
	\end{align*}
	
	Hence, by \cref{L:CalIds} we actually have that $\rho(t) \in C^\al$, with
	\begin{align*}
		\norm{\rho(t)}_{\dot{C}^\al}
			\le C_0(T)\norm{\rho_0}_{\dot{C^\al}} \norm{\grad X(t)}_{L^\iny}^\al
			\le C(T, \abs{\sigma_1}, \norm{\rho_0}_{L^1}, \norm{\rho_0}_{C^{\al}}).
	\end{align*}
	By \cref{L:VelocityReg}, then,  $\v \in C^{1, \al}$.\ToDo{Remove the next two sentences?  Technically, they are not true, since alpha is less that one and therefore rho is not differentiable.}
	Finally, $\prt_t \rho =  - \v \cdot \grad \rho + \sigma_2\rho^2$
	exists for all time, so $\rho$ is differentiable in time. Therefore, $\rho$ is a classical solution
	to \GAGzero.
	
	For higher regularity, $k \ge 1$, we observe that for all $x$, $y\in\R^d$,
	\begin{equation*}
	\begin{split}
	&\frac{1}{1-\sigma_2t\rho_0(X^{-1}(t,x))} - \frac{1}{1-\sigma_2t\rho_0(X^{-1}(t,y))} = \frac{\sigma_2t(\rho_0(X^{-1}(t,x)) - \rho_0(X^{-1}(t,y)))}{(1-\sigma_2t\rho_0(X^{-1}(t,x)))(1-\sigma_2t\rho_0(X^{-1}(t,y)))}.\\
	\end{split}
	\end{equation*}
	Thus, when $\nabla X^{-1}\in L^{\infty}(\R^d)$ and $\rho_0\in C^{\alpha}(\R^d)$, $(1-\sigma_2t\rho_0(X^{-1}(t,x)))^{-1}$ belongs to $C^{\alpha}(\R^d)$ as well.  This observation, combined with the fact that $C^{\alpha}$ is a Banach algebra, allows us to apply a bootstrap argument analogous to that in \cite{MP1994} to obtain regularity for $k \ge 1$.
\end{proof}

\begin{prop}\label{P:WeakExistenceOld}
	Fix $T > 0$ with $T < (\abs{\sigma_2} \norm{\rho_0}_{L^\iny})^{-1}$ or
	$T < \iny$ if $\sigma_2 = 0$.
	Let $\rho_0 \in L^1 \cap C^{k, \al}(\R^d)$ for some $\al \in (0, 1)$ and compactly supported.
	There exists a solution $\rho$ to \GAGzero
	that is the unique Lagrangian solution, and is also the unique weak solution
	with $\rho(t) \in L^1 \cap C^{k, \al}(\R^d)$ and compactly supported for all $t \in [0, T]$.
	\Ignore{ 
	Moreover,
	\begin{align}\label{e:MassInviscid}
		\diff{}{t} m(\rho(t)) = \sigma_1 \int_{\R^d} \rho^2.
	\end{align}
	} 
\end{prop}
\begin{proof}
\Ignore{ 
Define $\mu \colon [0, \iny) \to [0, \iny)$ by
\begin{align}\label{e:mu}
    \mu(r) = \max \set{-r \log r, e^{-1}}.
\end{align}
Then $\mu$ is an Osgood (specifically, log-Lipschitz) \MOC, by which we mean that
\begin{align*}
	\int_0^1 \frac{ds}{\mu(s)} = \iny.
\end{align*}
} 

Fix $T > 0$ as in the statement of Proposition \ref{P:WeakExistenceOld}. We first prove the existence of a Lagrangian solution.

We define sequences, $(\rho_n)_{n = 0}^\iny$, $(\v_n)_{n = 1}^\iny$, and $(X_n)_{n = 0}^\iny$ as follows:
\begin{align*}
	\rho_0(t, \cdot) &= \rho_0(x), \\
	X_0(t, x) &= x,
\end{align*}
with the iteration, for $n = 1, 2, \dots$,
\begin{align}\label{e:ApproxInviscid}
	\begin{split}
			\v_n &= \sigma_1 \grad \F *\rho_{n - 1}, \\
			\prt_t X_n(t, x) &= \v_n(t, X_n(t, x)), \\
			\rho_n(t, X_n(t, x)) &= \frac{\rho_0(x)}{1-\sigma_2t\rho_0(x)}.
	\end{split}
\end{align}
Thus, $\v_n$ is the unique curl-free vector field whose divergence is $\rho_{n - 1}$ and $X_n$ is the (non-measure-preserving) flow map for $\v_n$.\Ignore{; and $\rho_n$ is the initial density transported by that flow map.}
The proof of existence proceeds by showing that this iteration converges.

Also define the inverse flow map, $(X^n)^{-1}$, by
\begin{align*}
	(X^n)^{-1}(t, X_n(t, x)) = x,
\end{align*}
so that
\begin{align*}
	\rho_n(t, x) = \frac{\rho_0((X^n)^{-1}(t, x))}{1-\sigma_2t\rho_0((X^n)^{-1}(t, x))}.
\end{align*}
\Ignore{ 
    Suppose that a particle moving under the flow map $X_n$ is at position $x$
    at time $t$. Let $Y_n(\tau; t, x)$ be the position of that same particle
    at time $t - \tau$, where $0 \le \tau \le t$. Then
    \begin{align*}
        (X^n)^{-1}(t, x) = Y_n(t; t, x), \quad
        x = Y_n(0; t, x)
    \end{align*}
    and
    \begin{align*}
        \diff{}{\tau} Y_n(\tau; t, x)
            = -v_n(t - \tau, Y_n(\tau; t, x)).
    \end{align*}
    By the fundamental theorem of calculus,
    \begin{align*}
        Y_n(s; t, x) - x
            = \int_0^s \diff{}{\tau} Y_n(\tau; t, x) \, d \tau,
    \end{align*}
    or,
    \begin{align*}
        Y_n(s; t, x)
            = x - \int_0^s v(t - \tau, Y_n(\tau; t, x)) \, d \tau.
    \end{align*}
} 
We need to obtain some bounds on the approximate sequence uniformly in $n$.  We first note that since the velocities are bounded in $L^\iny$ uniformly in $n$, and $\rho_0$ is compactly supported, we have uniform control on the support of $\rho_n$:
\begin{align}\label{e:rhonSupport}
	\supp \rho_n(t) \subseteq B_{C_0(t)}.
\end{align}
Also note that
\begin{align*}
	\prt_t \grad X_n(t, x)
		= \grad \v_n(t, X_n(t, x)) \grad X_n(t, x).
\end{align*}
Integrating in time, taking the $L^\iny$-norm, and applying Gronwall's lemma gives
\begin{align*}
	\norm{\grad X_n(t, \cdot)}_{L^\iny}, \norm{\grad (X^n)^{-1}(t, \cdot)}_{L^\iny}
		\le \exp \int_0^t \norm{\grad \v_n(s)}_{L^\iny} \, ds.
\end{align*}
The bound on $\grad (X^n)^{-1}$ does not follow as immediately as that on $\grad X_n$ because the flow is not autonomous. For the details, see, for instance, the proof of Lemma 8.2 p. 318-319 of \cite{MB2002}.
Moreover,
\begin{align*}
	\grad \rho_n(t, x) = \frac{\grad \rho_0((X^n)^{-1}(t, x))}{(1-\sigma_2t\rho_0((X^n)^{-1}(t, x)))^2} \grad (X^n)^{-1}(t, x),
\end{align*}
so that
\begin{align*}
	\norm{\grad \rho_n(t)}_{L^\iny}
		\le C_0(T)\norm{\grad \rho_0}_{L^\iny} \norm{\grad (X^n)^{-1}(t)}_{L^\iny}
		\le C_0(T)\norm{\grad \rho_0}_{L^\iny} \exp \int_0^t \norm{\grad \v_n(s)}_{L^\iny} \, ds.
\end{align*}
Now,
\begin{align}\label{e:vnBound}
	\norm{\v_n(t)}_{L^\iny}
		\le C \norm{\rho_{n-1}(s)}_{L^1\cap L^{\infty}}\le C_0(T) \norm{\rho_{n-1}(s)}_{L^{\infty}}\le C_0(T)
\end{align}
by Lemma \ref{L:MassZero} and (\ref{e:rhonSupport}), and it follows as in the proof of \cref{T:WeakImpliesStrongOld} that $\norm{\rho_n(t)}_{C^{\al \theta(t)}} \le C_0(T)\norm{\rho_0}_{C^\al}$. Thus,
\begin{align*}
	\norm{\grad \v_n(s)}_{L^\iny}
		&\le C \norm{\v_n(s)}_{L^\iny} + C(\theta(t)) \norm{\rho_{n-1}(t)}_{C^{\al \theta(t)}}
		\le C_0(T)
			+ C_0(T) \norm{\rho_0}_{C^{\al}}
		\le C_0(T).
\end{align*}
Hence also, for all $n$ and all $t \in [0, T]$,
\begin{align}\label{e:ApproxSeqBound}
	\norm{\grad X_n(t, \cdot)}_{L^\iny}, \norm{\grad (X^n)^{-1}(t, \cdot)}_{L^\iny},
		\norm{\grad \rho_n(t)}_{L^\iny}
		\le C_0(T).
\end{align}

Define, for $n \ge 1$,
\begin{align*}
	h_n(t)
		= \norm{X_n(t, \cdot) - X_{n - 1}(t, \cdot)}_{L^\iny}.
\end{align*}
We will show that $h_n \to 0$ as $n \to \iny$ on a sufficiently short time interval.

\Ignore{ 
We observe, first, though, that for all $0 \le t_1 \le t_2 \le T$,
\begin{align}\label{e:hnSimpleBound}
	\begin{split}
	\int_{t_1}^{t_2} h_n(s) \, ds
		&\le \int_{t_1}^{t_2} \abs{X_n(s, x) - x} + \abs{X_{n - 1}(s, x) - x}
			\, ds \\
		&\le \int_{t_1}^{t_2} \int_0^s
			\norm{v_n(r)}_{L^\iny} + \norm{v_{n - 1}(r)}_{L^\iny} \, dr
		\le C_0(t) (t_2 - t_1),
	\end{split}
\end{align}
where we used \cref{e:vnBound}. This gives us a simple bound that applies uniformly over $n$.
} 

Fix $n \ge 2$. We have,
\begin{align*}
	&\abs{X_n(t, x) - X_{n - 1}(t, x)}
		\le \int_0^t \abs{\v_n(s, X_n(s, x)) - \v_n(s, X_{n - 1}(s, x))} \, ds \\
			&\qquad{} +
			\int_0^t \abs{\v_n(s, X_{n - 1}(s, x)) - \v_{n - 1}(s, X_{n - 1}(s, x))} \, ds \\
		&=: I_1 + I_2.
\end{align*}

\Ignore{ 
We will need a bound on $\v_n$ in $L^\iny$, which we will obtain via \cref{L:vBounds}, but this will require a bound on $\norm{\rho}_{L^1 \cap L^\iny}$, a bound we obtain from \cref{L:L1LInfGrowth}, which in turn requires a bound on $\dv \v_n$, which comes from $\rho_{n - 1}$. That is,
\begin{align*}
	\norm{\dv \v_n}_{L^\iny((0, t) \times \R^d)}
		= \abs{\sigma_1} \norm{\rho_{n - 1}}_{L^\iny((0, t) \times \R^d)},
\end{align*} 
so by \cref{L:L1LInfGrowth},
\begin{align*}
	\norm{\rho_n(t)}_{L^1 \cap L^\iny}
		&\le \norm{\rho_0}_{L^1 \cap L^\iny}
			\exp \pr{\abs{\sigma_1} \norm{\rho_{n - 1}}_{L^\iny((0, t) \times \R^d)} t} \\
		&= \norm{\rho_0}_{L^1 \cap L^\iny}
			\exp \pr{\abs{\sigma_1} \norm{\rho_0}_{L^\iny(\R^d)} t}
		=: C_0(t),
\end{align*}
the second inequality following by a simple inductive argument applying \cref{L:L1LInfGrowth} repetitively.
} 

\Ignore{By \cref{L:vBounds}, $\v_n(t)$ has a log-Lipschitz MOC, $\mu$, that applies uniformly over $t \in [0, T]$ and that depends only upon $\norm{\rho_n}_{L^\iny(0, T; L^1 \cap L^\iny)}$. We can write $\mu$ as
\begin{align*}
	\mu(r) =
		\begin{cases}
			- C_0r \log r & \text{ if } r < e^{-1}, \\
			C_0 e^{-1} & \text{ if } r \ge e^{-1},
		\end{cases}
\end{align*}
where $C_0 = C(\norm{\rho_0}_{L^1 \cap L^\iny}, T)$.
Then $I_1$ can be bounded as
\begin{align}\label{e:I1Bound}
	I_1
		\le \int_0^t \mu \pr{\abs{X_n(s, x) - X_{n - 1}(s, x)}} \, ds
		\le \int_0^t \mu \pr{h_n(s)} \, ds.
\end{align}
(In this bound, we used that $\mu$ is nondecreasing.) } 

Since $\v_n$ is Lipschitz uniformly in $n$, $I_1$ can be bounded as 
\begin{align}\label{e:I1Bound}
	I_1
		\le \int_0^t \| \grad \v_n(s) \|_{L^{\infty}}\abs{X_n(s, x) - X_{n - 1}(s, x)} \, ds
		\le C_0(T)\int_0^t  h_n(s) \, ds.
\end{align}

To bound $I_2$, we note that
\begin{align*}
	&\abs{\v_n(s, X_{n - 1}(s, x)) - \v_{n - 1}(s, X_{n - 1}(s, x))} \\
		&\qquad
		= \abs{\sigma_1}\abs{(\grad \F * \rho_{n - 1})(s, X_{n - 1}(s, x))
			- (\grad \F *\rho_{n - 2})(s, X_{n - 1}(s, x))} \\
		&\qquad
		\le \abs{\sigma_1} \norm{\grad \F * \rho_{n - 1}(s)
				- \grad \F * \rho_{n - 2}(s)}_{L^\iny}
		\le C_0(T) \norm{(\rho_{n - 1} - \rho_{n - 2})(s)}_{L^1 \cap L^\iny} \\
		&\qquad
		\le C_0(T) \norm{(\rho_{n - 1} - \rho_{n - 2})(s)}_{L^\iny}
		= C_0(T) \sup_{x \in \R^d} \abs{\rho_{n - 1}(s, X_{n - 2}(s, x))
			- \rho_{n - 2}(s, X_{n - 2}(s, x))}.
\end{align*}
In the last two inequalities we used \cref{e:gradFrhobound} of \cref{L:MassZero} and \cref{e:rhonSupport}.
Now,
\begin{align*}
	&\abs{\rho_{n - 1}(s, X_{n - 2}(s, x)) - \rho_{n - 2}(s, X_{n - 2}(s, x))} \\
		&\qquad
		= \left|\rho_{n - 1}(s, X_{n - 2}(s, x)) - \frac{\rho_0(x)}{1-\sigma_2s\rho_0(x)}\right|
		= \abs{\rho_{n - 1}(s, X_{n - 2}(s, x)) - \rho_{n - 1}(s, X_{n - 1}(s, x))} \\
		&\qquad
		\le \norm{\grad \rho_{n - 1}(s)}_{L^\iny}
			\norm{X_{n - 2}(s, \cdot) - X_{n - 1}(s, \cdot)}_{L^\iny}
		= \norm{\grad \rho_{n - 1}(s)}_{L^\iny} h_{n - 2}(s) \\
		&\qquad
		\le C_0(T) h_{n - 2}(s),
\end{align*}
where we used \cref{e:ApproxSeqBound} in the last inequality. We conclude that
\begin{align}\label{e:I2Bound}
	I_2
		\le C_0(T)  \int_0^t h_{n - 2}(s) \, ds.
\end{align}

\Ignore{ 
Also, letting $z = X_{n - 1}(s, x)$, we have
\begin{align}\label{e:vnvn1Diff}
	\begin{split}
	\v_n(s, &X_{n - 1}(s, x)) - \v_{n - 1}(s, X_{n - 1}(s, x)) \\
		&= \sigma_1 [\grad \F *\rho_{n - 1}(s)](z)
			- \sigma_1 [\grad \F *\rho_{n - 2}(s)](z) \\
		&= \sigma_1
			\int_{\R^d} \grad \F(z - y) \rho_{n - 1}(s, y) \, dy
				- \sigma_1 \int_{\R^d} \grad \F(z - y) \rho_{n - 2}(s, y) \, dy \\
		&= \sigma_1
			\int_{\R^d} \grad \F(z - y) \rho_0(Y^{n - 1}(s, y)) \, dy
				- \sigma_1 \int_{\R^d} \grad \F(z - y) \rho_0(Y^{n - 2}(s, y)) \, dy \\
		&= \sigma_1
			\int_{\R^d} \grad \F(z - X_{n - 1}(s, y)) \rho_0(y)
					\det \grad X_{n - 1}(s, y) \, dy \\
		&\qquad\qquad
				-  \sigma_1\int_{\R^d} \grad \F(z - X_{n - 2}(s, y)) \rho_0(y)
					\det \grad X_{n - 2}(s, y) \, dy.
	\end{split}
\end{align}
Note that in the last expression, $\det \grad X_j(s, y)$ is continuous in time and never vanishes, with $\det \grad X_j(0, y) = 1$, so no absolute value sign is required.

From page 3 of \cite{C1998}, we have, for any $j$,
\begin{align}\label{e:NeedForGradrho0}
	\begin{split}
	\prt_t \det &\grad X_j(s, y)
		= \dv \v_j(s, X_j(s, y)) \det \grad X_j(s, y)
		= \sigma_1 \rho_{j - 1} (s, X_j(s, y)) \det \grad X_j(s, y) \\
		&= \sigma_1 \rho_j (s, X_j(s, y)) \det \grad X_j(s, y)
			+ \sigma_1 (\rho_{j - 1} - \rho_j) (s, X_j(s, y)) \det \grad X_j(s, y) \\
		&= \sigma_1 \rho_0 (y) \det \grad X_j(s, y)
			+ \sigma_1 (\rho_{j - 1} - \rho_j)(s, X_j(s, y)) \det \grad X_j(s, y).
	\end{split}
\end{align}
Then,
\Ignore{ 
\begin{align}\label{e:UseOfGradrho0}
	\begin{split}
	&\abs{(\rho_{j - 1} - \rho_j)(s, X_j(s, y))}
		\le \norm{\rho_{j - 1}(s, \cdot) - \rho_j(s, \cdot)}_{L^\iny}
		= \abs{\rho_0(X_{j - 1}^{-1}(s, \cdot)) - \rho_0(X_j^{-1}(s, \cdot))} \\
		&\qquad
		\le \norm{\grad \rho_0}_{L^\iny}
			\norm{X_j^{-1}(s, \cdot) - X_{j - 1}^{-1}(s, \cdot)}_{L^\iny}.
	\end{split}
\end{align}
} 
\begin{align}\label{e:UseOfGradrho0}
	\begin{split}
	&\abs{(\rho_{j - 1} - \rho_j)(s, X_j(s, y))}
		= \abs{\rho_{j - 1}(s, X_j(s, x)) - \rho_j(s, X_j(s, x))} \\
		&\qquad
		= \abs{\rho_{j - 1}(s, X_j(s, x)) - \rho_0(x)}
		= \abs{\rho_{j - 1}(s, X_j(s, x)) - \rho_{j - 1}(s, X_{j - 1}(s, x))} \\
		&\qquad
		\le \norm{\grad \rho_{j - 1}(s)}_{L^\iny}
			\norm{X_j(s, \cdot) - X_{j - 1}(s, \cdot)}_{L^\iny}
		= \norm{\grad \rho_{j - 1}(s)}_{L^\iny} h_j(s)
		\le C_0(T) h_j(s),
	\end{split}
\end{align}
where we used \cref{e:ApproxSeqBound}.
\Ignore{ 
But,
\begin{align*}
	&\abs{\rho_{j - 1}(s, X_j(s, x)) - \rho_{j - 1}(s, X_{j - 1}(s, x))} \\
		&\qquad
		= \abs{\rho_{j - 1}(s, X_{j - 1}(s, Y_{j - 1}(s, X_j(s, x))))
			- \rho_{j - 1}(s, X_{j - 1} (s, x))} \\
		&\qquad
		= \abs{\rho_0(Y_{j - 1}(s, X_j(s, x))) - \rho_0(x))}
		\le \norm{\grad \rho_0}_{L^\iny}
			\abs{Y_{j - 1}(s, X_j(s, x)) - x} \\
		&\qquad
		\le \norm{\grad \rho_0}_{L^\iny}
			\norm{Y_{j - 1}(s, X_j(s, \cdot)) - \cdot}_{L^\iny}
		= \norm{\grad \rho_0}_{L^\iny}
			\norm{Y_{j - 1}(s, \cdot) - Y_j(s, \cdot)}_{L^\iny}
\end{align*}
} 

We now apply \cref{L:ODEBound} with $x(t) = \det \grad X_j(t)$, $f(t, x(t)) = f(t) = \sigma_1 (\rho_{j - 1} - \rho_j)(s, X_j(s, y))$. This gives that
\begin{align*}
	\det \grad X_j(s, y)
		= e^{\sigma_1 \rho_0(y) t}(1 + E_j(t)),
\end{align*}
where
\begin{align}\label{e:EjBound}
	\begin{split}
	\abs{E_j(t)}
		&\le C_0(T) \int_0^t h_j(s) \, ds
		=: C_1(T),
	\end{split}
\end{align}
the inequality holding as long as $C_1(T) < \frac{1}{2}$. 
We thus have
\begin{align*}
	\v_n(s, &X_{n - 1}(s, x)) - \v_{n - 1}(s, X_{n - 1}(s, x))
		= I_3 + I_4,
\end{align*}
where
\begin{align*}
	I_3
		&:= \sigma_1
			\int_{\R^d} \grad \F(z - X_{n - 1}(s, y)) \rho_0(y)
					e^{\sigma_1 \rho_0(y) s} \, dy \\
		&\qquad\qquad
				-  \sigma_1 \int_{\R^d} \grad \F(z - X_{n - 2}(s, y)) \rho_0(y)
					e^{\sigma_1 \rho_0(y) s} \, dy \\
		&= \sigma_1
			\int_{\R^d} \brac{\grad \F(z - X_{n - 1}(s, y))
					- \grad \F(z - X_{n - 2}(s, y))} \rho_0(y)
					e^{\sigma_1 \rho_0(y) s} \, dy
\end{align*}
and
\begin{align*}
	I_4
		&:= \sigma_1
			\int_{\R^d} \grad \F(z - X_{n - 1}(s, y)) \rho_0(y)
					e^{\sigma_1 \rho_0(y) s} E^{n - 1}(s) \, dy \\
		&\qquad\qquad
				-  \sigma_1 \int_{\R^d} \grad \F(z - X_{n - 2}(s, y)) \rho_0(y)
					e^{\sigma_1 \rho_0(y) s} E^{n - 2}(s) \, dy.	
\end{align*}

For $I_3$, we use the equivalent of Lemma 3.1 of \cite{MP1994} for the whole space, since $\rho_0$ is compactly supported and $\v^0 \in L^\iny(0, T; L^\iny)$ so $\rho(t)$ remains compactly supported. (Proposition 6.2 of \cite{AKLL2015} is similar.) This gives
\begin{align*}
	\abs{I_3}
		\le C \abs{\sigma_1} \norm{\rho_0}_{^\iny}
			\exp \pr{\abs{\sigma_1} \norm{\rho_0}_{^\iny} s} \mu(h_{n - 1}(s)).
\end{align*}

For $I_4$, we have, after changing variables
\begin{align*}
	\abs{I_4}
		&\le \abs{\sigma_1} \sum_{j = n - 2}^{n - 1}
			E^j(s) \abs{\int_{\R^d} \grad \F(z - y) \rho_0(Y^j(s, y))
				\exp \pr{\sigma_1 \rho_0(Y^j(s, y))} \det \grad Y^j(s, y) \, dy} \\
		&\le C \abs{\sigma_1} \norm{\rho_0}_{^\iny} \exp \pr{\abs{\sigma_1} \norm{\rho_0}_{^\iny} s}
			\sum_{j = n - 2}^{n - 1} \norm{\det \grad Y^j(s)}_{L^\iny}
				\norm{\grad \F}_{L^1(\supp \rho_0(Y^j(s)))}.
\end{align*}
But, Applying \cref{L:ODEBound}, we have \ToDo{Being for the inverse map, this probably needs to be refined a little.}
\begin{align*}
	\norm{\det \grad Y^j(s)}_{L^\iny}
		\le \exp \pr{\abs{\sigma_1 } \norm{\rho_0}_{L^\iny} s} (1 + \abs{E^j(s)}),
\end{align*}
where $E^j(s)$ is bounded as in \cref{e:EjBound}.

\ToDo{For precision, we should bound the measure of $\supp \rho_0(Y^j(s))$, but all we really need is the following conclusion.} Thus,
\begin{align*}
	\abs{I_4}
		&\le C(t, \abs{\sigma_1}, \norm{\rho_0}_{L^1 \cap C^1})
			\pr{\int_0^t h_{n - 1}(s) \, ds + \int_0^t h_{n - 2}(s) \, ds}.
\end{align*}
} 

Thus, on $[0,T]$ we have,
\begin{align}\label{e:hnBound}
	h_n(t)
		\le C(t, \abs{\sigma_1}, \norm{\rho_0}_{L^1 \cap C^1})
			\int_0^t \pr{h_{n - 2}(s)
				+ h_n(s)} \, ds.
\end{align}
\Ignore{ 
as long as
\begin{align*}
	\int_0^t h_{n - 1}(s) \, ds, \quad \int_0^t h_{n - 1}(s) \, ds
		&\le \frac{1}{2 \abs{\sigma_1} \norm{\grad \rho_0}_{L^\iny}}.
\end{align*}
Because of \cref{e:hnSimpleBound}, we can always insure that this holds up to some finite time, $T_0 > 0$.
} 
Now let
\begin{align*}
	\delta^N(t)
		= \sup_{n \ge N - 2} h_n(t).
\end{align*}
Then by \cref{e:hnBound}, for all $j \ge 0$,
\begin{align*}
	h_{N + j}(t)
		&\le C(t, \abs{\sigma_1}, \norm{\rho_0}_{L^1 \cap C^1})
			\int_0^t \pr{\sup \set{h_{N +j - 2}(s),
					h_{N + j}(s)}} \, ds \\
		&\le C(t, \abs{\sigma_1}, \norm{\rho_0}_{L^1 \cap C^1})
			\int_0^t \delta^{N}(s) \, ds
\end{align*}
and hence,
\begin{align*}
	\delta^{N + 2}(t)
		\le C(t, \abs{\sigma_1}, \norm{\rho_0}_{L^1 \cap C^1})
			\int_0^t \delta^N(s) \, ds.
\end{align*}
Iterating this inequality, we obtain
\begin{equation}\label{e:deltaN2Bound}
\delta^{2N + 3}(t) \leq \frac{(CT)^{N+1}T}{(N+1)!}.
\end{equation}
for all $t\leq T$.  The bound in \cref{e:deltaN2Bound} guarantees that $\delta^N$ is Cauchy on $[0,T]$, so that $X_n$ converges in $L^{\infty}([0,T]\times \R^d)$ to some $X$.  Using $X$, one can construct a Lagrangian solution over $[0, T]$.  \ToDo{I suggest someone else check this to make sure it looks okay.}    

\Ignore{Now at this point, the argument for short time existence can proceed as in \cite{MP1994},
showing that the sequence $\delta^N$ is Cauchy. The bound in \cref{e:deltaN2Bound} guarantees a time of existence up to $T_0$.
But we can always extend the solution beyond $T_0$ by repeating the argument above starting at $T_0 - \eps$ for some small $\eps > 0$.
This gives existence of a Lagrangian solution over $[0, T]$.} 

Finally, observe that the argument that led to \cref{e:I2Bound} also shows that $(v_n)$ is Cauchy in $L^\iny((0, T) \times \R^d)$. It follows, arguing as in \cite{BLL2012}, that our Lagrangian solution is also an Eulerian solution.

The uniqueness of the solution as a Lagrangian solution follows from \cref{T:InviscidExistence}, which we prove next. The uniqueness of the solution as an Eulerian solution follows as in \cite{Y1963,Y1995}, the extra term that appears because the velocities are not divergence-free being controllable because of the bound on the gradient of the densities in \cref{e:ApproxSeqBound}.
\end{proof}
\Ignore{ 

The details are a little messy, because Osgood's lemma is involved one way or another, but I will give the argument for now as though the velocity fields were Lipschitz rather than log-Lipschitz; that is, as though $\mu(r) = C r$.

\ToDo{First need to take the supremum over $1, 2, \dots, n$ before applying Osgood's/Gronwall's.}
Then from the bound above,
\begin{align*}
	h_n(t)
		\le C \pr{\int_0^t \mu \pr{h_{n - 1}(s)} \, ds} e^t
		\le C e^t \int_0^t \mu \pr{h_{n - 1}(s)} \, ds.
\end{align*}

Now,
\begin{align*}
	h_1(s)
		&= \norm{X_1(s, \cdot) - X_0(s, \cdot)}_{L^\iny}
		= \norm{X_1(s, \cdot) - \cdot}_{L^\iny}
		= \norm{\int_0^s \v_1(r, \cdot) \, dr}_{L^\iny} \\
		&\le \int_0^s \norm{\v_1(r)}_{L^\iny} \, dr
		\le Cs
\end{align*}
so $\mu(h_1(s)) \le C^2 s$ (all constants here might as well be equal, but we need to keep track of powers).
Hence, 
\begin{align*}
	h_2(t)
		\le C e^t \int_0^t C^2 s \, ds
		\le \frac{C^3}{2} t^2 e^t.
\end{align*}

Then,
\begin{align*}
	h_3(t)
		&\le C e^t \int_0^t \mu(h_2(s)) \, ds
		\le C e^t \int_0^t C h_2(s) \, ds
		\le C e^t \int_0^t \frac{C^4}{2} s^2 e^s \, ds \\
		&\le \frac{C^5}{{2 \cdot 3}} t^3 e^{2t}.
\end{align*}
Iterating, we see that
\begin{align*}
	h_n(t)
		\le \frac{C^{2n - 1}}{n!} t^n e^{(n - 1)t}.
\end{align*}
For all $t \in [0, T]$ for a  sufficiently small $T$ (in fact, I think any $T < 1$) this converges to zero. Moreover, it shows that the sequence, $(X_n)$, is Cauchy in $L^\iny([0, T] \times \rho)$ and hence converges to some flow map, $X$. Defining
\begin{align*}
	\rho(t, x) = \rho_0(X^{-1}(t, x)), \quad
	\v = \sigma_1 \grad \F *\rho,
\end{align*}
it is not hard to see (or see \cite{AKLL2015}) that $\v$ is a weak solution to the aggregation equations on $[0, T] \times \rho$ in the usual sense.

Since $T > 0$ is arbitrary, existence, in fact, holds for all time.
} 
\Ignore{ 
If follows from $\cref{e:ApproxInviscid}_{2, 3}$ that $\prt_t \rho_n + \v_n \cdot \grad \rho_n = 0$. Integrating in space, we have
\begin{align*}
	\int_{\R^d} \prt_t \rho_n + \int_{\R^d} \v_n \cdot \grad \rho_n
		= 0,
\end{align*}
so that
\begin{align}\label{e:Massndifft}
	\diff{}{t} \int_{\R^d} \rho_n
		= \int_{\R^d} \dv \v_n  \, \rho_n
		= \sigma_1 \int_{\R^d} \rho_{n - 1} \rho_n.
\end{align}
Now, because $(\delta^N)$ and so $(h_N)$ are Cauchy sequence, it follows that
\begin{align*}
	\norm{\rho_n(t) - \rho_m(t)}_{L^\iny}
		= \norm{\rho_0 \circ Y_n(t) - \rho_0 \circ Y_m(t)}_{L^\iny}
		\le \norm{\rho_0}_{C^\al} \norm{Y_n(t) - Y_m(t)}_{L^\iny}^\al
\end{align*}
is Cauchy. But because $\rho_0$ is compactly supported and $(\v_n)$ is bounded in $L^\iny_{loc}(\R; L^\iny)$ by \cref{L:vBounds}, it follows that $(\rho_n)$ is Cauchy in $L^\iny_{loc}(\R; L^p)$ for all $p \in [1, \iny)$. Hence, the right-hand side of \cref{e:Massndifft} converges to $- \int_{\R^d} \rho^2$. Integrating in time then gives
\begin{align*}
	m(\rho)
		= \lim_{n \to \iny} m(\rho_n)
		= m(\rho_0) + \sigma_1 \int_0^t \int_{\R^d} \rho^2.
\end{align*}
The first equality holds since $(\rho_n)$ Cauchy in $L^\iny_{loc}(\R; L^1)$. This also shows that $m(\rho)$ is differentiable and that \cref{e:MassInviscid} holds.
} 

\Ignore{ 
\begin{remark}
	It is not hard to modify this proof to allow forcing, $f$, as long as $\dv f \in L^\iny$.
	We modify the definition of $\rho_n$ to be
	\begin{align*}
		\rho_n(t, X_n(t, x))
			&= \rho_0(x) + \int_0^t \curl f(s, X_n(s, x)) \, ds
	\end{align*}
	and deal with the resulting estimates on $f$.
\end{remark}
} 

We now have what we need to prove the existence of weak solutions.

\ToDo{We need to address whether or not we want to include Theorem 5.2 and its proof.  Regardless of what we decide, it makes sens to me that we should include a proof of uniqueness of Lagrangian solutions.  If we do this, we *could* perhaps assume some smoothness on the initial data (hence smoothness of the solution), which will simplify the proof below considerably (it would make the proof below very similar to the proof of Prop 5.4 above, I am guessing, and we wouldn't need to invoke some of the technical lemmas from the appendix).}
\begin{proof}[\textbf{Proof of \cref{T:InviscidExistence}}]
	We first consider uniqueness of Lagrangian solutions.
	
	Suppose that $\rho_1$, $\rho_2$ are two Lagrangian solutions
	to the aggregation equations having the same initial density, $\rho_0$.
	We let $\rho_1$, $\rho_2$ play the same roles that $\rho_n$, $\rho_{n - 1}$
	played in the proof of \cref{P:WeakExistenceOld}, and define
	\begin{align*}
		h(t)
			= \norm{X_2(t, \cdot) - X_1(t, \cdot)}_{L^\iny},
	\end{align*}
	where $X_j$ is the flow map for $\v_j := \grad \F * \rho_j$. Then
	\begin{align*}
		\abs{X_2(t, x) - X_1(t, x)}
			&\le \int_0^t \abs{\v_2(s, X_2(s, x)) - \v_2(s, X_1(s, x))} \, ds \\
				&\qquad{} +
				\int_0^t \abs{\v_2(s, X_1(s, x)) - \v_1(s, X_1(s, x))} \, ds \\
			&=: I_1 + I_2.
	\end{align*}
	Because $\rho_0$ has no assumed smoothness, the bound on $I_1$ in \cref{e:I1Bound} does not apply in this setting.  We note instead that, by \cref{L:vBounds}, $\v_n(t)$ has a log-Lipschitz MOC, $\mu$, that applies uniformly over $t \in [0, T]$ and that depends only upon $\norm{\rho_n}_{L^\iny(0, T; L^1 \cap L^\iny)}$. We can write $\mu$ as
\begin{align*}
	\mu(r) =
		\begin{cases}
			- C_0r \log r & \text{ if } r < e^{-1}, \\
			C_0 e^{-1} & \text{ if } r \ge e^{-1},
		\end{cases}
\end{align*}
where $C_0 = C(\norm{\rho_0}_{L^1 \cap L^\iny}, T)$.
Then $I_1$ can be bounded as
\begin{align}\label{e:I1Bound}
	I_1
		\le \int_0^t \mu \pr{\abs{X_n(s, x) - X_{n - 1}(s, x)}} \, ds
		\le \int_0^t \mu \pr{h_n(s)} \, ds.
\end{align}
(In this bound, we used that $\mu$ is nondecreasing.) 

Similarly, for $I_2$, the bounds in \cref{e:ApproxSeqBound} no longer apply, so we must take a different approach.  We set $z = X_1(s, x)$, and write
	\begin{align}\label{e:v2v1Diff}
		\begin{split}
		\v_2(s, &X_1(s, x)) - \v_1(s, X_1(s, x)) \\
			&= \sigma_1 [\grad \F *\rho_2(s)](z)
				- \sigma_1 [\grad \F *\rho_1(s)](z) \\
			&= \sigma_1
				\int_{\R^d} \grad \F(z - y) \rho_2(s, y) \, dy
					- \sigma_1 \int_{\R^d} \grad \F(z - y) \rho_1(s, y) \, dy \\
			&= \sigma_1
				\int_{\R^d} \grad \F(z - y) \frac{\rho_0((X_2)^{-1}(s, y))}{1-\sigma_2s\rho_0((X_2)^{-1}(s, y))} \, dy
					- \sigma_1 \int_{\R^d} \grad \F(z - y) \frac{\rho_0((X_1)^{-1}(s, y))}{1-\sigma_2s\rho_0((X_1)^{-1}(s, y))} \, dy \\
			&= \sigma_1
				\int_{\R^d} \grad \F(z - X_2(s, y)) \frac{\rho_0(y)}{1-\sigma_2s\rho_0(y)}
						\det \grad X_2(s, y) \, dy \\
			&\qquad\qquad
					-  \sigma_1\int_{\R^d} \grad \F(z - X_1(s, y)) \frac{\rho_0(y)}{1-\sigma_2s\rho_0(y)}
						\det \grad X_1(s, y) \, dy.
		\end{split}
	\end{align}
	Then
	\begin{align}\label{e:NoNeedForgradrho0}
		\begin{split}
		\prt_t \det &\grad X_j(s, y)
			= \dv \v_j(s, X_j(s, y)) \det \grad X_j(s, y)
			= \sigma_1 \rho_j (s, X_j(s, y)) \det \grad X_j(s, y) \\
			&=  \frac{\sigma_1\rho_0(y)}{1-\sigma_2s\rho_0(y)} \det \grad X_j(s, y)
		\end{split}
	\end{align}
	for $j = 1, 2$ (see, for instance, page 3 of \cite{C1998}), so that 
	\begin{align*}
		\det \grad X_1(s, y) = \det \grad X_2(s, y)  
		= \left\{
			\begin{array}{rl}
				(1-\sigma_2s\rho_0(y))^{-\sigma_1\slash\sigma_2},
				&\sigma_2 \ne 0, \\
				e^{\sigma_1\rho_0(y)s},
				&\sigma_2 = 0.
			\end{array}
		\right.        
	\end{align*}
	Similarly,
	\begin{align*}
		\det \grad X_1^{-1}(s, y)  = \det \grad X_2^{-1}(s, y)  =\left\{
			\begin{array}{rl}
				(1-\sigma_2s\rho_0(y))^{\sigma_1\slash\sigma_2},
				&\sigma_2 \ne 0, \\
				e^{-\sigma_1\rho_0(y)s},
				&\sigma_2 = 0.
			\end{array}
		\right.        
	\end{align*}
	Hence,
	\begin{align*}
		\v_2(s, &X_1(s, x)) - \v_1(s, X_1(s, x)) \\
			&= \sigma_1 \int_{\R^d}
				\pr{\grad \F(z - X_2(s, y)) - \grad \F(z - X_1(s, y))} \rho_0(y)
						(1-\sigma_2s\rho_0(y))^{-\frac{\sigma_1}{\sigma_2} -1} \, dy 
	\end{align*}
	when $\sigma_2 \neq 0$, and
	\begin{align*}
		\v_2(s, &X_1(s, x)) - \v_1(s, X_1(s, x)) \\
					&= \sigma_1 \int_{\R^d}
				\pr{\grad \F(z - X_2(s, y)) - \grad \F(z - X_1(s, y))} \rho_0(y)
						e^{\sigma_1 \rho_0(y) s} \, dy 
	\end{align*}
	when $\sigma_2 = 0$.  In both cases, by \cref{P:hlogBound}, we have
	\begin{align*}
		I_2
			&\le C(T, \abs{\sigma_1}, \abs{\sigma_2}\norm{\rho_0}_{ L^\iny})
				\int_0^t
				\norm{\grad \F(z - X_2(s, y)) - \grad \F(z - X_1(s, y)) \rho_0(y)}
					_{L^1_y(\supp \rho_0(y))} \, ds \\
			&\le C(T, \abs{\sigma_1}, \abs{\sigma_2}\norm{\rho_0}_{ L^\iny})
				\int_0^t h(s) \, ds
			= C_0(T) \int_0^t h(s) \, ds.
	\end{align*}
	
	The net effect of this is that we do not require regularity of $\rho^1(0)$ and
	$\rho^2(0)$ and obtain, in place of \cref{e:hnBound}, the bound (see \cref{R:NeedForGradrho0}),
	\begin{align}\label{e:hBound}
		h(t)
			\le C(T, \abs{\sigma_1}, \abs{\sigma_2},\norm{\rho_0}_{L^1 \cap L^\iny}, \abs{\supp \rho_0})
				\int_0^t \mu \pr{h(s)} \, ds,
	\end{align}
	and this bound now applies up to time $T$. Uniqueness of the Lagrangian solutions
	then follows immediately
	from Osgood's lemma, since $\mu$ is an Osgood \MOC.

	Note that if $\rho^1(0) = \rho_{0, 1}$ and $\rho^2(0) = \rho_{0, 2}$ there would be
	an additional term in \cref{e:v2v1Diff}.  Setting 
	 \begin{align*}
		F_j(s, y)  
		= \left\{
			\begin{array}{rl}
				(1-\sigma_2s\rho_{0,j}(y))^{-\sigma_1\slash\sigma_2-1},
				&\sigma_2 \ne 0, \\
				e^{\sigma_1\rho_{0,j}(y)s},
				&\sigma_2 = 0,
			\end{array}
		\right.        
	\end{align*}
we would now have	
	\begin{align*} 
		\begin{split}
		\v_2(s, &X_1(s, x)) - \v_1(s, X_1(s, x)) \\
			&= \sigma_1
				\int_{\R^d} \grad \F(z - X_2(s, y)) \rho_{0, 2}(y)
						F_2(s,y) \, dy 
					-  \sigma_1\int_{\R^d} \grad \F(z - X_1(s, y)) \rho_{0, 1}(y)
						F_1(s,y) \, dy \\
			&= \sigma_1
				\int_{\R^d} \pr{\grad \F(z - X_2(s, y))
							-  \grad \F(z - X_1(s, y))}
							\rho_{0, 2}(y) F_2(s,y) \, dy  \\
			&\qquad
				+ \sigma_1\int_{\R^d} \grad \F(z - X_1(s, y))
						\pr{\rho_{0, 2}(y) F_2(s,y)
							- \rho_{0, 1}(y) F_1(s,y)} \, dy \\
			&=: I_5 + I_6.
		\end{split}
	\end{align*}
	
	We can bound $I_5$ as before:
	\begin{align*}
		\abs{I_5}
			\le C(T, \abs{\sigma_1}, \abs{\sigma_2},\norm{\rho_{0, 2}}_{L^1 \cap L^\iny})
				\int_0^t \mu \pr{h(s)} \, ds.
	\end{align*}
	We cannot expect continuity with respect to initial data in the $L^\iny$
	norm of the density, however. Instead, we fix $p \in (1, 2)$ and let $q$ be
	\Holder conjugate to $p$. We then have
	\begin{align*}
		\abs{I_6}
			&\le \abs{\sigma_1}
				\norm{\grad \F(z - X_1(s, \cdot))}_{L^p}
				\norm{\rho_{0, 2} F_2(s,\cdot)
							- \rho_{0, 1} F_1(s,\cdot)}_{L^q} \\
			&\le C(T, q, \abs{\sigma_1}, \abs{\sigma_2}, \sup_{j = 1, 2} \smallnorm{\rho_{0, j}}_{L^1 \cap L^\iny})
					\norm{\rho_{0, 2} F_2(s,\cdot)
							- \rho_{0, 1} F_1(s,\cdot)}_{L^q}.
	\end{align*}
	For the $L^p$ bound above, we could use, for instance, Proposition 3.2
	of \cite{AKLL2015} (and its analog, as in  \cref{P:hlogBound}, in higher dimensions),
	which has a dependence on the measure of the support
	of $\rho_1(s)$ along with its $L^\iny$-norm.
		
	The addition of $\abs{I_6}$ to \cref{e:hBound} produces, after application of
	Osgood's lemma, the rate of convergence of $X_1$ and $X_2$ as $\rho_2 \to \rho_1$
	in $L^q$: 
	\begin{align}\label{e:X1X2Diff}
		\norm{X_1(t) - X_2(t)}_{L^\iny}
			\le F(\norm{\rho_{0, 2} - \rho_{0, 1}}_{L^q}),
	\end{align}
	where $F \colon [0, \iny) \to [0, \iny)$ is a continuous increasing function,
	which also depends on $T$ and
	$\sup_{j = 1, 2} \smallnorm{\rho_{0, j}}_{L^1 \cap L^\iny}$.
	(It's specific form is not important.)  
	
Now assume $X_2(t,a)=x$ so that $X^{-1}(t,x) = a$, and assume $X_2(t,a)=y$ so that $X_2^{-1}(t,y)=a$.  Then $X^{-1}(t,x)= X_2^{-1}(t,y)=a$.  Since $X_1^{-1}(t)$ belongs to $\dot{C}^{\theta(t)}$ (as in the proof of Thoerem \ref{T:WeakImpliesStrongOld}), we can write
\begin{align*}
&|X_1^{-1}(t,x) - X_2^{-1}(t,x) |  =  |X_1^{-1}(t,x) -X_1^{-1}(t,y) |\\
&\qquad  \leq C| x-y |^{\theta(t)} = C| X_1(t,a) - X_2(t,a) |^{\theta(t)}.  
\end{align*}
Thus an estimate analogous to (\ref{e:X1X2Diff}) holds with $X_1$ and $X_2$ replaced by $X_1^{-1}$ and $X_2^{-1}$, respectively.

We take advantage of this estimate on the inverse flow maps to prove existence.
	Let $(\rho_{0, n})_{n = 1}^\iny$ be a sequence in
	$L^1 \cap C^{1, \al}$ for some fixed $\al \in (0, 1)$ that converges to
	$\rho_0$ in $L^1 \cap L^q$. Assume also that each $\rho_{0, n}$ is supported
	in some sufficiently large compact set.
	Let $\rho_n$ be the sequence of weak solutions
	to the aggregation equations given by \cref{P:WeakExistenceOld}.
	Because $(\rho_{0, n})$ is Cauchy in $L^q$, the sequence $(\rho_n)$ is Cauchy
	in $L^\iny(0, T; L^q)$ and hence converges to some $\rho \in L^q$. It is easy
	to see that $\rho$ also lies in $L^\iny(0, T; L^1 \cap L^\iny)$ and is a Lagrangian
	solution to the aggregation equations. That this solution is also a weak solution
	follows in the same manner as in the proof of \cref{P:WeakExistenceOld}
	(that is, using the argument from \cite{BLL2012}).
\end{proof}

\begin{remark}\label{R:NeedForGradrho0}
	The identity in \cref{e:NoNeedForgradrho0}
	involves each vorticity evaluated along its own flow lines. This is why
	this approach to the term $I_2$ cannot be taken in the proof of
	\cref{P:WeakExistenceOld}, since there
	each vorticity is evaluated along the
	flow lines for the next iteration.
\end{remark}

\Ignore{\begin{theorem}\label{T:ClassicalExistence}
	Assume that $\rho_0 \in C^{k, \al}(\R^d)$, $k \ge 0$, $\al \in [0, 1)$. There exists a
	unique global-in-time classical solution to \GAGzero with
	$\rho \in L^\iny_{loc}(\R; C^{k, \al})$.
\end{theorem}
\begin{proof}
	Follows immediately from \cref{T:WeakImpliesStrongOld,T:InviscidExistence}.
\end{proof}  }

\Ignore{ 
We also have the following estimate, which we will need in \cref{S:VV}:
\begin{prop}
\end{prop}
\begin{proof}
	Now, the proof of \cref{T:WeakImpliesStrongOld} contains a bound on $\norm{\grad X(t)}_{L^\iny}$;
	the same bound on $\norm{\grad X^{-1}(t)}_{L^\iny}$ can be obtained in the same manner.
	This bound would be sufficient for the purpose of obtaining the vanishing
	viscosity limit, but we will obtain a more refined estimate using the transport of $\rho_0$
	more directly.

	Taking the gradient of \GAGzero, we have
	\begin{align*}
		\prt_t \grad \rho^0 + \grad (\v^0 \cdot \grad \rho^0) = 0.
	\end{align*}
	Now,
	\begin{align*}
		(\grad (\v^0 \cdot \grad \rho^0))^i
			&= \prt_i ((v^0)^j \grad_j \rho^0)
			= \prt_i (v^0)^j \grad_j \rho^0 + (v^0)^j \grad_j \prt_i \rho^0 \\
			&= \prt_j (v^0)^i \grad_j \rho^0 + (v^0)^j \grad_j \prt_i \rho^0
			= (\grad \rho^0 \cdot \grad \v^0)^i + (\v^0 \cdot \grad \grad \rho^0)^i,
	\end{align*}
	where we used the symmetry of $\grad v$. \ToDo{Such symmetry we should note upon earlier.}
	Hence,
	\begin{align}\label{e:gradrho0Eq}
		\prt_t \grad \rho^0 + \v^0 \cdot \grad \grad \rho^0
			= - \grad \rho^0 \cdot \grad \v^0.
	\end{align}
	
	Let $q > 2$ and take the inner product of \cref{e:gradrho0Eq} with
	$\abs{\grad \rho_0}^{q - 2} \grad \rho_0$. This gives
\end{proof}
} 

We used the following lemma above:

\Ignore{ 
\begin{lemma}\label{L:ODEBound}
	Let $x \colon [0, \iny) \to (0, \iny)$ satisfy
	\begin{align*}
		x'(t)
			= a x(t) + f(t, x(t)) x(t), \quad
		x(0) = 1
	\end{align*}
	for a constant $a \ne 0$ and $f$ continuous and bounded. Suppose that,
	\begin{align*}
		\orgabs{\int_0^t f(s, x(s)) \, ds}
			\le \frac{1}{2}.
	\end{align*}
	Then
	\begin{align*}
		x(t)
			&= e^{at}(1 + E(t)), 
	\end{align*}
	with 
	\begin{align*}
		\abs{E(t)} \le C \abs{\int_0^t f(s, x(s)) \, ds}.
	\end{align*}
\end{lemma}
\begin{proof}
	We have,
	\begin{align*}
		(\log x(t))'
			= a + f(t, x(t)),
	\end{align*}
	so
	\begin{align*}
		\log x(t)
			= at + \int_0^t f(s, x(s)) \, ds
	\end{align*}
	and hence,
	\begin{align*}
		x(t)
			= e^{at} \exp \int_0^t f(s, x(s)) \, ds.
	\end{align*}
	The result then follows from the observation that
	$
		e^b \le C(1 + b)
	$
	for all $b \in [-1/2, 1/2]$.
\end{proof}
} 

\Ignore{ \cref{L:L1LInfGrowth} is a precise version of the following lemma
\begin{lemma}\label{L:rhoL1}
	Assume that $\rho$ is a weak solution to the aggregation equations on $[0, T]$ with
	$\rho_0 \in L^1 \cap L^\iny$. Then
	\begin{align*}
		\norm{\rho(t)}_{L^1}
			\le \norm{\rho_0}_{L^\iny} 
				\exp \pr{\abs{\sigma_1} \norm{\rho_0}_{L^\iny} t}.
	\end{align*}
\end{lemma}
\begin{proof}
	\ToDo{Refine this argument.}
	Assume that $\rho_0 \ge 0$. THen
	\begin{align*}
		0
			&= \int_{\R^d} (\prt_t \rho + \v \cdot \grad \rho)
			= \diff{}{t} \norm{\rho(t)}_{L^1} - \int_{\R^2} \dv \v \cdot \rho
			= \diff{}{t} \norm{\rho(t)}_{L^1} - \sigma_1 \int_{\R^2} \rho^2,
	\end{align*}
	so
	\begin{align*}
		\diff{}{t} \norm{\rho(t)}_{L^1}
			\le \abs{\sigma_1} \norm{\rho}_{L^\iny} \norm{\rho}_{L^1}
			= \abs{\sigma_1} \norm{\rho_0}_{L^\iny} \norm{\rho}_{L^1}.
	\end{align*}
	The result then follows from Gronwall's inequality.
\end{proof}
} 

\Ignore{ 
We also have the following estimate on classical solutions to \PAGzero:

\begin{prop}\label{P:gradrho0LInfBound}
	Let $p \in [1, \iny]$ and assume that $\rho^0$ is a classical solution to
	$(AG)$. Then
	\begin{align}\label{e:gradrho0LInfBoundEq}
		\norm{\grad \rho^0(t)}_{L^p}
			\le \norm{\grad \rho_0}_{L^p}
				\exp \pr{C \abs{\sigma_1}^{-1} e^{\abs{\sigma_1} \norm{\rho_0}_{L^\iny} t}}
					e^{\norm{\rho_0}_{C^\al} t}.
	\end{align}
\end{prop}
\begin{proof}
	We have,
	\begin{align*}
		\grad \rho^0(t, x)
			= \grad (\rho_0(X^{-1}(t, x)))
			= \grad \rho_0(X^{-1}(t, x)) \grad X^{-1}(t, x)
	\end{align*}
	so that
	\begin{align*}
		\norm{\grad \rho^0(t)}_{L^p}
			\le \norm{\grad \rho_0}_{L^p} \norm{\grad X^{-1}(t)}_{L^\iny}.
	\end{align*}
	Now, the proof of \cref{T:WeakImpliesStrongOld} contains a bound on $\norm{\grad X(t)}_{L^\iny}$,
	and the same bound applies to $\norm{\grad X(t)}_{L^\iny}$, by \cref{P:gradXFlowBound}.
	We did not, however, make the bound explicit in the proof of \cref{T:WeakImpliesStrongOld}.
	We do so now.
	
	It follows from \cref{e:gradvCalBound} that
	\begin{align*}
		\norm{\grad \v(t)}_{L^\iny}
			&\le C\norm{\rho_0}_{L^\iny} \exp \pr{\abs{\sigma_1} \norm{\rho_0}_{L^\iny} t}
				+ \norm{\rho_0}_{C^\al}.
	\end{align*}
	Thus,
	\begin{align*}
		\int_0^t \norm{\grad \v(s)}_{L^\iny} \, ds
			&\le \frac{C \norm{\rho_0}_{L^\iny}}{\abs{\sigma_1} \norm{\rho_0}_{L^\iny}}
			 \pr{\exp \pr{\abs{\sigma_1} \norm{\rho_0}_{L^\iny} t} - 1}
				+ \norm{\rho_0}_{C^\al} t \\
			&= C \abs{\sigma_1}^{-1}
			 \pr{\exp \pr{\abs{\sigma_1} \norm{\rho_0}_{L^\iny} t} - 1}
				+ \norm{\rho_0}_{C^\al} t
	\end{align*}
	By \cref{P:gradXFlowBound}, then
	\begin{align*}
		\norm{\grad X(t)}_{L^\iny}, \norm{\grad X^{-1}(t)}_{L^\iny}
			\le \exp \int_0^t \norm{\grad \v(s)}_{L^\iny} \, ds,
	\end{align*}
	from which the result follows.
\end{proof}
} 


\begin{lemma}\label{L:L1LInfGrowth}
	Let $f_0 \in L^1 \cap L^\iny(\R^d)$ and $w$ be an Osgood-continuous vector field
	on $\R^d$. Let $f$ solve $\prt_t f + w \cdot \grad f = \sigma_2(f)^2$, $f(0) = f_0$. Then for all $t\leq (|\sigma_2|\norm{f_0}_{L^\iny})^{-1}$ when $\sigma_2\neq 0$, and for all $t>0$ when $\sigma=0$,
	\begin{equation*}
	\norm{f}_{L^\iny} \leq \frac{\norm{f_0}_{L^\iny}}{1-|\sigma_2|\norm{f_0}_{L^\iny}t},
	\end{equation*}
	and
	\begin{align*}
		\norm{f(t)}_{L^1}
			\le \frac{\norm{f_0}_{L^1}}{1-|\sigma_2|\norm{f_0}_{L^\iny}t} \exp \pr{\norm{\dv w}_{L^\iny((0, t) \times \R^d)} t}.
	\end{align*}
\end{lemma}
\begin{proof}
	Let $X$ be the unique flow map for $w$. Then $f(t, x) = \frac{f_0(X^{-1}(t, x))}{1- \sigma_2tf_0(X^{-1}(t, x))}$. Hence,
	\begin{align*}
		\norm{f}_{L^\iny}
			= \norm{\frac{f_0(X^{-1}(t, \cdot))}{1- \sigma_2tf_0(X^{-1}(t, \cdot))}}_{L^\iny}
			\leq \frac{\norm{f_0}_{L^\iny}}{1-|\sigma_2|\norm{f_0}_{L^\iny}t}.
	\end{align*}
	For the $L^1$-norm, we have
	\begin{align*}
		\norm{f}_{L^1}
			&= \int_{\R^d} \left| \frac{f_0(X^{-1}(t, x))}{1- \sigma_2tf_0(X^{-1}(t, x))}\right| \, dx\\
			&\leq  \frac{1}{1-|\sigma_2|\norm{f_0}_{L^\iny}t} \int_{\R^d} |f_0(X^{-1}(t, x))| \, dx\\
			&\leq \frac{ 1}{1-|\sigma_2|\norm{f_0}_{L^\iny}t} \int_{\R^d} |f_0(y)| |\text{det}\grad X(t,y)| \, dy 
	\end{align*}

	From page 3 of \cite{C1998}, we have that
	\begin{align*}
		\prt_t \det \grad X(t, x)
			= \dv w(t, X(t, x)) \det \grad X(t, x).
	\end{align*}
	Hence,
	\begin{align*}
		\det \grad X(t, x)
			= I + \int_0^t \dv w(s, X(s, x)) \det \grad X(s, x) \, ds.
	\end{align*}
	Then,
	\begin{align*}
		\norm{\det \grad X(t)}_{L^\iny}
			\le \exp \pr{\norm{\dv w}_{L^\iny((0, t) \times \R^d)} t},
	\end{align*}
	from which the result follows.
\end{proof}

\Ignore { 
%
%
\section{Pressure and velocity energy estimates}\label{S:Pressure}

\noindent The pressure, $q^\nu$, of \cref{e:Pressure} is more difficult to bound than the Navier-Stokes or Euler pressures, because we cannot write $q^\nu$ or $\grad q^\nu$ nicely in the form of Riesz transforms. There is one simple, useful estimate we can make however: the $L^1$ norm of $\Delta q^\nu$.

\begin{prop}\label{L:PressureL1}
	We have,
	\begin{align*}
		\norm{\Delta q^\nu}_{L^1}
			= \norm{\rho^\nu}^2
	\end{align*}
	and
	\begin{align*}
		\norm{\Delta (q^\nu - q^0)}_{L^1}
			= \norm{\rho^\nu - \rho^0}^2.
	\end{align*}
\end{prop}
\begin{proof}
	We will use the fact that
	\begin{align*} 
		\innp{\grad \grad f, \grad \grad g}
			&= \innp{\Delta f, \Delta g}
	\end{align*}
	for all sufficiently smooth and decaying functions, $f$ and $g$. This implies as well
	that $\norm{\grad \grad f} = \norm{\Delta f}$. \ToDo{We have not shown, though it should
	be true, that solutions to \GAGnu are $C^\iny$ for positive time when $\nu > 0$.
	And since we are using classical Euler solutions in our VV limit, that pressure should
	also have sufficient regularity.}

	By \cref{e:Pressure},
	\begin{align*}
		\norm{\Delta q^\nu}_{L^1}
			= \int_{\R^2} \abs{\grad \v^\nu}^2
			= \norm{\grad v^\nu}^2
			= \norm{\grad \grad \F * \rho^\nu}^2
			= \norm{\Delta \F * \rho^\nu}^2
			= \norm{\rho^\nu}^2.
	\end{align*}
	
	Then,
	\begin{align*}
		\norm{\Delta (q^\nu - q^0)}_{L^1}
			&= \int_{\R^2} \abs{\grad \v^\nu - \grad \v^0}^2
			= \innp{\grad \v^\nu - \grad \v^0, \grad \v^\nu - \grad \v^0} \\
			&= \norm{\grad \v^\nu}^2 + \norm{\grad \v^0}^2
				- 2 \innp{\grad \v^\nu, \grad \v^0} \\
			&= \norm{\rho^\nu}^2 + \norm{\rho^0}^2
				- 2 \innp{\grad \grad \F * \rho^\nu, \grad \grad \F * \rho^0} \\
			&= \norm{\rho^\nu}^2 + \norm{\rho^0}^2
				- 2 \innp{\Delta \F * \rho^\nu, \Delta \F * \rho^0} \\
			&= \norm{\rho^\nu}^2 + \norm{\rho^0}^2
				- 2 \innp{\rho^\nu, \rho^0}
			= \norm{\rho^\nu - \rho^0}^2.
	\end{align*}
\end{proof}

\ToDo{Higher energy estimates might be better done using the comment at the very end of \cref{S:ViscousUniqueness} rather than using the velocity formulation that I start in the next Proposition.}
\begin{prop}\label{P:ViscousVelEnergy}
\end{prop}
\begin{proof}
	Let
	\begin{align*}
		\rhol = \rho^\nu - \sigma_1 m(\rho^\nu) g_0, \quad
			\vl = \v^\nu - \sigma_1 m(\rho) \btau_0.
	\end{align*}
	We can rewrite \GAGVnu as
	\begin{align*}
		\prt_t \vl + \v^\nu \cdot \grad \vl + \grad q^\nu
			&= \sigma_1 \diff{m(\rho)}{t} \btau_0
				+ \v^\nu \cdot \grad (\sigma_1 m(\rho) \btau_0)
				+ \nu \Delta \v^\nu \\
			&= \sigma_1 \norm{\rho^\nu}^2 \btau_0
				+ \v^\nu \cdot \grad (\sigma_1 m(\rho) \btau_0)
				+ \nu \Delta \v^\nu,
	\end{align*}
	where we used \cref{P:MassDiffBound}.
	Multiplying by $\vl$ and integrating over space gives
	\begin{align*}
		\frac{1}{2} \diff{}{t} &\norm{\vl}^2
			= - \frac{1}{2} \innp{\v^\nu,  \grad \abs{\vl}^2}
				+ \innp{\Delta q^\nu, \vl}
				+ \sigma_1 \norm{\rho^\nu}^2 \innp{\btau_0, \vl} \\
			&\qquad\qquad
				+ \innp{\v^\nu \cdot \grad (\sigma_1 m(\rho) \btau_0), \vl}
				- \nu \innp{\grad \v^\nu, \grad \vl} \\
			&\le \frac{1}{2} \innp{\dv \v^\nu, \abs{\vl}^2}
				+ \norm{\Delta q^\nu}_{L^1} \norm{\vl}_{L^\iny}^2
				+ \frac{1}{2}\abs{\sigma_1}^2 \norm{\rho^\nu}^2 \norm{\btau_0}^2
					+ \frac{1}{2} \norm{\vl}^2 \\
			&\qquad\qquad
				+ \frac{1}{2} \abs{\sigma_1}^2 \abs{m(\rho)}^2 \norm{\v^\nu}_{L^\iny}^2
						\norm{\grad \btau_0}^2 + \norm{\vl}^2
					- \nu \norm{\grad \v^\nu}^2
					+ \nu \innp{\grad \v^\nu, \sigma_1 m(\rho) \grad \btau_0}.
	\end{align*}
	Using
	\begin{align*}
		\innp{\dv \v^\nu, \abs{\vl}^2}
			= \sigma_1 \innp{\rho^\nu, \abs{\vl}^2}
			\le \abs{\sigma_1}{\rho^\nu} \norm{\vl}^2
	\end{align*}
	and
	\begin{align*}
		&\innp{\grad \v^\nu, \sigma_1 m(\rho) \grad \btau_0}
			= \innp{\grad \vl, \sigma_1 m(\rho) \grad \btau_0}
				- \innp{\sigma_1 m(\rho) \grad \btau_0, \sigma_1 m(\rho) \grad \btau_0} \\
			&\qquad
			= - \sigma_1 m(\rho) \innp{\vl, \Delta \btau_0}
				- \abs{\sigma_1}^2 \abs{m(\rho)}^2 \norm{\btau_0}^2
			\le C\abs{\sigma_1}^2 \abs{m(\rho)}^2 \norm{\btau_0}^2
				+ \frac{1}{2} \norm{\vl}^2,
	\end{align*}
	along with \cref{L:PressureL1}, we have
	\begin{align*}
		\diff{}{t} &\norm{\vl}^2 + \nu \norm{\grad \v^\nu}^2\\
			&\le \pr{\abs{\sigma_1}{\rho^\nu} + 3} \norm{\vl}^2
				+ 2 \norm{\rho^\nu}^2 \norm{\vl}_{L^\iny}^2
				+ C \abs{\sigma_1}^2 \norm{\rho^\nu}^2 \\
			&\qquad\qquad
				+ C \abs{\sigma_1}^2 (\norm{\v^\nu}_{L^\iny}^2 + 1) \abs{m(\rho)}^2.
	\end{align*}
	
	\ToDo{RETURN HERE---this gives the energy bound and control in $L^2(0, T; L^2)$ of
	the velocity gradient that is sufficient for uniqueness in 2D.}
\end{proof}
} 


\Ignore{

%
%
\section{The inviscid problem (Sobolev)}\label{S:InviscidProblemSobolev}

\noindent Well-posedness of weak solutions to \PAGzero locally in time having bounded, compactly supported density as well as classical solutions having \Holder regularity is proved in \cite{BLL2012}. All the solutions constructed were also Lagrangian solutions. The time of existence, which is sharp, comes from integrating $\rho$ along the flow lines. The same calculation applies with no significant changes, and corresponds to the upper limit on $T$ in \cref{T:ViscousExistence}.

\Ignore{ 
In this section we present a proof of existence and uniqueness of solutions to \GAGzero having bounded density. This proofs follow, to a large extent, the economical and elegant approach of Marchioro and Pulvirenti's text \cite{MP1994}, which originates in their earlier text \cite{MP1984}. We modify Marchioro and Pulvirenti's proof of existence of weak solutions to more explicitly use the flow map, as in Serfati's proof of uniqueness in \cite{Serfati1995A}.


In short, the plan in \cite{MP1994} is to construct weak solutions for bounded vorticity using a Picard iteration on successive approximations. The solution is constructed for short time, but is easily shown to exist for all time because the guaranteed time of existence depends only upon the $L^1 \cap L^\iny$ norm of the vorticity, which is conserved over time. A minor variant of this same argument gives uniqueness. Once weak solutions are established, a simple bootstrap argument using the flow map gives the existence of classical solutions.

There is a major difference between our proofs and those in \cite{MP1994}, however. This difference stems from the proof of existence of weak solutions for bounded density, in which a critical estimate (see \cref{e:vnvn1Diff}) requires a change of variables to follow the trajectories of the flow maps, one change for each approximation. For the Euler equations, both Jacobians are 1, but for the aggregation equations, this is no longer the case. This introduces an additional term whose control requires us to have some regularity of the initial data.

Hence, the organization of the proofs is reversed. First, we prove that any weak solution that has initially regular data is actually a classical solution that maintains that regularity for all time. We give a specific bound on the growth over time of the $C^\al$-norm of the density.  Next, we prove the short-time existence of weak solutions for bounded density given that the initial density also lies in $C^\al$ for some $\al > 0$. We use the specific $C^\al$-norm estimate on the density to show that this solution extends for all time.

To drop the requirement that the initial density be $C^\al$, we adapt the estimates in the proof of existence to show that a sequence of classical solutions converges to a weak solution. The critical estimate is simpler than in the proof of existence because now the density moves along the flow lines of the classical solutions, and the bound does not involve the $C^\al$ norm of the density. This allows convergence of the solutions.

Finally, having obtained existence and uniqueness of global-in-time weak solutions, our first result that weak solutions with regular initial data are classical gives unique global-in-time classical solutions.
}

Well-posedness of weak solutions to the more general \GAGzero for initial density in $L^1 \cap L^\iny$ can be obtained by adapting the economical and elegant approach of Marchioro and Pulvirenti's text \cite{MP1994}, which originates in their earlier text \cite{MP1984}. Considerable complications arise in such an adaptation, however. This leads to \cref{T:InviscidExistence}.

We make the convention that $C(a_1, \dots, a_n)$ stands for a continuous function from $[0, \iny)^n \to [0, \iny)$ that is nondecreasing in each of its arguments. We use $C(a_1, \dots, a_n)$ in the context of a constant that depends on the parameters $a_1, \dots, a_n$, where the exact form of the constant is unimportant.

Formally, if $\rho = \rho^0$ solves \GAGzero and $X$ is the flow map for $\v = \v^0$, then
\begin{align*}
	\diff{}{t} \rho(t, X(t, x))
		= \sigma_2 \rho(t, X(t, x))^2.
\end{align*}
Integrating along flow lines gives
\begin{align*}
	\rho(t, X(t, x))
		&= \frac{\rho_0(x)}{1 - \sigma_2 t \rho_0(x)}.
\end{align*}
This motivates the following definition of a Lagrangian solution to \GAGzero:

\begin{definition}\label{D:LagrangianSolution}
	Let $\rho \in L^\iny_{loc}([0, \iny); L^1 \cap L^\iny) \cap C([0, \iny); L^2)$
	and let $\v := \grad \F * \rho$. By \cref{L:vBounds},
	$\v \in C([0, \iny); LL)$ and so it has a unique classical flow map, $X$.
	We say that $\rho$ is a Lagrangian solution to the inviscid aggregation equations \GAGzero
	with initial density $\rho_0 \in L^1 \cap L^\iny$ if
	\begin{align*}
		\rho(t, x) = \frac{\rho_0(X^{-1}(t, x))}{1 - \sigma_2 t \rho_0(X^{-1}(t, x))}
	\end{align*}
	for all $t \ge 0$, $x \in \R^d$.
\end{definition}

In this section, we prove the following existence theorem for weak solutions to ($GAG_0$).
\begin{theorem}\label{T:InviscidExistence}
Fix $T > 0$ with $T < (\abs{\sigma_2} \norm{\rho_0}_{L^\iny})^{-1}$ or
	$T < \iny$ if $\sigma_2 = 0$.
	Assume that $\rho_0 \in L^1 \cap L^\iny$ 
	is compactly supported.
	Then there exists a unique weak solution
	to \GAGzero as in \cref{D:WeakSolution} on the time interval $[0, T]$.
	This weak solution is the unique Lagrangian solution. 
	Moreover, \cref{e:rhoBound1} holds and \ToDo{add additional bounds}.
\end{theorem}

Before we prove Theorem \ref{T:InviscidExistence}, we first establish that every Lagrangian solution to ($GAG_0$) with sufficiently smooth initial data will maintain its Sobolev regularity.
\begin{theorem}\label{T:WeakImpliesStrongOld}
	Assume that $\rho_0 \in W^{s,p}(\R^d)$, with $sp>d$. Also assume that
	$\rho$ a Lagrangian solution to \GAGzero on the interval
	$[0, T]$ for some $T > 0$. If $s\leq 1$, then $\rho \in L^\iny(0, T; W^{s',p})$ for each $s'<s$, while, if $s\geq 1$, $\rho \in L^\iny(0, T; W^{s,p})$. Moreover, for $s>1$,
	\begin{align}\label{e:rhoCaltBound}
		\norm{\rho(t)}_{W^{s,p}}
			\le C(t, \abs{\sigma_1}, \norm{\rho_0}_{L^1}, \norm{\rho_0}_{W^{s,p}}),
	\end{align}
	with $s$ replaced by $s'$ if $s<1$.
\end{theorem}
\begin{proof}
	First assume $s<1$.  It follows (as in Theorem 5.1.1 of \cite{C1998}) that $x \mapsto X(t, x) - x$ has norm 1 in $C^{\theta(t)}$,
	where $\theta(t) = e^{-c_0 t}$, $c_0 = C_0(T) \norm{\rho_0}_{L^1 \cap L^\iny}$ \ToDo{a little weird},
	and the same is true of the inverse flow map.  Fix $T^*\leq T$ such that $s\theta(t)p >d$ for all $t\in[0,T^*]$. 
	By Lemma \ref{SobolevComp}, 
	\begin{align*}
		\rho(t) \in W^{s'\theta(t),p},
	\end{align*}
	with
	\begin{align*}
		\norm{\rho(t)}_{W^{s'\theta(t),p}}
			\le C_0(T)\norm{\rho_0}_{W^{s,p}}.
	\end{align*}
	Hence by \cref{L:VelocityRegSob}, $\nabla\v(t) = \nabla\grad \F* \rho(t) \in W^{s'\theta(t),p}$ for all $s'<s$,
	with
	\begin{align}\label{e:gradvCalBound}
		\begin{split}
			\norm{\grad \v(t)}_{W^{s'\theta(t),p}} \le  C_0(T)\norm{\rho_0}_{W^{s,p}}.
		\end{split}
	\end{align} 
	Since $s\theta(t)p>d$ for all $t\in[0,T^*]$, $\grad \v$ is bounded on $[0,T^*]$ by the Sobolev embedding theorem.
	It follows that
	\begin{align*}
		\grad X(t, x)
			= I + \int_0^t \grad \v(s, X(s, x)) \cdot \grad X(s, x) \, ds
	\end{align*}
	so that
	\begin{align}\label{e:gradXLInfForGronwalls}
		\norm{\grad X(t)}_{L^\iny}
			\le 1 + \int_0^t \norm{\grad \v(s)}_{L^\iny} \norm{\grad X(s)}_{L^\iny} \, ds.
	\end{align}
	Applying Gronwall's lemma and using \cref{e:gradvCalBound}, we see that $\grad X(t) \in L^\iny(\R^d)$ on $[0,T^*]$,
	with
	\begin{align*}
		\norm{\grad X(t)}_{L^\iny}
			\le C(T, \abs{\sigma_1}, \norm{\rho_0}_{W^{s,p}}).
	\end{align*}
	A similar estimate holds for the inverse flow map\ToDo{explain further?}.  Hence, by Lemma \ref{SobolevComp} we actually have that $\rho(t) \in W^{s',p}(\R^d)$ on $[0,T^*]$, with
	\begin{align*}
		\norm{\rho(t)}_{{W}^{s',p}}
			\le C(T, \abs{\sigma_1},  \norm{\rho_0}_{W^{s,p}}).
	\end{align*}
	
	For the case where $s \geq 1$, we use a bootstrap argument.  Assume first that $\rho_0$ belongs to $W^{s,p}$ with $s\in [1,2)$, $sp>d$.  By the Sobolev embedding theorem, $\rho_0$ belongs to $W^{k,q}$, with $k<1$ and $q$ finite and satisfying $p< q \leq \frac{dp}{ d-(s-k)p}$.  Moreover, $k$ can be chosen to satisfy $k>s-1$, and $q$ can be chosen sufficiently close to $\frac{dp}{ d-(s-k)p}$ to ensure that $kq>d$.  We can now apply the argument above to conclude that $\rho(t)\in W^{k',q}(\R^d)$ on $[0,T^*]$ for each $k'<k$, so that $\nabla \v (t)$ is also in $W^{k',q}(\R^d)$ for each $k'<k$, which in turn implies that $\nabla X^{-1}(t) \in W^{k',q}(\R^d)$ for each $k'<k$.  In particular, on $[0,T^*]$, we have $\nabla X^{-1}(t) \in W^{k^*,q}(\R^d)$ for some $k^*$ with $k^*>s-1$ and $k^*q>d$.
	
We use this regularity to show that $\nabla\rho(t)$ belongs to $W^{s-1,p}$ on $[0,T^*]$.  First note that, since $\grad \rho_0\in L^p$, $\nabla X^{-1}(t)$ is bounded, and 
\begin{align*}
	 \grad \rho(t, x) = \frac{\grad \rho_0(X^{-1}(t, x))}{(1-\sigma_2t\rho_0(X^{-1}(t, x)))^2} \grad X^{-1}(t, x),
\end{align*}
we have that $\grad \rho(t)$ belongs to $L^p$.  For higher regularity of $\nabla\rho(t)$, note that by the Sobolev embedding theorem and membership of $\nabla X^{-1}(t)$ to $W^{k^*,q}$, it follows that $\nabla X^{-1}(t) \in W^{s-1,p_2}$ for all finite $p_2 \leq \frac{dq}{d-(k^*-s+1)q}$ with $p_2>q$.  Given $p_2$ satisfying these conditions, consider $p_1$ satisfying $1\slash p_1 + 1\slash p_2 = 1\slash p$.  Since $p_2\leq \frac{dq}{d-(k^*-s+1)q}$, it follows that 
\begin{equation*}
p_1 \geq \frac{dpq}{dq-dp + (k^*-s+1)qp}.
\end{equation*}    
Now, since $\nabla\rho_0$ belongs to $W^{s-1,p}$, it follows from the Sobolev embedding theorem that $\nabla\rho_0$ belongs to $L^r(\R^d)$ for all $r\leq \frac{dp}{d-(s-1)p}$.  A calculation shows that, since $k^*q> d$,
\begin{equation}
\frac{dp}{d-(s-1)p} > \frac{dpq}{dq-dp + (k^*-s+1)qp}.
\end{equation}
As a result, we can choose $p_2$ sufficiently large to ensure that $p_1 \leq \frac{dp}{d-(s-1)p}$, and that $\nabla\rho_0\in L^{p_1}$.  We can then apply Lemma \ref{Chae} and conclude that
\begin{equation*}
\begin{split}
	&\| \grad \rho(t) \|_{W^{s-1,p}}=\left\| \frac{\grad \rho_0(X^{-1}(t, \cdot))}{(1-\sigma_2t\rho_0(X^{-1}(t, \cdot)))^2} \grad X^{-1}(t, \cdot) \right\|_{W^{s-1,p}}\\
	&\qquad\leq \left\| \frac{\grad \rho_0(X^{-1}(t, \cdot))}{(1-\sigma_2t\rho_0(X^{-1}(t, \cdot)))^2} \right\|_{W^{s-1,p}} \| \grad X^{-1}(t, \cdot) \|_{L^{\infty}} \\
	&\qquad + \left\| \frac{\grad \rho_0(X^{-1}(t, \cdot))}{(1-\sigma_2t\rho_0(X^{-1}(t, \cdot)))^2} \right\|_{L^{p_1}} \| \grad X^{-1}(t, \cdot) \|_{W^{s-1,p_2}}.
\end{split}
\end{equation*}	     
\ToDo{the first norm on the second line above needs to be estimated (actually, I think I know how to do this).  We will also need something more general for higher derivatives (I don't know how to do this).}  This proves the case $s\in(1,2)$ on $[0,T^*]$.  We can repeat the bootstrapping argument above as needed to obtain the result for larger values of $s$.

In order to extend regularity from $[0,T^*]$ to $[0,T]$, we bootstrap up by time increments of length less than or equal to $T^*$, as $T^*$ depends only on norms of $\rho^0$.
\end{proof}
We now prove that, with sufficient Sobolev regularity of the initial density, there exists a Lagrangian solution to ($GAG_0$).
\begin{prop}\label{P:WeakExistenceOld}
	Fix $T > 0$ with $T < (\abs{\sigma_2} \norm{\rho_0}_{L^\iny})^{-1}$ or
	$T < \iny$ if $\sigma_2 = 0$.
	Let $\rho_0 \in L^1 \cap W^{s, p}(\R^d)$ with $sp>d$ and $s>1$, and assume $\rho_0$ is compactly supported.
	There exists a solution $\rho$ to \GAGzero
	that is the unique Lagrangian solution, and is also the unique weak solution
	with $\rho(t) \in L^1 \cap W^{s, p}(\R^d)$ and compactly supported for all $t \in [0, T]$.
\end{prop}
\begin{proof}
Fix $T > 0$ as in the statement of Proposition \ref{P:WeakExistenceOld}. We first prove the existence of a Lagrangian solution.

We define sequences, $(\rho_n)_{n = 0}^\iny$, $(\v_n)_{n = 1}^\iny$, and $(X_n)_{n = 0}^\iny$ as follows:
\begin{align*}
	\rho_0(t, \cdot) &= \rho_0(x), \\
	X_0(t, x) &= x,
\end{align*}
with the iteration, for $n = 1, 2, \dots$,
\begin{align}\label{e:ApproxInviscid}
	\begin{split}
			\v_n &= \sigma_1 \grad \F *\rho_{n - 1}, \\
			\prt_t X_n(t, x) &= \v_n(t, X_n(t, x)), \\
			\rho_n(t, X_n(t, x)) &= \frac{\rho_0(x)}{1-\sigma_2t\rho_0(x)}.
	\end{split}
\end{align}
Thus, $\v_n$ is the unique curl-free vector field whose divergence is $\rho_{n - 1}$ and $X_n$ is the (non-measure-preserving) flow map for $\v_n$.\Ignore{; and $\rho_n$ is the initial density transported by that flow map.}
The proof of existence proceeds by showing that this iteration converges.

Also define the inverse flow map, $(X^n)^{-1}$, by
\begin{align*}
	(X^n)^{-1}(t, X_n(t, x)) = x,
\end{align*}
so that
\begin{align*}
	\rho_n(t, x) = \frac{\rho_0((X^n)^{-1}(t, x))}{1-\sigma_2t\rho_0((X^n)^{-1}(t, x))}.
\end{align*}
We need to obtain some bounds on the approximate sequence uniformly in $n$.  We first note that since the velocities are bounded in $L^\iny$ uniformly in $n$, and $\rho_0$ is compactly supported, we have uniform control on the support of $\rho_n$:
\begin{align}\label{e:rhonSupport}
	\supp \rho_n(t) \subseteq B_{C_0(t)}
\end{align}
Also note that
\begin{align*}
	\prt_t \grad X_n(t, x)
		= \grad \v_n(t, X_n(t, x)) \grad X_n(t, x).
\end{align*}
Integrating in time, taking the $L^\iny$-norm, and applying Gronwall's lemma gives
\begin{align*}
	\norm{\grad X_n(t, \cdot)}_{L^\iny}, \norm{\grad (X^n)^{-1}(t, \cdot)}_{L^\iny}
		\le \exp \int_0^t \norm{\grad \v_n(s)}_{L^\iny} \, ds.
\end{align*}
The bound on $\grad (X^n)^{-1}$ does not follow as immediately as that on $\grad X_n$ because the flow is not autonomous. For the details, see, for instance, the proof of Lemma 8.2 p. 318-319 of \cite{MB2002}.
Moreover,
\begin{align*}
	\grad \rho_n(t, x) = \frac{\grad \rho_0((X^n)^{-1}(t, x))}{(1-\sigma_2t\rho_0((X^n)^{-1}(t, x)))^2} \grad (X^n)^{-1}(t, x)
\end{align*}
so that, for any $r\in[1,\infty]$,
\begin{align*}
	\norm{\grad \rho_n(t)}_{L^r}
		\le C_0(T)\norm{\grad \rho_0}_{L^r} \norm{\grad (X^n)^{-1}(t)}_{L^\iny}
		\le C_0(T)\norm{\grad \rho_0}_{L^r} \exp \int_0^t \norm{\grad \v_n(s)}_{L^\iny} \, ds.
\end{align*}
Now,
\begin{align}\label{e:vnBound}
	\norm{\v_n(t)}_{L^\iny}
		\le C \norm{\rho_{n-1}(t)}_{L^1\cap L^{\infty}}\le C_0(T) \norm{\rho_{n-1}(t)}_{L^{\infty}}\le C_0(T)
\end{align}
by Lemma \ref{L:MassZero} and (\ref{e:rhonSupport}).  It follows as in the proof of \cref{T:WeakImpliesStrongOld} that for some pair $k$, $q$ with $k<1$ and $kq>d$, $\rho_{n-1}(t)\in W^{k',q}(\R^d)$ for all $k'<k$, and $\norm{\rho_{n-1}(t)}_{W^{k' \theta(t),q}} \le C_0(T)\norm{\rho_0}_{W^{k,q}}$. As in the proof of Theorem \ref{T:WeakImpliesStrongOld}, let $T^*$ be such that $s \theta(t)p>d$ for all $t\in[0,T^*]$.  Then for all $t\in[0,T^*]$,
\begin{align*}
	\norm{\grad \v_n(s)}_{L^\iny}
		&\le C \norm{\v_n(s)}_{L^\iny} + C(\theta(t)) \norm{\rho_{n-1}(t)}_{W^{s \theta(t),p}}
		\le C_0(T)
			+ C_0(T) \norm{\rho_0}_{W^{s,p}}
		\le C_0(T).
\end{align*}
\ToDo{this requires further explanation}Hence also, for all $n$ and all $t \in [0, T^*]$,
\begin{align}\label{e:ApproxSeqBound}
	\norm{\grad X_n(t, \cdot)}_{L^\iny}, \norm{\grad (X^n)^{-1}(t, \cdot)}_{L^\iny},
		\norm{\grad \rho_n(t)}_{L^r}
		\le C_0(T).
\end{align}

Define, for $n \ge 1$,
\begin{align*}
	h_n(t)
		= \norm{X_n(t, \cdot) - X_{n - 1}(t, \cdot)}_{L^\iny}.
\end{align*}
We will show that $h_n \to 0$ as $n \to \iny$ on a sufficiently short time interval.

Fix $n \ge 2$. We have,
\begin{align*}
	&\abs{X_n(t, x) - X_{n - 1}(t, x)}
		\le \int_0^t \abs{\v_n(s, X_n(s, x)) - \v_n(s, X_{n - 1}(s, x))} \, ds \\
			&\qquad{} +
			\int_0^t \abs{\v_n(s, X_{n - 1}(s, x)) - \v_{n - 1}(s, X_{n - 1}(s, x))} \, ds \\
		&=: I_1 + I_2.
\end{align*}

\Ignore{  
By \cref{L:vBounds}, $\v_n(t)$ has a log-Lipschitz MOC, $\mu$, that applies uniformly over $t \in [0, T]$ and that depends only upon $\norm{\rho_n}_{L^\iny(0, T; L^1 \cap L^\iny)}$. We can write $\mu$ as
\begin{align*}
	\mu(r) =
		\begin{cases}
			- C_0r \log r & \text{ if } r < e^{-1}, \\
			C_0 e^{-1} & \text{ if } r \ge e^{-1},
		\end{cases}
\end{align*}
where $C_0 = C(\norm{\rho_0}_{L^1 \cap L^\iny}, T)$.
Then $I_1$ can be bounded as
\begin{align}\label{e:I1Bound}
	I_1
		\le \int_0^t \mu \pr{\abs{X_n(s, x) - X_{n - 1}(s, x)}} \, ds
		\le \int_0^t \mu \pr{h_n(s)} \, ds.
\end{align}
(In this bound, we used that $\mu$ is nondecreasing.)  }  
Since $\v_n$ is Lipschitz uniformly in $n$, $I_1$ can be bounded as 
\begin{align}\label{e:I1Bound}
	I_1
		\le \int_0^t \| \grad \v_n \|_{L^{\infty}}\abs{X_n(s, x) - X_{n - 1}(s, x)} \, ds
		\le C_0(T)\int_0^t  h_n(s) \, ds.
\end{align}

To bound $I_2$, we note that, for $a$ and $b$ satisfying $a<d<b$,
\begin{align*}
	&\abs{\v_n(s, X_{n - 1}(s, x)) - \v_{n - 1}(s, X_{n - 1}(s, x))} \\
		&\qquad
		= \abs{\sigma_1}\abs{(\grad \F * \rho_{n - 1})(s, X_{n - 1}(s, x))
			- (\grad \F *\rho_{n - 2})(s, X_{n - 1}(s, x))} \\
		&\qquad
		\le \abs{\sigma_1} \norm{\grad \F * \rho_{n - 1}(s)
				- \grad \F * \rho_{n - 2}(s)}_{L^\iny}
		\le C_0(T) \norm{(\rho_{n - 1} - \rho_{n - 2})(s)}_{L^a \cap L^b} \\
		&\qquad
		\le C_0(T) \norm{(\rho_{n - 1} - \rho_{n - 2})(s)}_{L^b}
		\le C_0(T) \| \rho_{n - 1}(s, X_{n - 2}(s, x))
			- \rho_{n - 2}(s, X_{n - 2}(s, x))\|_{L^b}.
\end{align*}
In the last two inequalities we used \cref{e:gradFrhobound} of \cref{L:MassZero} and \cref{e:rhonSupport}, as well as the uniform bound on $\| \text{det}\nabla X_n \|_{L^{\infty}}$.
Now,
\begin{align*}
	&\rho_{n - 1}(s, X_{n - 2}(s, x)) - \rho_{n - 2}(s, X_{n - 2}(s, x)) \\
		&\qquad
		= \rho_{n - 1}(s, X_{n - 2}(s, x)) - \frac{\rho_0(x)}{1-\sigma_2s\rho_0(x)}
		= \rho_{n - 1}(s, X_{n - 2}(s, x)) - \rho_{n - 1}(s, X_{n - 1}(s, x)). \\
\end{align*}
By Lemma \ref{W1pLem}
\begin{align*}
&\| \rho_{n - 1}(s, X_{n - 2}(s, \cdot)) - \rho_{n - 1}(s, X_{n - 1}(s, \cdot)) \|_{L^b} \\
&\qquad \qquad\leq \| \nabla \rho_{n-1}(s) \|_{L^b} \| X_{n - 2}(s, \cdot) - X_{n - 1}(s, \cdot) \|_{L^{\infty}}.
\end{align*} 

It follows from (\ref{e:ApproxSeqBound}) that, for $t\in[0,T^*]$,
\begin{align}\label{e:I2Bound}
	I_2
		\le C_0(T)  \int_0^t h_{n - 2}(s) \, ds.
\end{align}

\Ignore{Now, $x \le C_0 \mu(x)$ for $x \le e^{-1}$ and by \cref{e:vnBound}, we always have $h_n \le e^{-1}$ on some interval $[0, t']$ for all sufficiently small $t'$. Hence,}

We conclude that, on $[0,T^*]$,
\begin{align}\label{e:hnBound}
	h_n(t)
		\le C(t, \abs{\sigma_1}, \norm{\rho_0}_{W^{1,p}})
			\int_0^t  \pr{h_{n - 2}(s)	+  h_n(s)} \, ds.
\end{align}
Now let
\begin{align*}
	\delta^N(t)
		= \sup_{n \ge N - 2} h_n(t).
\end{align*}
Then by \cref{e:hnBound}, for all $j \ge 0$,
\begin{align*}
	h_{N + j}(t)
		&\le C(t, \abs{\sigma_1}, \norm{\rho_0}_{W^{1,p}})
			\int_0^t \sup \set{h_{N +j - 2}(s),
					h_{N + j}(s)} \, ds \\
		&\le C(t, \abs{\sigma_1}, \norm{\rho_0}_{W^{1,p}})
			\int_0^t \delta^{N}(s) \, ds
\end{align*}
and hence,
\begin{align}\label{e:deltaN2Bound}
	\delta^{N + 2}(t)
		\le C(t, \abs{\sigma_1}, \norm{\rho_0}_{W^{1,p}})
			\int_0^t \delta^N(s) \, ds.
\end{align}
\Ignore{\begin{align}\label{e:hnBound}
	h_n(t)
		\le C(t, \abs{\sigma_1}, \norm{\rho_0}_{L^1 \cap C^1})
			\int_0^t \pr{\mu \pr{h_{n - 2}(s)}
				+ \mu \pr{h_n(s)} \, ds}.
\end{align}
Now let
\begin{align*}
	\delta^N(t)
		= \sup_{n \ge N - 2} h_n(t).
\end{align*}
Then by \cref{e:hnBound}, for all $j \ge 0$,
\begin{align*}
	h_{N + j}(t)
		&\le C(t, \abs{\sigma_1}, \norm{\rho_0}_{L^1 \cap C^1})
			\int_0^t \pr{\mu \pr{\sup \set{h_{N +j - 2}(s),
					h_{N + j}(s)}} \, ds} \\
		&\le C(t, \abs{\sigma_1}, \norm{\rho_0}_{L^1 \cap C^1})
			\int_0^t \pr{\mu \pr{\delta^{N}(s)} \, ds}
\end{align*}
and hence,
\begin{align}\label{e:deltaN2Bound}
	\delta^{N + 2}(t)
		\le C(t, \abs{\sigma_1}, \norm{\rho_0}_{L^1 \cap C^1})
			\int_0^t \mu \pr{\delta^N(s)} \, ds.
\end{align}  }  

Now at this point, the argument for short time existence can proceed as in \cite{MP1994},
showing that the sequence $\delta^N$ is Cauchy (the argument here is, in fact, simpler that that in \cite{MP1994}). The bound in \cref{e:deltaN2Bound} guarantees a time of existence up to $T^*$.  But we can always extend the solution beyond $T^*$ by repeating the argument above starting at $T^* - \eps$ for some small $\eps > 0$. This gives existence of a Lagrangian solution over $[0, T]$.

Finally, observe that the argument that led to \cref{e:I2Bound} also shows that $(v_n)$ is Cauchy in $L^\iny((0, T) \times \R^d)$. It follows, arguing as in \cite{BLL2012}, that our Lagrangian solution is also an Eulerian solution.

The uniqueness of the solution as a Lagrangian solution follows from \cref{T:InviscidExistence}, which we prove next. The uniqueness of the solution as an Eulerian solution follows as in \cite{Y1963,Y1995}, the extra term that appears because the velocities are not divergence-free being controllable because of the bound on the gradient of the densities in \cref{e:ApproxSeqBound}.
\end{proof}

We now have what we need to prove the existence of weak solutions.

\ToDo{Update the proof of uniqueness---it should simplify since the existence proof simplified.}

\begin{proof}[\textbf{Proof of \cref{T:InviscidExistence}}]
	We first consider uniqueness of Lagrangian solutions.
	
	Suppose that $\rho_1$, $\rho_2$ are two Lagrangian solutions
	to the aggregation equations having the same initial density, $\rho_0$.
	We let $\rho_1$, $\rho_2$ play the same roles that $\rho_n$, $\rho_{n - 1}$
	played in the proof of \cref{P:WeakExistenceOld}, and define
	\begin{align*}
		h(t)
			= \norm{X_2(t, \cdot) - X_1(t, \cdot)}_{L^\iny},
	\end{align*}
	where $X_j$ is the flow map for $\v_j := \grad \F * \rho_j$. Then
	\begin{align*}
		\abs{X_2(t, x) - X_1(t, x)}
			&\le \int_0^t \abs{\v_2(s, X_2(s, x)) - \v_2(s, X_1(s, x))} \, ds \\
				&\qquad{} +
				\int_0^t \abs{\v_2(s, X_1(s, x)) - \v_1(s, X_1(s, x))} \, ds \\
			&=: I_1 + I_2.
	\end{align*}
	Because $\rho_0$ has no assumed smoothness, the bound on $I_1$ in \cref{e:I1Bound} does not apply in this setting.  We note instead that, by \cref{L:vBounds}, $\v_n(t)$ has a log-Lipschitz MOC, $\mu$, that applies uniformly over $t \in [0, T]$ and that depends only upon $\norm{\rho_n}_{L^\iny(0, T; L^1 \cap L^\iny)}$. We can write $\mu$ as
\begin{align*}
	\mu(r) =
		\begin{cases}
			- C_0r \log r & \text{ if } r < e^{-1}, \\
			C_0 e^{-1} & \text{ if } r \ge e^{-1},
		\end{cases}
\end{align*}
where $C_0 = C(\norm{\rho_0}_{L^1 \cap L^\iny}, T)$.
Then $I_1$ can be bounded as
\begin{align}\label{e:I1Bound}
	I_1
		\le \int_0^t \mu \pr{\abs{X_n(s, x) - X_{n - 1}(s, x)}} \, ds
		\le \int_0^t \mu \pr{h_n(s)} \, ds.
\end{align}
(In this bound, we used that $\mu$ is nondecreasing.) 

Similarly, for $I_2$, the bounds in
	\cref{e:ApproxSeqBound} no longer apply, so we must take a different approach.
	We set $z = X_1(s, x)$, and write
	\begin{align}\label{e:v2v1Diff}
		\begin{split}
		\v_2(s, &X_1(s, x)) - \v_1(s, X_1(s, x)) \\
			&= \sigma_1 [\grad \F *\rho_2(s)](z)
				- \sigma_1 [\grad \F *\rho_1(s)](z) \\
			&= \sigma_1
				\int_{\R^d} \grad \F(z - y) \rho_2(s, y) \, dy
					- \sigma_1 \int_{\R^d} \grad \F(z - y) \rho_1(s, y) \, dy \\
			&= \sigma_1
				\int_{\R^d} \grad \F(z - y) \frac{\rho_0((X_2)^{-1}(s, y))}{1-\sigma_2s\rho_0((X_2)^{-1}(s, y))} \, dy
					- \sigma_1 \int_{\R^d} \grad \F(z - y) \frac{\rho_0((X_1)^{-1}(s, y))}{1-\sigma_2s\rho_0((X_1)^{-1}(s, y))} \, dy \\
			&= \sigma_1
				\int_{\R^d} \grad \F(z - X_2(s, y)) \frac{\rho_0(y)}{1-\sigma_2s\rho_0(y)}
						\det \grad X_2(s, y) \, dy \\
			&\qquad\qquad
					-  \sigma_1\int_{\R^d} \grad \F(z - X_1(s, y)) \frac{\rho_0(y)}{1-\sigma_2s\rho_0(y)}
						\det \grad X_1(s, y) \, dy.
		\end{split}
	\end{align}
	Then
	\begin{align}\label{e:NoNeedForgradrho0}
		\begin{split}
		\prt_t \det &\grad X_j(s, y)
			= \dv \v_j(s, X_j(s, y)) \det \grad X_j(s, y)
			= \sigma_1 \rho_j (s, X_j(s, y)) \det \grad X_j(s, y) \\
			&=  \frac{\sigma_1\rho_0(y)}{1-\sigma_2s\rho_0(y)} \det \grad X_j(s, y)
		\end{split}
	\end{align}
	for $j = 1, 2$ (see, for instance, page 3 of \cite{C1998}), so that 
	\begin{align*}
		\det \grad X_1(s, y) = \det \grad X_2(s, y)  
		= \left\{
			\begin{array}{rl}
				(1-\sigma_2s\rho_0(y))^{-\sigma_1\slash\sigma_2},
				&\sigma_2 \ne 0, \\
				e^{\sigma_1\rho_0(y)s},
				&\sigma_2 = 0.
			\end{array}
		\right.        
	\end{align*}
	Similarly,
	\begin{align*}
		\det \grad X_1^{-1}(s, y)  = \det \grad X_2^{-1}(s, y)  =\left\{
			\begin{array}{rl}
				(1-\sigma_2s\rho_0(y))^{\sigma_1\slash\sigma_2},
				&\sigma_2 \ne 0, \\
				e^{-\sigma_1\rho_0(y)s},
				&\sigma_2 = 0.
			\end{array}
		\right.        
	\end{align*}
	Hence,
	\begin{align*}
		\v_2(s, &X_1(s, x)) - \v_1(s, X_1(s, x)) \\
			&= \sigma_1 \int_{\R^d}
				\pr{\grad \F(z - X_2(s, y)) - \grad \F(z - X_1(s, y))} \rho_0(y)
						(1-\sigma_2s\rho_0(y))^{-\frac{\sigma_1}{\sigma_2} -1} \, dy 
	\end{align*}
	when $\sigma_2 \neq 0$, and
	\begin{align*}
		\v_2(s, &X_1(s, x)) - \v_1(s, X_1(s, x)) \\
					&= \sigma_1 \int_{\R^d}
				\pr{\grad \F(z - X_2(s, y)) - \grad \F(z - X_1(s, y))} \rho_0(y)
						e^{\sigma_1 \rho_0(y) s} \, dy 
	\end{align*}
	when $\sigma_2 = 0$.  In both cases, by \cref{P:hlogBound}, we have
	\begin{align*}
		I_2
			&\le C(T, \abs{\sigma_1}, \abs{\sigma_2}\norm{\rho_0}_{ L^\iny})
				\int_0^t
				\norm{\grad \F(z - X_2(s, y)) - \grad \F(z - X_1(s, y)) \rho_0(y)}
					_{L^1_y(\supp \rho_0(y))} \, ds \\
			&\le C(T, \abs{\sigma_1}, \abs{\sigma_2}\norm{\rho_0}_{ L^\iny})
				\int_0^t h(s) \, ds
			= C_0(T) \int_0^t h(s) \, ds.
	\end{align*}
	
	The net effect of this is that we do not require regularity of $\rho^1(0)$ and
	$\rho^2(0)$ and obtain, in place of \cref{e:hnBound}, the bound (see \cref{R:NeedForGradrho0}),
	\begin{align}\label{e:hBound}
		h(t)
			\le C(T, \abs{\sigma_1}, \abs{\sigma_2},\norm{\rho_0}_{L^1 \cap L^\iny}, \abs{\supp \rho_0})
				\int_0^t \mu \pr{h(s)} \, ds,
	\end{align}
	and this bound now applies up to time $T$. Uniqueness of the Lagrangian solutions
	then follows immediately
	from Osgood's lemma, since $\mu$ is an Osgood \MOC.

	Note that if $\rho^1(0) = \rho_{0, 1}$ and $\rho^2(0) = \rho_{0, 2}$ there would be
	an additional term in \cref{e:v2v1Diff}.  Setting 
	 \begin{align*}
		F_j(s, y)  
		= \left\{
			\begin{array}{rl}
				(1-\sigma_2s\rho_{0,j}(y))^{-\sigma_1\slash\sigma_2-1},
				&\sigma_2 \ne 0, \\
				e^{\sigma_1\rho_{0,j}(y)s},
				&\sigma_2 = 0,
			\end{array}
		\right.        
	\end{align*}
we would now have	
	\begin{align*} 
		\begin{split}
		\v_2(s, &X_1(s, x)) - \v_1(s, X_1(s, x)) \\
			&= \sigma_1
				\int_{\R^d} \grad \F(z - X_2(s, y)) \rho_{0, 2}(y)
						F_2(s,y) \, dy 
					-  \sigma_1\int_{\R^d} \grad \F(z - X_1(s, y)) \rho_{0, 1}(y)
						F_1(s,y) \, dy \\
			&= \sigma_1
				\int_{\R^d} \pr{\grad \F(z - X_2(s, y))
							-  \grad \F(z - X_1(s, y))}
							\rho_{0, 2}(y) F_2(s,y) \, dy  \\
			&\qquad
				+ \sigma_1\int_{\R^d} \grad \F(z - X_1(s, y))
						\pr{\rho_{0, 2}(y) F_2(s,y)
							- \rho_{0, 1}(y) F_1(s,y)} \, dy \\
			&=: I_5 + I_6.
		\end{split}
	\end{align*}
	
	We can bound $I_5$ as before:
	\begin{align*}
		\abs{I_5}
			\le C(T, \abs{\sigma_1}, \abs{\sigma_2},\norm{\rho_{0, 2}}_{L^1 \cap L^\iny})
				\int_0^t \mu \pr{h(s)} \, ds.
	\end{align*}
	We cannot expect continuity with respect to initial data in the $L^\iny$
	norm of the density, however. Instead, we fix $p \in (1, 2)$ and let $q$ be
	\Holder conjugate to $p$. We then have
	\begin{align*}
		\abs{I_6}
			&\le \abs{\sigma_1}
				\norm{\grad \F(z - X_1(s, \cdot))}_{L^p}
				\norm{\rho_{0, 2} F_2(s,\cdot)
							- \rho_{0, 1} F_1(s,\cdot)}_{L^q} \\
			&\le C(T, q, \abs{\sigma_1}, \abs{\sigma_2}, \sup_{j = 1, 2} \smallnorm{\rho_{0, j}}_{L^1 \cap L^\iny})
					\norm{\rho_{0, 2} F_2(s,\cdot)
							- \rho_{0, 1} F_1(s,\cdot)}_{L^q}.
	\end{align*}
	For the $L^p$ bound above, we could use, for instance, Proposition 3.2
	of \cite{AKLL2015} (and its analog, as in  \cref{P:hlogBound}, in higher dimensions),
	which has a dependence on the measure of the support
	of $\rho_1(s)$ along with its $L^\iny$-norm.
		
	The addition of $\abs{I_6}$ to \cref{e:hBound} produces, after application of
	Osgood's lemma, the rate of convergence of $X_1$ and $X_2$ as $\rho_2 \to \rho_1$
	in $L^q$: 
	\begin{align}\label{e:X1X2Diff}
		\norm{X_1(t) - X_2(t)}_{L^\iny}
			\le F(\norm{\rho_{0, 2} - \rho_{0, 1}}_{L^q}),
	\end{align}
	where $F \colon [0, \iny) \to [0, \iny)$ is a continuous increasing function,
	which also depends on $T$ and
	$\sup_{j = 1, 2} \smallnorm{\rho_{0, j}}_{L^1 \cap L^\iny}$.
	(It's specific form is not important.) 
	
Now assume $X_2(t,a)=x$ so that $X^{-1}(t,x) = a$, and assume $X_2(t,a)=y$ so that $X_2^{-1}(t,y)=a$.  Then $X^{-1}(t,x)= X_2^{-1}(t,y)=a$.  Since $X_1^{-1}(t)$ belongs to $\dot{C}^{\theta(t)}$ (as in the proof of Thoerem \ref{T:WeakImpliesStrongOld}), we can write
\begin{align*}
&|X_1^{-1}(t,x) - X_2^{-1}(t,x) |  =  |X_1^{-1}(t,x) -X_1^{-1}(t,y) |\\
&\qquad  \leq C| x-y |^{\theta(t)} = C| X_1(t,a) - X_2(t,a) |^{\theta(t)}.  
\end{align*}
Thus an estimate analogous to (\ref{e:X1X2Diff}) holds with $X_1$ and $X_2$ replaced by $X_1^{-1}$ and $X_2^{-1}$, respectively.

	We take advantage of this estimate on the inverse flow maps to prove existence.
	Let $(\rho_{0, n})_{n = 1}^\iny$ be a sequence in
	$L^1 \cap C^{1, \al}$ for some fixed $\al \in (0, 1)$ that converges to
	$\rho_0$ in $L^1 \cap L^q$. Assume also that each $\rho_{0, n}$ is supported
	in some sufficiently large compact set.
	Let $\rho_n$ be the sequence of weak solutions
	to the aggregation equations given by \cref{P:WeakExistenceOld}.
	Because $(\rho_{0, n})$ is Cauchy in $L^q$, the sequence $(\rho_n)$ is Cauchy
	in $L^\iny(0, T; L^q)$ and hence converges to some $\rho \in L^q$. It is easy
	to see that $\rho$ also lies in $L^\iny(0, T; L^1 \cap L^\iny)$ and is a Lagrangian
	solution to the aggregation equations. That this solution is also a weak solution
	follows in the same manner as in the proof of \cref{P:WeakExistenceOld}
	(that is, using the argument from \cite{BLL2012}).
\end{proof}

\begin{remark}\label{R:NeedForGradrho0}
	The identity in \cref{e:NoNeedForgradrho0}
	involves each vorticity evaluated along its own flow lines. This is why
	this approach to the term $I_2$ cannot be taken in the proof of
	\cref{P:WeakExistenceOld}, since there
	each vorticity is evaluated along the
	flow lines for the next iteration.
\end{remark}

\Ignore{ 
We also have the following estimate, which we will need in \cref{S:VV}:
\begin{prop}
\end{prop}
\begin{proof}
	Now, the proof of \cref{T:WeakImpliesStrongOld} contains a bound on $\norm{\grad X(t)}_{L^\iny}$;
	the same bound on $\norm{\grad X^{-1}(t)}_{L^\iny}$ can be obtained in the same manner.
	This bound would be sufficient for the purpose of obtaining the vanishing
	viscosity limit, but we will obtain a more refined estimate using the transport of $\rho_0$
	more directly.

	Taking the gradient of \GAGzero, we have
	\begin{align*}
		\prt_t \grad \rho^0 + \grad (\v^0 \cdot \grad \rho^0) = 0.
	\end{align*}
	Now,
	\begin{align*}
		(\grad (\v^0 \cdot \grad \rho^0))^i
			&= \prt_i ((v^0)^j \grad_j \rho^0)
			= \prt_i (v^0)^j \grad_j \rho^0 + (v^0)^j \grad_j \prt_i \rho^0 \\
			&= \prt_j (v^0)^i \grad_j \rho^0 + (v^0)^j \grad_j \prt_i \rho^0
			= (\grad \rho^0 \cdot \grad \v^0)^i + (\v^0 \cdot \grad \grad \rho^0)^i,
	\end{align*}
	where we used the symmetry of $\grad v$. \ToDo{Such symmetry we should note upon earlier.}
	Hence,
	\begin{align}\label{e:gradrho0Eq}
		\prt_t \grad \rho^0 + \v^0 \cdot \grad \grad \rho^0
			= - \grad \rho^0 \cdot \grad \v^0.
	\end{align}
	
	Let $q > 2$ and take the inner product of \cref{e:gradrho0Eq} with
	$\abs{\grad \rho_0}^{q - 2} \grad \rho_0$. This gives
\end{proof}
} 

We used the following lemmas above:

\Ignore{ \cref{L:L1LInfGrowth} is a precise version of the following lemma
\begin{lemma}\label{L:rhoL1}
	Assume that $\rho$ is a weak solution to the aggregation equations on $[0, T]$ with
	$\rho_0 \in L^1 \cap L^\iny$. Then
	\begin{align*}
		\norm{\rho(t)}_{L^1}
			\le \norm{\rho_0}_{L^\iny} 
				\exp \pr{\abs{\sigma_1} \norm{\rho_0}_{L^\iny} t}.
	\end{align*}
\end{lemma}
\begin{proof}
	\ToDo{Refine this argument.}
	Assume that $\rho_0 \ge 0$. THen
	\begin{align*}
		0
			&= \int_{\R^d} (\prt_t \rho + \v \cdot \grad \rho)
			= \diff{}{t} \norm{\rho(t)}_{L^1} - \int_{\R^2} \dv \v \cdot \rho
			= \diff{}{t} \norm{\rho(t)}_{L^1} - \sigma_1 \int_{\R^2} \rho^2,
	\end{align*}
	so
	\begin{align*}
		\diff{}{t} \norm{\rho(t)}_{L^1}
			\le \abs{\sigma_1} \norm{\rho}_{L^\iny} \norm{\rho}_{L^1}
			= \abs{\sigma_1} \norm{\rho_0}_{L^\iny} \norm{\rho}_{L^1}.
	\end{align*}
	The result then follows from Gronwall's inequality.
\end{proof}
} 

\Ignore{ 
We also have the following estimate on classical solutions to \PAGzero:

\begin{prop}\label{P:gradrho0LInfBound}
	Let $p \in [1, \iny]$ and assume that $\rho^0$ is a classical solution to
	$(AG)$. Then
	\begin{align}\label{e:gradrho0LInfBoundEq}
		\norm{\grad \rho^0(t)}_{L^p}
			\le \norm{\grad \rho_0}_{L^p}
				\exp \pr{C \abs{\sigma_1}^{-1} e^{\abs{\sigma_1} \norm{\rho_0}_{L^\iny} t}}
					e^{\norm{\rho_0}_{C^\al} t}.
	\end{align}
\end{prop}
\begin{proof}
	We have,
	\begin{align*}
		\grad \rho^0(t, x)
			= \grad (\rho_0(X^{-1}(t, x)))
			= \grad \rho_0(X^{-1}(t, x)) \grad X^{-1}(t, x)
	\end{align*}
	so that
	\begin{align*}
		\norm{\grad \rho^0(t)}_{L^p}
			\le \norm{\grad \rho_0}_{L^p} \norm{\grad X^{-1}(t)}_{L^\iny}.
	\end{align*}
	Now, the proof of \cref{T:WeakImpliesStrongOld} contains a bound on $\norm{\grad X(t)}_{L^\iny}$,
	and the same bound applies to $\norm{\grad X(t)}_{L^\iny}$, by \cref{P:gradXFlowBound}.
	We did not, however, make the bound explicit in the proof of \cref{T:WeakImpliesStrongOld}.
	We do so now.
	
	It follows from \cref{e:gradvCalBound} that
	\begin{align*}
		\norm{\grad \v(t)}_{L^\iny}
			&\le C\norm{\rho_0}_{L^\iny} \exp \pr{\abs{\sigma_1} \norm{\rho_0}_{L^\iny} t}
				+ \norm{\rho_0}_{C^\al}.
	\end{align*}
	Thus,
	\begin{align*}
		\int_0^t \norm{\grad \v(s)}_{L^\iny} \, ds
			&\le \frac{C \norm{\rho_0}_{L^\iny}}{\abs{\sigma_1} \norm{\rho_0}_{L^\iny}}
			 \pr{\exp \pr{\abs{\sigma_1} \norm{\rho_0}_{L^\iny} t} - 1}
				+ \norm{\rho_0}_{C^\al} t \\
			&= C \abs{\sigma_1}^{-1}
			 \pr{\exp \pr{\abs{\sigma_1} \norm{\rho_0}_{L^\iny} t} - 1}
				+ \norm{\rho_0}_{C^\al} t
	\end{align*}
	By \cref{P:gradXFlowBound}, then
	\begin{align*}
		\norm{\grad X(t)}_{L^\iny}, \norm{\grad X^{-1}(t)}_{L^\iny}
			\le \exp \int_0^t \norm{\grad \v(s)}_{L^\iny} \, ds,
	\end{align*}
	from which the result follows.
\end{proof}
} 


\begin{lemma}\label{L:L1LInfGrowth}
	Let $f_0 \in L^1 \cap L^\iny(\R^d)$ and $w$ be an Osgood-continuous vector field
	on $\R^d$. Let $f$ solve $\prt_t f + w \cdot \grad f = \sigma_2(f)^2$, $f(0) = f_0$. Then for all $t\leq (|\sigma_2|\norm{f_0}_{L^\iny})^{-1}$ when $\sigma_2\neq 0$, and for all $t>0$ when $\sigma=0$,
	\begin{equation*}
	\norm{f}_{L^\iny} \leq \frac{\norm{f_0}_{L^\iny}}{1-|\sigma_2|\norm{f_0}_{L^\iny}t},
	\end{equation*}
	and
	\begin{align*}
		\norm{f(t)}_{L^1}
			\le \frac{\norm{f_0}_{L^1}}{1-|\sigma_2|\norm{f_0}_{L^\iny}t} \exp \pr{\norm{\dv w}_{L^\iny((0, t) \times \R^d)} t}.
	\end{align*}
\end{lemma}
\begin{proof}
	Let $X$ be the unique flow map for $w$. Then $f(t, x) = \frac{f_0(X^{-1}(t, x))}{1- \sigma_2tf_0(X^{-1}(t, x))}$. Hence,
	\begin{align*}
		\norm{f}_{L^\iny}
			= \norm{\frac{f_0(X^{-1}(t, \cdot))}{1- \sigma_2tf_0(X^{-1}(t, \cdot))}}_{L^\iny}
			\leq \frac{\norm{f_0}_{L^\iny}}{1-|\sigma_2|\norm{f_0}_{L^\iny}t}.
	\end{align*}
	For the $L^1$-norm, we have
	\begin{align*}
		\norm{f}_{L^1}
			&= \int_{\R^d} \left| \frac{f_0(X^{-1}(t, x))}{1- \sigma_2tf_0(X^{-1}(t, x))}\right| \, dx\\
			&\leq  \frac{1}{1-|\sigma_2|\norm{f_0}_{L^\iny}t} \int_{\R^d} |f_0(X^{-1}(t, x))| \, dx\\
			&\leq \frac{ 1}{1-|\sigma_2|\norm{f_0}_{L^\iny}t} \int_{\R^d} |f_0(y)| |\text{det}\grad X(t,y)| \, dy 
	\end{align*}

	From page 3 of \cite{C1998}, we have that
	\begin{align*}
		\prt_t \det \grad X(t, x)
			= \dv w(t, X(t, x)) \det \grad X(t, x).
	\end{align*}
	Hence,
	\begin{align*}
		\det \grad X(t, x)
			= I + \int_0^t \dv w(s, X(s, x)) \det \grad X(s, x) \, ds.
	\end{align*}
	Then,
	\begin{align*}
		\norm{\det \grad X(t)}_{L^\iny}
			\le \exp \pr{\norm{\dv w}_{L^\iny((0, t) \times \R^d)} t},
	\end{align*}
	from which the result follows.
\end{proof}
} 


}

%
%
\section{The vanishing viscosity limit for \GAGnu for velocities in $L^2$}\label{S:VV}

\noindent In this section we consider the vanishing viscosity limit \VV (see \cref{S:Introduction}) for any $\sigma_1$, $\sigma_2$ when $d \ge 3$ and $\sigma_1 + \sigma_2 = 0$ when $d = 2$. In both these cases, $\v^\nu - \v^0$ remains in $L^2(\R^d)$. In \cref{S:VVNonL2} we consider the general situation in 2D.

For the remainder of this paper, we will assume that the initial density is compactly supported. This gives, through the results of \cref{S:SpatialDecay}, rapid spatial decay of the viscous solutions and no difficulties obtaining the identities for the total mass of the density (recall the definition of the total mass in \cref{e:MassDef}.) In particular, we have \cref{P:MassDiffBound}.

\begin{prop}\label{P:MassDiffBound}
	Assume that $\rho_0 \in L^1 \cap L^\iny(\R^d)$ is compactly supported.
	Let $\nu \ge 0$ and let $\mu = \rho^\nu - \rho^0$. Then
	\begin{align*}
		m(\mu(t)) &= (\sigma_1 + \sigma_2) \int_0^t \innp{\mu(s),  \rho^0(s) + \rho^\nu(s)} \, ds, \\
		\abs{m(\mu(t))}
			&\le \abs{\sigma_1 + \sigma_2} \int_0^t \pr{\norm{\rho^0(s)} + \norm{\rho^\nu(s)}}
					\norm{\mu(s)} \, ds.
	\end{align*}
\end{prop}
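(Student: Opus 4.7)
The plan is to derive both statements as a direct consequence of the total mass identity \cref{e:TotalMassId} established in \cref{S:SpatialDecay}, applied to $\rho^\nu$ and $\rho^0$ separately.

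First I would verify that the hypothesis of that identity is met. Since $\rho_0$ is compactly supported, $\|b_R \rho_0\|^2 = 0$ for all sufficiently large $R$, so the decay assumption $\|b_R \rho_0\|^2 \le C R^{-N}$ is satisfied trivially for every $N > d+1$. Therefore \cref{e:TotalMassId} applies to both the viscous solution $\rho^\nu$ and the inviscid solution $\rho^0$ (the latter case being $\nu = 0$, which is included in the theorem), giving
\begin{align*}
    m(\rho^\nu(t)) &= m(\rho_0) + (\sigma_1 + \sigma_2) \int_0^t \|\rho^\nu(s)\|^2 \, ds, \\
    m(\rho^0(t))   &= m(\rho_0) + (\sigma_1 + \sigma_2) \int_0^t \|\rho^0(s)\|^2 \, ds.
\end{align*}

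Subtracting these two identities, the initial masses cancel and I obtain
\begin{align*}
    m(\mu(t)) = m(\rho^\nu(t)) - m(\rho^0(t))
        = (\sigma_1 + \sigma_2) \int_0^t \bigl(\|\rho^\nu(s)\|^2 - \|\rho^0(s)\|^2\bigr) \, ds.
\end{align*}
Factoring the difference of squares via the polarization identity $\|\rho^\nu\|^2 - \|\rho^0\|^2 = \innp{\rho^\nu - \rho^0, \rho^\nu + \rho^0} = \innp{\mu, \rho^\nu + \rho^0}$ yields the first claimed equality.

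For the second statement, I would simply take absolute values inside the time integral and apply the Cauchy--Schwarz inequality to $\innp{\mu(s), \rho^\nu(s) + \rho^0(s)}$, then use the triangle inequality $\|\rho^\nu + \rho^0\| \le \|\rho^\nu\| + \|\rho^0\|$. There is no genuine obstacle here; the only point worth checking carefully is the applicability of \cref{e:TotalMassId} to both $\nu > 0$ and $\nu = 0$, which is immediate from compact support of $\rho_0$ together with the fact that compact support (and hence rapid spatial decay) propagates to $\rho^0$ by the Lagrangian representation in \cref{D:LagrangianSolution} and to $\rho^\nu$ by \cref{P:rhonuTailBound}.
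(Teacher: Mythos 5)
Your proof is correct and is exactly the argument the paper intends: the paper's own proof of this proposition is the single line ``This follows from \cref{e:TotalMassId},'' and your expansion---verifying the decay hypothesis trivially from compact support, applying the identity to $\rho^\nu$ and $\rho^0$ (the case $\nu=0$ being covered since \cref{S:SpatialDecay} treats all $\nu\ge 0$), subtracting, and using polarization plus Cauchy--Schwarz---supplies precisely the details the paper leaves implicit.
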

\begin{proof}
	This follows from \cref{e:TotalMassId}.
\end{proof}

Though total mass of $\rho^\nu - \rho^0$ is zero at time zero, there is no reason to expect, based upon the identity in \cref{P:MassDiffBound}, that $m(\mu(t))$ remains zero. \cref{P:MassDiffBound} does show, however, that,  as $\nu \to 0$, $m(\mu)$ vanishes if $\norm{\mu}$ vanishes.  This will be very useful to us in \cref{S:VVNonL2}.  

\begin{theorem}\label{T:VVPAG}
	Let $T$ be as in \cref{T:ViscousExistence}.
	Assume that $\rho_0$ is compactly supported and in \Ignore{$W^{s,p}(\R^d)$ for $ s> 1$ and $sp>d$}$C^{\al}_C$ for some $\al > 1$.  Also assume
	$d \ge 3$ or $\sigma_1 + \sigma_2 = 0$.
	Then for all $\nu \le 1$ and $t \in [0, T]$,
	\begin{align*}
		\norm{(\v^\nu - \v^0)(t)}_{H^1}^2 + \norm{(\rho^\nu - \rho^0)(t)}^2
			+ \nu \int_0^t \norm{(\rho^\nu - \rho^0)(s)}^2 \, ds
			\le C_0(t) t \nu e^{C_0(t)}.
	\end{align*}
\end{theorem}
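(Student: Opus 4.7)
Set $\mu := \rho^\nu - \rho^0$ and $\w := \v^\nu - \v^0 = \sigma_1 \grad \F * \mu$, so that $\dv \w = \sigma_1 \mu$. The crucial observation is that $\|\grad \w\|^2 = \sigma_1^2 \|\grad \grad \F * \mu\|^2 = \sigma_1^2 \|\Delta \F * \mu\|^2 = \sigma_1^2 \|\mu\|^2$, hence $\|\w\|_{H^1}^2 \le (1+\sigma_1^2)(\|\w\|^2 + \|\mu\|^2)$, reducing the theorem to a Gronwall estimate on $\|\mu(t)\|^2 + \|\w(t)\|^2$, both vanishing at $t=0$.

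Subtracting \GAGzero from \GAGnu gives
\[
\prt_t \mu + \w \cdot \grad \rho^\nu + \v^0 \cdot \grad \mu = \sigma_2(\rho^\nu + \rho^0)\mu + \nu \Delta \rho^\nu.
\]
The plan is to test with $\mu$, using the identities $\innp{\v^0 \cdot \grad \mu, \mu} = -(\sigma_1/2)\innp{\rho^0, \mu^2}$ (from $\dv \v^0 = \sigma_1 \rho^0$), the split $\w \cdot \grad \rho^\nu = \w \cdot \grad \mu + \w \cdot \grad \rho^0$ together with $\innp{\w \cdot \grad \mu, \mu} = -(\sigma_1/2)\int \mu^3$, and $\nu \innp{\Delta \rho^\nu, \mu} = -\nu\|\grad \mu\|^2 - \nu \innp{\grad \rho^0, \grad \mu}$. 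Combined with the uniform bounds on $\|\rho^\nu\|_{L^\iny}, \|\rho^0\|_{L^\iny}$ from \cref{T:ViscousExistence,T:InviscidStrong} and $\|\grad \rho^0\|_{L^\iny \cap L^2} \le C_0(t)$ (which follows from $\rho_0 \in C^\al$ compactly supported with $\al > 1$), Young's inequality produces
\[
\diff{}{t}\|\mu\|^2 + \nu\|\grad \mu\|^2 \le C_0(t)(\|\mu\|^2 + \|\w\|^2) + C_0(t)\nu. \qquad (\mathrm{I})
\]

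For $\w$, I use the velocity formulation. Writing $\v^\nu \cdot \grad \rho^\nu = \dv(\rho^\nu \v^\nu) - \sigma_1 (\rho^\nu)^2$ and convolving the density equation with $\sigma_1 \grad \F$ yields
\[
\prt_t \v^\nu + \sigma_1 \grad \F * \dv(\rho^\nu \v^\nu) = \sigma_1(\sigma_1+\sigma_2) \grad \F *(\rho^\nu)^2 + \nu \Delta \v^\nu,
\]
and analogously for $\v^0$. Subtracting, testing with $\w$, and exploiting: (a) $\sigma_1 \grad \F * \dv$ is a matrix of Calder\'on--Zygmund operators bounded on $L^2$, with $\|\rho^\nu \v^\nu - \rho^0 \v^0\| = \|\rho^\nu \w + \mu \v^0\| \le C_0(t)(\|\w\| + \|\mu\|)$ using \cref{L:vBounds}; (b) the clean identity $\Delta \v^\nu = \sigma_1 \grad \rho^\nu$, giving $\nu\innp{\w, \Delta \v^\nu} = -\nu\sigma_1^2 \innp{\mu, \rho^\nu} \le C_0(t)\nu + C_0(t)\|\mu\|^2$; and (c) either the vanishing of the coefficient $\sigma_1(\sigma_1+\sigma_2)$ when $\sigma_1 + \sigma_2 = 0$, or, in the case $d \ge 3$, the Hardy--Littlewood--Sobolev estimate $\|\grad \F * f\|_{L^2} \le C\|f\|_{L^{2d/(d+2)}}$ (valid precisely because $2d/(d+2) > 1$ iff $d > 2$) combined with H\"older's inequality $\|(\rho^\nu+\rho^0)\mu\|_{L^{2d/(d+2)}} \le \|\rho^\nu + \rho^0\|_{L^d}\|\mu\|$ to bound the remaining term by $C_0(t)\|\mu\|\|\w\|$. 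This produces
\[
\diff{}{t}\|\w\|^2 \le C_0(t)(\|\mu\|^2 + \|\w\|^2) + C_0(t)\nu. \qquad (\mathrm{II})
\]

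Adding (I) and (II) yields $\diff{}{t}(\|\mu\|^2 + \|\w\|^2) + \nu \|\grad \mu\|^2 \le C_0(t)(\|\mu\|^2 + \|\w\|^2) + C_0(t)\nu$, and Gronwall's inequality with zero initial data delivers the stated bound; the $\nu\int_0^t \|\mu\|^2$ term is then automatic. The principal obstacle is controlling $\grad \F * ((\rho^\nu+\rho^0)\mu)$ in $L^2$ when $\sigma_1 + \sigma_2 \ne 0$: this is exactly where $d \ge 3$ enters, since the HLS exponent $2d/(d+2)$ collapses to the endpoint $1$ in $d = 2$. The hypothesis $\sigma_1 + \sigma_2 = 0$ in $d = 2$ simultaneously eliminates this term and, via \cref{P:MassDiffBound}, forces $m(\mu(t)) \equiv 0$, which by \cref{L:MassZero} secures $\w \in L^2$ so that the energy estimates are well posed.
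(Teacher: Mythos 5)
Your proof is correct and yields the stated rate, and while it shares the paper's overall architecture---an $L^2$ energy estimate for $\mu=\rho^\nu-\rho^0$ coupled to one for $\w=\v^\nu-\v^0$, summed and closed by Gronwall, with the $(\sigma_1+\sigma_2)$-term vanishing by hypothesis in $d=2$ and controlled by Hardy--Littlewood--Sobolev in higher dimensions---your key technical devices genuinely differ from the paper's. For the velocity estimate the paper never forms a velocity equation: it returns to the weak form of the density equation, redoes the integrations by parts, and tests with $\varphi=\sigma_1\F*\mu$ (so that $\grad\varphi=\w$); in $d=2$ the hypothesis $\sigma_1+\sigma_2=0$ is needed there precisely so that no term containing the potential $\F*\mu$ itself (which is not in a good space in 2D) survives. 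You instead convolve the equation with $\sigma_1\grad\F$ to obtain a pressure-free nonlocal velocity equation and test with $\w$, bounding $\grad\F*\dv(\rho^\nu\w+\mu\v^0)$ via $L^2$-boundedness of the Riesz transforms; since only $\grad\F$-convolutions ever appear, the 2D issue with $\F*\mu$ never arises (notably, the paper's concluding remarks assert that the velocity route requires estimating a pressure---your nonlocal formulation shows it need not), and your extra integration by parts in the viscous term, $\nu\innp{\w,\Delta\v^\nu}=-\nu\sigma_1^2\innp{\mu,\rho^\nu}$, avoids the paper's absorption of $\tfrac{\nu}{4}\norm{\grad\mu}^2$ into the dissipation. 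For the density estimate you split the nonlinearity as $\w\cdot\grad\rho^\nu+\v^0\cdot\grad\mu$ (picking up the harmless $-\tfrac{\sigma_1}{2}\int\mu^3$) and use the blunt bound $\norm{\grad\rho^0}_{L^\iny}\norm{\w}\norm{\mu}$, legitimate here because $\al>1$ gives $\grad\rho^0\in L^\iny$ via \cref{T:InviscidStrong}; the paper instead splits as $\w\cdot\grad\rho^0+\v^\nu\cdot\grad\mu$ and uses HLS ($d=3$) or the Bernstein estimate $\norm{\w}_{L^p}\le C(\norm{\w}+\norm{\mu})$ ($d=2$), which needs only $\grad\rho^0\in L^2\cap L^q$ and hence survives the weaker hypotheses contemplated in the paper's closing remark. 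Two further small gains on your side: the HLS exponent $2d/(d+2)$ treats all $d\ge3$ uniformly, whereas the paper's written proof details only $d=3$; and the Plancherel identity $\norm{\grad\w}=\abs{\sigma_1}\norm{\mu}$ sharpens the Calder\'on--Zygmund inequality invoked in the paper's final step. One point to make explicit (the paper is equally terse here): in $d=2$, invoking \cref{L:MassZero} to place $\w$ in $L^2$ requires, besides $m(\mu)=0$ from \cref{P:MassDiffBound}, the moment condition $\abs{x}^\eps\mu\in L^1$, which is supplied by the compact support of $\rho^0$ and the decay of $\rho^\nu$ from \cref{C:L1Membership}.
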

\begin{proof}
Define
\begin{align}\label{e:muwDefs}
	\begin{array}{ll}
	\mu = \rho^\nu - \rho^0,
			&\w = \v^\nu - \v^0.
	\end{array}
\end{align}
Then $\dv \w = \sigma_1 \mu$, and $\w \in L^2(\R^d)$ for all time by \cref{L:MassZero}, since $m(\mu) = 0$ by \cref{P:MassDiffBound}. 

Taking $(GAG_\nu) - (GAG_0)$ gives
\begin{align*}
	\prt_t \mu + \w \cdot \grad \rho^0 + \v^\nu \cdot \grad \mu
		= \sigma_2 \mu (\rho^0 + \rho^\nu) + \nu \Delta \rho^\nu.
\end{align*}
Noting that each term above is at least in $L^2(0, T; H^{-1})$ by \cref{T:ViscousExistence,T:InviscidExistence}, we can take the pairing of the above equation with $\varphi \in C_C^\iny([0, T) \times \R^d)$, giving
\begin{align}\label{e:mulDiffEqvarphi}
	&(\prt_t \mu, \varphi)
		= - \innp{\w \cdot \grad \rho^0, \varphi} - \innp{\v^\nu \cdot \grad \mu, \varphi}
			+ \sigma_2 \innp{\mu (\rho^0 + \rho^\nu), \varphi} + \nu (\Delta \rho^\nu, \varphi).
\end{align}

By density, \cref{e:mulDiffEqvarphi} holds as well if we set $\varphi = \mu \in L^2(0, T; H^1)$. Then,
\begin{align*}
	(\prt_t \mu, \varphi)
		&= \frac{1}{2} \diff{}{t} \norm{\mu}^2,
			\\
	-\innp{\v^\nu \cdot \grad \mu, \varphi}
		&= - \frac{1}{2} \innp{ \v^\nu, \grad \mu^2}
		= \frac{1}{2} \innp{\dv \v^\nu, \mu^2}
		= \frac{\sigma_1}{2} \innp{\rho^\nu, \mu^2}
		\le \frac{\sigma_1}{2} \norm{\rho^\nu}_{L^\iny} \norm{\mu}^2 \\
		&\le C_0(t) \norm{\mu}^2,
		\\
	\sigma_2 \innp{\mu (\rho^0 + \rho^\nu), \varphi}
		&\le \abs{\sigma_2} \norm{\rho^0 + \rho^\nu}_{L^\iny} \norm{\mu}^2
		\le C_0(t) \norm{\mu}^2,
		\\
	\nu (\Delta \rho^\nu, \varphi)
		&= - \nu \innp{\grad \rho^\nu, \grad \mu}
		= - \nu \innp{\grad \mu, \grad \mu} - \nu \innp{\grad \rho^0, \grad \mu} \\
		&\le - \nu \innp{\grad \mu, \grad \mu}
			+ \frac{\nu}{2} \norm{\grad \rho^0}^2
			+ \frac{\nu}{2} \norm{\grad \mu}^2
		\le C_0(t) \nu
			- \frac{\nu}{2} \norm{\grad \mu}^2.
\end{align*}
For the time derivative, we used Theorem 3 Section 5.9 of \cite{Evans} (or see\cite{T2001}). 

To estimate the term $-\innp{\w \cdot \grad \rho^0, \varphi}$, we consider the cases $d=2$ and $d=3$ separately.  Note that when $d=3$, by the Hardy-Littlewood-Sobolev inequality, $\| \w \|_{L^6} \leq C\| \mu \|$.  Therefore, when $d=3$,
\begin{equation*}
\begin{split}
&-\innp{\w \cdot \grad \rho^0, \mu} \leq \| \w \cdot \grad \rho^0 \| \| \mu\| \leq \| \w \|_{L^6} \| \grad \rho^0 \|_{L^3} \| \mu\|\\
&\qquad  \leq C\|\mu\|  \| \grad \rho^0 \|_{L^3} \| \mu\| = C\|\mu\|^2  \| \grad \rho^0 \|_{L^3}
	\le C_0(t) \norm{\mu}^2.
\end{split}
\end{equation*}
For the case $d=2$, the Hardy-Littlewood-Sobolev inequality does not yield the desired estimate, but we have 
\begin{equation}\label{term1}
\begin{split}
&-\innp{\w \cdot \grad \rho^0, \mu} \leq \| \w \cdot \grad \rho^0 \| \| \mu\|  \leq \| \w \|_{L^p} \| \grad \rho^0 \|_{L^q} \| \mu\|
\end{split}
\end{equation}
where $1\slash p + 1\slash q = 1\slash 2$.  Now note that for $p\in(2,\infty)$, by Bernstein's Lemma and boundedness of Calderon-Zygmund operators on $L^2$ (recall the definition of $\Delta_j$ in \cref{S:InfiniteEnergy}),
\begin{align}\label{e:wLpBound}
\begin{split}
&\| \w \|_{L^p} \leq \| \Delta_{-1} \w\|_{L^p} + \sum_{j\geq 0} \| \Delta_j \w \|_{L^p} \leq  C\| \Delta_{-1} \w\| + C \sum_{j\geq 0} 2^{-j}\| \Delta_j \nabla \nabla\Phi\ast \mu \|_{L^p}\\
&\qquad \leq C\| \w\| + C \sum_{j\geq 0} 2^{-j}2^{2j(1\slash 2 - 1\slash p)}\| \Delta_j \nabla \nabla\Phi\ast \mu \|\\
&\qquad \leq C\| \w\| + C \sum_{j\geq 0} 2^{-(2j \slash p)}\| \mu \| \leq C(\| \w\| + \| \mu \|).
\end{split}
\end{align} 
Substituting this estimate into (\ref{term1}) gives, for any fixed $q\in (2,\infty)$, 
\begin{equation}\label{VV2dterm}
-\innp{\w \cdot \grad \rho^0, \mu} \leq \| \w \cdot \grad \rho^0 \| \| \mu\| \leq C\| \grad \rho^0 \|_{L^q}( \| \w\| + \| \mu \| )\| \mu \|
	\le C_0(t) \pr{\norm{\w} + \norm{\mu}} \norm{\mu}.
\end{equation}
 
Applying the above estimates to \cref{e:mulDiffEqvarphi}, we see that for $d=3$,
\begin{align}\label{VV3d}
	\diff{}{t} \norm{\mu}^2 + \nu \norm{\grad \mu}^2
		&\le C_0(t) \nu +  C_0(t) \norm{\mu}^2,
\end{align}
while, for $d=2$,
\begin{align}\label{VV2d}
	\diff{}{t} \norm{\mu}^2 + \nu \norm{\grad \mu}^2
		&\le C_0(t) \nu + C_0(t) \norm{\w}^2 + C_0(t) \norm{\mu}^2.
\end{align}
After integrating (\ref{VV3d}) in time and applying Gronwall's Lemma, we can conclude that $\rho_{\nu}$ converges to $\rho$ in  $L^{\infty}([0,T]; L^2(\R^3))$ as $\nu$ approaches zero.  However, we must obtain a bound on $\norm{\w}$  for both $d=2$ and $d=3$ below in order to obtain the estimate in Theorem \ref{T:VVPAG} on the difference of velocities in $H^1$.  Therefore, in what follows, we utilize (\ref{VV2d}) for both $d=2$ and $d=3$. 
   
Integrating (\ref{VV2d}) in time, we have
\begin{align}\label{e:mulNormBound}
	\norm{\mu(t)}^2 + \nu \int_0^t \norm{\grad \mu(s)}^2 \, ds
		&\le C_0(t) t \nu + \int_0^t C_0(s) \pr{\norm{\w(s)}^2 + \norm{\mu(s)}^2} \, ds.
\end{align}
  
We return to \cref{e:mulDiffEqvarphi}, sticking for the moment with an unspecified $\varphi \in L^2(0, T; H^1)$, but integrating several of the terms by parts in a different manner than above. We have,
\begin{align}\label{e:IBPsVarious}
	\begin{split}
	(\prt_t \mu, \varphi)
		&= \sigma_1^{-1} (\prt_t \dv \w, \varphi)
			= - \sigma_1^{-1} \innp{\prt_t \w, \grad \varphi},
			\\
	-\innp{\w \cdot \grad \rho^0, \varphi}
		&= -\innp{\varphi \w, \grad \rho^0}
		= \innp{\dv (\varphi \w), \rho_0}
		= \innp{\varphi \dv \w, \rho^0} + \innp{\grad \varphi \cdot \w, \rho^0} \\
		&= \sigma_1 \innp{\varphi \mu, \rho^0} + \innp{\grad \varphi \cdot \w, \rho^0},
		\\
	-\innp{\v^\nu \cdot \grad \mu, \varphi}
		&= -\innp{\varphi \v^\nu, \grad \mu}
		= \innp{\dv(\varphi \v^\nu), \mu}
		= \innp{\varphi \dv \v^\nu, \mu} + \innp{\grad \varphi \cdot \v^\nu, \mu} \\
		&= \sigma_1 \innp{\varphi \rho^\nu, \mu} + \innp{\grad \varphi \cdot \v^\nu, \mu}, \\
	\nu (\Delta \rho^\nu, \varphi)
		&= - \nu \innp{\grad \rho^\nu, \grad \varphi}.
	\end{split}
\end{align}
From \cref{e:mulDiffEqvarphi}, then, we have
\begin{align}\label{e:mu1mu2AssumptionNeeded}
	\begin{split}
	- \sigma_1^{-1} &\innp{\prt_t \w, \grad \varphi}
		= \sigma_1 \innp{\varphi \mu, \rho^0} + \innp{\grad \varphi \cdot \w, \rho^0}
			+ \sigma_1 \innp{\varphi \rho^\nu, \mu} + \innp{\grad \varphi \cdot \v^\nu, \mu} \\
		&\qquad\qquad\qquad\qquad
			+ \sigma_2 \innp{\mu (\rho^0 + \rho^\nu), \varphi}
			- \nu \innp{\grad \rho^\nu, \grad \varphi} \\
		&= \innp{\grad \varphi \cdot \w, \rho^0}
			+ \innp{\grad \varphi \cdot \v^\nu, \mu}
			+ (\sigma_1 + \sigma_2) \pr{\mu (\rho^0 + \rho^\nu), \varphi}
			- \nu \innp{\grad \rho^\nu, \grad \varphi}.
	\end{split}
\end{align}

Now set $\varphi = \sigma_1\F * \mu$. For $d \ge 3$, $\varphi$ lies in $L^2(0, T; \dot{H}^1)$ by \cref{T:ViscousExistence,L:MassZero}, so \cref{e:mu1mu2AssumptionNeeded} continues to hold by the density of $C_C^\iny([0, T) \times \R^d)$ in $L^2(0, T; \dot{H}^1)$. The same is true for $d = 2$ when $\sigma_1 + \sigma_2 = 0$  (though then the only term on the right-hand side of \cref{e:mu1mu2AssumptionNeeded} involving $\varphi$ without a gradient vanishes).
Hence, in both cases covered by this theorem, \cref{e:mu1mu2AssumptionNeeded}  holds for $\varphi = \sigma_1\F * \mu$.
Then $\grad \varphi = \w$, and we find that
\begin{align}\label{e:mulDiffEqvarphiSimp}
	\begin{split}
	\sigma_1^{-1} &\innp{\prt_t \w, \w}
		= \frac{\sigma_1^{-1}}{2} \diff{}{t} \norm{\w}^2 \\
		&= -\innp{\abs{\w}^2, \rho^0}
			- \innp{\w \cdot \v^\nu, \mu}
			+ \nu \innp{\grad \rho^\nu, \w}
			-\sigma_1 (\sigma_1 + \sigma_2)
				\innp{\mu \eta, \F * \mu},
	\end{split}
\end{align}
where $\eta := \rho^0 + \rho^\nu$.
But,
\begin{align*}
	\abs{\innp{\abs{\w}^2, \rho^0}}
		&\le \norm{\rho^0}_{L^\iny} \norm{\w}^2
		\le C_0(t) \norm{\w}^2,
		\\
	\abs{\innp{\w \cdot \v^\nu, \mu}}
		&\le \norm{\v^\nu}_{L^\iny} \norm{\w} \norm{\mu}
		\le C_0(t) \norm{\w}^2 + C_0(t) \norm{\mu}^2,
		\\
	\nu \abs{\innp{\grad \rho^\nu, \w}}
		&= \nu \abs{\innp{\grad \mu, \w}
			+ \innp{\grad \rho^0, \w}}
		\le \frac{\nu \abs{\sigma_1^{-1}}}{4} \norm{\grad \mu}^2
			+ \frac{2\nu + \abs{\sigma_1^{-1}}}{2 \abs{\sigma_1^{-1}}} \norm{\w}^2
			+ \frac{\nu^2}{2} \norm{\grad \rho^0}^2 \\
		&\le \frac{\nu \abs{\sigma_1^{-1}}}{4} \norm{\grad \mu}^2
			+ \frac{2\nu + \abs{\sigma_1^{-1}}}{2 \abs{\sigma_1^{-1}}} \norm{\w}^2
			+ C_0(t) \nu^2.
\end{align*}

Now consider $\langle \mu\eta, \Phi*\mu\rangle = \langle \mu\eta, \nabla\Phi*\textbf{w}\rangle$ for $d=3$.  Write
\begin{equation*}
\begin{split}
&\abs{\langle \mu\eta, \nabla\Phi*\textbf{w}\rangle} \leq \|\mu\eta \|_{L^{6\slash 5}} \| \nabla\Phi*\textbf{w} \|_{L^6}\\ 
&\qquad\leq \| \mu \|_{L^2} \| \eta \|_{L^3} \| \nabla\Phi*\textbf{w} \|_{L^6} \leq C_0(t)\| \mu \|_{L^2} \|\textbf{w} \|_{L^2},
\end{split}
\end{equation*}
where we applied the Hardy-Littlewood-Sobolev Inequality.
When $\sigma_1 = - \sigma_2$, the term $\sigma_1 (\sigma_1 + \sigma_2) \innp{\mu \eta, \F * \mu}$ disappears entirely.

Substituting these bounds into \cref{e:mulDiffEqvarphiSimp} and integrating in time gives, for all $\nu \le 1$,
\begin{align*} 
	\norm{\w(t)}^2
		&\le \frac{\nu}{2} \int_0^t \norm{\grad \mu(s)}^2 \, ds+ C_0(t) t \nu
			+ \int_0^t C_0(s) \pr{\norm{\w(s)}^2 + \norm{\mu(s)}^2} \, ds.
\end{align*}
Adding this inequality to that in \cref{e:mulNormBound} gives, for all $\nu \le 1$,
\begin{align*} 
	\norm{\w(t)}^2 + \norm{\mu(t)}^2 + \frac{\nu}{2} \int_0^t \norm{\grad \mu(s)}^2 \, ds
		&\le C_0(t) t \nu + \int_0^t C_0(s) \pr{\norm{\w(s)}^2 + \norm{\mu(s)}^2} \, ds.
\end{align*}
Applying Gronwall's lemma, we conclude that
\begin{align*}
	\norm{\w(t)}^2 + \norm{\mu(t)}^2 + \nu \int_0^t \norm{\grad \mu(s)}^2 \, ds
		\le C_0(t) t \nu e^{C_0(t)}
\end{align*}
for all $\nu \le 1$. The proof is completed by observing that $\norm{\w(t)}_{H^1} \le \norm{\w(t)} + C \norm{\mu(t)}$ by the boundedness of Calderon-Zygmund operators on $L^2$.
\end{proof}

\begin{remark}
An examination of the proof of \cref{T:VVPAG} shows that the conclusion holds as long as the solutions satisfy $\nabla\rho^0\in L^{\infty}([0,T], L^2\cap L^3(\R^d))$ when $d=3$, and $\nabla\rho^0\in L^{\infty}([0,T], L^2\cap L^q(\R^d))$ for some $q>2$ when $d=2$, and $\rho^{\nu}$, $\rho^{0}$ belong to $L^{\infty}([0,T], L^1\cap L^{\infty}(\R^d))$ for $d=2$, $3$. Our assumptions on the initial data imply that these conditions hold, but are not minimal; in particular, compact support of $\rho_0$ is stronger than required.
\end{remark}

%
%
\section{The vanishing viscosity limit for \GAGnu for velocities not in $L^2$}\label{S:VVNonL2}

\noindent In this section, we consider the vanishing viscosity limit $\VV'$ (see \cref{S:Introduction}) in the general 2D case.

\begin{theorem}\label{T:VVGAG}
	Assume that $d = 2$ and $\rho_0$ is compactly supported and in $C^{\al}_C$ for some $\al > 1$\Ignore{$W^{s,p}(\R^2)$ for some $s > 1$ with $sp>2$}.
	Define $\mu$, $\w$ as in \cref{e:muwDefs}.
	Let $T$ be as in \cref{T:ViscousExistence}.
	With $\tau_0$, $g_0$ as in \cref{D:btau0} and 
	the total mass function, $m$, defined as in \cref{e:MassDef},
	let
	\begin{align}\label{e:mulwlDef}
		\begin{array}{lll}
			\mul := \mu - m(\mu) g_0,
				&\wl := \w - \sigma_1 m(\mu) \btau_0.
		\end{array}
	\end{align}
	Then for all $t \in [0, T]$, $\nu \le 1$,
	\begin{align}\label{e:FinalL2Bound}
			\norm{\wl(t)}_{H^1}^2 + \norm{\mu(t)}^2
				+ \nu \int_0^t \norm{\grad \mu}^2
			\le C \nu t e^{C_0(t) t}.
	\end{align}
	Moreover, for all $k \ge 0$,
	\begin{align}\label{e:wInfDiff}
		\norm{\w(t) - \wl(t)}_{C^k}
			\le C_k \nu^{\frac{1}{2}} t^{\frac{3}{2}} e^{C_0(t) t}.
	\end{align}
\end{theorem}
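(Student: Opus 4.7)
The plan is to mirror the two-step energy argument of \cref{T:VVPAG}, replacing $\mu$ and $\w$ by the corrected quantities $\mul$ and $\wl$ throughout. The key observation is that $\dv(\sigma_1 m(\mu)\btau_0) = \sigma_1 m(\mu) g_0$ (by the divergence computation following \cref{D:btau0}) and $m(g_0)=1$, so $\dv \wl = \sigma_1 \mul$ and $m(\mul)=0$. Since $\rho_0$ is compactly supported, \cref{C:L1Membership} gives $|x|^\al \rho^\nu(t,x) \in L^1_x$ for every $\al$, and then \cref{L:MassZero} places $\wl \in L^2(\R^2)$ and $\Phi * \mul \in L^\infty(\R^2)$ via \cref{e:CalFmuBoundd2}. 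These are precisely the ingredients needed to re-run the two pairings from the proof of \cref{T:VVPAG}.

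For the estimate on $\norm{\mu}^2$, I would test the weak equation \cref{e:mulDiffEqvarphi} against $\varphi = \mu$. All terms estimate exactly as in \cref{T:VVPAG} except $-\innp{\w\cdot\grad\rho^0,\mu}$, where I would write $\w = \wl + \sigma_1 m(\mu)\btau_0$. The Bernstein/Littlewood--Paley argument producing \cref{e:wLpBound} applies verbatim to $\wl$ and $\mul$, yielding $\norm{\wl}_{L^p} \le C(\norm{\wl}+\norm{\mul})$ for $p>2$, while $\btau_0 \in L^p(\R^2)$ for $p>2$ by direct inspection of \cref{D:btau0} (it decays like $1/|x|$ at infinity). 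This controls the term by $C_0(t)(\norm{\wl}+\norm{\mu}+|m(\mu)|)\norm{\mu}$ and produces an analog of \cref{e:mulNormBound} with $\norm{\wl}$ in place of $\norm{\w}$.

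For the estimate on $\norm{\wl}^2$, I would insert $\varphi = \sigma_1\Phi*\mul$ into the identity \cref{e:mu1mu2AssumptionNeeded}; this is legitimate because $m(\mul)=0$ keeps $\varphi \in L^\infty \cap \dot H^1$. Since $\grad\varphi = \wl$, the time-derivative term expands as $-\frac{1}{2}\sigma_1^{-1}\diff{}{t}\norm{\wl}^2$ plus a correction $-M(t)\innp{\btau_0,\wl}$ with $M(t)=(\sigma_1+\sigma_2)\innp{\mu,\eta}$ read off from \cref{P:MassDiffBound}, and the other terms on the right are handled exactly as in \cref{T:VVPAG}. The only genuinely new contribution is $(\sigma_1+\sigma_2)\innp{\mu\eta,\varphi}$, which vanished in \cref{T:VVPAG}; here it is bounded by $|\sigma_1+\sigma_2|\norm{\mu}\norm{\eta}\norm{\varphi}_{L^\infty}$ with $\norm{\varphi}_{L^\infty}$ uniformly controlled via \cref{e:CalFmuBoundd2}, and absorbed into the Gronwall closure. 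Adding the two estimates and applying Gronwall produces \cref{e:FinalL2Bound}.

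For \cref{e:wInfDiff}, note that $\w - \wl = \sigma_1 m(\mu)\btau_0$, and $\btau_0$ lies in $C_b^k(\R^2)$ for every $k$ (smooth at the origin since $f(r) = g_0(0)/2 + O(r^2)$, and decaying like $1/|x|$ at infinity with derivatives vanishing correspondingly). So it suffices to bound $|m(\mu(t))|$, which by \cref{P:MassDiffBound} and Cauchy--Schwarz in time satisfies
\begin{align*}
|m(\mu(t))| \le C_0(t)\int_0^t \norm{\mu(s)}\,ds \le C_0(t)\, t^{1/2}\Bigl(\int_0^t \norm{\mu(s)}^2\,ds\Bigr)^{1/2}.
\end{align*}
Inserting the bound $\norm{\mu(s)}^2 \le C\nu s\, e^{C_0(s)s}$ from \cref{e:FinalL2Bound} yields the claimed $\nu^{1/2}t^{3/2}$ rate. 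The main obstacle will be the $(\sigma_1+\sigma_2)\innp{\mu\eta,\varphi}$ term in the $\wl$-estimate, which was automatically zero in \cref{T:VVPAG}; here it must be shown to contribute only at lower order relative to $\nu$, using that $\norm{\mu}$ is of size $\nu^{1/2}$ while $\norm{\varphi}_{L^\infty}$ is merely uniformly bounded, so that a straightforward Cauchy--Schwarz splitting suffices to keep the Gronwall iteration closed.
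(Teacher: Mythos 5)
Your skeleton matches the paper's proof closely: mass-correct $\mu$ and $\w$, run the two coupled energy estimates with $\varphi=\mu$ and $\varphi=\sigma_1\F*\mul$, and recover \cref{e:wInfDiff} from \cref{P:MassDiffBound}; your $\norm{\mu}^2$ estimate and your derivation of the $\nu^{1/2}t^{3/2}$ rate are essentially what the paper does. But the step you yourself flag as ``the main obstacle'' is a genuine gap, and your proposed fix fails. Bounding $(\sigma_1+\sigma_2)\innp{\mu\eta,\varphi}$ by $\abs{\sigma_1+\sigma_2}\,\norm{\mu}\,\norm{\eta}\,\norm{\varphi}_{L^\iny}$ with $\norm{\varphi}_{L^\iny}$ only \emph{uniformly} bounded puts a term of the form $C_0(t)\norm{\mu}$ into the differential inequality for $\norm{\wl}^2$: linear in the small quantity, with an $O(1)$ coefficient. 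The Gronwall closure behind \cref{e:FinalL2Bound} requires every right-hand term to be either $O(\nu)$ or quadratic in $\pr{\norm{\mu},\norm{\wl}}$. Setting $E:=\norm{\wl}^2+\norm{\mu}^2$, your inequality has the shape $E'\le C_0(t)\nu + C_0(t)\sqrt{E} + C_0(t)E$, and the $\sqrt{E}$ term alone already permits $E(t)\sim C t^2$ independently of $\nu$; no rate, indeed no convergence at all, follows as $\nu\to0$. You cannot invoke ``$\norm{\mu}$ is of size $\nu^{1/2}$'' inside the argument—that is the conclusion being proved, and since your $\mu$-estimate is itself coupled to $\norm{\wl}^2$, no bootstrap or continuity argument rescues a linear term with an $O(1)$ constant.

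This is precisely why the paper needs \cref{L:muetaBound}, which is the real content of the section. The paper re-derives the testing identity directly for $\wl$ (rather than substituting into \cref{e:mu1mu2AssumptionNeeded}), so that the mass correction $-(\sigma_1+\sigma_2)m(\mu\eta)(g_0,\varphi)$ merges with $(\sigma_1+\sigma_2)\innp{\mu\eta,\varphi}$ into the single term $(\sigma_1+\sigma_2)\innp{\gamma,\varphi}$ with $\gamma:=\mu\eta-m(\mu\eta)g_0$ of \emph{total mass zero}. (Your bookkeeping is algebraically equivalent: since $\btau_0=\grad\F*g_0$ and $\wl=\grad\varphi$, one has $\innp{\btau_0,\wl}=-\innp{g_0,\varphi}$, so your correction $-M(t)\innp{\btau_0,\wl}$ is exactly the paper's merged $g_0$ term.) Then, instead of estimating $\F*\mul$ in $L^\iny$, \cref{L:muetaBound} integrates by parts to move the kernel onto $\gamma$, giving $\abs{\innp{\gamma,\F*\mul}}\le C\norm{\grad\F*\gamma}\,\norm{\wl}$, and shows $\norm{\grad\F*\gamma}\le C_0(t)\norm{\mu}$: in the weighted estimates with $g(x)=1+\abs{x}^{\eps}$ that control $\F*\gamma$, the weight always lands on $\eta$ and $g_0$—whose decay is uniform in $\nu$ by the compact support of $\rho^0$ and \cref{C:L1Membership}—while $\mu$ enters only through its unweighted $L^2$ norm via Cauchy--Schwarz. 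This yields the quadratic bound $C_0(t)\norm{\wl}\,\norm{\mu}$ and closes Gronwall at the $O(\nu)$ rate. Your proof can be repaired by inserting this lemma (or an equivalent device), but as written it does not establish \cref{e:FinalL2Bound}.
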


\begin{proof}
Assume that $d = 2$.
Then $\dv \wl = \sigma_1 \mul$, and $\wl \in L^2(\R^d)$ for all time by \cref{L:MassZero}, since $m(\mul) = 0$. 

We start off the same way as in \cref{S:VV}. Taking $(GAG_\nu) - (GAG_0)$ gives
\begin{align*}
	\prt_t \mu + \w \cdot \grad \rho^0 + \v^\nu \cdot \grad \mu
		= \sigma_2 \mu (\rho^0 + \rho^\nu) + \nu \Delta \rho^\nu.
\end{align*}
We have $\prt_t \mu, \Delta \rho^\nu \in L^2(0, T; H^{-1})$ while all other terms above lie in the smaller space, $L^\iny(0, T; L^2)$. Thus, we can take the pairing of the above equation with $\varphi = \mu \in L^2(0, T; H^1)$, giving
\begin{align}\label{e:mulDiffEqvarphi2}
	&(\prt_t \mu, \mu)
		= - \innp{\w \cdot \grad \rho^0, \mu} - \innp{\v^\nu \cdot \grad \mu, \mu}
			+ \sigma_2 \innp{\mu (\rho^0 + \rho^\nu), \mu} + \nu (\Delta \rho^\nu, \mu),
\end{align}
as in \cref{S:VV}, immediately following \cref{e:mulDiffEqvarphi}. Now, however, we need to estimate $- \innp{\w \cdot \grad \rho^0, \mu}$ differently.  First note that
\begin{align}\label{e:wgradrho0muBound}
		\abs{\innp{\w \cdot \grad \rho^0, \mu}}
		&\le \abs{\innp{\wl \cdot \grad \rho^0, \mu}}
			+ \abs{\innp{(\sigma_1 m(\mu) \btau_0) \cdot \grad \rho^0, \mu}}.
\end{align}
Following the proof of Theorem \ref{T:VVPAG}, we have  
\begin{equation}\label{term12d}
\begin{split}
&|\innp{\wl \cdot \grad \rho^0, \mu}| \leq \| \wl \cdot \grad \rho^0 \| \| \mu\| \leq \| \wl \|_{L^p} \| \grad \rho^0 \|_{L^q} \| \mu\|
\end{split}
\end{equation}
where $1\slash p + 1\slash q = 1\slash 2$.  Now note that for $p\in(2,\infty)$, an argument identical to that in \cref{e:wLpBound} yields
\begin{align*}
	\norm{\wl}_{L^p}
		\le C \pr{\norm{\wl} + \norm{\mul}}.
\end{align*}
Substituting this estimate into \cref{term12d} gives, for any fixed $q\in (2,\infty)$, 
\begin{equation*}
	|\innp{\wl \cdot \grad \rho^0, \mu}| 
		\leq C\| \grad \rho^0 \|_{L^q}
			( \| \wl\| + \| \mul \| )\| \mu \|
		\le C_0(t) \pr{\norm{\wl} + \norm{\mul}}
				\norm{\mu}.
\end{equation*}
Now, by the definition of $\mul$ and Propostion \ref{P:MassDiffBound},
\begin{align*}
	 &\norm{\mul(t)}
	 	\le  \norm{\mu(t)} + \abs{m(\mu(t))} \norm{g_0}
		\le \norm{\mu(t)} + \norm{g_0}
			\abs{\sigma_1 + \sigma_2}
		\int_0^t \norm{\mu(s)} \norm{\rho^{\nu}
			+\rho^0(s)} \, ds\\
		&\qquad
		\le  \norm{\mu(t)} + C_0(t)
			\int_0^t \norm{\mu(s)} \, ds,
\end{align*}		
so that 
\begin{align}\label{e:wrhomuInnBound}
\begin{split}
&|\innp{\wl \cdot \grad \rho^0, \mu}|  \leq C_0(t) \| \wl \| \| \mu \| + C_0(t)\|\mu \|^2 +  C_0(t)  \| \mu \|\int_0^t \norm{\mu(s)} \, ds\\
&\qquad \leq C_0(t)\| \wl \|^2 + C_0(t)  \| \mu \|^2 + \left(\int_0^t \norm{\mu(s)} \, ds \right)^2\\
&\qquad \leq C_0(t)\| \wl \|^2 + C_0(t)  \| \mu \|^2 + C_0(t)t\int_0^t \norm{\mu(s)}^2 \, ds,
\end{split}
\end{align}
where we used Jensen's inequality (or Cauchy-Schwartz) in the last step.
To estimate $\abs{((\sigma_1 m(\mu) \btau_0) \cdot \grad \rho^0, \mu)}$, we observe that, by \cref{P:MassDiffBound},
\begin{align}\label{e:mulBoundOrig}
		\abs{m(\mu(t))}
			&\le \abs{\sigma_1 + \sigma_2} \int_0^t \pr{\norm{\rho^\nu(s)} + \norm{\rho^0(s)}}
					\norm{\mu(s)} \, ds
			\le C_0(t) \int_0^t \norm{\mu(s)} \, ds
\end{align}
so that
\begin{align}\label{e:sigmammulbtau0Orig}
	\begin{split}
	&\abs{\innp{(\sigma_1 m(\mu) \btau_0) \cdot \grad \rho^0, \mu}}
		\le \abs{\sigma_1} \norm{\grad \rho^0} \norm{\btau_0}_{L^\iny}
				\abs{m(\mu)} \norm{\mu} \\
		&\qquad
		\le \frac{1}{2} \sigma_1^2 \norm{\grad \rho^0}^2
				\norm{\btau_0}_{L^\iny}^2 m(\mu)^2
				+ \frac{1}{2} \norm{\mu}^2
		\le C_0(t) \pr{\int_0^t \norm{\mu(s)} \, ds}^2
				+ \frac{1}{2} \norm{\mu}^2 \\
		&\qquad
		\le C_0(t) t
				\int_0^t \norm{\mu(s)}^2 \, ds
				+ \frac{1}{2} \norm{\mu}^2,
	\end{split}
\end{align}
where we again used Jensen's inequality (or Cauchy-Schwartz) as well as \cref{P:MassDiffBound}.
Hence, applying \cref{e:wrhomuInnBound,e:sigmammulbtau0Orig} to \cref{e:wgradrho0muBound},
\begin{align*}
	\abs{\innp{\w \cdot \grad \rho^0, \mu}}
		\le C_0(t) \norm{\wl}^2 + C_0(t) \norm{\mu}^2
			+ C_0(t) t \int_0^t \norm{\mu(s)}^2 \, ds.
\end{align*}

Integrating in time, we have
\begin{align}\label{e:inttinttBoundOrig}
	\int_0^t C_0(y) y \int_0^y \norm{\mu(s)}^2 \, ds \, dy
		\le C_0(t) t \int_0^t \int_0^t \norm{\mu(s)}^2 \, ds \, dy
		\le C_0(t) t^2 \int_0^t \norm{\mu(s)}^2 \, ds.
\end{align}
Thus, in place of \cref{e:mulNormBound}, we have
\begin{align}\label{e:mulNormBound2}
	\norm{\mu(t)}^2 + \nu \int_0^t \norm{\grad \mu(s)}^2 \, ds
		&\le C_0(t) t \nu + \int_0^t C_0(s) \pr{\norm{\wl(s)}^2
			+ \norm{\mu(s)}^2} \, ds.
\end{align}

To bound $\norm{\wl}$, we derive the equivalent of \cref{e:mu1mu2AssumptionNeeded} for $\wl$ for an arbitrary $\varphi \in L^2(0, T; H^1)$. The only change we need make is in $\cref{e:IBPsVarious}_1$, which, using $\dv \w = \dv \wl + \sigma_1 m(\mu) g_0$, becomes
\begin{align*}
	(\prt_t \mu, \varphi)
		&= \sigma_1^{-1} (\prt_t \dv \w, \varphi)
		= \sigma_1^{-1} (\prt_t \dv \wl, \varphi)
			+ m'(\mu) (g_0, \varphi) \\
		&= - \sigma_1^{-1} \innp{\prt_t \wl, \grad \varphi}
			+ (\sigma_1 + \sigma_2) m(\mu \eta) (g_0, \varphi),
\end{align*}
where $\eta := \rho^0 + \rho^\nu$ and we used \cref{P:MassDiffBound}.

Thus, \cref{e:mu1mu2AssumptionNeeded} becomes
\begin{align}\label{e:mu1mu2AssumptionNeededAlt}
	\begin{split}
	- \sigma_1^{-1} &\innp{\prt_t \wl, \grad \varphi}
		= \innp{\grad \varphi \cdot \w, \rho^0}
			+ \innp{\grad \varphi \cdot \v^\nu, \mu}
			+ (\sigma_1 + \sigma_2) \innp{\mu \eta, \varphi} \\
		&\qquad\qquad\qquad\qquad
			- \nu \innp{\grad \rho^\nu, \grad \varphi}
			- (\sigma_1 + \sigma_2) m(\mu \eta) (g_0, \varphi) \\
		&= \innp{\grad \varphi \cdot \w, \rho^0}
			+ \innp{\grad \varphi \cdot \v^\nu, \mu}
			- \nu \innp{\grad \rho^\nu, \grad \varphi}
			+ (\sigma_1 + \sigma_2) \innp{\mu \eta - m(\mu \eta) g_0, \varphi}.
	\end{split}
\end{align}

Now set $\varphi = \sigma_1\F * \mul = \sigma_1\F * (\mu - m(\mu) g_0)$ and fix $p_0 \in (2, \iny]$. Then $\grad \varphi \in L^\iny(0, T; L^2)$ with $\varphi \in L^\iny(0, T; L^{p_0})$ by \cref{L:MassZero}. Also, $\mu \eta \in L^\iny(0, T; L^{q_0})$, where $q_0$ is \Holder conjugate to $p_0$, by the rapid spatial decay of $\mu$ and $\eta$. Hence, \cref{e:mu1mu2AssumptionNeededAlt} holds for $\varphi$ by an obvious density argument. Hence, using $\grad \varphi = \wl$,
\begin{align}\label{e:mulDiffEqvarphiSimpAlt}
	\begin{split}
	\sigma_1^{-1} \innp{\prt_t \wl, \wl}
		&= \frac{\sigma_1^{-1}}{2} \diff{}{t} \norm{\wl}^2
		= -\innp{\wl \cdot \w, \rho^0}
			- \innp{\wl \cdot \v^\nu, \mu}
			+ \nu \innp{\grad \rho^\nu, \wl} \\
		&\qquad
			-\sigma_1
			(\sigma_1 + \sigma_2) \innp{\mu \eta - m(\mu \eta) g_0, \F * (\mu - m(\mu) g_0)}.
	\end{split}
\end{align}

Now,
\begin{align*}
	\abs{\innp{\wl \cdot \w, \rho^0}}
		&\le \norm{\rho^0}_{L^\iny} \norm{\wl}^2
			+  \abs{\sigma_1} \abs{m(\mu)} \abs{\innp{\btau_0 \cdot \wl, \rho_0}}
		\le C_0(t) \norm{\wl}^2 + C_0(t) \abs{m(\mu)}^2,
		\\
	\abs{\innp{\wl \cdot \v^\nu, \mu}}
		&\le \norm{\v^\nu}_{L^\iny} \norm{\wl} \norm{\mu}
		\le C_0(t) \norm{\wl}^2 + C_0(t) \norm{\mu}^2,
		\\
	\nu \abs{\innp{\grad \rho^\nu, \wl}}
		&= \nu \abs{\innp{\grad \mu, \wl}
			+ \innp{\grad \rho^0, \wl}}
		\le \frac{\nu \abs{\sigma_1^{-1}}}{4} \norm{\grad \mu}^2
			+ \frac{2 \nu + \abs{\sigma_1^{-1}}}{2 \abs{\sigma_1^{-1}}} \norm{\wl}^2
			+ \frac{\nu^2}{2} \norm{\grad \rho^0}^2 \\
		&\le \frac{\nu \abs{\sigma_1^{-1}}}{4} \norm{\grad \mu}^2
			+ \frac{2 \nu + \abs{\sigma_1^{-1}}}{2 \abs{\sigma_1^{-1}}} \norm{\wl}^2
			+ C_0(t) \nu^2.
\end{align*}

Substituting these bounds into \cref{e:mulDiffEqvarphiSimpAlt}, integrating in time, using \cref{e:mulBoundOrig} with Jensen's inequality, and applying \cref{L:muetaBound} we obtain, for all $\nu \le 1$,
\begin{align*} 
	\norm{\wl(t)}^2
		&\le \int_0^t\frac{\nu}{2} \norm{\grad \mu(s)}^2 \, ds+ C_0(t) t \nu
			+ \int_0^t C_0(s) \pr{\norm{\wl(s)}^2 + \norm{\mu(s)}^2} \, ds.
\end{align*}
Adding this inequality to that in \cref{e:mulNormBound2} gives, for all $\nu \le 1$,
\begin{align*} 
	\norm{\wl(t)}^2 + \norm{\mu(t)}^2 + \frac{\nu}{2} \int_0^t \norm{\grad \mu(s)}^2 \, ds
		&\le C_0(t) t \nu + \int_0^t C_0(s) \pr{\norm{\wl(s)}^2 + \norm{\mu(s)}^2} \, ds.
\end{align*}
Applying Gronwall's lemma, we conclude that
\begin{align}\label{e:wlmuBoundPre}
	\norm{\wl(t)}^2 + \norm{\mu(t)}^2 + \nu \int_0^t \norm{\grad \mu(s)}^2 \, ds
		\le C_0(t) t \nu e^{C_0(t)}
\end{align}
for all $\nu \le 1$. Also, by \cref{L:MassZero,P:MassDiffBound},
\begin{align*}
	\norm{\wl(t)}_{H^1}
		&\le \norm{\wl(t)} + C \norm{\mul(t)}
		\le \norm{\wl(t)} + \norm{\mu(t)} + \abs{m(\mu(t))} \norm{g_0}  \\
		&\le \norm{\wl(t)} + \norm{\mu(t)}
			+ \norm{g_0} \abs{\sigma_1 + \sigma_2} \int_0^t \norm{\mu(s)} \norm{\eta(s)} \, ds\\
		&\le \norm{\wl(t)} + \norm{\mu(t)} + C_0(t) \int_0^t \norm{\mu(s)} \, ds \\
		&\le \norm{\wl(t)} + C_0(t) {(t \nu)}^{\frac{1}{2}} e^{C_0(t)} + C_0(t) \int_0^t C_0(s) s^{\frac{1}{2}} {\nu}^{\frac{1}{2}} e^{C_0(s)} \, ds\\
		&\le \norm{\wl(t)} + C_0(t) (1+t)^{\frac{3}{2}} e^{C_0(t)} {\nu}^{\frac{1}{2}}.
\end{align*}
In the second-to-last inequality, we used \cref{e:wlmuBoundPre}. Combining this bound with \cref{e:wlmuBoundPre} completes the proof of \cref{e:FinalL2Bound}.

To prove \cref{e:wInfDiff}, we simply observe that
\begin{align*}
	\norm{\w(t) - \wl(t)}_{L^\iny}
		&= \norm{\sigma_1 m(\mu) \btau_0}_{L^\iny}
		\le \abs{\sigma_1} \abs{m(\mu)} \norm{\btau_0}_{L^\iny}
		\le C \abs{m(\mu)} \\
		&\le C_0(t) \int_0^t \norm{\mu(s)} \norm{\eta(s)} \, ds
		\le C \nu^{\frac{1}{2}} t^{\frac{3}{2}} e^{C_0(t) t},
\end{align*}
where we used \cref{P:MassDiffBound} and \cref{e:FinalL2Bound}. A similar bound holds for all spatial derivatives of $\w(t) - \wl(t)$, yielding \cref{e:wInfDiff}.
\end{proof}

We used the following lemma in the proof of \cref{T:VVGAG}, above.

\begin{lemma}\label{L:muetaBound}
	Define $\mu$,
	$\mul$, $\wl$ as in \cref{e:muwDefs,e:mulwlDef}
	and let $\eta = \rho^0 + \rho^\nu$, as in the proof of
	\cref{T:VVGAG}.
	When $d = 2$, we have,
	\begin{align*}
		\abs{\innp{\mu \eta - m(\mu \eta) g_0, \F * \mul}}
			\le C_0(t) \norm{\wl} \norm{\mu}.
	\end{align*}
\end{lemma}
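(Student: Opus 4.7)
The plan is to integrate by parts, exploiting that $h := \mu\eta - m(\mu\eta) g_0$ has vanishing total mass: since $m(g_0) = 1$, I would observe $m(h) = m(\mu\eta) - m(\mu\eta) = 0$. I would then set $H := \grad \F * h$, so that $\dv H = h$, and note that $\grad(\F * \mul) = \sigma_1^{-1}\wl$ (from $\wl = \sigma_1\grad \F * \mul$). Using \cref{L:MassZero} together with the rapid spatial decay inherited from \cref{S:SpatialDecay}, one finds $\F * \mul \in L^\infty$ and $H \in L^2$, with $H$ in fact decaying like $\abs{x}^{-2}$ at infinity thanks to $m(h)=0$. A standard cutoff argument then yields
\begin{equation*}
\innp{h, \F * \mul} = \innp{\dv H, \F * \mul} = -\innp{H, \grad(\F * \mul)} = -\sigma_1^{-1}\innp{H, \wl},
\end{equation*}
so by Cauchy-Schwarz the claim reduces to showing $\norm{H} \le C_0(t)\norm{\mu}$.

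The main obstacle is that a direct application of \cref{e:CalFmuBoundd2} only gives $\norm{H}^2 \le C_0(t)\norm{h}_{L^1} \le C_0(t)\norm{\mu}$, hence $\norm{H} \le C_0(t)\norm{\mu}^{1/2}$, which is too weak: the $L^\infty$ prefactor in \cref{e:CalFmuBoundd2} is $O(1)$ rather than $O(\norm{\mu})$ in our setting. To sharpen this, I would exploit $\widehat h(0) = m(h) = 0$ via Plancherel. In 2D, $\widehat H(\xi) \propto i\xi\abs{\xi}^{-2}\widehat h(\xi)$, and the low-frequency singularity is tamed by $\abs{\widehat h(\xi)} = \abs{\widehat h(\xi) - \widehat h(0)} \le \abs{\xi}\norm{x h}_{L^1}$. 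Splitting the Plancherel integral at $\abs{\xi}=1$ then yields
\begin{equation*}
\norm{H}^2 \le C\int_{\abs{\xi}\le 1}\norm{xh}_{L^1}^2\, d\xi + C\int_{\abs{\xi}>1}\frac{\abs{\widehat h(\xi)}^2}{\abs{\xi}^2}\, d\xi \le C\norm{xh}_{L^1}^2 + C\norm{h}^2.
\end{equation*}

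Finally, I would check that each of $\norm{xh}_{L^1}$ and $\norm{h}$ is bounded by $C_0(t)\norm{\mu}$. For the $L^2$ piece, Cauchy-Schwarz gives $\norm{h} \le \norm{\eta}_{L^\infty}\norm{\mu} + \abs{m(\mu\eta)}\norm{g_0}$ with $\abs{m(\mu\eta)} \le \norm{\eta}\norm{\mu}$, so $\norm{h}\le C_0(t)\norm{\mu}$. For the weighted $L^1$ piece, $\norm{xh}_{L^1} \le \norm{x\eta}\norm{\mu} + \abs{m(\mu\eta)}\norm{xg_0}_{L^1}$, where $\norm{x\eta}\le C_0(t)$ follows from $\rho^0$ being compactly supported (since $\rho_0$ is compactly supported and $\v^0\in L^\infty$) combined with \cref{C:L1Membership} applied to $\rho^\nu$ (with $\al = 1$ and $N$ large, justified by \cref{P:rhonuTailBound} for compactly supported initial data), while $\norm{xg_0}_{L^1}$ is finite since $g_0$ is compactly supported. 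The conceptual takeaway is that the first-moment vanishing $m(h)=0$ — the very reason for subtracting $m(\mu\eta) g_0$ in the definition of $h$ — is precisely what enables the Plancherel sharpening that makes the final bound linear rather than square-root in $\norm{\mu}$.
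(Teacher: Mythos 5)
Your proof is correct, but your key estimate goes through Fourier space where the paper stays entirely in physical space. Both arguments begin identically: integrate by parts using the vanishing total mass of $h := \mu\eta - m(\mu\eta)g_0$ (the paper calls it $\gamma$) to reduce the lemma to the bound $\norm{\grad \F * h} \le C_0(t)\norm{\mu}$. At that point the paper writes $\norm{\grad \F * h}^2 = -\innp{\F * h, h}$ via \cref{L:NearL2Norm}, splits the kernel as $\F = a\F + (1-a)\F$, controls the near part by Young's inequality (giving $\norm{a\F}_{L^1}\norm{h}^2$), and controls the far part by a weighted $L^\infty$--$L^1$ duality with weight $g = 1 + \abs{x}^\epsilon$, trading the logarithmic growth of $\F$ for the fractional moment $\norm{\abs{x}^\epsilon h}_{L^1}$. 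Notably, in the paper's proof the zero-mass condition enters only qualitatively, to guarantee $\grad \F * h \in L^2$ so that the integration by parts is legitimate; it plays no role in the quadratic estimate itself. Your Plancherel argument instead uses the cancellation quantitatively: $\abs{\widehat{h}(\xi)} = \abs{\widehat{h}(\xi) - \widehat{h}(0)} \le \abs{\xi}\,\norm{x h}_{L^1}$ neutralizes the nonintegrable $\abs{\xi}^{-2}$ singularity at low frequency (which would otherwise diverge logarithmically in 2D), and the high frequencies cost only $\norm{h}$. This route is arguably cleaner and makes transparent exactly why the corrector $m(\mu\eta)g_0$ must be subtracted, at the modest price of requiring the full first moment $\abs{x}h \in L^1$ where the paper gets by with the fractional moment $\abs{x}^\epsilon h \in L^1$, $\epsilon < 1$ --- immaterial here, given the rapid decay available.

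One small repair: you invoke \cref{C:L1Membership} with $\al = 1$ to bound $\norm{\abs{x}\eta}$, but that corollary gives a weighted $L^1$ bound, whereas $\norm{\abs{x}\eta}$ (in the paper's convention $\norm{\cdot} = \norm{\cdot}_{L^2}$) is a weighted $L^2$ norm. The fix is one line: $\norm{\abs{x}\eta}^2 \le \norm{\eta}_{L^\infty}\norm{\abs{x}^2\eta}_{L^1}$, then apply \cref{C:L1Membership} with $\al = 2$ (any $N$ is available since $\rho_0$ is compactly supported), with constants uniform for $\nu \le 1$ by \cref{P:rhonuTailBound}. The paper commits the same elision when asserting $\norm{g\eta} \le C_0(t)$, so this is cosmetic rather than a genuine gap.
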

\begin{proof}
	First observe that $\gamma := \mu \eta - m(\mu \eta) g_0$ lies in $L^2(\R^2)$,
	has total mass zero, and
	$|x|^{\epsilon} \gamma \in L^1(\R^2)$ for some $\epsilon\in(0,1)$. Thus, by \cref{L:MassZero},
	$\grad \F * \gamma \in L^2(\R^2)$. Similarly, $\mul \in L^2(\R^2)$
	and $\grad \F * \mul = \wl \in L^2(\R^2)$. 
	This allows us to integrate by parts, using $\gamma = \dv (\grad \F * \gamma))$, to conclude that
	\begin{align*}
		\abs{\innp{\gamma, \F * \mul}}
			&= \abs{\innp{\grad \F * \gamma, \wl}}
			\le \norm{\grad \F * \gamma} \norm{\wl}.
	\end{align*}
	
	By \cref{L:NearL2Norm}, with $a$ as in \cref{D:Radial},
	\begin{align}\label{e:gradPhigamma}
		\begin{split}
		\norm{\grad \F * \gamma}^2
			&= - \innp{\F * \gamma, \gamma}
			= - \innp{(a \F) * \gamma, \gamma}
				- \innp{((1 - a) \F) * \gamma, \gamma} \\
			&\le \norm{a \F}_{L^1} \norm{\gamma}^2 + \abs{\innp{((1 - a) \F) * \gamma, \gamma}} \\
			&\le C_0(t) \norm{\mu}^2 + \abs{\innp{((1 - a) \F) * \gamma, \gamma}},
	\end{split}
	\end{align}
	since
	\begin{align*}
		\norm{\gamma}^2
			&= \norm{\mu \eta - m(\mu \eta) g_0}^2
				\le \pr{\norm{\mu} \norm{\eta}_{L^\iny} + \abs{m(\mu \eta)} \norm{g_0}}^2 \\
			&\le \pr{\norm{\mu} \norm{\eta}_{L^\iny} + \norm{\mu} \norm{\eta} \norm{g_0}}^2
			\le C_0(t) \norm{\mu}^2.
	\end{align*}
	
	It remains to estimate $\abs{\innp{((1 - a) \F) * \gamma, \gamma}}$.
	Define $g(x) := 1 + \abs{x}^{\epsilon}$ and write,
	\begin{align*}
		\begin{split}
		&\abs{\innp{((1 - a) \F) * \gamma, \gamma}}
			= \abs{\innp{(1/g) \brac{((1 - a) \F) * \gamma}, g \gamma}} \\
			&\qquad
			\le \norm{(1/g) \brac{((1 - a) \F) * \gamma}}_{L^\iny} \norm{g \gamma}_{L^1}.
	\end{split}
	\end{align*}
	The compact support of $\rho_0$ insures the
	compact support of $\rho^0$ and, through
	\cref{C:L1Membership}, the rapid spatial decay of
	$\rho^\nu$. This allows us to conclude
	that $\norm{g \eta} \le C_0(t)$. Therefore,	
	\begin{align*}
		\norm{g \gamma}_{L^1}
			&= \norm{\mu g \eta - m(\mu \eta) g g_0)}_{L^1}
			\le \norm{\mu} \norm{g \eta} + \abs{m(\mu \eta)} \norm{g g_0}_{L^1} \\
			&\le C_0(t) \norm{\mu} + \norm{\mu} \norm{\eta} \norm{ g g_0}_{L^1}
			\le C_0(t) \norm{\mu},
	\end{align*}
	and we conclude that
	\begin{align}\label{e:OneaPhigamma}
		\begin{split}
		&\abs{\innp{((1 - a) \F) * \gamma, \gamma}}
			\le C_0(t) \norm{(1/g) \brac{((1 - a) \F) * \gamma}}_{L^\iny} \norm{\mu}.
	\end{split}
	\end{align}
	
	We need to extract another factor of $\norm{\mu}$ from
	$\norm{(1/g) \brac{((1 - a) \F) * \gamma}}_{L^\iny}$. We have,
	\begin{align*}
		\bigabs{\frac{1}{g(x)} &\brac{((1 - a) \F) * \gamma}(x)}
			= \bigabs{\frac{1}{2 \pi g(x)}
				\int_{\R^2} (1 - a(x - y)) \log \abs{x - y} \gamma(y) \, dy} \\
			&\le \frac{1}{2 \pi g(x)}
				\int_{\R^2} \frac{(1 - a(x - y)) \log \abs{x - y}}{\abs{x - y}^{\epsilon}}
						\abs{x - y}^{\epsilon} \abs{\gamma(y)} \, dy \\
			&\le \frac{1}{2 \pi g(x)}
				\int_{\R^2} \frac{(1 - a(x - y)) \log \abs{x - y}}{\abs{x - y}^{\epsilon}}
						(\abs{x} + \abs{y})^{\epsilon} \abs{\gamma(y)} \, dy \\
			&\le \frac{1}{2 \pi g(x)}
				\int_{\R^2} \frac{(1 - a(x - y)) \log \abs{x - y}}{\abs{x - y}^{\epsilon}}
						\abs{x}^{\epsilon} \abs{\gamma(y)} \, dy \\
			&\qquad
				+ \frac{1}{2 \pi g(x)}
				\int_{\R^2} \frac{(1 - a(x - y)) \log \abs{x - y}}{\abs{x - y}^{\epsilon}}
						\abs{y}^{\epsilon} \abs{\gamma(y)} \, dy.
	\end{align*} 
	So
	\begin{align}\label{e:OneovergConv}
		\begin{split}
		&\norm{\frac{1}{g} \brac{((1 - a) \F) * \gamma}}_{L^\iny}
			\le \norm{\frac{\abs{x}^{\epsilon}}{2 \pi g(x)}
				\int_{\R^2} \frac{(1 - a(x - y)) \log \abs{x - y}}{\abs{x - y}^{\epsilon}}
						\abs{\gamma(y)} \, dy}_{L^\iny_x} \\
			&\qquad\qquad
				+ \norm{\frac{1}{2 \pi g(x)}
				\int_{\R^2} \frac{(1 - a(x - y)) \log \abs{x - y}}{\abs{x - y}^{\epsilon}}
						\abs{y}^{\epsilon} \abs{\gamma(y)} \, dy}_{L^\iny_x} \\
			&\qquad
			\leq C \int_{\R^2} \abs{\gamma(y)} \, dy
				+ C \int_{\R^2} \abs{y}^{\epsilon} \abs{\gamma(y)} \, dy
			= C \norm{\gamma}_{L^1} + C \norm{\abs{x}^{\epsilon} \gamma(x)}_{L^1_x}.
	\end{split}
	\end{align}
	But,
	\begin{align*}
		\norm{\gamma}_{L^1}
			&= \norm{\mu \eta - m(\mu \eta) g_0}_{L^1}
			\le \norm{\mu} \norm{\eta} + \abs{m(\mu \eta)} \norm{g_0}_{L^1} \\
			&\le C_0(t) \norm{\mu} + \norm{\mu} \norm{\eta} \norm{g_0}_{L^1}
			\le C_0(t) \norm{\mu}
	\end{align*}
	and
	\begin{align*}
		\norm{\abs{x}^{\epsilon} \gamma(x)}_{L^1_x}
			&= \norm{\mu(x) (\abs{x}^{\epsilon} \eta(x)) - m(\mu \eta) (\abs{x}^{\epsilon} g_0(x))}_{L^1_x} \\
			&\le \norm{\mu} \norm{\abs{x}^{\epsilon} \eta(x)}_{L^2_x}
					+ \abs{m(\mu \eta)} \norm{\abs{x}^{\epsilon} g_0(x)}_{L^1_x} \\
			&\le C_0(t) \norm{\mu} + \norm{\mu} \norm{\eta} \norm{\abs{x}^{\epsilon} g_0(x)}_{L^1_x}
			\le C_0(t) \norm{\mu}.
	\end{align*}
	Substituting this estimate into \cref{e:OneovergConv},
	the resulting estimate into \cref{e:OneaPhigamma},
	and finally that estimate into
	\cref{e:gradPhigamma}, yields the desired bound.
\end{proof}

%
%
\section{The vanishing viscosity limit in the $L^{\infty}$-norm}\label{S:VVUniform}

In this section, we use the results from \cref{S:VV} and \cref{S:VVNonL2} to prove that the vanishing viscosity limit (see $(VV)$ in \cref{S:Introduction}) holds in the $L^{\infty}$-norm of the density. This follows immediately from interpolation if we are able to find a modulus of continuity on $\rho^\nu$ that applies for all sufficiently small $\nu$. We do this by obtaining a bound on $\rho^\nu$ in a \Holder space norm \textit{uniformly} in $\nu$, adapting the approach of Hmidi and Keraani in \cite{Hmidi2005,HK2008} for the Navier-Stokes equations. This leads to the following theorem.

\begin{theorem}\label{T:HolderUniform}
Assume $\rho_0\in C^{\beta}(\R^d)$ with $\beta<1$ and $d\geq 2$, and let $T$ be as in \cref{T:ViscousExistence}.  Then the smooth solution $\rho^{\nu}$ to ($GAG_{\nu}$) belongs in $L^{\infty}([0,T];C^{\beta}(\R^d))$, and the following estimate holds for all $t\in[0,T]$:
\begin{equation}\label{holderuniform}
\| \rho^{\nu}(t) \|_{C^{\beta}} \leq C_0(T)e^{e^{C_0(T)}}.
\end{equation}
where $C_0(T)$ depends on $\| \rho_0 \|_{C^{\beta}}$ and $\beta$.
\end{theorem}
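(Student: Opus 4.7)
The strategy is to adapt the Hmidi--Keraani argument \cite{Hmidi2005,HK2008} to obtain a $\nu$-independent control of $\|\rho^\nu(t)\|_{C^\beta}$, which amounts to exploiting the fact that the heat semigroup is a contraction on $L^\infty$, so the viscous term contributes nothing to an $L^\infty$-type maximum principle. I will work at the level of the Littlewood--Paley blocks $\Delta_j \rho^\nu$ and then pay for the fact that $v^\nu$ is only log-Lipschitz (by Lemma~2.6) via Osgood's lemma, which is precisely what produces the doubly exponential bound in~\cref{holderuniform}.

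Concretely, I would apply $\Delta_j$ to $(GAG_\nu)$ to obtain
\begin{equation*}
   \partial_t \Delta_j \rho^\nu + v^\nu \cdot \nabla \Delta_j \rho^\nu - \nu \Delta \Delta_j \rho^\nu
   \; = \; -[\Delta_j,\, v^\nu \cdot \nabla]\rho^\nu \; + \; \sigma_2 \Delta_j\bigl((\rho^\nu)^2\bigr).
\end{equation*}
The key point is that for any smooth solution of $\partial_t f + v\cdot\nabla f - \nu\Delta f = g$ on $\R^d$ with $v$ smooth, the $L^\infty$ maximum principle yields
\begin{equation*}
   \|f(t)\|_{L^\infty} \; \le \; \|f(0)\|_{L^\infty} \; + \; \int_0^t \|g(s)\|_{L^\infty}\, ds,
\end{equation*}
uniformly in $\nu$ (one either splits off the heat semigroup and uses that $e^{\nu t \Delta}$ is an $L^\infty$-contraction, or writes the equation as a perturbation of pure transport along characteristics plus a heat kernel representation). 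Applying this to $\Delta_j \rho^\nu$ gives
\begin{equation*}
   \|\Delta_j \rho^\nu(t)\|_{L^\infty} \; \le \; \|\Delta_j \rho_0\|_{L^\infty}
     + \int_0^t \Bigl(\|[\Delta_j,v^\nu\cdot\nabla]\rho^\nu(s)\|_{L^\infty} + |\sigma_2|\,\|\Delta_j (\rho^\nu)^2(s)\|_{L^\infty}\Bigr) ds.
\end{equation*}

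I would next multiply by $2^{j\beta}$ and take a supremum in $j\ge -1$. The quadratic term is routine: since $C^\beta$ is a Banach algebra for $\beta\in(0,1)$,
\begin{equation*}
   2^{j\beta}\|\Delta_j((\rho^\nu)^2)\|_{L^\infty} \; \le \; C\|(\rho^\nu)^2\|_{C^\beta}
   \; \le \; C\|\rho^\nu\|_{L^\infty}\|\rho^\nu\|_{C^\beta} \; \le \; C_0(T)\,\|\rho^\nu\|_{C^\beta},
\end{equation*}
using \cref{T:ViscousExistence}. The main estimate is the commutator bound. Via Bony's paraproduct decomposition, one shows that for $\beta\in(0,1)$,
\begin{equation*}
   2^{j\beta}\|[\Delta_j,v^\nu\cdot\nabla]\rho^\nu\|_{L^\infty}
   \; \le \; C\bigl(\|\nabla v^\nu\|_{L^\infty} + \|\rho^\nu\|_{L^\infty}\bigr) \|\rho^\nu\|_{C^\beta};
\end{equation*}
however $\nabla v^\nu$ is in general only in $BMO$, not $L^\infty$. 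This is where the log-Lipschitz estimate of \cref{L:vBounds} enters: for any $N\ge 0$ one has the standard splitting
\begin{equation*}
   \|\nabla v^\nu\|_{L^\infty} \; \le \; C\Bigl(\|\rho^\nu\|_{L^1\cap L^\infty} + N\|\rho^\nu\|_{L^\infty} + 2^{-N\beta}\|\rho^\nu\|_{C^\beta}\Bigr),
\end{equation*}
(exactly as in the proof of \cref{L:vBounds}), and optimizing $N$ in terms of $\|\rho^\nu\|_{C^\beta}/\|\rho^\nu\|_{L^\infty}$ yields the logarithmically refined commutator bound
\begin{equation*}
   2^{j\beta}\|[\Delta_j,v^\nu\cdot\nabla]\rho^\nu\|_{L^\infty}
   \; \le \; C_0(T)\,\|\rho^\nu\|_{C^\beta}\,\Bigl(1 + \log^+ \|\rho^\nu\|_{C^\beta}\Bigr).
\end{equation*}

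Combining, letting $\Psi(t):=\|\rho^\nu(t)\|_{C^\beta}$ and using the equivalence of $\|\cdot\|_{C^\beta}$ with $\sup_j 2^{j\beta}\|\Delta_j\cdot\|_{L^\infty}$ (\cref{D:HolderSpaces}), I obtain the integral inequality
\begin{equation*}
   \Psi(t) \; \le \; \|\rho_0\|_{C^\beta} \; + \; C_0(T)\int_0^t \Psi(s)\bigl(1 + \log^+ \Psi(s)\bigr)\, ds.
\end{equation*}
The function $r\mapsto r(1+\log^+ r)$ is an Osgood modulus, so Osgood's lemma gives $\Psi(t)\le C_0(T)e^{e^{C_0(T)}}$, independent of $\nu$, which is exactly \cref{holderuniform}.

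The main obstacle is the commutator step: one must absorb the logarithmic failure of $\nabla v^\nu$ to lie in $L^\infty$ without sacrificing the $2^{-j\beta}$ decay, since it is that precise decay that survives the sup in $j$ and feeds into the Osgood argument. Everything else (max principle, algebra property of $C^\beta$, uniform $L^\infty$ bound on $\rho^\nu$) is already in hand from earlier sections.
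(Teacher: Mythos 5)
Your proposal is correct, and it shares the paper's skeleton: localize \GAGnu with $\Delta_j$, use the $L^\iny$ maximum principle for the transport--diffusion operator so the viscous term costs nothing uniformly in $\nu$, bound the quadratic term by $\norm{\rho^\nu}_{L^\iny}\norm{\rho^\nu}_{C^\beta}$ via the algebra property, and reduce everything to the commutator $[\Delta_j,\v^\nu\cdot\grad]\rho^\nu$ together with the logarithmic failure of $\grad\v^\nu$ to be bounded by $\norm{\rho^\nu}_{L^1\cap L^\iny}$. Where you genuinely diverge is in how the estimate is closed. The paper keeps the commutator bound \emph{linear} in $\norm{\grad\v^\nu}_{L^\iny}$ (Chemin's Lemma 4.1.1 of \cite{C1998}, cited precisely because it tolerates $\dv\v^\nu\ne 0$), applies Gronwall once to get $\norm{\rho^\nu(t)}_{C^\beta}\le Ce^{CV(t)}\norm{\rho_0}_{C^\beta}$ with $V(t)=\int_0^t\pr{\norm{\grad\v^\nu(s)}_{L^\iny}+\norm{\rho^\nu(s)}_{L^\iny}}ds$, and only afterwards feeds this bound into the logarithmic interpolation inequality (Proposition 2.3.5 of \cite{C1998} together with \cref{L:VelocityReg}); the logarithm cancels the exponential, a second Gronwall yields $\norm{\grad\v^\nu(t)}_{L^\iny}\le C_0(T)e^{C_0(T)}$, and substituting back gives \cref{holderuniform}. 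You instead fold the interpolation inequality---which you re-derive by the frequency splitting and optimization in $N$, exactly as in the proof of \cref{L:vBounds}---into the commutator estimate itself, arriving at the nonlinear integral inequality $\Psi(t)\le\norm{\rho_0}_{C^\beta}+C_0(T)\int_0^t\Psi(s)\pr{1+\log^+\Psi(s)}ds$ for $\Psi=\norm{\rho^\nu}_{C^\beta}$, and close with Osgood's lemma; since $r\mapsto r(1+\log^+ r)$ satisfies the Osgood condition, integration of the modulus gives $\Psi(t)\le\exp\brac{\pr{1+\log^+\norm{\rho_0}_{C^\beta}}e^{C_0(T)t}}$, the same double exponential. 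The two routes are equivalent in substance: yours is more self-contained (the interpolation is proved rather than cited, and Osgood replaces the second Gronwall), while the paper's two-step Gronwall argument avoids Osgood entirely but needs the monotonicity of $x\mapsto x\log\pr{e+C/x}$ to justify inserting the first Gronwall bound inside the logarithm. Two details to tighten in your write-up: the paraproduct commutator estimate must be verified for non-divergence-free $\v^\nu$ (this is exactly what Chemin's lemma supplies; your extra $\norm{\rho^\nu}_{L^\iny}$ term is harmless and in fact redundant, since $\abs{\sigma_1}\norm{\rho^\nu}_{L^\iny}=\norm{\dv\v^\nu}_{L^\iny}\le C\norm{\grad\v^\nu}_{L^\iny}$), and you should state explicitly that all constants entering the Osgood modulus are uniform in $\nu$ because they come from \cref{T:ViscousExistence}.
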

\begin{proof}
That the solution $\rho^{\nu}$ to ($GAG_{\nu}$) belongs in $L^{\infty}([0,T];C^{\beta}(\R^d))$ {\em for all} $\beta>0$ follows from standard arguments; we show only the uniform control in viscosity of the $C^{\beta}$-norm when $\beta<1$.

This theorem is essentially proved in \cite{Hmidi2005,HK2008} for divergence-free vector fields $\v^{\nu}$ in the more general setting of Besov spaces.  We follow the proof from \cite{HK2008} below, with a slight modification to account for the assumption that $\v^{\nu}$ is not divergence-free in our setting.  Our modification relies on a commutator estimate established in Chapter 4 of \cite{C1998}.

As in the proof in \cite{HK2008}, we apply the Littlewood-Paley operator $\Delta_j$ to ($GAG_{\nu}$) and apply the maximum principle (see, for example, Lemma 5 of \cite{Hmidi2005}) to write
\begin{equation*} 
\| \Delta_j\rho^{\nu} (t) \|_{L^{\infty}} \leq \| \Delta_j\rho_0 \|_{L^{\infty}} + C\int_0^t \left(\| [\Delta_j, \v^{\nu}\cdot\nabla]\rho^{\nu}(s) \|_{L^{\infty}} + \| \Delta_j (\rho^{\nu}(s))^2\|_{L^{\infty}}\right) \, ds. 
\end{equation*}
Multiplying through by $2^{j\beta}$ and taking the supremum over $j$ gives
\begin{equation}\label{preGronwall}
\begin{split}
&\| \rho^{\nu} (t) \|_{C^{\beta}} \leq \| \rho_0 \|_{C^{\beta}} + C\int_0^t \left(\sup_j 2^{j\beta}\| [\Delta_j, \v^{\nu}\cdot\nabla]\rho^{\nu}(s) \|_{L^{\infty}} + \| (\rho^{\nu}(s))^2\|_{C^{\beta}}\right) \, ds.\\
&\qquad \leq  \| \rho_0 \|_{C^{\beta}} + C\int_0^t \left(\sup_j 2^{j\beta}\| [\Delta_j, \v^{\nu}\cdot\nabla]\rho^{\nu}(s) \|_{L^{\infty}} + \|\rho^{\nu}(s)\|_{L^{\infty}}\| \rho^{\nu}(s)\|_{C^{\beta}}\right) \, ds,
\end{split}
\end{equation}
where we used the estimate $\| (\rho^{\nu})^2\|_{C^{\beta}} \leq C\|\rho^{\nu}\|_{L^{\infty}}\| \rho^{\nu}\|_{C^{\beta}}$ to obtain the last inequality.  To bound the commutator on the right hand side in (\ref{preGronwall}), we apply the commutator estimate  
\begin{equation}\label{commest}
\| [\Delta_j, \v^{\nu}\cdot\nabla]\rho^{\nu}(s) \|_{L^{\infty}} \leq C2^{-j\beta} \| \nabla \v^{\nu}(s) \|_{L^{\infty}}\| \rho^{\nu}(s)\|_{C^{\beta}},
\end{equation}
which is established in Chemin's proof of Lemma 4.1.1 of \cite{C1998} (note that Chemin shows that (\ref{commest}) holds even when div $\v^{\nu} \neq 0$ and for \textit{all} $\beta>0$).  Substituting (\ref{commest}) into (\ref{preGronwall}) and applying Gronwall's lemma gives
\begin{equation}\label{mainholderboundest}
\| \rho^{\nu} \|_{L^{\infty}([0,T]; C^{\beta})} \leq Ce^{CV(t)} \| \rho_0\|_{C^{\beta}},
\end{equation}
where
\begin{equation*}
V(t) = \int_0^t \left(\| \nabla \v^{\nu} (s)\|_{L^{\infty}}+ \|\rho^{\nu}(s)\|_{L^{\infty}}\right) \, ds.
\end{equation*}

To complete the proof of \cref{T:HolderUniform}, we apply Proposition 2.3.5 of \cite{C1998} and write 
\begin{equation*}
\begin{split}
&\| \nabla \v^{\nu}(t)\|_{L^{\infty}} \leq \frac{C}{\beta} \| \nabla \v^{\nu}(t)\|_{C^0_*} \log \left( e + \frac{\| \nabla \v^{\nu}(t)\|_{C^{\beta}}}{\| \nabla \v^{\nu}(t)\|_{C^0_*}}  \right).
\end{split}
\end{equation*}
Since $x \mapsto x\log\left( e + \frac{C}{x}  \right)$ is increasing in $x$ when $C>0$, it follows from \cref{L:VelocityReg}, the embedding $L^{\infty}\hookrightarrow C^0_*$, and the equivalence between $C^{\beta}$ and $C^{\beta}_*$ when $\beta\in(0,1)$ that
\begin{equation*}
\begin{split}
&\| \nabla \v^{\nu}(t)\|_{L^{\infty}} \leq \frac{C}{\beta} \|\rho^{\nu} \|_{L^1\cap L^{\infty}} \log \left( e + \frac{\| \nabla \v^{\nu}(t)\|_{C^{\beta}}}{\|\rho^{\nu} \|_{L^1\cap L^{\infty}}}  \right) \leq \frac{C}{\beta} \|\rho^{\nu} \|_{L^1\cap L^{\infty}} \log \left( e + \frac{\| \rho^{\nu}(t)\|_{L^1\cap C^{\beta}}}{\|\rho^{\nu} \|_{L^1\cap L^{\infty}}}  \right)\\
&\qquad \leq \frac{C}{\beta} C_0(T) \log \left( e + \frac{\| \rho^{\nu}(t)\|_{L^1\cap C^{\beta}}}{C_0(T)}  \right),
\end{split}
\end{equation*}
where we used \cref{T:ViscousExistence} in the third inequality above.  An application of \ref{mainholderboundest} gives
\begin{equation*}
\begin{split}
&\| \nabla \v^{\nu}(t)\|_{L^{\infty}} \leq C_0(T) \log \left( e + Ce^{CV(t)} \| \rho_0\|_{C^{\beta}}  \right)\\
&\qquad \leq C_0(T) \log \left( e^{CV(t)}\left(e + \| \rho_0\|_{C^{\beta}} \right) \right) = C_0(T)\left( V(t)+ \log\left(e +  \| \rho_0\|_{C^{\beta}}  \right)\right)\\
&\qquad = C_0(T) \log\left(e +  \| \rho_0\|_{C^{\beta}}  \right) + C_0(T)\int_0^t \left(\| \nabla \v^{\nu} (s)\|_{L^{\infty}}+ \|\rho^{\nu}(s)\|_{L^{\infty}}\right) \, ds.
\end{split}
\end{equation*}
Gronwall's lemma and \cref{T:ViscousExistence} imply that 
\begin{equation*}
\| \nabla \v^{\nu}(t)\|_{L^{\infty}} \leq C_0(T)e^{C_0(T)}.
\end{equation*}
Substituting this estimate into (\ref{mainholderboundest}) gives (\ref{holderuniform}).
\end{proof}

We now prove the main result of this section.
\begin{theorem}\label{T:VVSUP}
Assume $\rho_0$ is compactly supported and belongs to $C^{\alpha}(\R^d)$ with $\alpha>1$, $d\geq 2$.  Define $\mu$ as in \cref{e:muwDefs}, and let $T$ be as in \cref{T:ViscousExistence}.  Then for $t\in[0,T]$, $\nu\leq 1$, and $\beta\in(0,1)$,
\begin{equation*}
	\| \mu(t) \|_{L^{\infty}}
		\leq C_0(t) (\nu t)^{\frac{2\beta}{2\beta+d}}.
\end{equation*}
\end{theorem}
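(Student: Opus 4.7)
The plan is to combine the strong $L^2$-rate of convergence of $\mu$ established in \cref{S:VV,S:VVNonL2} with the uniform-in-$\nu$ H\"older control provided by \cref{T:HolderUniform}, via a standard Littlewood--Paley interpolation inequality between $L^2$ and $C^\beta$.

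First, I would apply either \cref{T:VVPAG} (in the case $d \ge 3$, or the case $d=2$ with $\sigma_1+\sigma_2=0$) or \cref{T:VVGAG} (in the remaining $d=2$ case) to obtain
\[
    \norm{\mu(t)} \le C_0(t)\,(\nu t)^{1/2}
\]
for $\nu \le 1$ and $t \in [0,T]$. Note that the $L^2$-bound on $\mu$ itself is identical in both settings, even though in the 2D non-vanishing-mass case the corresponding velocity statement requires the corrector $\bm\theta^\nu$ from \cref{S:VVNonL2}.

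Next, since $\beta < 1 < \alpha$, the initial datum $\rho_0$ lies in $C^\beta$, and \cref{T:HolderUniform} provides the uniform bound $\norm{\rho^\nu(t)}_{C^\beta} \le C_0(t)$, valid for all $\nu \le 1$. Combining this with the corresponding H\"older estimate on $\rho^0$ from \cref{T:InviscidStrong} yields
\[
    \norm{\mu(t)}_{C^\beta} \le C_0(t).
\]
This step is where \cref{T:HolderUniform} is essential: without it, we would lose all control of the high-frequency part of $\mu$ uniformly in $\nu$.

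The concluding ingredient is the interpolation inequality
\[
    \norm{f}_{L^\infty} \le C\, \norm{f}^{\,\frac{2\beta}{2\beta+d}}\, \norm{f}_{C^\beta}^{\,\frac{d}{2\beta+d}}
    \qquad \text{for } f \in L^2 \cap C^\beta(\R^d),
\]
which I would prove by writing $f = \sum_j \Delta_j f$ and splitting at an integer $N$ to be chosen: Bernstein's lemma (\cref{bernstein}) gives $\sum_{j \le N}\norm{\Delta_j f}_{L^\infty} \le C\,2^{Nd/2}\norm{f}$, while the definition of $C^\beta$ (\cref{D:HolderSpaces}) gives $\sum_{j > N}\norm{\Delta_j f}_{L^\infty} \le C\,2^{-N\beta}\norm{f}_{C^\beta}$; equating the two contributions determines $N$ and yields the displayed bound. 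Applying this with $f = \mu(t)$ and inserting the estimates from the two preceding steps produces the desired inequality. There is no substantive obstacle here: the deep work has already been done in \cref{T:VVPAG,T:VVGAG,T:HolderUniform}, and this theorem is a direct interpolation consequence of those results.
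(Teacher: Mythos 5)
Your overall strategy is the same as the paper's: decompose $\mu(t)$ into Littlewood--Paley blocks, control the low frequencies via Bernstein's lemma and the $L^2$ convergence rate, control the high frequencies via the uniform-in-$\nu$ \Holder bound of \cref{T:HolderUniform} (together with the inviscid bound for $\rho^0$), and optimize over the cutoff $N$. Packaging this as the interpolation inequality $\norm{f}_{L^\infty} \le C \norm{f}^{2\beta/(2\beta+d)}\norm{f}_{C^\beta}^{d/(2\beta+d)}$ is a purely cosmetic difference, and that inequality itself is correct.

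The gap is in your final line. Your step 1 reads \cref{T:VVPAG,T:VVGAG} correctly: those theorems bound $\norm{\mu(t)}^2$ (not $\norm{\mu(t)}$) by $C_0(t)\,t\nu\, e^{C_0(t)}$, hence $\norm{\mu(t)} \le C_0(t)(\nu t)^{1/2}$. Feeding this into your interpolation inequality gives
\begin{equation*}
	\norm{\mu(t)}_{L^\infty}
		\le C_0(t)\pr{(\nu t)^{1/2}}^{\frac{2\beta}{2\beta+d}}
		= C_0(t)(\nu t)^{\frac{\beta}{2\beta+d}},
\end{equation*}
whose exponent is half the one claimed in the statement, so your assertion that this ``produces the desired inequality'' is an arithmetic slip. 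To reach $(\nu t)^{2\beta/(2\beta+d)}$ you would need $\norm{\mu(t)} \le C_0(t)\,\nu t$, i.e., an $L^2$ rate with the first power of $\nu t$. That is in fact what the paper's own proof writes when it quotes $\norm{(\rho^\nu-\rho^0)(t)}_{L^2} \le C_0(t)\,t\nu\, e^{C_0(t)}$, but that line misquotes \cref{T:VVPAG,T:VVGAG}, which bound the \emph{squares} of the norms by that quantity. So your more careful reading exposes a genuine inconsistency: from the $L^2$ results as stated, this method (yours or the paper's) only yields the exponent $\beta/(2\beta+d)$, and the stated exponent $2\beta/(2\beta+d)$ would require improving the $L^2$ convergence rate to $O(\nu t)$, which is not available in the paper.
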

\begin{proof}
Fix $t<T$, $p>d$, and  $\beta\in(0,1)$.  For fixed $N\geq 0$ (to be chosen later), we use Bernstein's Lemma and the definition of the Holder space $C^{\beta}(\R^d)$ as given in \cref{D:HolderSpaces} to write
\begin{equation}\label{LPest}
\begin{split}
&\| (\rho^{\nu}  - \rho^0)(t) \|_{L^{\infty}}  \leq \sum_{j=-1}^N \| \Delta_j (\rho^{\nu}  - \rho^0)(t) \|_{L^{\infty}} + \sum_{j={N+1}}^{\infty} \| \Delta_j (\rho^{\nu}  - \rho^0)(t) \|_{L^{\infty}}\\
&\qquad \leq C\sum_{j=-1}^N 2^{j\frac{d}{2}} \| \Delta_j (\rho^{\nu}  - \rho^0)(t) \|_{L^{2}} + \sum_{j={N+1}}^{\infty} 2^{-j\beta}2^{j\beta}\| \Delta_j (\rho^{\nu}  - \rho^0)(t) \|_{L^{\infty}}\\
&\qquad \leq C\sum_{j=-1}^N 2^{j\frac{d}{2}} \| \Delta_j (\rho^{\nu}  - \rho^0)(t) \|_{L^{2}} + C\sum_{j={N+1}}^{\infty} 2^{-j\beta}(\| \rho^{\nu}(t)\|_{C^{\beta}} + \|\rho^{0}(t) \|_{C^{\beta}} )\\
&\qquad \leq C2^{N\frac{d}{2}} \| (\rho^{\nu}  - \rho^0)(t) \|_{L^{2}} + C_0(t)2^{-N\beta},
\end{split}
\end{equation}
where we applied \cref{T:HolderUniform,T:InviscidExistence} above to get the last inequality.  By Theorems \ref{T:VVPAG} and \ref{T:VVGAG}, for $\nu\leq 1$, 
\begin{equation*}
\| (\rho^{\nu}  - \rho^0)(t) \|_{L^{2}} \leq C_0(t)t\nu e^{C_0(t)}.
\end{equation*}
Substituting this estimate into (\ref{LPest}) gives
\begin{equation*}
\| (\rho^{\nu}  - \rho^0)(t) \|_{L^{\infty}}  \leq C_0(t)te^{C_0(t)} \nu  2^{N\frac{d}{2}}  + C_0(t)2^{-N\beta}.
\end{equation*}
Now set $N=-{\frac{2}{2\beta+d}}\log_2 (\nu t)$. We conclude that 
\begin{equation*}
\begin{split}
	&\| (\rho^{\nu}  - \rho^0)(t) \|_{L^{\infty}}
		\leq C_0(t)te^{C_0(t)} (\nu t)^{1-\frac{d}{2\beta+d}}
			+ C_0(t) (\nu t)^{\frac{2\beta}{2\alpha+d}}
		\leq C_0(t)e^{C_0(t)} (\nu t)^{\frac{2\beta}{2\beta+d}}.
\end{split}
\end{equation*}  
This completes the proof of \cref{T:VVSUP}.
\end{proof}

Above, we used the following lemma.

\begin{lemma}\label{L:VelocityReg}
	For all $r \in \R$,
	\begin{align*}
		\norm{\grad\grad \F * \rho}_{C_*^{r}}
			\le C (\norm{\rho}_{L^1} +  \norm{\rho}_{C_*^{r}}).
	\end{align*}
\end{lemma}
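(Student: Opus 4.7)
My plan is to decompose $\grad\grad\F*\rho$ via the Littlewood-Paley blocks and separately estimate the low-frequency piece ($j=-1$) and the high-frequency pieces ($j\ge 0$). On the Fourier side, $\widehat{\grad\grad\F}$ is (up to a sign) the bounded multiplier $\xi_i\xi_j/|\xi|^2$, which is smooth away from the origin but only bounded (not smooth) at the origin. The two regimes therefore behave quite differently, and the whole point of the right-hand side in the lemma is that $\norm{\rho}_{L^1}$ controls exactly the low-frequency obstruction.

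For $j\ge 0$ the frequency support of $\Delta_j\rho$ sits in the annulus $\{(3/5)2^j\le |\xi|\le (5/3)2^j\}$, away from the origin. So I introduce a fixed $\tilde\phi\in C_C^\infty(\R^d)$ with $\hat{\tilde\phi}\equiv 1$ on $\{3/5\le |\xi|\le 5/3\}$ and supported in a slightly larger annulus; setting $\hat{\tilde\phi}_j(\xi)=\hat{\tilde\phi}(2^{-j}\xi)$ I write
\begin{equation*}
\Delta_j(\grad\grad\F*\rho) = K^{(j)}*\Delta_j\rho,\qquad \widehat{K^{(j)}}(\xi)=\tfrac{\xi_i\xi_j}{|\xi|^2}\,\hat{\tilde\phi}(2^{-j}\xi).
\end{equation*}
A change of variables shows $K^{(j)}(x)=2^{jd}K^{(0)}(2^j x)$, so $\norm{K^{(j)}}_{L^1}=\norm{K^{(0)}}_{L^1}$ is finite and independent of $j$ (the multiplier defining $K^{(0)}$ is smooth and compactly supported, hence $K^{(0)}\in\mathcal{S}$). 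Young's inequality therefore yields
\begin{equation*}
\norm{\Delta_j(\grad\grad\F*\rho)}_{L^\infty}\le C\norm{\Delta_j\rho}_{L^\infty},\qquad j\ge 0,
\end{equation*}
and multiplying by $2^{jr}$ and taking the supremum bounds the high-frequency contribution by $C\norm{\rho}_{C^r_*}$.

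For $j=-1$ I set $K:=\mathcal{F}^{-1}(m)$ with $m(\xi):=\hat\chi(\xi)\,\xi_i\xi_j/|\xi|^2$. Because $\hat\chi$ is a Schwartz function supported in $\{|\xi|\le 5/6\}$ and $\xi_i\xi_j/|\xi|^2$ is bounded by $1$, the multiplier $m$ lies in $L^1\cap L^\infty$ with compact support, so $K\in L^\infty$ with $\norm{K}_{L^\infty}\le (2\pi)^{-d}\norm{m}_{L^1}\le C$. Then
\begin{equation*}
\Delta_{-1}(\grad\grad\F*\rho) = K*\rho,\qquad \norm{\Delta_{-1}(\grad\grad\F*\rho)}_{L^\infty}\le\norm{K}_{L^\infty}\norm{\rho}_{L^1}\le C\norm{\rho}_{L^1}.
\end{equation*}
Combining the two estimates in the definition $\norm{\,\cdot\,}_{C^r_*}=\sup_{j\ge -1}2^{jr}\norm{\Delta_j(\cdot)}_{L^\infty}$ gives the claimed bound.

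The only genuine subtlety is the $j=-1$ block, where the standard Calder\'on--Zygmund philosophy breaks down because the multiplier fails to be smooth at the origin. The key observation that makes everything work is that one only needs $m\in L^1$ (not smooth) to conclude $K\in L^\infty$, and then the $L^1$ control on $\rho$ absorbs the low-frequency defect via Young's inequality, which is exactly why both $\norm{\rho}_{L^1}$ and $\norm{\rho}_{C^r_*}$ appear on the right-hand side.
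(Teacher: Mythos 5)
Your argument is correct and is essentially the paper's own proof with the two cited facts unpacked: the paper makes the identical split of the $C_*^r$ norm into the $\Delta_{-1}$ block, bounded by $C\norm{\rho}_{L^1}$ (citing (3.6) of \cite{VishikBesov}), and the $\Delta_j$, $j\ge 0$, blocks, bounded by $C\norm{\Delta_j\rho}_{L^\infty}$ via the classical annulus-multiplier lemma (Lemma 4.2 of \cite{CK2006}), which are exactly the two estimates you prove from scratch. The only slip is cosmetic: a function $\tilde\phi\in C_C^\infty(\R^d)$ cannot also have $\hat{\tilde\phi}$ compactly supported, so you should take $\hat{\tilde\phi}\in C_C^\infty$ supported in the slightly larger annulus and $\tilde\phi\in\mathcal{S}(\R^d)$, which is what your scaling argument actually uses.
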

\begin{proof}
	Let $\v = \grad \F * \rho$. Then
	using the
	definition of $C^r_*$ as  given
	in \cref{D:HolderSpaces}, we have
	\begin{align*}
		\norm{ \grad \v}_{C_*^r}
			&= \sup_{q \ge -1} 2^{qr}\norm{\Delta_q \grad \v}_{L^\iny}
			\le 2^{-r} \norm{\Delta_{-1} \grad \v}_{L^\iny}
				+  \sup_{q \ge 0} 2^{ q r} \norm{\Delta_q  \grad \v}_{L^\iny} \\
			&\le C \norm{ \Delta_{-1}\rho}_{L^1}
				+ C \sup_{q \ge 0} 2^{ qr} \norm{\Delta_q \rho}_{L^\iny}
			\le C \norm{\rho}_{L^1}
				+ C \norm{\rho}_{C_*^{r}}.
	\end{align*}
	To obtain the second inequality above, we argued as in (3.6) of \cite{VishikBesov} when estimating the low frequency term and applied a classical lemma to bound the high frequencies (see, for example, Lemma 4.2 of \cite{CK2006}).  This completes the proof.
\end{proof}


%
%
\section{Concluding Remarks}

\noindent It is possible to obtain a velocity formulation of \GAGnu, in analogy with the Navier-Stokes and Euler equations. For any $\nu \ge 0$, we can write this in the form,
\begin{align*}
	\left\{
		\begin{array}{l}
			\prt_t \v^\nu + \v^\nu \cdot \grad \v^\nu + \grad q^\nu = \nu \Delta \v^\nu, \\
			\curl \v^\nu = 0, \\
			\v^\nu(0) = \v_0,
		\end{array}
	\right.
\end{align*}
for an appropriately chosen ``pressure,'' $q^\nu$. This velocity formulation can be used to obtain the bounds on $\norm{\w(t)}$ in \cref{S:VV} and $\norm{\wl(t)}$ in \cref{S:VVNonL2}. Because $\dv \w \ne 0$, however, the pressure does not disappear in these bounds. This requires a great deal of effort to properly bound the pressure, so we took the shorter approach in \cref{S:VV,S:VVNonL2}, leaving the elaboration of the velocity formulation to future work.

\section*{Acknowledgements}
\noindent Work on this paper was partially supported by NSF grant DMS-1212141.

\appendix


\Ignore{

%
%
\section{Flow map estimates}\label{S:FlowMap}

\noindent \ToDo{Only include this section if we decide to include a treatment of the existence of inviscid solutions in detail.}

Let $\Omega$ be a domain in $\R^d$, $d \ge 2$, and let $\v \colon [0, \iny) \times \Omega \to \R^d$ be a time-varying velocity field on $\Omega$. Assume that $\v$ has an Osgood modulus of continuity (MOC), $\mu$, in space, uniform over time. That is,
\begin{align}\label{e:vMuBound}
    \abs{\v(t, x) - \v(t, y)}
        \le \mu(\abs{x - y})
\end{align}
for all $t \in [0, \iny)$ and all $x, y \in \Omega$, where $\mu \colon [0, \iny) \to [0, \iny)$ with $\mu(0) = 0$ is an increasing function satisfying
\begin{align*} 
	\int_0^1 \frac{dx}{\mu(x)} = \iny.
\end{align*}

\begin{prop}\label{P:FlowBound}
    The vector field $\v$ has a unique continuous flow. More precisely,
    there exists a unique mapping $X$, continuous from $[0, \iny) \times \Omega$ to
    $\R^d$, such that
    \begin{align*}
        X(t, x) = x + \int_0^t \v(s, X(s, x)) \, ds.
    \end{align*}
    Let $\Gamma_t \colon [0, \iny) \to [0, \iny)$ be defined by
    $\Gamma_t(0) = 0$ and for $\delta > 0$ by
    \begin{align}\label{e:MOCFlow}
        \int_\delta^{\Gamma_t(\delta)}  \frac{dr}{\mu (r)} = t.
    \end{align}
    Then $\delta \mapsto \Gamma_t(\delta)$ is a
    MOC for $X(t, \cdot)$ for all $t \ge 0$; that is,
    for all $x$ and $y$ in $\R^2$
    \begin{align}\label{e:psiMOC}
        \abs{X(t, x) - X(t, y)}
            \le \Gamma_t(\abs{x - y}).
    \end{align}
    Moreover, if we define $X^{-1}(t, x)$ so that $X(t, X^{-1}(t, x)) = x$
    then $\delta \mapsto \Gamma_t(\delta)$ is also a MOC for $X^{-1}(t, \cdot)$.
\end{prop}
\begin{proof}
    The existence of the flow map is classical. To obtain a spatial MOC on the flow map,
    let $x, y \in \Omega$. Then
    \begin{align*}
        \abs{X(t, x) - X(t, y)}
            &= \abs{x - y + \int_0^t \pr{\v(s, X(s, x)) - \v(s, X(s, y))} \, ds} \\
            &\le \abs{x - y}
                + \int_0^t \abs{\v(s, X(s, x)) - \v(s, X(s, y))} \, ds \\
            &\le \abs{x - y}
                + \int_0^t \mu(\abs{X(s, x) - X(s, y)}) \, ds.
    \end{align*}
    Applying Osgood's lemma, \cref{L:Osgood}, gives \cref{e:MOCFlow,e:psiMOC}.
    
    The bound on $X^{-1}(t, \cdot)$ follows by running the flow backward.
    For completeness, we give the argument explicitly, arguing as in part of the proof of
    Lemma 8.2 p. 318-319 of \cite{MB2002}.
    
    Suppose that a particle moving under the flow map is at position $x$
    at time $t$. Let $Y(\tau; t, x)$ be the position of that same particle
    at time $t - \tau$, where $0 \le \tau \le t$. Then
    \begin{align*}
        X^{-1}(t, x) = Y(t; t, x), \quad
        x = Y(0; t, x)
    \end{align*}
    and
    \begin{align*}
        \diff{}{\tau} Y(\tau; t, x)
            = -\v(t - \tau, Y(\tau; t, x)).
    \end{align*}
    By the fundamental theorem of calculus,
    \begin{align*}
        Y(s; t, x) - x
            = \int_0^s \diff{}{\tau} Y(\tau; t, x) \, d \tau,
    \end{align*}
    or,
    \begin{align*}
        Y(s; t, x)
            = x - \int_0^s \v(t - \tau, Y(\tau; t, x)) \, d \tau.
    \end{align*}
    
    Thus,
    \begin{align*}
        \abs{Y(s; &t, x) - Y(s; t, y)} \\
            &= \abs{x - y - \int_0^s
                \pr{\v(t - \tau, Y(\tau; t, x)) - \v(t - \tau, Y(\tau; t, y))} \, d \tau} \\
            &= \abs{x - y} + \int_0^s
                \abs{\v(t - \tau, Y(\tau; t, x)) - \v(t - \tau, Y(\tau; t, y))} \, d \tau \\
            &\le \abs{x - y}
                + \int_0^s \mu(\abs{Y(\tau; t, x) - Y(\tau; t, y)}) \, d \tau.
    \end{align*}
    For any fixed $t > 0$, this bound applies for all $0 \le s \le t$.
    In particular, setting $s = t$ yields
    \begin{align*}
        \abs{Y(s; t, x) - Y(s; t, y)}
            \le \abs{x - y}
                + \int_0^s \mu(\abs{Y(\tau; t, x) - Y(\tau; t, y)}) \, d \tau.
    \end{align*}
    Applying Osgood's lemma at $s = t$ gives
    \begin{align*}
        \abs{X^{-1}(t, x) - X^{-1}(t, y)}
        	= \abs{Y(t; t, x) - Y(t; t, y)}
            \le \Gamma_t(\abs{x - y}).
    \end{align*}

    \Ignore{ 
    We can thus
    apply Osgood's lemma to conclude that
    \begin{align*}
        \abs{Y(s; t, x) - Y(s; t, y)}
            \le \Gamma_s(\abs{x - y}).
    \end{align*}
    In particular, setting $s = t$ yields
    } 
\end{proof}

\begin{remark}
    Note the bound in (8.36) p. 315 of \cite{MB2002} on the MOC of the \textit{forward}
    flow map in time can be improved to $\norm{\v}_{L^\iny} \abs{t_1 - t_2}$
    by a direct bound, because only one flow line is involved.     
    The inverse flow map bound in (8.35) of \cite{MB2002}
    cannot be improved, because two flow lines are involved.
    There is no discrepancy in the forward and backward flow map bounds on the
    spatial MOC, because both involve two flow lines.
\end{remark}

\cref{P:FlowBound} can be modified to apply, awkwardly, to a time-varying \MOC, $\mu_t$. For $\v$ Lipschitz, however, so that Osgood's lemma reduces to Gronwall's lemma with $\mu_t(r) = \norm{\grad \v(t)}_{L^\iny} r$, it can be easily expressed:

\begin{prop}\label{P:gradXFlowBound}
	Suppose that $\grad \v \in L^1(0, t; L^\iny)$ and that $X$, $X^{-1}$ are known to be differentiable
	in space. Then
	\begin{align*}
		\norm{\grad X(t)}_{L^\iny}, \norm{\grad X^{-1}(t)}_{L^\iny}
			\le \exp \pr{\int_0^t \norm{\grad \v(s)}_{L^\iny} \, ds}.
	\end{align*}
\end{prop}
\begin{proof}
	Proceeding as in the proof of \cref{P:FlowBound} using Gronwall's lemma in place of
	Osgood's lemma yields the bounds
    \begin{align*}
        \abs{X(t, x) - X(t, y)}
            &\le \abs{x - y}
                + \int_0^t \norm{\v(s)}_{L^\iny} \abs{X(s, x) - X(s, y)} \, ds, \\
        \abs{Y(s; t, x) - Y(s; t, y)}
            &\le \abs{x - y}
                + \int_0^s \norm{\v(\tau)}_{L^\iny}\abs{Y(\tau; t, x) - Y(\tau; t, y)} \, d \tau.
    \end{align*}
	Applying Gronwall's lemma gives
	\begin{align*}
		\abs{X(t, x) - X(t, y)}
			&\le \abs{x - y} \exp \pr{\int_0^t \norm{\grad \v(s)}_{L^\iny} \, ds}, \\
		\abs{X^{-1}(t, x) - X^{-1}(t, y)}
			&= \abs{Y(t; t, x) - Y(t; t, y)}
            \le \abs{x - y} \exp \pr{\int_0^t \norm{\grad \v(s)}_{L^\iny} \, ds}.
	\end{align*}
	Since we are assuming that $X(t)$, $X^{-1}(t)$ are
	differentiable (and not just Lipschitz), we can divide by $\abs{x - y}$ to give
	\begin{align*}
		\norm{\grad X(t)}_{L^\iny}, \norm{\grad X^{-1}(t)}_{L^\iny}
			\le \exp \pr{\int_0^t \norm{\grad \v(s)}_{L^\iny} \, ds}.
	\end{align*}
\end{proof}

The following is Osgood's lemma, as stated in \cite{C1998}.
\begin{lemma}[Osgood's lemma]\label{L:Osgood}
    Let $L$ be a measurable nonnegative function and $\gamma$ a nonnegative
    locally integrable function, each defined on the domain $[t_0,
    t_1]$. Let $\mu \colon [0, \iny) \to [0, \iny)$ be a continuous nondecreasing
    function, with $\mu(0) = 0$. Let $a \ge 0$, and assume that for
    all $t$ in $[t_0, t_1]$,
    \begin{align}\label{e:LInequality}
        L(t) \le a + \int_{t_0}^t \gamma(s) \mu(L(s)) \, ds.
    \end{align}
    If $a > 0$, then
    \[
        \int_a^{L(t)} \frac{ds}{\mu(s)}
            \le \int_{t_0}^t \gamma(s) \, ds.
    \]
    If $a = 0$ and $\int_0^\iny ds/\mu(s) = \iny$, then $L \equiv 0$.
\end{lemma}

}  

\Ignore{ 
%
%
\section{Some lemmas}\label{S:SomeLemmas}

\noindent The following lemma allows one to obtain Holder regularity of the velocity gradient from Holder regularity of the density.\ToDo{This lemma is no longer cited, now that we have removed most of section 5, so I assume it can be removed?}
\begin{lemma}\label{L:VelocityReg}
	For all $k \in \N$, $\al \in (0, 1)$,
	\begin{align*}
		\norm{\grad \F * \rho}_{C^{k+1, \al}}
			\le C \norm{\rho}_{L^1} + C \norm{\rho}_{C^{k, \al}}.
	\end{align*}
\end{lemma}
\begin{proof}
	Let $r = k +1 + \al$.
	Let $v = \grad \F * \rho$. Then
	using the Littlewood-Paley-based
	definition of \Holder spaces given
	in \cref{D:HolderSpaces}, we have
	\begin{align*}
		\norm{ v}_{C^r}
			&= \sup_{q \ge -1} 2^{qr}\norm{\Delta_q v}_{L^\iny}
			\le 2^{-r} \norm{\Delta_{-1} v}_{L^\iny}
				+  \sup_{q \ge 0} 2^{ q r} \norm{\Delta_q  v}_{L^\iny} \\
			&\leq C\norm{\Delta_{-1} \rho}_{L^1}
				+  \sup_{q \ge 0} 2^{ q (r-1)} \norm{\Delta_q \grad v}_{L^\iny} \\
			&\le C \norm{ \rho}_{L^1}
				+ C \sup_{q \ge 0} 2^{ q (r-1)} \norm{\Delta_q \rho}_{L^\iny}
			\le C \norm{\rho}_{L^1}
				+ C \norm{\rho}_{C^{k+\alpha}}.
	\end{align*}
	To obtain the second inequality above, we argued as in (3.6) of \cite{VishikBesov} and used Bernstein's Lemma.  The third inequality above follows from a classical lemma
(for a proof, see Lemma 4.2 of \cite{CK2006}).  With the correspondence between $C^{k,\alpha}$ and $C^{k+\alpha}$ (see \cite{C1998}), this completes the proof.
\end{proof}

\begin{lemma}\label{L:CalIds} \ToDo{Again, no longer cited.}
	Let $\al, \beta \in (0, 1)$. Then for any real number $C_1$,
	\begin{align}\label{e:CalphaFacts}
    	\begin{split}
        	\norm{\frac{f \circ g}{1-C_1(f \circ g)}}_{\dot{C}^{\al \beta}}
            	& \le \| (1-C_1(f \circ g))^{-1}\|^2_{L^{\infty}}\norm{f}_{\dot{C}^\al} \norm{g}_{\dot{C}^\beta}^\al, \\
        	\norm{\frac{f \circ g}{1-C_1(f \circ g)}}_{\dot{C}^\al}
            	&\le \| (1-C_1(f \circ g))^{-1}\|^2_{L^{\infty}}\norm{f}_{\dot{C}^\al} \norm{\grad g}_{L^\iny}^\al.
	    \end{split}
	\end{align}
	Here, $\dot{C}^\al$ is the homogeneous \Holder space.
\end{lemma}
\begin{proof}
	For $\cref{e:CalphaFacts}_1$, by a simple calculation we have for any $x$ and $y$ in $\R^d$,
	\begin{equation*}
	\frac{f(g(x))}{1-C_1(f(g(x)))} - \frac{f(g(y))}{1-C_1(f(g(y)))} = \frac{f(g(x)) - f(g(y))}{(1-C_1(f(g(x))))(1-C_1(f(g(y))))},
	\end{equation*}
	so that
	\begin{align*}
		&\norm{\frac{f \circ g}{1-C_1(f \circ g)}}_{\dot{C}^{\al \beta}}
			\le \| (1-C_1(f \circ g))^{-1}\|^{2}_{L^{\infty}}\sup_{x \ne y}
				\frac{\abs{f(g(x)) - f(g(y))}}{\abs{g(x) - g(y)}^\al}
				\pr{\frac{\abs{g(x) - g(y)}}{\abs{x - y}^\beta}}^\al \\
			&\qquad\qquad
			\le \| (1-C_1(f \circ g))^{-1}\|^{2}_{L^{\infty}}
			\norm{f}_{\dot{C}^\al} \norm{g}_{\dot{C}^\beta}^\al.
	\end{align*}
	The same argument setting $\beta = 1$ yields $\cref{e:CalphaFacts}_2$.
\end{proof}

\begin{lemma}\label{L:gradPhiBound} \ToDo{Only cited in the proof of the lemma below.}
	For $x, y \in \R^d$,
	\begin{align*}
		\abs{\grad \F(x) - \grad \F(y)}
			\le \frac{C_d}{\abs{x} \abs{y}}
				\pr{\frac{1}{\abs{x}} + \frac{1}{\abs{y}}}^{d - 2} \abs{x - y}.
	\end{align*}
\end{lemma}
\begin{proof}
	This result is well-known for $d = 2$. So assume
	that $d \ge 3$.
	We have, for any $x, y \in \R^d$,
	\begin{align*}
		&\Bigabs{{\frac{x}{\abs{x}^d} - \frac{y}{\abs{y}^d}}}^2
			= \Bigabs{{\frac{\abs{y}^d x - \abs{x}^d y}
					{\abs{x}^d \abs{y}^d}}}^2
			= {\frac{\abs{x}^2 \abs{y}^{2d}
						+ \abs{y}^2 \abs{x}^{2d}
						- 2 x \cdot y \abs{x}^{d} \abs{y}^{d}}
					{\abs{x}^{2d} \abs{y}^{2d}}} \\
			&\qquad
			= {\abs{x}^2 \abs{y}^2 \frac{\abs{y}^{2(d - 1)}
						+ \abs{x}^{2(d - 1)}
						- 2 x \cdot y \abs{x}^{d - 2} \abs{y}^{d - 2}}
					{\abs{x}^{2d} \abs{y}^{2d}}}
			= \frac{\abs{\abs{x}^{d - 2} x - \abs{y}^{d - 2} y}^2}
					{\abs{x}^{2(d - 1)} \abs{y}^{2(d - 1)}}.
	\end{align*}
	Hence,
	\begin{align*}
		&\Bigabs{{\frac{x}{\abs{x}^d} - \frac{y}{\abs{y}^d}}}
			= \frac{\abs{\abs{x}^{d - 2} x - \abs{y}^{d - 2} y}}
					{\abs{x}^{d - 1} \abs{y}^{d - 1}}
			= \frac{\abs{\abs{x}^{d - 2} (x - y)
					+ (\abs{x}^{d - 2} - \abs{y}^{d - 2}) y}}
					{\abs{x}^{d - 1} \abs{y}^{d - 1}} \\
			&\qquad
			\le \frac{\abs{x - y}}
					{\abs{x} \abs{y}^{d - 1}}
				+ \frac{\abs{\abs{x}^{d - 2} - \abs{y}^{d - 2}}}
					{\abs{x}^{d - 1} \abs{y}^{d - 2}} \\
			&\qquad
			= \frac{\abs{x - y}}
					{\abs{x} \abs{y}^{d - 1}}
				+ \frac{(\abs{x} - \abs{y})
						(\abs{x}^{d - 3}  + \abs{x}^{d - 4} \abs{y}
							+ \abs{x} \abs{y}^{d - 4} + \abs{y}^{d - 3})}
					{\abs{x}^{d - 1} \abs{y}^{d - 2}} \\
			&\qquad
			\le \frac{\abs{x - y}}
					{\abs{x} \abs{y}^{d - 1}}
				+ \abs{x - y}
					\sum_{j = 2}^{d - 1} \frac{1}{\abs{x}^j \abs{y}^{d - j}}.
	\end{align*}
	
	But,
	\begin{align*}
		\sum_{j = 2}^{d - 1} \frac{1}{\abs{x}^j \abs{y}^{d - j}}
			&= \frac{1}{\abs{x} \abs{y}}
				\sum_{j = 1}^{d - 2} \frac{1}{\abs{x}^j \abs{y}^{d - 2 - j}}
			\le \frac{C_d}{\abs{x} \abs{y}}
					\pr{\frac{1}{\abs{x}} + \frac{1}{\abs{y}}}^{d - 2}
	\end{align*}
	and $({\abs{x} \abs{y}^{d - 1}})^{-1}$ is bounded by the same quantity.
	Therefore,
	\begin{align*}
		\abs{\grad \F(x) - \grad \F(y)}
			= C_d \Bigabs{{\frac{x}{\abs{x}^d} - \frac{y}{\abs{y}^d}}}
			\le \frac{C_d}{\abs{x} \abs{y}}
				\pr{\frac{1}{\abs{x}} + \frac{1}{\abs{y}}}^{d - 2} \abs{x - y}.
	\end{align*}
	
\end{proof}

\begin{prop}\label{P:hlogBound}   \ToDo{no longer cited anywhere in the paper.}
	Let $X_1$ and $X_2$ be homeomorphisms of $\R^d$, $d \ge 2$. Let
	$\delta = \norm{X_1 - X_2}_{L^\iny}$ and suppose $\delta < e^{-1}$. Then,
	for any measurable subset $U \subset \Omega$, with finite measure,
	there exists $C>0$, depending only on $\Omega$,
	the measure of $U$, and $d$, such that
	\begin{align}\label{e:KX1X2Diff}
		\begin{split}
				\smallnorm{\grad \F(X_1(x) - z) - \grad \F(X_2(x) - z)}_{L^1_x(U)}
					&\le - C \delta \log \delta
						\max_{j = 1, 2} \set{\smallnorm{\det \grad X_j^{-1}}_{L^\iny}}.
		\end{split}
	\end{align}
\end{prop}
\begin{proof}
	Set $A = \grad \F(x - y_1) - \grad \F(x - y_2)$ and let
	$p$, $q > 1$, with $p^{-1} + q^{-1} = 1$. Let $a = \abs{x - y_1}$,
	$b = \abs{x - y_2}$, and note that
	$
		\abs{y_1 - y_2} \le a + b.
	$
	Then from \cref{L:gradPhiBound},
	\begin{align*}
		A
			&\le \frac{C_d}{ab} \pr{\frac{1}{a} + \frac{1}{b}}^{d - 2}
				\abs{y_1 - y_2}^{\frac{1}{p}} \abs{y_1 - y_2}^{\frac{1}{q}}
			\le \frac{C_d}{(ab)^{1 - \frac{1}{p}}}
				\pr{\frac{1}{a} + \frac{1}{b}}^{d - 2}
				\pr{\frac{a + b}{ab}}^{\frac{1}{p}}
					\abs{y_1 - y_2}^{\frac{1}{q}} \\
			&= \frac{C_d}{(ab)^{1 - \frac{1}{p}}}
				\pr{\frac{1}{a} + \frac{1}{b}}^{d + \frac{1}{p} - 2}
				\abs{y_1 - y_2}^{\frac{1}{q}}
			\le \frac{C_d}{2}
				\pr{\frac{1}{a} + \frac{1}{b}}^{d - \frac{1}{p}}
				\abs{y_1 - y_2}^{\frac{1}{q}} \\
			&\le C_d 2^{d - 2 - \frac{1}{p}}
				\pr{\frac{1}{a^{d - \frac{1}{p}}}
					+ \frac{1}{b^{d - \frac{1}{p}}}}
					\abs{y_1 - y_2}^{\frac{1}{q}}.
	\end{align*}
	In the final two inequalities we used $(ab)^{-1/2} \le (1/2)(a^{-1} + b^{-1})$
	followed by $(c + d)^r \le 2^{r - 1} (c^r + d^r)$ for any $c, d, r > 0$.
	
	It follows that
	\begin{align*}
		\norm{A}_{L^1_x(U)}
			&\le C \delta^{\frac{1}{q}}
				\sum_{j = 1}^2 \norm{\pr{X_j(x) - z}
					^{-(d - \frac{1}{p})}}_{L^1_x(U)}
			= C \delta^{\frac{1}{q}}
				\sum_{j = 1}^2
					\int_{\R^d} \frac{\abs{\det \grad X_j^{-1}(w)}}
							{\abs{w - z}^{d - \frac{1}{p}}} \, dw \\
			&\le C \delta^{\frac{1}{q}}
            	\max_{j = 1, 2} \set{\smallnorm{\det \grad X_j^{-1}}_{L^\iny}}
                \sum_{j = 1}^2 \norm{\frac{1}{\abs{w - z}^{d - \frac{1}{p}}}}_
                    {L^1_w(X_j(U))}.
	\end{align*}
	But, as in the proof of Proposition 3.2 of \cite{AKLL2015}, the above
	norm is maximized when $U$ is a ball centered at $z$ (of radius $R$, depending
	on the measure of $U$). This gives
	\begin{align*}
		\norm{A}_{L^1_x(U)}
			&\le C \delta^{\frac{1}{q}}
            	\max_{j = 1, 2} \set{\smallnorm{\det \grad X_j^{-1}}_{L^\iny}}
                \sum_{j = 1}^2 
                	\int_0^R \frac{r^{d - 1}}{r^{d - \frac{1}{p}}} \, dr
			= C \delta^{1 - \frac{1}{p}}
            	\max_{j = 1, 2} \set{\smallnorm{\det \grad X_j^{-1}}_{L^\iny}}
                	p R^{\frac{1}{p}} \\
			&\le C \delta^{1 - \frac{1}{p}}
                	p \max \set{1, R}
            	\max_{j = 1, 2} \set{\smallnorm{\det \grad X_j^{-1}}_{L^\iny}}.
	\end{align*}
	This is minimized, relative to $p$, when $p = - \log \delta$, giving
	\begin{align*}
		\norm{A}_{L^1_x(U)}
			&\le C \max \set{1, R} e (-\delta \log \delta),
	\end{align*}
	which is \cref{e:KX1X2Diff}.
\end{proof}
} 


\begin{thebibliography}{10}

\bibitem{BedrossianAzzam2015}
Jonas Azzam and Jacob Bedrossian.
\newblock Bounded mean oscillation and the uniqueness of active scalar
  equations.
\newblock {\em Trans. Amer. Math. Soc.}, 367(5):3095--3118, 2015.

\bibitem{BGLV2015}
A.~Bertozzi, J.~Garnett, T.~Laurent, and J.~Verdera.
\newblock The regularity of the boundary of a multidimensional aggregation
  patch.
\newblock {\em arXiv:1507.07831v1}, 2015.

\bibitem{BLL2012}
Andrea~L. Bertozzi, Thomas Laurent, and Flavien L{\'e}ger.
\newblock Aggregation and spreading via the {N}ewtonian potential: the dynamics
  of patch solutions.
\newblock {\em Math. Models Methods Appl. Sci.}, 22(suppl. 1):1140005, 39,
  2012.

\bibitem{BlanchetCarrilloMasmoudi2008}
Adrien Blanchet, Jos{\'e}~A. Carrillo, and Nader Masmoudi.
\newblock Infinite time aggregation for the critical {P}atlak-{K}eller-{S}egel
  model in {$\Bbb R^2$}.
\newblock {\em Comm. Pure Appl. Math.}, 61(10):1449--1481, 2008.

\bibitem{Brezis2011}
Haim Brezis.
\newblock {\em Functional analysis, {S}obolev spaces and partial differential
  equations}.
\newblock Universitext. Springer, New York, 2011.

\bibitem{C1996}
Jean-Yves Chemin.
\newblock A remark on the inviscid limit for two-dimensional incompressible
  fluids.
\newblock {\em Comm. Partial Differential Equations}, 21(11-12):1771--1779,
  1996.

\bibitem{C1998}
Jean-Yves Chemin.
\newblock {\em Perfect incompressible fluids}, volume~14 of {\em Oxford Lecture
  Series in Mathematics and its Applications}.
\newblock The Clarendon Press Oxford University Press, New York, 1998.
\newblock Translated from the 1995 French original by Isabelle Gallagher and
  Dragos Iftimie.

\bibitem{Constantin1986}
Peter Constantin.
\newblock Note on loss of regularity for solutions of the {$3$}-{D}
  incompressible {E}uler and related equations.
\newblock {\em Comm. Math. Phys.}, 104(2):311--326, 1986.

\bibitem{CF1988}
Peter Constantin and Ciprian Foias.
\newblock {\em Navier-{S}tokes equations}.
\newblock Chicago Lectures in Mathematics. University of Chicago Press,
  Chicago, IL, 1988.

\bibitem{CK2006}
Elaine Cozzi and James~P. Kelliher.
\newblock Vanishing viscosity in the plane for vorticity in borderline spaces
  of {B}esov type.
\newblock {\em Journal of Differential Equations}, 235(2):647--657, 2007.

\bibitem{Evans}
Lawrence~C. Evans.
\newblock {\em Partial differential equations}, volume~19 of {\em Graduate
  Studies in Mathematics}.
\newblock American Mathematical Society, Providence, RI, second edition, 2010.

\bibitem{Hmidi2005}
Taoufik Hmidi.
\newblock R\'egularit\'e h\"old\'erienne des poches de tourbillon visqueuses.
\newblock {\em J. Math. Pures Appl. (9)}, 84(11):1455--1495, 2005.

\bibitem{HK2008}
Taoufik Hmidi and Sahbi Keraani.
\newblock Incompressible viscous flows in borderline {B}esov spaces.
\newblock {\em Arch. Ration. Mech. Anal.}, 189(2):283--300, 2008.

\bibitem{KiselevXu2015}
A.~Kiselev and X.~Xu.
\newblock Suppression of chemotactic explosion by mixing.
\newblock {\em arXiv:1508.05333v2}, 2013.

\bibitem{Laurent2007a}
Thomas Laurent.
\newblock Local and global existence for an aggregation equation.
\newblock {\em Comm. Partial Differential Equations}, 32(10-12):1941--1964,
  2007.

\bibitem{LionsMagenes1972}
J.-L. Lions and E.~Magenes.
\newblock {\em Non-homogeneous boundary value problems and applications. {V}ol.
  {I}}.
\newblock Springer-Verlag, New York-Heidelberg, 1972.
\newblock Translated from the French by P. Kenneth, Die Grundlehren der
  mathematischen Wissenschaften, Band 181.

\bibitem{MB2002}
Andrew~J. Majda and Andrea~L. Bertozzi.
\newblock {\em Vorticity and incompressible flow}, volume~27 of {\em Cambridge
  Texts in Applied Mathematics}.
\newblock Cambridge University Press, Cambridge, 2002.

\bibitem{MP1984}
C.~Marchioro and M.~Pulvirenti.
\newblock {\em Vortex methods in two-dimensional fluid dynamics}, volume 203 of
  {\em Lecture Notes in Physics}.
\newblock Springer-Verlag, Berlin, 1984.

\bibitem{MP1994}
Carlo Marchioro and Mario Pulvirenti.
\newblock {\em Mathematical theory of incompressible nonviscous fluids},
  volume~96 of {\em Applied Mathematical Sciences}.
\newblock Springer-Verlag, New York, 1994.

\bibitem{MasmoudiZhang2005}
Nader Masmoudi and Ping Zhang.
\newblock Global solutions to vortex density equations arising from
  sup-conductivity.
\newblock {\em Ann. Inst. H. Poincar\'e Anal. Non Lin\'eaire}, 22(4):441--458,
  2005.

\bibitem{Perthame2007}
Beno{\^{\i}}t Perthame.
\newblock {\em Transport equations in biology}.
\newblock Frontiers in Mathematics. Birkh\"auser Verlag, Basel, 2007.

\bibitem{T2001}
Roger Temam.
\newblock {\em Navier-{S}tokes equations}.
\newblock AMS Chelsea Publishing, Providence, RI, 2001.
\newblock Theory and numerical analysis, Reprint of the 1984 edition.

\bibitem{VishikBesov}
Misha Vishik.
\newblock Incompressible flows of an ideal fluid with vorticity in borderline
  spaces of {B}esov type.
\newblock {\em Ann. Sci. \'Ecole Norm. Sup. (4)}, 32(6):769--812, 1999.

\bibitem{Y1995}
V.~I. Yudovich.
\newblock Uniqueness theorem for the basic nonstationary problem in the
  dynamics of an ideal incompressible fluid.
\newblock {\em Math. Res. Lett.}, 2(1):27--38, 1995.

\end{thebibliography}

\def\cprime{$'$} \def\polhk#1{\setbox0=\hbox{#1}{\ooalign{\hidewidth
  \lower1.5ex\hbox{`}\hidewidth\crcr\unhbox0}}}

\end{document}